%% file: arxiv_main.tex
\documentclass[11pt]{article}
\RequirePackage{amsthm,amsmath,amsfonts,amssymb}
\RequirePackage[utf8]{inputenc}
\RequirePackage{mathtools}
\RequirePackage[dvipsnames]{xcolor}
\colorlet{OliveGreenOrig}{OliveGreen}
\colorlet{OliveGreen}{OliveGreenOrig!80!LimeGreen}
\RequirePackage[authoryear,square]{natbib}
\RequirePackage[margin=1in]{geometry}
\RequirePackage{enumitem,graphicx,rotating,tabularx,bm,array,mdframed}
\RequirePackage{subfig}
\graphicspath{{./}{../}}
\newcolumntype{P}[1]{>{\raggedright\arraybackslash}p{#1}}
\RequirePackage{tikz,tikz-3dplot}
\usetikzlibrary{arrows.meta,calc,positioning,shapes.geometric}
\RequirePackage[colorlinks,hypertexnames=false,citecolor=OliveGreen,urlcolor=blue,linkcolor=BrickRed]{hyperref}

\numberwithin{equation}{section}
\definecolor{bladefill}{HTML}{97C459}
\definecolor{bladeline}{HTML}{3B6D11}
\definecolor{bladeedge}{HTML}{639922}
\definecolor{axiscol}{HTML}{D85A30}
\definecolor{cylline}{HTML}{888780}
\definecolor{cylfill}{HTML}{EDECE6}
\definecolor{ink}{HTML}{0755AE}

\usepackage{float}
\usepackage{graphicx}
\usepackage{url,booktabs,nicefrac,microtype,comment}
\floatstyle{ruled}
\newfloat{algorithm}{htbp}{loa}
\floatname{algorithm}{Algorithm}
\newcounter{pseudoline}
\newlength{\pseudoindent}
\newenvironment{algorithmic}[1][1]{%
  \setcounter{pseudoline}{0}\setlength{\pseudoindent}{0pt}%
  \begin{list}{}{\setlength{\leftmargin}{2em}%
    \setlength{\labelwidth}{1.6em}\setlength{\labelsep}{0.4em}%
    \setlength{\itemsep}{2pt}\setlength{\parsep}{0pt}%
    \setlength{\topsep}{4pt}}%
}{\end{list}}
\newcommand{\STATE}{\refstepcounter{pseudoline}%
  \item[\footnotesize\thepseudoline:]\hspace*{\pseudoindent}\ignorespaces}
\newcommand{\REPEAT}{\STATE\textbf{repeat}\addtolength{\pseudoindent}{1em}}
\newcommand{\UNTIL}[1]{\addtolength{\pseudoindent}{-1em}\STATE\textbf{until}\ #1}
\newcommand{\IF}[1]{\STATE\textbf{if} #1 \textbf{then}\addtolength{\pseudoindent}{1em}}
\newcommand{\ENDIF}{\addtolength{\pseudoindent}{-1em}\STATE\textbf{end if}}
\newcommand{\RETURN}{\STATE\textbf{return}\ \ignorespaces}

\DeclareMathOperator{\Var}{Var}

\newcommand{\Cov}{{\bm{\mathrm{Cov}}}}

\newcommand{\dd}{\,\mathrm d}
\newcommand{\la}{\left\{}
\newcommand{\ra}{\right\}}
\newcommand{\lb}{\left(}
\newcommand{\rb}{\right)}
\newcommand{\mb}{\mathbb}
\newcommand{\ve}{\varepsilon}
\newcommand{\mc}{\mathcal}
\newcommand{\Sp}{\mathbb S}
\newcommand{\br}[1]{\left(#1\right)}
\newcommand{\innerprod}[1]{\left\langle#1\right\rangle}
\newcommand{\bc}[1]{\left\{#1\right\}}
\newcommand{\balpha}{{\bm{\alpha}}}
\newmdenv{boxedlemma}
\def\cf{{\it cf}}

\newcommand{\bbR}{\mathbb R}
\newcommand{\bbE}{\mathbb E}
\newcommand{\sfZ}{{\bm{\mathsf Z}}}
\newcommand{\sfH}{{\bm{\mathsf H}}}
\newcommand{\bA}{{\bm{A}}}
\newcommand{\bB}{{\bm{B}}}
\newcommand{\bC}{{\bm{C}}}

\newcommand{\bG}{{\bm{G}}}
\newcommand{\bH}{{\bm{H}}}
\newcommand{\bI}{{\bm{I}}}
\newcommand{\bK}{{\bm{K}}}
\newcommand{\bL}{{\bm{L}}}
\newcommand{\bM}{{\bm{M}}}

\newcommand{\bQ}{{\bm{Q}}}

\newcommand{\bPi}{{\bm{\Pi}}}

\newcommand{\bcN}{{\mathcal{N}}}

\newcommand{\bJ}{{\bm{J}}}
\newcommand{\bSigma}{{\bm{\Sigma}}}
\newcommand{\bpi}{{\bm{\pi}}}
\newcommand{\bs}[1]{\left[#1\right]}
\newcommand{\norm}[1]{\left\lVert#1\right\rVert}
\newcommand{\op}{\mathrm{op}}
\newcommand{\ddt}{\frac{d}{dt}}
\newcommand{\Angle}{\operatorname{Angle}}
\newcommand{\Dmax}{D_{\max}}

\newcommand{\pgmm}{p_{\mathrm{GMM}}}
\newcommand{\pdata}{p_{\rm{data}}}
\newcommand{\Law}{\text{Law}}
\newcommand{\pr}{{ \mathbb{P}}}
\newcommand{\Rectify}[1]{\texttt{Rectify}\lb#1\rb}
\newcommand{\Rectflow}[1]{\texttt{Rectflow}\lb#1\rb}

\DeclareMathOperator*{\argmin}{arg\,min}
\DeclareMathOperator{\supp}{supp}

\theoremstyle{plain}
\newtheorem{thm}{Theorem}[section]
\newtheorem{lemma}{Lemma}[section]
\newtheorem{prop}{Proposition}[section]

\theoremstyle{definition}

\newtheorem{remark}{Remark}[section]

\newtheorem{defi}{Definition}

\begin{document}
\title{Setting Things Straight: On the Straightness and Non-Straightness of Two-Step Rectified Flow}
\author{%
Tuan Pham\textsuperscript{1,*} \quad 
Saptarshi Roy\textsuperscript{2,*}\\[0.6em]
Alessandro Rinaldo\textsuperscript{1} \quad 
Purnamrita Sarkar\textsuperscript{1}\\[0.75ex]
\small\textsuperscript{1}Department of Statistics and Data Science, University of Texas at Austin\\
\small\textsuperscript{2}Division of Computing and Mathematical Sciences, MBZUAI\\[0.5ex]
\small\textsuperscript{*}~Equal contribution}
\date{}
\maketitle

\begingroup
\renewcommand{\thefootnote}{}
\footnotetext{Corresponding authors: Tuan Pham (\href{mailto:tuan.pham@utexas.edu}{tuan.pham@utexas.edu}) and Saptarshi Roy (\href{mailto:Saptarshi.Roy@mbzuai.ac.ae}{Saptarshi.Roy@mbzuai.ac.ae}).}
\addtocounter{footnote}{-1}
\endgroup

\begin{abstract}
Rectified flow (RF) is an ODE-based generative modeling framework that has achieved remarkable success in practice over the past few years.  By design, RF seeks to produce flows that have a high degree of straightness, in the sense that the resulting ODE trajectories can be obtained with relatively few (ideally, one) sampling steps. 
However, a rigorous analysis of straightness or near-straightness of RF flows remains missing in the literature. In this paper, we study settings in which the coupling generated by two-step RF is provably straight at the population level. We provide a complete characterization in two popular classes of data distributions: (i) smooth, strongly log-concave densities and (ii) Gaussian mixture models. Our main results yield  sharp conditions for the coupling generated by two-step RF to be straight. Non-straight couplings are constructed when these conditions fail, which demonstrates the sharpness of these conditions. 

Using the theory developed on the straightness of two-step RF, we propose a notion of approximate straightness that measures the straightness of the learned couplings and can be estimated from the data. We introduce a procedure for constructing asymptotic confidence intervals for our proposed notion of approximate straightness and propose a data-driven method for selecting the number of reflow rounds in multiple iterations of RF.

\end{abstract}

\medskip
\noindent\textbf{Keywords:} rectified flows; generative modeling; sampling; flow matching.

\tableofcontents
\clearpage

\section{Introduction} \label{sec:intro}



Generative models have gained substantial attention over the last few years due to their impressive performance in drawing approximate samples from complex probability distributions. Early successes in  deep generative modeling include variational autoencoders (VAE) \citep{kingma2014auto} and generative adversarial networks (GAN) \citep{goodfellow2014generative}, which learn data distributions through latent-variable modeling and adversarial training, respectively. Diffusion and score-based models subsequently emerged as powerful approaches that generate samples by learning to reverse a process that gradually transforms data into noise \citep{sohl2015deep,ho2020denoising}. Their continuous-time formulations connect generative modeling to stochastic and ordinary differential equations (ODEs) that transport a simple reference distribution to the target data distribution \citep{song2021denoising}.

In the past few years, Rectified Flow or Flow matching \citep{liu2022rectified,liu2023flow, lipman2022flow} has emerged as a prominent sampling methodology  due to its state-of-the-art generative performance across various modalities including image generation  \citep{liu2023instaflow, yan2024perflow}, audio generation \citep{ audio_RF_wang2024frieren, audio_RF_liu2025flashaudio}, and video generation \citep{video_RF_chen2025goku, video_RF_liu2025improving}. Apart from applications, RF has also attracted several strands of theoretical research including low-dimensional adaptivity \citep{low_boffi2025shallow, roy2026low,  low_kumar2026flow}, sampling performance \citep{bansal2025wasserstein, wang2026computational, guan2026mirror, kumar2026conditional}, alignment \citep{alignment_fan2025online, alignment_potaptchik2026meta}, optimal distillation \citep{roy2026optimal} and many more.
The success of RF stems from two key factors: (1) RF is a deterministic ordinary differential equation (ODE)-based sampler that is easier to simulate compared to other stochastic samplers, and (2) RF learns almost straight trajectories between source and target data distributions via multiple reflow steps, leading to faster sampling. In particular, the straightness or near-straightness of its ODE trajectories is critical: flows with nearly straight trajectories require fewer sampling steps to generate new samples and are thus more efficient. Though this feature of RF is generally recognized as a main reason for its empirical successes and wide applicability, the theoretical understanding of the conditions under which the RF trajectories are guaranteed to be (near) straight or not has remained limited. { In this paper we make a step forward in filling this gap by providing a rigorous framework for understanding the straightness of RFs. The key message is that our framework yields sharp conditions for straightness and identifies precisely when non-straightness can happen. We demonstrate the use of our framework on two common classes of data distributions: smooth, strongly log-concave distributions and Gaussian mixture models (GMMs). }

\paragraph*{Rectified flows}
Let us briefly recall the RF framework. 
In this generative modeling framework, we observe
$\bm X_1,\dots,\bm X_n \sim p_{\rm data}$, where $p_{\rm data}$ is an unknown distribution on $\mb R^d$, and aim to generate new samples approximately from this distribution. The RF framework learns a velocity field ${\bm v}(\bm x,t)$ by minimizing the following objective:
\begin{equation}
\label{eq: drift_obj}
    \bm v = \argmin_{\bm f} \int_0^1 \bbE{\norm{\bm X_1 - \bm X_0 - \bm f\lb \bm X_t, t\rb}_2^2} dt , 
\end{equation}
where 
\[
\bm X_t = t\bm X_1+(1-t)\bm X_0,
\qquad
(\bm X_0, t) \sim \bcN\lb\bm 0,\bm I_d\rb \times \operatorname{Unif}(0,1),
\]
with $(\bm X_0,t)$ independent of the data. 
In the objective \eqref{eq: drift_obj} above,
$\bm X_t$ is formed by interpolating the same pair
$(\bm X_0,\bm X_1)$ that appears in the displacement
$\bm X_1-\bm X_0$. For each $t$, the above objective in (\ref{eq: drift_obj}) is minimized at
\begin{align} \label{eq: drift-true}
 \lb \bm x,t\rb \mapsto   \bm v_t\lb \bm x\rb := \bm v\lb \bm x,t\rb &= \bbE[ \bm X_1- \bm X_0 \mid \bm X_t = \bm x]. 
\end{align}
Under appropriate regularity conditions (investigated in \citet{bansal2025wasserstein, mena2025statistical}) the velocity is differentiable and the ODE
\begin{align} 
    d \bm Z_t = \bm v_t\lb \bm Z_t \rb\ dt, \quad \bm Z_0 \sim p_0 = \bcN\lb \bm 0, \bI_d\rb \label{eq: ode-true}
\end{align}
 has a unique solution and satisfies the marginal preserving property,
 i.e., $\Law\lb \bm Z_t\rb = \Law\lb \bm X_t\rb = p_t$. Under a mild non-explosivity assumption on the ODE and the second moment assumption on $\pdata$, \cite{bansal2025wasserstein} showed that 
 ODE \eqref{eq: ode-true} admits a unique solution. Throughout this paper, we assume that the above ODE has a unique solution, which is also necessary for a coupling to be rectifiable \citep{liu2022rectified,liu2023flow}. 
 
The initial fitting procedure yields a learned velocity field, which we denote
by $\bm v^{(1)}$. More generally, after $k$ rectification rounds we write
$\bm v^{(k)}$ for the learned velocity field at round $k$. To simplify
notation, when discussing the first round we will write $\bm v:=\bm v^{(1)}$.
We refer to the endpoint coupling induced by $\bm v^{(1)}$ as the $1$-RF coupling:
\[
\mbox{1-RF}
:=
\lb
\bm X_0^{(1)},
\sfZ_1^{(1)}\lb\bm X_0^{(1)}\rb
\rb,
\qquad
\bm X_0^{(1)}\sim\bcN\lb\bm 0,\bm I_d\rb,
\]
where the flow map $\sfZ_t^{(1)}$ solves
\[
\frac{d}{dt} \sfZ_t^{(1)}(\bm x)
=
\bm v^{(1)}
\lb \sfZ_t^{(1)}(\bm x),t\rb,
\qquad
\sfZ_0^{(1)}(\bm x)=\bm x.
\]
This coupling is deterministic in the sense that, conditional on the learned velocity field, the terminal state is a deterministic function of the initial state $\bm X_0^{(1)}$. We can iterate this procedure to obtain $K$-RF by performing $K-1$ additional reflow rounds, as illustrated below:
\begin{figure}[t]
\centering
\resizebox{\linewidth}{!}{%
\begin{tikzpicture}[>=latex,baseline]
  \tikzstyle{box}=[draw=black!50,rounded corners,inner sep=2pt,align=center,minimum height=12mm]
  \tikzstyle{data}=[box,fill=blue!6]
  \tikzstyle{vel}=[box,fill=orange!10]
  \tikzstyle{rf}=[box,fill=green!8]
  \tikzstyle{arr}=[->,thick,draw=black!60]

  \node[data] (data) {$(\bm X_0,\bm X_1)$};
  \node[vel,right=8mm of data] (v1) {$\bm v^{(1)}$};
  \node[rf,right=8mm of v1] (rf1) {$1$-RF\\$(\bm X_0^{(1)},\sfZ_1^{(1)}(\bm X_0^{(1)}))$};
  \node[vel,right=8mm of rf1] (v2) {$\bm v^{(2)}$};
  \node[rf,right=8mm of v2] (rf2) {$2$-RF\\$(\bm X_0^{(2)},\sfZ_1^{(2)}(\bm X_0^{(2)}))$};
  \node[right=6mm of rf2] (dots) {$\cdots$};
  \node[vel,right=6mm of dots] (vK) {$\bm v^{(K)}$};
  \node[rf,right=8mm of vK] (rfK) {$K$-RF\\$(\bm X_0^{(K)},\sfZ_1^{(K)}(\bm X_0^{(K)}))$};

  \draw[arr] (data) -- node[midway,above=15pt,font=\scriptsize]{learn} (v1);
  \draw[arr] (v1) -- node[midway,above=15pt,font=\scriptsize]{generate} (rf1);
  \draw[arr] (rf1) -- node[midway,above=15pt,font=\scriptsize]{learn} (v2);
  \draw[arr] (v2) -- node[midway,above=15pt,font=\scriptsize]{generate} (rf2);
  \draw[arr] (rf2) -- (dots);
  \draw[arr] (dots) -- (vK);
  \draw[arr] (vK) -- node[midway,above=15pt,font=\scriptsize]{generate} (rfK);
\end{tikzpicture}%
}
\caption{Schematic of the iterative reflow procedure for constructing $K$-RF.}
\label{fig:reflow-procedure}
\end{figure}
At each subsequent round, a new velocity field is fitted using the
paired initial and terminal states generated by the preceding learned
flow. We refer readers to \cite{liu2023flow} for a comprehensive
description of the procedure and its implementation. Once the procedure stops after the $K$-th rectification, new samples are generated by
\[
\text{drawing
$\bm X_0^{(K)}\sim\bcN\lb\bm 0,\bm I_d\rb$
and computing
$\sfZ_1^{(K)}\lb\bm X_0^{(K)}\rb$}.
\]
This can be done through numerical integration of the ODE associated with the final
learned velocity field $\hat{\bm v}^{(K)}$.
In practice, $2$-RF or $3$-RF can already produce high-quality samples \citep{liu2023flow}.
Interestingly, repeated rectifications seem to increase the straightness; see Theorem D.7 in  \cite{liu2023flow}.


Despite the impressive empirical success of RF, theoretical understanding
and guarantees regarding its straightness remained limited until recently,
when \citet{bansal2025wasserstein} developed a framework for studying RF
straightness. In particular, they showed that $2$-RF produces straight flows under
certain technical assumptions on the target distribution and the velocity
field. A substantial limitation of these results is that they provide
only a partial characterization of the conditions under which RF produces
straight flows. Furthermore, they offer limited practical guidance for
assessing the straightness of trajectories generated by a learned velocity
field, which is an important consideration in determining how many
rectification steps are needed and whether RF is an appropriate generative
modeling framework for a given dataset.

The goal of the present article is to provide a general, simple framework
for understanding the straightness property of RF under some common
classes of data distributions via the concept of ``maximum rotation''
which borrows ideas from analysis. In particular, we will be concerned
with the following questions:

(i) Given a data distribution $p_{\rm data}$, when does RF produce
straight couplings after two rectification steps, as one observes in
practice? If not, can it be non-straight even for very regular data
distributions?

(ii) Based on the data $\bm X_1,\dots,\bm X_n\in\mb R^d$, how many
rectification steps $K\geq1$ are needed so that the learned velocity
fields after $K$ steps give approximately straight couplings that are
good enough to use for sampling?

Our results make both theoretical and methodological contributions,
which can be summarized as follows.
\begin{enumerate}
    \item We develop a general theory for proving the straightness
    of two-step RF by bounding the maximum angular rotation of the
    endpoint map. We apply our theory to two concrete, popular
    classes of distributions: (a) smooth, strongly log-concave
    distributions and (b) Gaussian mixture models with isotropic
    covariance matrices.

Using the maximum rotation framework, geometric conditions
    under which $2$-RF produces exactly straight couplings
    (at the population level) are obtained for these two classes
    of distributions. We find that the condition number of the log-concave distribution and the maximum cluster mean separation in the case of GMM are the crucial quantities for characterizing the straightness of RF. Additionally, we show that our conditions are sharp by
    constructing non-straight flows when these geometric conditions
    fail. This answers question~(i) in full generality. To the best of our knowledge, this is the first construction of non-straight couplings obtained from $2$-RF.

    \item We consider the problem of assessing the straightness of a learned velocity field, which is crucial for determining the number of rectification steps upon observing the data. We propose a notion of “approximate straightness” and construct a consistent estimator to quantify it. Using this estimator, we construct valid confidence intervals and develop a data-driven stopping rule based on a prescribed straightness tolerance.

    {\item On the technical level, the framework developed here is useful for understanding both straightness and non-straightness of the couplings produced by RF. In particular, our results provide the first sharp characterization of RF straightness. Proposition \ref{ns:prop:criterion} below provides a route to proving that a coupling is non-straight, and we expect it to be relevant to other settings beyond those considered in this paper. } 

\end{enumerate}

\textbf{Notation.} Throughout the paper, with some abuse of notation, we will identify  a probability distribution with its Lebesgue density, which we will assume exists unless otherwise noted. For a distribution (Lebesgue density) $p$, write $\supp(p)$ to denote the support of $p$. For a random variable $X$ we use $\Law(X)$ to denote its distribution. For a  function $\bm f=\lb f_1,\dots,f_d \rb: \mb R^d \to \mb R^d$ differentiable at $\bm x$, we denote by $D \bm f(\bm x)$ the Jacobian matrix of $\bm f$ at $\bm x$
\[
D \bm f( \bm x):= \lb \frac{\partial  f_{i}}{\partial x_j} \lb \bm x \rb \rb_{i,j=1}^d.
\]
When there are many variables, we will also write $D_{\bm x}$ to indicate that the Jacobian matrix is taken with respect to $\bm x$. In the case where the function takes real values, we use $\nabla$ to indicate its gradient and $\nabla^2$ to indicate its Hessian matrix.

For a set $\{x_j\}_{j \in[K]}\subset \bbR$ and probability vector $\bpi := (\pi_1, \ldots, \pi_K)$, we denote the variance of the set by 
$$\Var_\bpi(\{x_j\}_{j \in [K]}) = \sum_{j \in [K]} \pi_j x_j^2 - \lb \sum_{j \in [K]}\pi_j x_j \rb^2.$$ 
For two vectors $\bm u, \bm v$, we denote by $\Angle(\bm u, \bm v) \in [0, \pi]$ the angle between $\bm u$ and $\bm v$. Given a matrix $\bM \in \bbR^{d \times d}$, we also define the maximum angle
\begin{align} \label{eq:maximum-angle}
\phi_\bM:= \sup_{\bm u \in \mb R^d: \bM \bm u \ne \bm 0} \Angle(\bm u, \bM \bm u),
\end{align}
which represents the maximum rotation achieved by the matrix $\bM$. For two square matrices $\bm M$ and $\bm N$, we write their commutator as $[\bm M, \bm N] = \bm M \bm N - \bm N \bm M$. Finally, for a symmetric matrix $\bm M$, we identify its maximum and minimum eigenvalues as $\lambda_{\max}(\bm M)$ and $\lambda_{\min}(\bm M)$, respectively.


We use $\mbox{GL}_k(\mb R)$ to denote the group of real  invertible matrices of size $k$, i.e.
\[
\mbox{GL}_k(\mb R):= \la  \bm A \in \mb R^{k \times k}: \mbox{det}(\bm A) \neq 0\ra.
\]
Similarly, the space of positive-definite matrices of size $k$ will be denoted by $\mb S_{++}^k$:
\[
\mb S_{++}^k:= \Big\{  \bm A: \exists \ve>0 \ \text{s.t.} \ \bm A \succeq \ve \cdot  \bm I_k  \Big\}.
\]
Both these spaces are regarded as open subsets of $\mb R^{k^2}$ and $\mb R^{k(k+1)/2}$, respectively, when we talk about differentiability. For example, a map $\bm T$ is said to be in $ C^1 \Big( [0,1], \mbox{GL}_k(\mb R) \Big)$ if $\bm T$ takes values in $\mbox{GL}_k(\mb R)$ and is $C^1$ as a map from $[0,1]$ to $\mb R^{k^2}$. 
\section{Background and Preliminaries}
\label{sec: background}

\subsection{Straight coupling and straight flow}
\label{sec: straight coupling adn straight flow}
Rectified flow interpolates between the two distributions in straight line paths as $\bm X_t = t \bm X_1 + \lb 1-t\rb\bm X_0$ for $t \in [0,1]$, giving $\bm v_t\lb \bm x\rb$ as in \eqref{eq: drift-true}. Therefore, the resulting sampling ODE \eqref{eq: ode-true} also approximates a geodesic path between $p_0$ and $p_1$ that allows faster sampling.
In the context of RFs, the optimization step \eqref{eq: drift_obj} is known as a rectification step, and the solution path 
\begin{align} \label{eq:bm-Z}
   \bm {\mc{Z}}: = \la\bm Z_t\ra_{t \in [0,1]} 
\end{align}
of the ODE \eqref{eq: ode-true} is known as the rectified flow, denoted by $\bm {\mc{Z}} = \Rectflow{\bm X_0, \bm X_1}$. 

It is known that RF is marginal preserving \citep{liu2023flow}, i.e., $\Law\lb  \bm X_t\rb = \Law\lb \bm Z_t\rb$. Hence, RF yields a new \textit{deterministic} coupling $\lb \bm Z_0, \bm Z_1\rb:= \Rectify{\bm X_0, \bm X_1}$ between $p_0$ and $p_1$. In fact, $\lb \bm Z_0,\bm  Z_t\rb$ is also a deterministic coupling between $p_0$ and 
\begin{align} \label{eq:p_t}
   p_t:= \text{density of $\mbox{Law} \lb \bm X_t \rb$},  
\end{align}
 for any $t \in [0,1]$. To underscore this deterministic relation, we denote by $\sfZ_t : \bbR^d \to \bbR^d$ the transport/ flow map defined through the ODE
\begin{equation}
\label{eq: ode non-random}
    \ddt \sfZ_t\lb \bm z_0\rb = \bm v_t\lb \sfZ_t\lb \bm z_0\rb\rb, \quad \sfZ_0\lb \bm z_0\rb = \bm z_0,
\end{equation}
for an initial point $\bm z_0\in \bbR^d$.
It is easy to see that $\bm Z_t = \sfZ_t\lb \bm Z_0\rb$ by comparing \eqref{eq: ode-true} and \eqref{eq: ode non-random}. In particular, we have $\lb \bm Z_0, \bm Z_1\rb = \lb \bm Z_0, \sfZ_1\lb \bm Z_0\rb\rb$ for $\bm Z_0 \sim \bcN\lb \bm 0, \bI_d\rb$. For notational brevity, we also define the chord map
\begin{align} \label{eq:chord-map}
\sfH_t\lb \bm x\rb := \lb 1-t\rb\bm x + t\sfZ_1\lb \bm x\rb, \qquad \bm x\in\bbR^d.
\end{align}
Now, we recall the definitions of straight coupling and straight flow from \cite{liu2023flow}. 
\begin{defi}[Straight coupling and Straight flow]
\label{def: Qiang straightness}
    A coupling $\lb \bm Z_0, \sfZ_1\lb \bm Z_0\rb\rb$ such that $\bbE \| \sfZ_1\lb \bm Z_0\rb - \bm Z_0\|^2< \infty$ is called a \textit{straight coupling} if
    $$\bbE[\sfZ_1\lb \bm Z_0\rb - \bm Z_0 \mid t, \sfH_t\lb \bm Z_0\rb ] = \sfZ_1\lb \bm Z_0\rb - \bm Z_0, $$ 
   almost surely with respect to independent draws of $ \bm Z_0 \sim p_0$ and $ t \sim  \text{Unif}\lb 0, 1\rb$.
  We call the $\Rectflow{\bm Z_0, \sfZ_1\lb \bm Z_0\rb}$ generated by rectification of a straight coupling $\lb \bm Z_0, \sfZ_1\lb \bm Z_0\rb\rb$ a \emph{straight flow}.
\end{defi} 

It is worth pointing out why rectifying a straight coupling yields
constant velocity along each trajectory. Write
$\bm D=\sfZ_1(\bm Z_0)-\bm Z_0$ and
$\sfH_t(\bm Z_0)=\bm Z_0+t\bm D$.
By straightness, the 2-RF velocity satisfies
\[
    \bm v_t^{(2)}(\sfH_t(\bm Z_0))
    =\bbE[\bm D\mid t,\sfH_t(\bm Z_0)]
    =\bm D
    =\frac{d}{dt}\sfH_t(\bm Z_0)
\]
almost surely in $(\bm Z_0,t)$. Thus, the linear interpolation
solves the 2-RF ODE and, under uniqueness, coincides with
its trajectory. Consequently, 2-RF has constant velocity along
almost every trajectory, for almost every time. This does not
assert that the original 1-RF trajectories are straight, nor
that $\bm v_t^{(2)}(\bm x)$ is independent of $t$ at a fixed $\bm x$.
Following this, we state a stronger result that is also true:

\begin{lemma}[Theorem D.6, \cite{liu2023flow}]
    \label{lemma: straight coupling and flow}
    A coupling $(\bm X_0, \bm X_1)$ is a straight coupling iff $\Rectflow{\bm X_0, \bm X_1}$ is a straight flow.
\end{lemma}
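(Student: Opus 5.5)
We prove both directions, with the flow $\bm{\mc Z}=\Rectflow{\bm X_0,\bm X_1}$ driven by the velocity $\bm v_t(\bm x)=\bbE[\bm X_1-\bm X_0\mid \bm X_t=\bm x]$. We take a straight flow to be one whose trajectories have constant velocity, $\bm Z_t=\bm Z_0+t(\bm Z_1-\bm Z_0)$ for all $t$ almost surely, and we use the standing assumption that the ODE \eqref{eq: ode-true} has a unique solution. Throughout, $\bm D:=\bm X_1-\bm X_0$.

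\emph{Straight coupling implies straight flow.} This is the computation recorded just before the lemma, stated for a general coupling. Straightness means $\bbE[\bm D\mid t,\bm X_t]=\bm D$ a.s. Since $t$ is independent of $(\bm X_0,\bm X_1)$, for a.e.\ $t$ this gives $\bm v_t(\bm X_t)=\bm D$ a.s. Hence the path $\bm Y_t:=\bm X_0+t\bm D$ satisfies $\dot{\bm Y}_t=\bm D=\bm v_t(\bm Y_t)$ for a.e.\ $t$, almost surely. To pass from ``for a.e.\ $t$, a.s.'' to ``a.s., for a.e.\ $t$'' we use Fubini on the null set $\{(\omega,t):\bm v_t(\bm Y_t)\neq\bm D\}$. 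The path $\bm Y$ is absolutely continuous and starts at $\bm X_0\sim p_0$, so uniqueness for \eqref{eq: ode-true} (in integral form) gives $\bm Y_t=\sfZ_t(\bm X_0)$. The rectified trajectories are therefore the straight segments, so the flow is straight. As a by-product, $\bm X_1=\sfZ_1(\bm X_0)$ a.s., so the coupling is deterministic.

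\emph{Straight flow implies straight coupling.} Suppose $\bm Z_t=\bm Z_0+t(\bm Z_1-\bm Z_0)$. Then $\bm v_t(\bm Z_t)=\bm Z_1-\bm Z_0$ for a.e.\ $t$. We compare transport costs using marginal preservation, $\Law(\bm Z_t)=\Law(\bm X_t)$:
\[
\bbE\norm{\bm Z_1-\bm Z_0}^2=\int_0^1\bbE\norm{\bm v_t(\bm Z_t)}^2dt=\int_0^1\bbE\norm{\bm v_t(\bm X_t)}^2dt=\int_0^1\bbE\norm{\bbE[\bm D\mid\bm X_t]}^2dt .
\]
The last quantity is at most $\bbE\norm{\bm D}^2$ by Jensen. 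By Jensen again, applied to the straight path, $\bbE\norm{\bm Z_1-\bm Z_0}^2$ equals $\int_0^1\bbE\norm{\dot{\bm Z}_t}^2dt$.

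The key step, and the main obstacle, is to show that the chain of inequalities is in fact an equality. For a general coupling the rectified cost can be strictly smaller than $\bbE\norm{\bm D}^2$, so the inequality must be closed from the other side. We would use that the flow is straight and therefore reproduces itself: its own interpolation $\bm Z_0+t(\bm Z_1-\bm Z_0)$ coincides with $\bm Z_t$. This gives the self-consistency relation $\bbE[\bm Z_1-\bm Z_0\mid \bm Z_t]=\bm v_t(\bm Z_t)$.

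Rather than compare costs, we would then argue via conditional variance. Write
\[
\bbE\norm{\bm D}^2=\int_0^1\bbE\norm{\bm v_t(\bm X_t)}^2dt+\int_0^1\bbE\,\Var(\bm D\mid \bm X_t)\,dt .
\]
The desired straightness of $(\bm X_0,\bm X_1)$ is exactly the vanishing of the second term. We expect this to need the interpretation (as in Theorem D.6 of \cite{liu2023flow}) that the coupling in question is the one being rectified, with $\bm X_1=\sfZ_1(\bm X_0)$.

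To handle this, we would show that a straight flow with the same time marginals as $\bm X_t$ forces $\bm X_t$ to be a deterministic function of $(t,\bm X_t)$-compatible segments. Concretely, the non-crossing property of ODE solutions means that distinct trajectories never occupy the same point at the same time. Combined with $\Law(\bm X_t)=\Law(\bm Z_t)$ and $\bbE[\bm D\mid\bm X_t]=\bm v_t(\bm X_t)$, this yields $\bm X_t=\bm Z_t$ up to relabeling. Hence $\Var(\bm D\mid\bm X_t)=0$ for a.e.\ $t$.
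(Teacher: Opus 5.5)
Your forward direction is correct and is the paper's argument: the interpolation solves the rectified ODE, and uniqueness identifies it with the flow. Your Fubini remark just makes explicit what the paper leaves implicit.

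The gap is in the converse, at the step you yourself flag. Your chain is correct, and in fact it gives an exact identity:
\[
\int_0^1\bbE\norm{\bm D-\bbE[\bm D\mid\bm X_t]}^2\,dt
=\bbE\norm{\bm D}^2-\int_0^1\bbE\norm{\bm v_t(\bm X_t)}^2\,dt
=\bbE\norm{\bm X_1-\bm X_0}^2-\bbE\norm{\bm Z_1-\bm Z_0}^2 .
\]
So everything hinges on showing $\bbE\norm{\bm X_1-\bm X_0}^2\le\bbE\norm{\bm Z_1-\bm Z_0}^2$, and the non-crossing argument does not supply this. Non-crossing is a property of the ODE trajectories $\bm Z_t$. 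It says nothing about the segments $(1-t)\bm X_0+t\bm X_1$ of the original coupling, which may well cross. The identity $\Law(\bm X_t)=\Law(\bm Z_t)$ does carry joint information about $(\bm X_0,\bm X_1)$, but you never extract it, so ``$\bm X_t=\bm Z_t$ up to relabeling'' is asserted rather than proved.

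Retreating to the extra hypothesis $\bm X_1=\sfZ_1(\bm X_0)$ does not rescue the argument. The lemma concerns an arbitrary coupling, so this changes the statement. It also trivializes it: with that hypothesis, $\bm X_t=\bm Z_t$, and straightness is immediate from $\bm v_t(\bm Z_t)=\bm D$.

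The missing idea is an elementary identity, valid for any square-integrable pair $(\bm A,\bm B)$ and any $t\in[0,1]$:
\[
\bbE\norm{(1-t)\bm A+t\bm B}^2=(1-t)\,\bbE\norm{\bm A}^2+t\,\bbE\norm{\bm B}^2-t(1-t)\,\bbE\norm{\bm B-\bm A}^2 .
\]
Apply it to $(\bm X_0,\bm X_1)$ and to $(\bm Z_0,\bm Z_1)$. Because the flow is straight, $\bm Z_t=(1-t)\bm Z_0+t\bm Z_1$. Marginal preservation at the three times $0$, $t$, $1$ (any single $t\in(0,1)$ suffices) then matches every term except the last, which gives $\bbE\norm{\bm X_1-\bm X_0}^2=\bbE\norm{\bm Z_1-\bm Z_0}^2$.

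Substituting this into the first display makes the conditional-variance term vanish, that is, $\bbE[\bm D\mid\bm X_t]=\bm D$ for a.e.\ $t$. This is exactly how the paper concludes. It is also precisely the joint information carried by the interpolation marginals: it pins down the transport cost, with no need for non-crossing or determinism arguments.
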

The above result is a subtle but crucial result that we use in proving the straightness of 2-RF. It is essentially an adaptation of Theorem D.6 of \cite{liu2023flow}, and we add the proof in Appendix~\ref{sec: proof of straight flow and coupling} to make the paper self-contained.

Throughout the paper, by 1-RF and 2-RF we mean the application of $\Rectflow{\cdot, \cdot}$ once and twice, respectively, starting from an independent coupling $(\bm X_0,\bm X_1)$. Then, provided that 1-RF produces a straight coupling, 2-RF will necessarily achieve {\it straightness} due to Lemma \ref{lemma: straight coupling and flow}, i.e. it will yield a straight flow, so that samples from a target distribution of $\bm X_1$ can be generated with a single Euler step. 

Indeed, the main appeal of the RF machinery, in addition to its inherent conceptual simplicity among ODE-based sampling methods, is the fact that by design it seeks to produce trajectories with minimal curvature. Methodologically, this is a critical feature, as it allows for dramatically faster sampling. However, from a theoretical standpoint, very little is known in the literature about whether or when RF trajectories are provably straight or even approximately straight.  
To the best of our knowledge, sufficient conditions for straightness are known only for the case of Gaussian-to-Gaussian \citep{ mena2025statistical, bansal2025wasserstein, hertrich2026relation} and Gaussian-to-isotropic-Gaussian-mixtures, under appropriate (in fact, as we will see, sub-optimal) separation conditions \citep{bansal2025wasserstein}. In this paper, we  partially fill this gap by fully characterizing the straightness of RF for two classical tasks: sampling from  log-concave distributions and from (isotropic) Gaussian mixtures. 

In their seminal paper, \citet{liu2023flow} showed that successive rectifications eventually lead to a near straight flow that allows faster sampling in the generation stage. Subsequently, \citet{bansal2025wasserstein}  formulated general conditions under which  2-RF leads to straight flows. 
Their main finding is that global injectivity of \;$\sfH_t\lb \cdot\rb$ leads to straightness of 1-RF coupling.
They also showed that the positive-definiteness of the \textit{symmetric part} of the Jacobian $\bJ_t\lb \bm z_0\rb : = D_{\bm z_0} \sfZ_t\lb \bm z_0\rb$ for all $\bm z_0 \in \bbR^d$ is sufficient to ensure global invertibility of $\sfH_t\lb \cdot\rb$. In detail, the positive-definiteness condition\footnote[1]{$\bJ_1\lb \bm z_0\rb$ is not necessarily symmetric.} 

    \begin{equation}
    \label{eq: J positive definite}
    \bm u^\top \bJ_1\lb \bm z_0\rb \bm u > 0; \quad \forall \bm z_0 \in \bbR^d\; \text{and}\; \bm u \in \Sp^{d-1} \Longleftrightarrow \phi_{\bJ_1\lb \bm z_0\rb} < \pi/2; \quad \forall \bm z_0 \in \bbR^d
\end{equation}

\noindent ensures that $\sfH_t\lb \cdot\rb$ is \emph{proper}, i.e., $\lim_{\norm{\bm z_0}_2 \to \infty} \norm{\sfH_t\lb \bm z_0\rb}_2 = \infty$, and $\det\lb D \sfH_t\lb \bm z_0\rb\rb>0$, which yield global invertibility of $\sfH_t\lb \cdot\rb$ (\cf.~\cite{ plastock1974homeomorphisms,ruzhansky2015global}) . Furthermore, \cite{bansal2025wasserstein} noted that the Jacobian $\bJ_t(\bm z_0)$ satisfies the ODE
\begin{equation}
    \label{eq: ode J_t}
    \ddt \bJ_t\lb \bm z_0\rb =   D \bm v_t\lb \sfZ_t\lb \bm z_0\rb\rb \bJ_t\lb \bm z_0\rb , \quad \bJ_0\lb \bm z_0\rb = \bI_d,
\end{equation}
and used it to examine the properties of the Jacobian flow across different target models.
In contrast, in this paper we investigate the injectivity of $\sfH_t\lb \cdot\rb$ directly, without relying on the ODE \eqref{eq: ode J_t}.

\paragraph*{Related work} Our results complement several recent works on sufficient and necessary
conditions for straightness. Although \cite{bansal2025wasserstein}
established straightness of 1-RF under Condition~\eqref{eq: J positive definite}, they observed
experimentally that this condition can fail for low-variance Gaussian mixtures.
This failure was subsequently established rigorously by
\cite{nonstraight_pierret2026tessellations} using a tessellation argument.
Importantly, however, failure of the Jacobian condition alone does not imply that
the resulting RF coupling is non-straight. Related positive results were obtained
by \cite{nonstraight_chambolle2026limit}, who introduced \textit{weak
rectification} and established straightness in a limiting sense under a
generalization of Condition~\eqref{eq: J positive definite}, and by
\cite{nonstraight_tsimpos2026one} under similar structural assumptions.

On the negative side, \cite{hertrich2026relation} and
\cite{nonstraight_tsimpos2026one} showed that rectification can produce
non-straight couplings when the underlying distributions have compact and
disconnected supports. Such examples, however, are qualitatively different from
the standard generative-modeling setting, where the source distribution is
typically Gaussian and therefore has connected and unbounded support. Additionally, \cite{mena2025statistical} showed that compact support may not even warrant existence of RF.  Our results
show that compact or disconnected support is not necessary for non-straightness. In
Theorems~\ref{ns:thm:main} and \ref{ns:thm:gmm}, we construct examples in which
the source is Gaussian and the target is absolutely continuous (log-concave or a mixture of Gaussians), yet the coupling
obtained after one rectification is non-straight. As a key ingredient, we develop
a general criterion for certifying non-straightness
(Proposition~\ref{ns:prop:criterion}), which may be of independent interest.

\section{Maximum rotation and straightness}
\label{sec: a sufficient condition for straightness}
In this section, we present our proposed criterion for straightness, which is based on the angle between the difference of two initial points and the difference of their images under the endpoint map; see Theorem \ref{theorem: straightness theorem} for the precise result. We apply this criterion in Subsections \ref{sec:straightness-log-concave} and \ref{sec:straightness-gmm} to derive sufficient conditions for straightness for some popular types of data distributions. Sharpness of this condition will be studied in Section \ref{sec:nonstraight-construction}.

\subsection{Maximum rotation} \label{sec: tracking maximum angle}
We begin by ensuring that distinct initial points remain distinct
under the ODE flow. Throughout this section, we assume that the
flow in \eqref{eq: ode non-random} is defined over $[0,1]$ for any
initial point and that its Borel measurable velocity field satisfies
\begin{equation}
    \label{eq: local regularity velocity}
    \int_0^1 \left(
        \|\bm v_t(\bm 0)\|_2
        + \sup_{ \Vert \bm z\Vert_2\le R}\|D\bm v_t(\bm z)\|_{\mathrm{op}}
    \right)\,dt < \infty,
    \qquad \text{for every } R>0.
\end{equation}
By applying uniqueness backward in time \citep[Lemma 8.1.4]{ambrosio2005gradient},
the endpoint map $\sfZ_1$ is injective. For completeness, we add a relevant discussion in Appendix~\ref{sec: proof of injectivity of terminal flow map}.
Thus, $\sfH_0=\operatorname{Id}$ and $\sfH_1=\sfZ_1$ are injective,
and it remains to consider $t\in(0,1)$.

Suppose $\sfH_t\lb \cdot\rb$ is not injective for some $t \in \lb 0,1\rb$. Then there exist distinct points $\bm x, \bm y$ such that $\sfH_t\lb \bm x\rb = \sfH_t\lb \bm y\rb$, i.e., 
\begin{equation}
    \label{eq: H_t not 1-1}
    \sfZ_1\lb \bm y\rb - \sfZ_1\lb \bm x\rb = -\frac{1-t}{t} \lb \bm y- \bm x\rb = -\frac{1-t}{t} \lb \sfZ_0\lb \bm y\rb-\sfZ_0\lb \bm x\rb\rb.
\end{equation}


Therefore, by writing 
\[
\bm\delta_t\lb \bm x, \bm y\rb:= \sfZ_t\lb \bm y\rb - \sfZ_t\lb \bm x\rb,
\]
the identity \eqref{eq: H_t not 1-1} implies that the angle between $\bm\delta_1\lb \bm x, \bm y\rb$ and $\bm\delta_0\lb \bm x, \bm y\rb$ is $\pi$. Therefore, one { equivalent} condition for global injectivity of $\sfH_t\lb \cdot\rb$ is
\begin{equation}
    \label{eq: angle n_t}
      \Angle\lb \bm\delta_1\lb \bm x, \bm y\rb, \bm\delta_0\lb \bm x, \bm y\rb\rb  < \pi \qquad \text{for all} \qquad \bm x \neq \bm y.
\end{equation}
    \begin{lemma} \label{lemma: injectivity lemma}
  If \eqref{eq: local regularity velocity} and \eqref{eq: angle n_t} hold, then $\sfH_t$ is globally injective for all $t \in [0,1]$. Consequently, the
$1$-RF endpoint coupling $\lb \bm Z_0, \sfZ_1\lb \bm Z_0\rb\rb$ is straight in the sense of Definition~\ref{def: Qiang straightness}.
\end{lemma}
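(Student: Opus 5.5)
The argument has two parts: injectivity of each chord map $\sfH_t$, and then straightness of the coupling, obtained by identifying the conditional expectation in Definition~\ref{def: Qiang straightness}.

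\textbf{Injectivity.} The endpoint cases are already settled. $\sfH_0=\operatorname{Id}$ is trivially injective. $\sfH_1=\sfZ_1$ is injective because, under \eqref{eq: local regularity velocity}, the velocity is locally Lipschitz in space with an integrable-in-time constant, so the ODE \eqref{eq: ode non-random} has unique solutions backward in time. Two trajectories that meet at time $1$ must therefore coincide, as discussed before the statement.

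For $t\in(0,1)$ the argument is by contradiction. Suppose $\sfH_t(\bm x)=\sfH_t(\bm y)$ with $\bm x\ne\bm y$. Rearranging gives \eqref{eq: H_t not 1-1}:
\[
\bm\delta_1(\bm x,\bm y)=-\tfrac{1-t}{t}\,\bm\delta_0(\bm x,\bm y).
\]
Here $\bm\delta_0=\bm y-\bm x\neq\bm 0$ and $\tfrac{1-t}{t}>0$, so $\bm\delta_1$ is a strictly negative multiple of the nonzero vector $\bm\delta_0$. In particular $\bm\delta_1\neq\bm 0$, so the angle is well defined, and it equals $\pi$. This contradicts \eqref{eq: angle n_t}. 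Hence $\sfH_t$ is injective for every $t\in[0,1]$.

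\textbf{Straightness.} Set $\bm D:=\sfZ_1(\bm Z_0)-\bm Z_0$, which is a deterministic function of $\bm Z_0$.

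Fix $t$. Since $\sfH_t$ is injective, $\bm Z_0=\sfH_t^{-1}(\sfH_t(\bm Z_0))$. To make this a statement about $\sigma$-algebras I need $\sfH_t^{-1}$ to be Borel on the image. Two routes are available:
\begin{itemize}[leftmargin=*]
\item $\sfH_t$ is continuous, since $\sfZ_1$ is continuous by continuous dependence on initial data under \eqref{eq: local regularity velocity}. By the Lusin--Souslin theorem, a continuous injective map between Polish spaces has Borel image, and its inverse is Borel on that image.
\item Alternatively, $\sfH_t$ is a continuous injection of $\bbR^d$ into $\bbR^d$, so by invariance of domain it is a homeomorphism onto an open set.
\end{itemize}
Either way, $\sigma(\sfH_t(\bm Z_0))=\sigma(\bm Z_0)$ for each fixed $t$.

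It follows that $\bm D$ is $\sigma(t,\sfH_t(\bm Z_0))$-measurable for the joint draw, and so
\[
\bbE[\bm D\mid t,\sfH_t(\bm Z_0)]=\bm D \quad\text{almost surely}.
\]
The square-integrability hypothesis $\bbE\|\bm D\|^2<\infty$ is assumed in Definition~\ref{def: Qiang straightness}; it holds because $p_0$ and $p_1$ both have finite second moments.

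\textbf{Main obstacle.} The step I expect to need the most care is this last measurability point with $t$ random. Pointwise injectivity for each fixed $t$ must be upgraded to a statement about the joint map $(t,\bm z)\mapsto(t,\sfH_t(\bm z))$.
\begin{itemize}[leftmargin=*]
\item This joint map is continuous and injective on $(0,1)\times\bbR^d$, which is again a Polish space.
\item Lusin--Souslin therefore applies directly and gives a Borel inverse.
\end{itemize}
This yields $\sigma(t,\bm Z_0)=\sigma(t,\sfH_t(\bm Z_0))$ up to null sets, and the conditional expectation identity follows, which is exactly straightness of the 1-RF coupling.
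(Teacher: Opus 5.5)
Your proposal is correct and follows the paper's route. Injectivity comes from the angle-$\pi$ contradiction, with $\sfH_0$ and $\sfH_1$ handled separately; measurability of the inverse of the continuous injection $\sfH_t$ then gives $\sigma(\sfH_t(\bm Z_0))=\sigma(\bm Z_0)$ and hence the conditional-expectation identity. Your additional step of applying Lusin--Souslin to the joint map $(t,\bm z)\mapsto(t,\sfH_t(\bm z))$ makes rigorous the joint conditioning on $(t,\sfH_t(\bm Z_0))$, which the paper's one-line argument glosses over.
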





The proof can be found in Appendix~\ref{sec:proof-injectivity+straightness}. This is a slight modification of what was used in the proof of Theorem 6 in \cite{bansal2025wasserstein}, which requires $\sfH_t$ to be a diffeomorphism to obtain straightness.
The rest of this subsection focuses on providing sufficient conditions for the inequality \eqref{eq: angle n_t} to hold, which in turn yields a criterion for straightness. To set the stage, we begin by noticing the following direct consequence of the ODE \eqref{eq: ode non-random}:
\begin{align*}
    \ddt\bm\delta_t\lb \bm x, \bm y\rb  &= \bm v_t\lb \sfZ_t\lb \bm y\rb\rb - \bm v_t\lb \sfZ_t\lb \bm x\rb\rb \\
    & = \left[ \int_0^1 D \bm v_t \lb  \xi \cdot  \sfZ_t\lb \bm y\rb + \lb 1-\xi\rb\cdot \sfZ_t\lb \bm x\rb \rb \; d\xi \right] \bm\delta_t\lb \bm x, \bm y\rb.
\end{align*}
 Next, setting 
 \begin{align} \label{eq:A_t}
     \bA_t\lb \bm x, \bm y\rb:= \int_0^1 D \bm v_t \lb \xi \cdot \sfZ_t\lb \bm y\rb + \lb 1-\xi\rb \cdot \sfZ_t\lb \bm x\rb \rb\; d\xi,
 \end{align}
 the ODE in the above display can be simply expressed as 
\begin{equation}
    \label{eq: ode delta_t}
    \ddt \bm\delta_t\lb \bm x, \bm y\rb = \bA_t\lb \bm x, \bm y\rb \cdot \bm\delta_t\lb \bm x, \bm y\rb.
\end{equation}
The ODE \eqref{eq: ode delta_t} provides control on the maximum angular rotation of $\bm\delta_{1}\lb \bm x,\bm y\rb$  under some assumptions on $\bA_t\lb \bm x, \bm y\rb$. It turns out that the spectral properties of $\bm A_t$ can be used to deduce sufficient conditions for \eqref{eq: angle n_t}.
Next, we present a heuristic argument for deriving \eqref{eq: angle n_t} from the spectral properties of $\bm A_t$. All of the derivations below can be made rigorous; { see Appendix~\ref{sec:proof-injectivity+straightness}} for more details. 
\paragraph*{Bounding the maximum rotation via the discretized solution} The discretized solution of \eqref{eq: ode delta_t} takes the following form:
\[
\bm{\delta}^{\mathrm{disc}}_{1}\lb \bm x, \bm y\rb = \bs{\bI_d + \frac{1}{N} \bA_{t_{N-1}}\lb \bm x, \bm y\rb}\ldots \bs{\bI_d + \frac{1}{N} \bA_{t_1}\lb \bm x, \bm y\rb}\bs{\bI_d + \frac{1}{N} \bA_{t_0}\lb \bm x, \bm y\rb}\bm\delta_0\lb \bm x, \bm y\rb.
\]
For notational convenience, define
\[
\tilde \bA_j = \bI_d + \frac{1}{N} \bA_{t_j}\lb \bm x, \bm y\rb; 
\qquad  \text{and}
\qquad \bA^{[j]} = \tilde \bA_j\cdots\tilde \bA_0,\qquad j=0,\ldots,N-1.
\]
 Choose $N$ sufficiently large such that the matrices 
$$\la\tilde \bA_j\ra_{j=0}^{N-1}$$ 
are symmetric and positive definite. Consider two cases: 
\vspace{1em}

\noindent\underline{ \it Case 1: $\tilde \bA_j$'s are commutative.}
In this case, $\Angle \lb \bm{\delta}^{\mathrm{disc}}_1\lb \bm x, \bm y\rb, \bm\delta_0\lb \bm x, \bm y\rb\rb< \pi/2$ as $\bA^{[N-1]}$ is symmetric and positive definite.

\vspace{1em}

\noindent\underline{\it Case 2: $\tilde \bA_j$'s are not commutative.} In this case, we can only bound the $\Angle\lb \bm u, \bA^{[N-1]} \bm u\rb$. First note that by elementary geodesic theory on $\Sp^{d-1}$, the following holds:
\begin{align*}
&\Angle\lb \bm\delta_0\lb \bm x, \bm y\rb, \bA^{[N-1]} \bm\delta_0\lb \bm x, \bm y\rb\rb\\
& \le \Angle\lb \bm\delta_0\lb \bm x, \bm y\rb, \bA^{[0]} \bm\delta_0\lb \bm x, \bm y\rb\rb \\
& \quad +  \sum_{j = 0}^{N-2} \Angle \lb \bA^{[j]}\bm\delta_0\lb \bm x, \bm y\rb,\tilde \bA_{j+1} \, \bA^{[j]} \bm\delta_0\lb  \bm x, \bm y\rb \rb\\
&\le \sum_{j = 0}^{N-1} \phi_{\tilde \bA_j}.
\end{align*}
Therefore, to ensure $\Angle \lb \bm{\delta}^{\mathrm{disc}}_1\lb \bm x, \bm y\rb, \bm\delta_0\lb \bm x, \bm y\rb \rb< \pi$ , we need $ \sum_{j = 0}^{N-1} \phi_{\tilde \bA_j} < \pi$. It turns out that this term can be bounded using
\[
\sum_{j = 0}^{N-1} \phi_{\tilde \bA_j} \leq \frac{1}{2} \int_0^1 \la \lambda_{\max}[\bA_t\lb \bm x, \bm y\rb] - \lambda_{\min}[\bA_t\lb \bm x, \bm y\rb]\ra \;dt +o(1).
\]

By letting $ N \to \infty$, we deduce that
\begin{align*}
\Angle\lb \bm\delta_1\lb \bm x, \bm y\rb, \bm\delta_0\lb \bm x, \bm y\rb\rb 
& \approx \lim_{N \to \infty}\Angle\lb \bm{\delta}^{\mathrm{disc}}_1\lb \bm x, \bm y\rb, \bm\delta_0\lb \bm x, \bm y\rb\rb \\
& \le \frac{1}{2} \int_0^1 \la \lambda_{\max}[\bA_t\lb \bm x, \bm y\rb] - \lambda_{\min}[\bA_{t}\lb \bm x, \bm y\rb]\ra \;dt.
\end{align*}
The above heuristic can be proved rigorously and suggests our proposed criterion, as stated in the next theorem. 
\begin{thm}[Straightness criterion]
\label{theorem: straightness theorem}
Let $(\bm X_0,\bm X_1)\sim \bcN(\bm 0,\bI_d)\otimes p_1$ with
$\bbE\norm{\bm X_1}_2^2<\infty$, and let $\bm v_t$ be the associated
RF velocity field. Assume that the flow $\sfZ_t$ defined in
\eqref{eq: ode non-random} exists on $[0,1]$~for every initial
point and satisfies the regularity condition
\eqref{eq: local regularity velocity}.
With $\bA_t$ defined in \eqref{eq:A_t}, suppose that at least one
of the following conditions holds:
\begin{enumerate}[label=(\alph*)]
    \item \textit{Commutativity.} For every $\bm x\ne\bm y$,
    \[
        \bs{\bA_t(\bm x,\bm y),\bA_s(\bm x,\bm y)}
        =\bm 0,
        \qquad s,t\in(0,1).
    \]
    Here $[\bm A, \bm B]$ denotes the commutator of two matrices, see the notation in Section \ref{sec:intro} for the precise definition. 

    \item \textit{Bounded spectral spread.} For every $\bm x\ne\bm y$,
    \[
        \frac12\int_0^1
        \left(
            \lambda_{\max}[\bA_t(\bm x,\bm y)]
            -\lambda_{\min}[\bA_t(\bm x,\bm y)]
        \right)\,dt<\pi.
    \]
\end{enumerate}
Then the angle condition \eqref{eq: angle n_t} holds, and
$\sfH_t$ is globally injective for every $t\in[0,1]$.
Consequently, the $1$-RF endpoint coupling
\[
    (\bm Z_0,\bm Z_1)=\Rectify{\bm X_0,\bm X_1}
\]
is straight in the sense of Definition~\ref{def: Qiang straightness}.
\end{thm}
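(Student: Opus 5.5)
Since Lemma~\ref{lemma: injectivity lemma} already says that \eqref{eq: local regularity velocity} together with the angle condition \eqref{eq: angle n_t} give global injectivity of every $\sfH_t$ and straightness of the $1$-RF coupling, the proof comes down to checking \eqref{eq: angle n_t} under (a) or (b). Fix $\bm x\ne\bm y$ and abbreviate $\bm\delta_t=\bm\delta_t(\bm x,\bm y)$, $\bA_t=\bA_t(\bm x,\bm y)$. By \eqref{eq: local regularity velocity}, the trajectories $\sfZ_t(\bm x),\sfZ_t(\bm y)$ lie in some ball of radius $R$, so $t\mapsto \|\bA_t\|_{\op}$ is integrable. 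The linear ODE \eqref{eq: ode delta_t} then has a unique absolutely continuous solution $\bm\delta_t=\bPi_t\bm\delta_0$, where $\bPi_t$ is the propagator and $\bPi_t$ is invertible. In particular $\bm\delta_t\neq\bm 0$ for all $t$, which is also what injectivity of $\sfZ_t$ gives. I will assume $\bA_t$ is symmetric (for example, $D\bm v_t$ is a Hessian-type matrix for Gaussian source); otherwise $\lambda_{\max},\lambda_{\min}$ should be read as those of the symmetric part $\bm S_t=(\bA_t+\bA_t^\top)/2$, and the argument below works with $\bm S_t$ anyway.

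\textbf{Case (b).} Rather than using the discretization heuristic, I will work directly with the unit vector $\bm u_t=\bm\delta_t/\|\bm\delta_t\|_2$. It satisfies
\[
\dot{\bm u}_t=(\bI_d-\bm u_t\bm u_t^\top)\bA_t\bm u_t,
\]
so the speed of $\bm u_t$ on the sphere is $\|\dot{\bm u}_t\|_2=\|(\bI_d-\bm u_t\bm u_t^\top)\bA_t\bm u_t\|_2$. The key pointwise estimate is that for any real matrix $\bA$ and unit vector $\bm u$,
\[
\|(\bI-\bm u\bm u^\top)\bA\bm u\|_2\le \tfrac12\bigl(\lambda_{\max}(\bm S)-\lambda_{\min}(\bm S)\bigr)+\|\bm K\|_{\op},
\]
where $\bm K$ is the skew part of $\bA$. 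In the symmetric case $\bm K=\bm 0$, and the estimate reduces to the classical fact that the component of $\bm S\bm u$ orthogonal to $\bm u$ is at most half the spectral spread. To prove that fact, shift $\bm S$ to $\bm S-c\bI$ with $c=(\lambda_{\max}+\lambda_{\min})/2$; this does not change the orthogonal component, and the shifted matrix has operator norm equal to half the spread. Then
\[
\Angle(\bm\delta_1,\bm\delta_0)\le\int_0^1\|\dot{\bm u}_t\|_2\,dt\le \tfrac12\int_0^1(\lambda_{\max}-\lambda_{\min})\,dt<\pi,
\]
because geodesic distance on $\Sp^{d-1}$ is bounded by the length of any curve joining the two points. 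If $\bA_t$ is not symmetric, the skew part only rotates $\bm u_t$, so I expect the hypothesis is meant for symmetric $\bA_t$. The main technical care here is justifying differentiation of $\bm u_t$ almost everywhere and the length bound for absolutely continuous curves, which is routine given integrability.

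\textbf{Case (a).} With commuting $\bA_t$, the propagator is $\bPi_1=\exp\bigl(\int_0^1\bA_t\,dt\bigr)$. To prove this, check that the map $t\mapsto\exp\bigl(\int_0^t\bA_s\,ds\bigr)$ solves the ODE; commutativity lets the derivative of the exponential factor out. If the $\bA_t$ are symmetric, $\bm B=\int_0^1\bA_t\,dt$ is symmetric, so $e^{\bm B}$ is symmetric positive definite. Hence $\bm\delta_0^\top e^{\bm B}\bm\delta_0>0$, which gives $\Angle(\bm\delta_1,\bm\delta_0)<\pi/2<\pi$. The step I expect to be the main obstacle is the non-symmetric setting: $e^{\bm B}$ can rotate vectors by $\pi$, as a rotation generator does. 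So I would either invoke symmetry of $D\bm v_t$ (true for RF with Gaussian source via Tweedie-type formulas, where $\bm v_t$ is an affine function of a score) or restrict to that case. Finally, I will apply Lemma~\ref{lemma: injectivity lemma} to conclude.
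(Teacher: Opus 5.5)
Your proposal is correct and follows the paper's proof essentially step for step. In case (a) the paper also writes $\bm\delta_1=\exp(\int_0^1\bA_s\,ds)\,\bm\delta_0$ with a symmetric positive-definite exponential. In case (b) it tracks the normalized vector through the same projected ODE, bounds the angle by arc length, and controls $\|(\bI_d-\bm n\bm n^\top)\bA\bm n\|_2$ by half the spectral spread; it uses Popoviciu's inequality where you use the equivalent midpoint shift. The paper also uses the symmetry of $\bA_t$ that you flag, implicitly. That symmetry holds automatically here: by \eqref{eq:tweedie2} each $D\bm v_t$ is symmetric, so $\bA_t$, an average of such matrices, is too.
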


{The proof is deferred to Appendix~\ref{sec:proof-injectivity+straightness}.}
Condition (a) (commutativity) holds trivially in the one-dimensional case. In \cite{bansal2025wasserstein}, the authors showed the stronger commutativity condition $[D \bm v_t\lb \bm x\rb, D \bm v_s\lb \bm y\rb] = \bm 0$ for all pairs of  $(\bm x, \bm y)$ and $(s,t)$ 
when the target distribution is $\bcN\lb \bm \mu, \bSigma\rb$ or a two-component GMM with isotropic covariance. Therefore, 1-RF yields a straight coupling in the above cases. In the next subsections, we will derive model-specific conditions for straightness under other popular and more general types of data distributions. 

\begin{remark}
\label{remark: eigen gap}
Letting
 $$\Delta\lb t\rb := \sup_{\bm z \in \bbR^d} \Big( \lambda_{\max}[D \bm v_t\lb \bm z\rb] - \lambda_{\min}[D \bm v_t\lb \bm  z\rb]\Big) , \qquad t \in [0,1]$$
 and noting that  $\lambda_{\max}[\bA_t\lb \bm x, \bm y\rb] - \lambda_{\min}[\bA_t\lb \bm x, \bm y\rb] \le \Delta\lb t\rb$ for all $t$,  Condition (b) in Theorem~\ref{theorem: straightness theorem} is implied by the stronger condition
   \[
   \frac12\int_0^1 \Delta\lb t\rb\; dt <  \pi.
   \]
\end{remark}

\subsection{Straightness when generating log-concave distributions} \label{sec:straightness-log-concave}
In this section we are going to investigate conditions under which straightness holds when the target distribution is log-concave. Towards that goal, suppose that the target distribution admits a Lebesgue density of the form 
\[
p_1\lb \bm x\rb = \exp\lb -V\lb \bm x\rb \rb,
\]
for some $C^3$ function $V: \bbR^d \to \bbR$. We further assume that
\begin{align} \label{strong convex+smoothness}
    m \bI_d \preceq \nabla^2 V\lb \bm x\rb \preceq L\bI_d
\end{align}
for some constants  $0 < m<L$.
Distributions satisfying the above condition \eqref{strong convex+smoothness} are known as smooth and strongly log-concave distributions and have been studied extensively in the optimization and machine learning literature; see, for example, \cite{dalalyan2017theoretical,durmus2019analysis} and the references therein. In our next result, we verify condition (b) of Theorem \ref{theorem: straightness theorem} to formulate a condition that guarantees the straightness of couplings obtained from $2$-RF when the data distribution satisfies \eqref{strong convex+smoothness}. 

\begin{thm} \label{2RF straight-logconcave}
    If $L/m <
    \exp\lb 4\pi\rb$, then $2$-RF is straight when applied to the independent coupling $\bcN\lb \bm 0, \bI_d\rb \otimes p_1$.
\end{thm}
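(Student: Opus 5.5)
The plan is to verify condition (b) of Theorem~\ref{theorem: straightness theorem} through the stronger sufficient condition of Remark~\ref{remark: eigen gap}, namely $\frac12\int_0^1\Delta(t)\,dt<\pi$. The work is an explicit computation of $D\bm v_t$ for a Gaussian source and a strongly log-concave target.

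\emph{Step 1 (velocity in terms of the score).} Write $\bm X_t=t\bm X_1+(1-t)\bm X_0$. By Tweedie's formula with noise level $\sigma_t=1-t$, we have $\bbE[\bm X_0\mid \bm X_t=\bm x]=-(1-t)\nabla\log p_t(\bm x)$. The identity $\bm x=t\bbE[\bm X_1\mid\bm x]+(1-t)\bbE[\bm X_0\mid\bm x]$ then gives $\bbE[\bm X_1\mid\bm x]$. Subtracting the two conditional means yields
\[
\bm v_t(\bm x)=\frac{\bm x}{t}+\frac{1-t}{t}\nabla\log p_t(\bm x),\qquad D\bm v_t(\bm x)=\frac1t\bI_d+\frac{1-t}{t}\nabla^2\log p_t(\bm x).
\]
The identity part does not affect the spectral spread. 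Hence $\Delta(t)$ equals $\frac{1-t}{t}$ times the spread of $\nabla^2\log p_t$.

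\emph{Step 2 (Hessian via posterior covariance).} Differentiating the score twice gives
\[
\nabla^2\log p_t(\bm x)=-\frac{1}{(1-t)^2}\bI_d+\frac{t^2}{(1-t)^4}\Cov(\bm X_1\mid\bm X_t=\bm x).
\]
The posterior density of $\bm X_1$ is proportional to $\exp\lb -V(\bm y)-\|\bm x-t\bm y\|^2/(2(1-t)^2)\rb$. Its potential has Hessian between $(m+s^2)\bI_d$ and $(L+s^2)\bI_d$, where $s=t/(1-t)$. I will bound the posterior covariance from above and below:
\begin{itemize}
\item Brascamp--Lieb gives $\Cov\preceq (m+s^2)^{-1}\bI_d$.
\item The Cram\'er--Rao-type inequality $\Cov\succeq(\bbE\nabla^2 W)^{-1}$ for a density $e^{-W}$ gives $\Cov\succeq (L+s^2)^{-1}\bI_d$.
\end{itemize}
Combining Steps 1 and 2,
\[
\Delta(t)\le \frac{1-t}{t}\cdot\frac{t^2}{(1-t)^4}\Big(\frac{1}{m+s^2}-\frac{1}{L+s^2}\Big)=\frac{1}{(1-t)^2}\cdot\frac{s\,(L-m)}{(m+s^2)(L+s^2)}.
\]

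\emph{Step 3 (integration).} Substitute $s=t/(1-t)$, so that $dt=(1-t)^2ds$ and the prefactor $(1-t)^{-2}$ cancels. With $u=s^2$,
\[
\int_0^1\Delta(t)\,dt\le\int_0^\infty\frac{(L-m)s}{(m+s^2)(L+s^2)}\,ds=\frac12\int_0^\infty\frac{L-m}{(m+u)(L+u)}\,du=\frac12\log\frac Lm .
\]
Hence $\frac12\int_0^1\Delta\le\frac14\log(L/m)$, which is $<\pi$ exactly when $L/m<e^{4\pi}$. Theorem~\ref{theorem: straightness theorem} (via Remark~\ref{remark: eigen gap}) then gives that the 1-RF coupling is straight. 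Lemma~\ref{lemma: straight coupling and flow} then yields that 2-RF is a straight flow.

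\emph{Main obstacle.} The hard part is checking the hypotheses of Theorem~\ref{theorem: straightness theorem}: global existence of the flow and the local integrability \eqref{eq: local regularity velocity}. This is delicate near $t=0$, where $D\bm v_t$ carries the factor $1/t$. I will first show the singular terms cancel. The bounds in Step 2 give
\[
-\frac{1}{(1-t)^2}+\frac{s^2}{(1-t)^2(L+s^2)}\preceq\nabla^2\log p_t\preceq -\frac{1}{(1-t)^2}+\frac{s^2}{(1-t)^2(m+s^2)}.
\]
So $D\bm v_t$ is sandwiched between $\frac{1}{t}\big(1-\frac{1}{1-t}\cdot\frac{L}{L+s^2}\big)\bI_d$ and the same expression with $m$. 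Both are bounded uniformly on $[0,1-\epsilon]$, since the $1/t$ terms cancel to $O(1)$. Near $t=1$, $D\bm v_t\to\bI_d-\nabla^2V$, which is bounded by $L$. This gives a global Lipschitz bound on $\bm v_t$, uniformly in $t$, and hence both existence of the flow and \eqref{eq: local regularity velocity}. The $C^3$ assumption on $V$ justifies differentiating under the integral in Step 2.
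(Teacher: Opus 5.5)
Your proposal is correct and follows the paper's proof essentially step for step. The paper likewise expresses $D\bm v_t$ through Tweedie's formula in terms of the posterior covariance, and uses Brascamp--Lieb and Cram\'er--Rao to sandwich it between $f_L(t)\bI_d$ and $f_m(t)\bI_d$, where $f_a(t)=\frac{t-a(1-t)}{t^2+a(1-t)^2}$. It then integrates $\frac12(f_m-f_L)$ to obtain $\frac14\log(L/m)$, which is exactly your computation after the substitution $s=t/(1-t)$.

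There is one harmless slip in your regularity aside. As $t\to1$ one has $D\bm v_t\to\bI_d$, since $\bm v_1(\bm x)=\bm x$, not $\bI_d-\nabla^2V$. You do not need that claim anyway, because your sandwich bounds $f_L$ and $f_m$ are already bounded on all of $[0,1]$.
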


Thus, Theorem \ref{2RF straight-logconcave} shows that the
condition number can determine the straightness of RF starting from
independent couplings. This should not come as a big surprise: the condition number is a central geometric
parameter in the log-concave sampling literature; see, e.g.,
\cite{chewi2026logconcave} and the references therein. It captures
the interaction between concentration and the potential's smoothness.
This parameter also appears in recent high-accuracy sampling
guarantees. For example, under their oracle model,
\citet{chen2026highaccuracy} obtain a complexity bound that depends
linearly on $L C_{\mathrm{LSI}}$, where $C_{\mathrm{LSI}}$ is the
log-Sobolev constant of the target distribution; see Definition~B.2
in \citet{chen2026highaccuracy}. Since $C_{\mathrm{LSI}}\leq 1/m$
under strong log-concavity, this effective condition number is
bounded by $L/m$. Our straightness result thus gives a complementary
geometric interpretation of the same parameter.




The proof of Theorem \ref{2RF straight-logconcave} hinges on bounding the maximum rotation angle from equation \eqref{eq: angle n_t}. In particular, we show that
\begin{align*}
 \sup_{\bm x \neq \bm y} \Big\{ \Angle \lb \bm\delta_1\lb \bm x, \bm y\rb, \bm\delta_0\lb \bm x, \bm y\rb \rb \Big\} &\leq \frac{1}{2} \Bigg[ \int_0^1 \frac{s-m\lb 1-s\rb}{s^2 +m\lb 1-s\rb^2} ds \\
 &\qquad - \int_0^1 \frac{s-L\lb 1-s\rb}{s^2 +L\lb 1-s\rb^2} ds \Bigg]\\
 &= \frac{1}{4} \log\lb \frac{L}{m}\rb,
\end{align*}
a bound implying that condition (b) in Theorem \ref{theorem: straightness theorem} holds if
$$L/m < \exp\lb 4\pi\rb \approx 286751.$$
In other words, if the condition number $L/m$ of the distribution is smaller than $\exp\lb 4\pi\rb$, then $2$-RF always yields straight couplings when applied to the independent coupling $\bcN\lb \bm 0, \bI_d\rb \otimes p_1$. To the best of our knowledge, this is the first straightness result under log-concave distributions. Interestingly, this upper bound on the distribution's condition number is {\it dimension-free}.

At first glance, one might wonder how sharp this criterion based on condition numbers is. Since the bound is independent of the dimension, it does not capture any high-dimensional effects, and one might expect it to become loose as the dimension increases. This turns out not to be the case. Surprisingly, we show that in any dimension $d \geq 3$, this condition is necessary to guarantee straightness for all data distributions satisfying \eqref{strong convex+smoothness}; see Section \ref{sec:non-straight-log-concave} for more details.

\subsection{Straightness when generating isotropic GMMs} \label{sec:straightness-gmm}
Next, we consider another class of popular data distributions, the Gaussian mixture models, arguably one of the most popular and widely studied classes of mixture models, with huge expressive power. The readers are referred to \cite{mclachlan2000finite,bishop2006pattern,nguyen2020approximation,lee2018beyond,chen2025learning} for basic properties of GMMs and their applications in machine learning. Below, we assume that 
\begin{equation}
\label{eq: base target model}
\bm X_1 \sim \pgmm := \sum_{j = 1}^K \pi_j \bcN\lb  \bm \mu_j, \sigma^2 \bm I_d\rb.
\end{equation}
To describe our straightness result, put 
\begin{align*}
    \quad \Dmax := \max_{i \ne j} \norm{\bm \mu_i - \bm \mu_j}_2.
\end{align*}
The quantity $D_{\rm max}$ is the maximum interpoint distance (see \cite{heiny2025maximum} for more details) formed by the set $\la \bm \mu_1,\dots,\bm \mu_K \ra$.  For data with distributions of GMM-types in \eqref{eq: base target model}, our criterion for straightness reads  


    \begin{thm} \label{theorem: straightnes GMM}
    If $\Dmax^2 < 16 \pi \sigma^2$, then 2-RF is straight when applied to the independent coupling $\bcN\lb \bm 0, \bI_d\rb \otimes \pgmm$.
\end{thm}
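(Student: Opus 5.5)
We apply Theorem~\ref{theorem: straightness theorem}(b) through Remark~\ref{remark: eigen gap}. The plan is to write $D\bm v_t$ explicitly for the isotropic GMM target and bound its eigenvalue spread $\Delta(t)$ uniformly in $\bm z$. The goal is to show that $\frac12\int_0^1\Delta(t)\,dt\le \Dmax^2/(16\sigma^2)$, after which the hypothesis $\Dmax^2<16\pi\sigma^2$ closes the argument.

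\textbf{Step 1 (velocity and Jacobian).} Conditionally on the component $j$, the pair $(\bm X_t,\bm X_1-\bm X_0)$ is jointly Gaussian. We have $\bm X_t\mid j\sim\bcN\big(t\bm\mu_j,\ s_t^2\bI_d\big)$ with $s_t^2:=(1-t)^2+t^2\sigma^2$, and $\mathrm{Cov}(\bm X_1-\bm X_0,\bm X_t\mid j)=\big(t\sigma^2-(1-t)\big)\bI_d$. Therefore
\[
\bm v_t(\bm z)=a_t\,\bm z+b_t\sum_{j}w_j(\bm z)\,\bm\mu_j,\qquad a_t=\frac{t\sigma^2-(1-t)}{s_t^2},\quad b_t=1-a_t t=\frac{1-t}{s_t^2},
\]
where $w_j(\bm z)\propto \pi_j\exp\big(-\|\bm z-t\bm\mu_j\|^2/(2s_t^2)\big)$ are the posterior weights. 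The gradient of the weights is $\nabla w_j=\frac{t}{s_t^2}w_j(\bm\mu_j-\bar{\bm\mu})$, with $\bar{\bm\mu}=\sum_j w_j\bm\mu_j$. Differentiating gives
\[
D\bm v_t(\bm z)=a_t\bI_d+\frac{t(1-t)}{s_t^4}\,\Cov_{w(\bm z)}(\bm\mu),
\]
where $\Cov_{w}(\bm\mu)=\sum_j w_j(\bm\mu_j-\bar{\bm\mu})(\bm\mu_j-\bar{\bm\mu})^\top\succeq 0$. This matrix is symmetric, so its spread is just the spread of the covariance term.

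\textbf{Step 2 (spread bound).} The eigenvalues of $\Cov_w(\bm\mu)$ lie in $[0,\lambda_{\max}]$, and for any unit vector $\bm u$,
\[
\bm u^\top\Cov_w(\bm\mu)\,\bm u=\Var_w\big(\{\bm u^\top\bm\mu_j\}\big)\le \tfrac14\big(\max_j \bm u^\top\bm\mu_j-\min_j\bm u^\top\bm\mu_j\big)^2\le \Dmax^2/4 .
\]
This is Popoviciu's inequality, applied to the projections, whose range is at most $\Dmax$. Hence
\[
\Delta(t)\le \frac{t(1-t)\Dmax^2}{4\,s_t^4}.
\]
The velocity is smooth with bounded Jacobian on compacts and $\bm v_t(\bm 0)$ is integrable, so the regularity condition \eqref{eq: local regularity velocity} holds, and the flow exists globally because $\bm v_t$ has linear growth.

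\textbf{Step 3 (the integral; the main obstacle).} It remains to compute $I(\sigma):=\int_0^1 \frac{t(1-t)}{((1-t)^2+t^2\sigma^2)^2}\,dt$. Substituting $u=t/(1-t)$, so that $dt=du/(1+u)^2$, the integrand becomes $\frac{u(1-t)^2}{(1-t)^4(1+u^2\sigma^2)^2}\cdot\frac{(1-t)^2}{1}\,du$. Tracking the powers of $(1-t)=1/(1+u)$ carefully, this simplifies to
\[
I(\sigma)=\int_0^\infty \frac{u}{(1+\sigma^2u^2)^2}\,du=\frac{1}{2\sigma^2}.
\]
The bookkeeping here is the step to double-check, since the constant $16\pi$ hinges on it. It then follows that
\[
\frac12\int_0^1\Delta(t)\,dt\le \frac12\cdot\frac{\Dmax^2}{4}\cdot\frac{1}{2\sigma^2}=\frac{\Dmax^2}{16\sigma^2},
\]
which is $<\pi$ exactly when $\Dmax^2<16\pi\sigma^2$. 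Theorem~\ref{theorem: straightness theorem}(b) then gives injectivity of every $\sfH_t$ and straightness of the 1-RF coupling, and Lemma~\ref{lemma: straight coupling and flow} upgrades this to a straight 2-RF.
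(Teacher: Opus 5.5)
Your proposal is correct and follows the paper's proof essentially step for step. You use the same explicit Jacobian $a_t\bI_d+\frac{t(1-t)}{s_t^4}\Cov_w(\bm\mu)$; the paper writes the covariance term as $\frac12\sum_{i\ne j}\pi_{i,t}\pi_{j,t}(\bm\mu_i-\bm\mu_j)(\bm\mu_i-\bm\mu_j)^\top$, which is the same matrix. You also use the same Popoviciu bound $\Dmax^2/4$ and the same identity $\int_0^1 t(1-t)/s_t^4\,dt=1/(2\sigma^2)$, so condition (b) of Theorem~\ref{theorem: straightness theorem} holds when $\Dmax^2<16\pi\sigma^2$. Your substitution $u=t/(1-t)$ verifying that integral is correct (the paper only states it), and deriving $\bm v_t$ by within-component Gaussian regression instead of Tweedie's formula is a cosmetic difference.
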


Theorem \ref{theorem: straightnes GMM} states that the dimension-free bound on the maximum interpoint distance
\[
D_{\rm max}^2 < 16\pi \cdot \sigma^2  
\]
is enough to guarantee the straightness of $2$-RF.  We note that a weaker version of Theorem \ref{theorem: straightnes GMM} was established in \cite{bansal2025wasserstein}, where the authors obtained the sub-optimal condition $D^2_{\rm max} \leq 4\sigma^2$.  Similar to the smooth, strongly log-concave case, our dimension-free condition is sharp; see Section \ref{sec:non-straight-GMM} for more details. Roughly speaking, for any constant greater than $16\pi$, there exists a distribution of the form \eqref{eq: base target model} such that $2$-RF provably yields a non-straight coupling when applied to the independent coupling.

It is worth pointing out that  a variant of the normalized maximal mean separation $D_{\rm max}/\sigma$ appears in recent diffusion sampling guarantees. Under some score regularity and estimation assumptions,
\citet{gatmiry2026highaccuracy} obtain high-accuracy sampling
guarantees whose complexity depends quadratically on $R/\sigma$,
up to logarithmic factors, where $R$ bounds the radius of the
mixing distribution. For isotropic GMMs, the smallest enclosing
radius $R_\star$ of the component means satisfies
\[
    \frac{D_{\max}}{2\sigma}
    \leq \frac{R_\star}{\sigma}
    \leq \frac{D_{\max}}{\sigma}.
\]
Thus, after centering, their normalized radius is equivalent,
up to a universal factor, to our normalized maximal mean separation. Therefore, our result gives a complementary geometric
interpretation of this ratio by identifying a threshold that
guarantees exact straightness.

\section{Sharpness of the maximum rotation criterion} \label{sec:nonstraight-construction}

In Section~\ref{sec: a sufficient condition for straightness}, we established sufficient
conditions for the straightness of 2-RF through the maximum-rotation framework.
In particular, we showed that 2-RF generates a straight coupling for every smooth,
strongly log-concave target distribution whose condition number is smaller than
$e^{4\pi}$. The same argument guarantees straightness for isotropic Gaussian
mixtures with identity component covariances whenever the squared maximum pairwise
distance between the component means is smaller than $16\pi$.
These results naturally raise the question of whether the thresholds imposed by
the maximum-rotation criterion are merely artifacts of our analysis, or whether
they reflect a genuine obstruction to straightness. The purpose of this section
is to show that the latter is the case. More precisely, we construct log-concave
and Gaussian-mixture examples for which one reflow produces a non-straight
coupling once the corresponding maximum-rotation criterion is violated. Thus,
the thresholds obtained in Section~\ref{sec: a sufficient condition for straightness}
are, in this sense, sharp.

Throughout this section, $\sfZ_t$ denotes the RF transport map defined in \eqref{eq: ode non-random}, and $\sfZ_1$ is thus the endpoint map. Therefore, $\lb\bm Z_0,\sfZ_1\lb\bm Z_0\rb\rb$ is the coupling obtained after the first rectification and, in turn, its rectification generates the second RF. 

We will prescribe distributions for which 2-RF is provably not straight. We will detail the constructions for the case of $d=3$;  as explained in Section~\ref{sec:non-straight-log-concave} this setting can be extended to arbitrary $d\geq3$ via a simple tensorization argument.


\subsection{A criterion for non-straightness}
\label{ns:sec:criterion}

Throughout this subsection, we assume that the endpoint map $\sfZ_1:\bbR ^3\to\bbR ^3$ is $C^1$ and we  write $\mb S_{++}^k$ for the set of real symmetric positive-definite $k\times k$ matrices. Put
\[
     \bm{D} ( \bm{x} )=\sfZ_1( \bm{x} )- \bm{x} ,
    \qquad
    \sfH_t ( \bm{x} )=(1-t) \cdot  \bm{x} +t \cdot  \sfZ_1( \bm{x} )
      = \bm{x} + t \cdot  \bm{D} ( \bm{x} ).
\]
For a square-integrable coupling $\lb\bm X,\bm \sfZ_1\lb\bm X\rb\rb$, define the {\it straightness defect}  
\[
    S_t=S_t(\bm X)
    :=\bbE\norm{\bm D\lb\bm X\rb
       -\bbE\left[\bm D\lb\bm X\rb
          \mid\sfH_t \lb\bm X\rb\right]}^2.
\]
By Definition~\ref{def: Qiang straightness}, the coupling is straight if and only if 
$S_t=0$ for almost every $t \in(0,1)$; equivalently,
$\bm D\lb\bm X\rb$ is measurable with respect to
$\sigma\lb\sfH_t \lb \bm X\rb \rb$ for almost every such $t$.
The goal of this subsection is to propose a sufficient condition for this coupling to be non-straight. 

Since $ \bm{D} ( \bm{X} )=t^{-1}(\sfH_t ( \bm{X} )- \bm{X} )$, $S_t$ can be written in terms of the chord map $\sfH_t $ as
\begin{equation}
\label{ns:eq:defect-as-chord}
    S_t
    =t^{-2}\cdot \bbE \norm{ \bm{X} -\bbE \left[ \bm{X} \mid \sfH_t \lb  \bm{X} \rb \right]}^2 .
\end{equation}
The following result provides the main technical tool for our construction of non-straight couplings. It shows that it suffices to formulate a data distribution for which the Jacobian of the endpoint map has two distinct negative eigenvalues at some point.

\begin{prop}
\label{ns:prop:criterion}
Let $\sfZ_1:\bbR ^3\to\bbR ^3$ be $C^1$ such that
\begin{equation} \label{eq:properness}
    \sfZ_1( \bm{x} )=\bB  \bm{x} + \bm{R} ( \bm{x} ),
    \qquad \bB \in\Sp_{++}^3,
    \qquad \sup_{ \bm{x} }\norm{ \bm{R} ( \bm{x} )}<\infty .
\end{equation}
and  $D\sfZ_1$ has real
eigenvalues $\lambda_1<\lambda_2<0<\lambda_3$ at some point $\bm x_*$.  Then, if $ \bm{X} $ has a strictly positive density and finite second moment,
\[
S_t>0
\]
on the interval $\bigl((1-\lambda_1)^{-1},(1-\lambda_2)^{-1}\bigr)$, almost surely. 
As a result, the coupling
$( \bm{X} ,\sfZ_1( \bm{X} ))$ is non-straight.
\end{prop}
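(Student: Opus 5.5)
Fix $t\in\bigl((1-\lambda_1)^{-1},(1-\lambda_2)^{-1}\bigr)$. By \eqref{ns:eq:defect-as-chord}, $S_t=0$ means $\bm X=\bbE[\bm X\mid\sfH_t(\bm X)]$ almost surely. Equivalently, $\bm X$ is a.s.\ a measurable function of $\sfH_t(\bm X)$. Since $\bm X$ has a strictly positive density, this forces $\sfH_t$ to be injective off a Lebesgue-null set. More precisely, there can be no two disjoint open sets $U_1,U_2$ with $\sfH_t(U_1)=\sfH_t(U_2)=W$, $W$ open, and $\sfH_t|_{U_i}$ diffeomorphisms onto $W$. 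Indeed, in that case the conditional law of $\bm X$ given $\sfH_t(\bm X)=\bm w\in W$ puts positive mass on both preimages $\sfH_t|_{U_1}^{-1}(\bm w)$ and $\sfH_t|_{U_2}^{-1}(\bm w)$. These masses are proportional to $p(\bm x_i)/|\det D\sfH_t(\bm x_i)|>0$, so $\bm X$ is not $\sigma(\sfH_t(\bm X))$-measurable. The plan is therefore to exhibit such a two-sheeted overlap, using the sign of $\det D\sfH_t$ together with a degree argument.

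\textbf{Step 1 (local orientation reversal).} We have $D\sfH_t(\bm x_*)=(1-t)\bI+tD\sfZ_1(\bm x_*)$, with eigenvalues $1-t+t\lambda_i=1-t(1-\lambda_i)$. For $t$ in the given interval, $1-t(1-\lambda_1)<0$ while $1-t(1-\lambda_2)>0$ and $1-t(1-\lambda_3)>0$. Hence $\det D\sfH_t(\bm x_*)<0$ and $D\sfH_t(\bm x_*)$ is invertible. By the inverse function theorem, $\sfH_t$ maps a small ball $U_1\ni\bm x_*$ diffeomorphically and orientation-reversingly onto an open set $W_0$.

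\textbf{Step 2 (global degree $+1$).} Write $\sfH_t(\bm x)=((1-t)\bI+t\bB)\bm x+t\bm R(\bm x)$. Since $(1-t)\bI+t\bB\in\Sp^3_{++}$ and $\bm R$ is bounded, $\sfH_t$ is proper. The straight-line homotopy $\sfH_t^s=((1-t)\bI+t\bB)\bm x+st\bm R(\bm x)$, $s\in[0,1]$, is uniformly proper. Therefore the Brouwer degree $\deg(\sfH_t,\bm w)=\deg((1-t)\bI+t\bB,\cdot)=+1$ for every $\bm w$.

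\textbf{Step 3 (second sheet).} By Sard, pick a regular value $\bm w\in W_0$; shrinking $W_0$ by properness, we may take $\bm w$ to have finitely many preimages, all regular. Then $\sum_{\bm x\in\sfH_t^{-1}(\bm w)}\operatorname{sign}\det D\sfH_t(\bm x)=1$. One preimage lies in $U_1$ with sign $-1$, so there must be at least two other preimages with sign $+1$, and these lie outside $U_1$. Take one of them, $\bm x_2$, and apply the inverse function theorem there to obtain a neighbourhood $U_2$ disjoint from $U_1$. Shrinking to a common image $W=\sfH_t(U_1)\cap\sfH_t(U_2)\ni\bm w$ gives the two sheets, so $S_t>0$ for every $t$ in the interval. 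The interval has positive Lebesgue measure, so Definition~\ref{def: Qiang straightness} fails and the coupling is non-straight.

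The main obstacle is making Step 2 rigorous with $\sfZ_1$ only $C^1$: we need degree invariance under a proper homotopy on $\bbR^3$, where compactness is not automatic. I would handle this by bounding $\|\sfH_t^s(\bm x)\|\ge c\|\bm x\|-C$ uniformly in $s$, which lets us work on a large ball on whose boundary $\bm w$ is never attained. A secondary technical point is the measure-theoretic step converting the two sheets into $\bbE[\bm X\mid\sfH_t(\bm X)]\neq\bm X$. For this I would compute the conditional law on $W$ explicitly via the change of variables formula.
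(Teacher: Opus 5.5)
Your proposal is correct and follows essentially the paper's route. Like the paper, you show $\det D\sfH_t(\bm x_*)<0$ on the interval, obtain a degree-$+1$ count by homotoping $\sfH_t$ to the linear map $(1-t)\bI+t\bB$, extract an orientation-preserving second sheet over a common open set, and contradict the $\sigma(\sfH_t(\bm X))$-measurability of $\bm X$ using the positive density; the one difference is that you invoke Brouwer degree and Sard's theorem directly, whereas the paper avoids degree theory by proving the signed change-of-variables identity $\int\varphi(\sfH_t)\det D\sfH_t=\int\varphi$ via that homotopy and Piola's identity, then testing it with a bump function supported in the image of the orientation-reversing sheet.
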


Proposition \ref{ns:prop:criterion} provides a route to proving that a coupling is {\it non-straight}. To the best of our knowledge,  the criterion introduced above is novel. The constructions below for the smooth, strongly log-concave data rely crucially on this result. We note that it is also relatively simple to generalize Proposition \ref{ns:prop:criterion} to arbitrary dimension, under the assumption that the Jacobian matrix $D \sfZ_1$ has exactly two distinct negative eigenvalues (counting algebraic multiplicities) at some point. 

\subsection{Non-straight couplings under smooth \& strongly log-concave data} \label{sec:non-straight-log-concave}

We will now use Proposition \ref{ns:prop:criterion} to show that the threshold $e^{4\pi}$ in Theorem \ref{2RF straight-logconcave} is sharp. In fact, our results show that $e^{4\pi}$ is the largest condition number possible to guarantee the straightness of $2$-RF. Specifically, while  Theorem \ref{2RF straight-logconcave} already shows that such a condition number must be no smaller than $e^{4\pi}$,  Theorem \ref{ns:thm:main} below shows that any condition number larger than this threshold will preclude straightness for one particular choice of the function $V$.

The proof of Theorem \ref{ns:thm:main} entails constructing a limiting ODE where a suitably normalized version of the Jacobian map has exactly two negative eigenvalues at some point, for all large time. We then use Proposition \ref{ns:prop:criterion} to finish the proof. 

\begin{thm} \label{ns:thm:main}
For every $\ve>0$ and every $d\geq3$, there exist
$V\in C^\infty\lb\bbR^d\rb$ and constants $0<m<L<\infty$ such that
\[
    m\bI_d\preceq\nabla^2V\lb\bm x\rb\preceq L\bI_d.
    \qquad \bm x\in\bbR^d,
    \qquad
    e^{4\pi}<\frac{L}{m}<e^{4\pi}+\ve,
\]
and two-step RF applied to the independent coupling
$\bcN\lb\bm0,\bI_d\rb\otimes p_1$ yields a non-straight flow, where
$p_1\lb\bm x\rb\propto\exp\lb-V\lb\bm x\rb\rb$.
\end{thm}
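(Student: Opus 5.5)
We construct the counterexample in $d=3$ and verify non-straightness with Proposition~\ref{ns:prop:criterion}. For $d>3$ we tensorize: take $V(\bm x)=V_3(x_1,x_2,x_3)+\tfrac{c}{2}\sum_{i>3}x_i^2$ with $c\in[m,L]$. The RF velocity field and the endpoint map then factor over coordinates. The extra block of $D\sfZ_1$ is a positive scalar, so the Jacobian keeps exactly two distinct negative eigenvalues, and non-straightness transfers; the higher-dimensional version of the criterion mentioned after the proposition covers this.

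\textbf{Step 1: the two extreme one-dimensional profiles.} For a Gaussian target $\bcN(\bm 0,a^{-1})$ in one dimension, the RF velocity is linear in $x$. Its slope along the flow is
\[
\frac{t-a(1-t)}{t^2+a(1-t)^2},
\]
which is exactly the integrand appearing in the bound $\tfrac14\log(L/m)$. The maximal rotation $\tfrac14\log(L/m)$ is therefore attained, at least approximately, by a linear map $\bm A_t$. This map has eigenvalues equal to the $m$-slope and the $L$-slope on a fixed 2-plane, and its eigenvectors rotate in time so that the rotation accumulates and never cancels.

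\textbf{Step 2: design $V$.} The aim is a quadratic-like potential whose Hessian eigenframe rotates with position. I would choose the 2-plane $(x_1,x_2)$ and set
\[
\nabla^2V(\bm x)=\bQ(\theta(x_3))^\top\operatorname{diag}(m,L)\bQ(\theta(x_3))\oplus c,
\]
suitably smoothed so that it is an actual Hessian, or more simply a potential of the form $\tfrac m2 x_1^2+\tfrac L2 x_2^2$ composed with a slowly varying rotation. The constant $L/m$ is taken slightly above $e^{4\pi}$, so that with the rotation rate tuned, the accumulated angle $\tfrac14\log(L/m)$ exceeds $\pi$. I would then pass to a scaling limit, for instance stretching the rotation scale by a parameter $R\to\infty$. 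In this limit the normalized Jacobian ODE \eqref{eq: ode J_t} converges to an explicit linear ODE $\dot{\bm J}=\bQ_t^\top\Lambda_t\bQ_t\bm J$, whose solution $\bm J_1$ can be computed in closed form in rotating coordinates.

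\textbf{Step 3: eigenvalues of the limit.} Show that on the 2-plane the limiting $\bm J_1$ is a positive scalar times a rotation by an angle strictly greater than $\pi$, composed with a symmetric part. Choose parameters so that it has two real, distinct, negative eigenvalues; a rotation by an angle in $(\pi,2\pi)$ combined with anisotropic stretching gives real negative eigenvalues when the anisotropy is large. The third eigenvalue, in the $x_3$-direction, stays positive. Continuity in $R$ transfers the two distinct negative eigenvalues and the positive one to $D\sfZ_1(\bm x_*)$ for large finite $R$.

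\textbf{Step 4: properness.} Check condition \eqref{eq:properness}. Far from the origin the density is Gaussian-like with bounded perturbation, so $\sfZ_1(\bm x)=\bB\bm x+\bm R(\bm x)$ with $\bB$ positive definite and $\bm R$ bounded. This uses the fact that the velocity for targets with Hessian pinched in $[m,L]$ deviates from the linear one by a bounded amount.

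Step 3 is the main obstacle. It requires showing that the noncommuting time-ordered product actually realizes rotation past $\pi$ with real eigenvalues, and not merely complex ones, while $L/m$ stays within $\ve$ of $e^{4\pi}$. This needs an explicit solution of the limiting rotating-frame ODE and careful tuning of the rotation speed against the eigenvalue gap.
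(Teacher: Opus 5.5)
Your outline has the same architecture as the paper: a three-dimensional potential whose on-axis transverse Hessian has the two extreme eigenvalues with a rotating eigenframe, a scale-separation limit giving an explicit linear ODE for the endpoint Jacobian, then Proposition~\ref{ns:prop:criterion} and tensorization. Your ``stretch the rotation scale by $R$'' limit is sound in principle. After the substitution $r=t/(1-t)$ it gives the same equation $\bK'(r)=r\bigl(\bG(\zeta(r))+r^2\bI_2\bigr)^{-1}\bK(r)$, with $\bm J_1=\lim_{r\to\infty}\bK(r)/r$, as the paper's zero-temperature limit, though the convergence still has to be proved.

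Step~2, however, fails as written. The rotating quadratic $W_0=\tfrac12\bm x_\perp^\top\bG(x_3)\bm x_\perp+\tfrac c2x_3^2$ has Hessian with off-diagonal block $\bG'(x_3)\bm x_\perp$ and corner entry $c+\tfrac12\bm x_\perp^\top\bG''(x_3)\bm x_\perp$. Wherever the frame rotates, the Schur complement $c+\tfrac12\bm x_\perp^\top\bG''\bm x_\perp-\bm x_\perp^\top\bG'\bG^{-1}\bG'\bm x_\perp$ becomes negative for large $\|\bm x_\perp\|$. Slowing the rotation only widens the tube on which $m\bI\preceq\nabla^2V\preceq L\bI$ holds. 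Nor can a prescribed matrix field like yours be ``smoothed into a Hessian'', since it violates $\partial_kH_{ij}=\partial_jH_{ik}$.

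You must therefore keep $W_0$ in a thin tube around the axis and glue it to a fixed quadratic $Q$ outside, while keeping the global ratio below $e^{4\pi}+\ve$. A naive cutoff $\psi W_0+(1-\psi)Q$ produces terms $\nabla\psi\otimes\nabla(W_0-Q)$ and $(W_0-Q)\nabla^2\psi$ of order $\lambda_{\max}$. These need not be sign-definite, so they can wipe out $\lambda_{\min}\approx e^{-4\pi}\lambda_{\max}$. The paper instead uses the smoothed maximum $W=\tfrac12\bigl(W_0+Q+\chi(W_0-Q)\bigr)$ with a convex crease obeying $|\chi''(s)|\,|\alpha-s|\le\beta$ (Lemma~\ref{ns:lem:crease}). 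Then $\nabla^2W$ is a convex combination of $\nabla^2W_0$ and $\nabla^2Q$ plus a controlled PSD rank-one term. The same gluing makes $W$ a bounded perturbation of a fixed quadratic, which is what Lemma~\ref{ns:lem:affine-tail} needs for properness. Your Step~4 claim that a pinched Hessian alone makes the velocity a bounded perturbation of a linear field is not justified without it.

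The second missing piece is the sharp threshold, which you defer as ``the main obstacle''. The quantity $\tfrac14\log(L/m)$ is only an upper bound on rotation. Exceeding $\pi$ does not by itself give two real distinct negative eigenvalues, and $e^{4\pi}$ is reached only with essentially optimal tuning. The paper rotates the frame by exactly $\pi$ at a rate proportional to the eigenvalue-gap density $\delta(r)$, i.e.\ uniformly in the time $\int\delta$ (Lemma~\ref{ns:lem:ODE-sol-construction}). This makes the ODE solvable in closed form:
\[
\bPi=-e^{-S/2}\exp\begin{pmatrix}S&\pi\\-\pi&0\end{pmatrix},\qquad \operatorname{spec}\bPi=\bigl\{-e^{\pm\sqrt{S^2-4\pi^2}/2}\bigr\}.
\]
These eigenvalues are real and distinct exactly when $S>2\pi$, i.e.\ $L/m=e^{2S}>e^{4\pi}$.

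Your heuristic for this step is also wrong. For symmetric $\bm P\succ0$ one has $\operatorname{tr}(\bm R_\phi\bm P)=\cos\phi\,\operatorname{tr}\bm P$, so a polar angle in $(3\pi/2,2\pi)$ forces positive trace and excludes two negative eigenvalues. What is needed is a net half-turn ($\cos\phi<0$) with enough anisotropy, not rotation ``past $\pi$''.
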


As noted above, we only need to consider the case $d=3$. Indeed, for the case $d\geq 4$, let $V_3$ be the potential function in Theorem \ref{ns:thm:main}. Then, we can simply choose the new potential
\[
V_d \lb \bm x, \bm y \rb = V_{3} \lb \bm x \rb + \frac{m}{2} \cdot  \| \bm y\|^2,
\qquad 
\bm x \in  \mb R^3, \bm y \in \mb R^{d-3}. 
\]
The target distribution is thus
\[
\exp \lb -V_3 \rb \otimes \bcN \lb \bm 0, m^{-1} \cdot  \bm I_{d-3} \rb.
\]
The distribution above has the same condition number as $V_3$, and is non-straight because its endpoint map has the form
\[
\sfZ^{V_d} \lb \bm x, \bm y \rb = \lb \sfZ_1^{V_3} \lb \bm x \rb, m^{-1/2} \cdot \bm y \rb, 
\qquad 
\bm x \in  \mb R^3, \bm y \in \mb R^{d-3},
\]
where $\sfZ_1^{V_3}$ is the endpoint map with respect to $V_3$. 
Clearly, the straightness defect of $V_d$ is the same as $V_3$, which is positive and yields non-straightness. 


 The construction has two parts. First, we construct a smooth, strongly
convex potential $W$ and an associated auxiliary flow whose endpoint
map has a Jacobian with two distinct negative eigenvalues. Next,
we show that the RF endpoint map associated with the law $p_1^\beta\propto\exp\lb-\beta W\rb$ also has the same eigenvalue property when $\beta$ is sufficiently large (but finite). A crucial ingredient of the proof is a gluing argument that is used to guarantee that the condition number stays close to $e^{4\pi}$. The detailed proofs are presented
in Appendix~\ref{sec:nonstraight-proofs}.

\subsection{Non-straight couplings under Gaussian mixture models} 
\label{sec:non-straight-GMM}
For Gaussian mixtures, the relevant quantity that determines straightness is the ratio between the maximum interpoint distance between the component means and their common standard deviation. The next result shows that the bound in Theorem~\ref{theorem: straightnes GMM}, which guarantees straightness of 2-RF whenever the maximal separation among the means $\Dmax^2$ is smaller than $16\pi \sigma^2$, is sharp. In detail, Theorem~\ref{ns:thm:gmm} below proves that, for any value of $\Dmax^2$ larger than this threshold, there exists an isotropic GMM with variance parameter $\sigma^2$ and at least $4$ components for which 2-RF provably fails to be straight.


 In particular, our result improves and refines the bound in \cite{bansal2025wasserstein}, who, in the same settings, show that 2-RF is straight as long as $\Dmax^2/\sigma^2 \leq 4$.
 

\begin{thm}\label{ns:thm:gmm}
Fix $d\geq3$ and $\sigma>0$. For any $\ve>0$, there exist finitely many means $\bm\mu_1,\ldots,\bm\mu_K\in\bbR^d$, $K\geq 4$ and
positive weights $\pi_1,\ldots,\pi_K$ summing to one such that the mixture 
\[
p_1=\sum_{j=1}^K\pi_j \bcN\lb\bm\mu_j,\sigma^2\bI_d\rb
\]
satisfies
\[
16\pi
<
\frac{\Dmax^2}{\sigma^2}
<
16\pi+\ve,
\qquad
\Dmax:=\max_{i\ne j}\norm{\bm\mu_i-\bm\mu_j}_2,
\]
while the 2-RF initialized from the independent coupling
$\bcN\lb\bm0,\bI_d\rb\otimes p_1$ produces a non-straight flow.
\end{thm}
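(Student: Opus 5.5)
The plan is to reduce to $d=3$ and then obtain non-straightness from Proposition~\ref{ns:prop:criterion}, as in the proof of Theorem~\ref{ns:thm:main}.

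\textbf{Reduction to $d=3$.} Take the means in $\bbR^3\times\{\bm 0\}$. The remaining $d-3$ coordinates of $\bm X_1$ are then independent $\bcN(\bm 0,\sigma^2\bI_{d-3})$ coordinates, independent of the first three. On them the RF velocity is the Gaussian-to-Gaussian one, so the endpoint map splits as $(\sfZ_1^{(3)}(\bm x),\sigma\bm y)$. This leaves $\Dmax$ unchanged, and the straightness defect is that of the three-dimensional problem, exactly as in the tensorization remark after Theorem~\ref{ns:thm:main}.

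\textbf{Properness.} For an isotropic GMM with bounded means, write $\bm X_1=\bm\mu_J+\sigma\bm G$. Conditioning on $\bm X_t=\bm x$, the velocity is
\[
\bm v_t(\bm x)=\bm v_t^{\mathrm{Gauss}}(\bm x)+\bm r_t(\bm x),
\]
where $\bm v_t^{\mathrm{Gauss}}$ is the linear velocity for the target $\bcN(\bm 0,\sigma^2\bI)$. The remainder $\bm r_t$ is an affine image of the posterior mean $\bbE[\bm\mu_J\mid\bm X_t=\bm x]$ with coefficients bounded in $t$, so it is uniformly bounded. Integrating the ODE, $\sfZ_1(\bm x)=\sigma\bm x+\bm R(\bm x)$ with $\sup_{\bm x}\norm{\bm R(\bm x)}<\infty$. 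This is \eqref{eq:properness} with $\bB=\sigma\bI_3$, and $\sfZ_1$ is smooth.

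\textbf{Origin of the threshold $16\pi$.} The same computation gives
\[
D\bm v_t(\bm x)=a(t)\bI+c(t)\,\Cov(\bm\mu_J\mid\bm X_t=\bm x),
\qquad s_t^2=t^2\sigma^2+(1-t)^2,
\]
with explicit scalars $a(t)$ and $c(t)$, where $c(t)$ is proportional to $t/s_t^4$. The spectral spread is therefore at most $c(t)\,\Dmax^2/4$, since the variance of a point set along any direction is at most its squared diameter over four. The integral $\frac12\int_0^1 c(t)\,dt\cdot\Dmax^2/4$ equals $\Dmax^2/(16\sigma^2)$, which produces the constant in Theorem~\ref{theorem: straightnes GMM}. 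Near-equality forces a specific structure along a single trajectory: the posterior covariance must be nearly rank one with variance $\Dmax^2/4$ at every time, i.e. a balanced two-cluster ambiguity at full diameter. Moreover, its top eigendirection must rotate in a fixed plane in the way that realizes the extremal rotation of the auxiliary flow built for Theorem~\ref{ns:thm:main}.

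\textbf{Construction of the means.} I would place many means on a twisted configuration in $\bbR^3$, namely a one-parameter family of antipodal pairs $\pm\frac{\Dmax}{2}\bm e(\theta_k)$ with $\theta_k$ sweeping a planar arc, plus small offsets along the third axis. The weights and offsets are chosen so that along a designated trajectory the posterior at time $t$ concentrates on the pair with index $\theta(t)$. The rotation rate $\dot\theta(t)$ should match the extremal profile $c(t)$. I would then pass to a limiting regime with more pairs and sharper posterior switching, where the linearized equation \eqref{eq: ode J_t} along that trajectory converges to an explicit rotating ODE. Solving that ODE should give a Jacobian $D\sfZ_1(\bm x_*)$ with eigenvalues $\lambda_1<\lambda_2<0<\lambda_3$. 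Here the total in-plane rotation slightly exceeds $\pi$, because $\Dmax^2/\sigma^2$ slightly exceeds $16\pi$, which makes the two in-plane eigenvalues negative. The transverse direction stays positive.

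\textbf{Conclusion and main obstacle.} The eigenvalue condition is open, so it persists for a finite approximation. Proposition~\ref{ns:prop:criterion}, applied with the Gaussian source (strictly positive density, finite second moment), then gives $S_t>0$ on an interval, so 2-RF is non-straight. The main obstacle is the previous step: building the finite mixture so that the posterior covariance really tracks the rotating rank-one profile with spread close to $c(t)\Dmax^2/4$. This must hold uniformly along the trajectory, and the error must be controlled well enough that the limit of $D\sfZ_1$ is reached while keeping $\Dmax^2/\sigma^2<16\pi+\ve$. I would first carry out the perturbative Jacobian argument of Theorem~\ref{ns:thm:main} with the mixture playing the role of $p_1^\beta$.
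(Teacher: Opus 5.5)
Your outer layers are sound and match the paper: the reduction to $d=3$, and the decomposition $\sfZ_1(\bm x)=\sigma\bm x+\bm R(\bm x)$ with bounded $\bm R$ (the paper's Lemma~\ref{lem:properness}, taken with $\sigma=1$). The heuristic origin of $16\pi$ is also right, with one correction: the coefficient is $c(t)=t(1-t)/s_t^4$, not $t/s_t^4$, and the factor $1-t$ is what gives $\int_0^1 c(t)\,dt=1/(2\sigma^2)$. The antipodal pairs with rotating horizontal directions and small vertical offsets, and the finish via Proposition~\ref{ns:prop:criterion}, are as in the paper. Even your requirement that the rotation rate be proportional to $c(t)$ is exactly the paper's choice.

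\textbf{The gap.} The step that carries the whole theorem is only asserted. You never specify the weights, the starting point or the angle profile. You never show that the posterior sweeps through the pairs at the prescribed speed. You never solve the limiting ODE, so your claim that it \emph{should} give $\lambda_1<\lambda_2<0<\lambda_3$ is unverified.

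This step is not routine. Along the symmetry axis, the antipodal symmetry makes each pair balanced and keeps the trajectory explicit. There, the only information separating the pairs is the vertical offset. The offsets must be $O(\sqrt\ve)$ to keep $\Dmax^2/\sigma^2<16\pi+\ve$, so the likelihood barely distinguishes the pairs. Concentration therefore has to come from the weights, and the sliding of the posterior from a very large linear tilt, i.e.\ a starting point far up the axis. Your plan contains no such mechanism.

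Moreover, the perturbative argument of Theorem~\ref{ns:thm:main} does not transfer. It rests on strong log-concavity: a unique posterior mode and the Brascamp--Lieb/Cram\'er--Rao covariance sandwich. A mixture with separation about $2\sqrt{4\pi}\,\sigma$ is far from log-concave. Also, the relevant limit here is $N,B\to\infty$ along one explicit trajectory, not a zero-temperature limit.

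\textbf{How the paper fills it.} Set $\sigma=1$, $R^2=4\pi+\ve/8$, $\eta=\sqrt\ve/2$ and $\theta_j=(2j-1)/(2N)$. The paper takes means $\bm\mu_j^\pm=(\pm R\bm u_{\pi\theta_j^2},\eta\theta_j)$, weights $p_j^\pm\propto e^{-B\theta_j^2}$, and starting point $\bm x_\star=(2B/\eta)\bm e_3$.

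Use the rescaled time $a=t/\rho_t$ with $\rho_t=\sqrt{(1-t)^2+t^2}$. The trajectory stays on the axis, and the Jacobian solves $\partial_a\bm K=a\,\bm C\,\bm K$ in block form, with transverse block $R^2\sum_j\alpha_j(a)\bm P_{\pi\theta_j^2}$. Completing the square gives $\alpha_j(a)\propto\exp\{-\beta(a)(\theta_j-c(a))^2\}$ with $\beta(a)\ge B$ and $|c(a)-a|\le\eta^2/(2B)$. Lemma~\ref{lem:block-convergence-gmm} then gives $\sup_a\|\sum_j\alpha_j(a)\bm P_{\pi\theta_j^2}-\bm P_{\pi a^2}\|\le\pi/N+\pi\sqrt{2/B}+\pi\eta^2/B$. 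A plain Gr\"onwall estimate for linear matrix ODEs (Lemma~\ref{lem:perturbation-matrix-ode}) reduces everything to $\bm M'=aR^2\bm P_{\pi a^2}\bm M$, $\bm M(0)=\bI_2$.

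Since $\frac{d}{dt}\pi a(t)^2=2\pi c(t)$, this is your proportional profile, and it makes the equation autonomous in the rotating frame. Explicitly, $\bm M(a)=\bm R_{\pi a^2}\exp[\pi a^2(\frac{R^2}{2\pi}\bm P_0-\bm\Omega)]$. Hence $\bm M(1)=-\exp\bigl(\begin{smallmatrix}R^2/2&\pi\\-\pi&0\end{smallmatrix}\bigr)$, with eigenvalues $-\exp\bigl(R^2/4\pm\sqrt{R^4/16-\pi^2}\bigr)$.

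The minus sign comes from the exact half-turn $\bm R_\pi=-\bI_2$ of the covariance direction. The threshold $R^2>4\pi$, i.e.\ $\Dmax^2>16\pi$, is what makes these eigenvalues real and distinct. For $R^2\le4\pi$ they are nonreal or repeated, and Proposition~\ref{ns:prop:criterion} cannot be applied. This explicit computation, not the heuristic that the total rotation slightly exceeds $\pi$, is the missing core of your argument.
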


The strategy of the proof is generally similar to that of Theorem \ref{ns:thm:main}, in that we construct a limiting ODE whose endpoint map's Jacobian has exactly two negative eigenvalues at some point.

The number of components $K$ chosen in the proof of Theorem \ref{ns:thm:gmm} is huge and we have not attempted to make it explicit for clarity of the proof. The construction underlying Theorem~\ref{ns:thm:gmm} is illustrated in
Figure~\ref{fig:gmm-construction}.
We arrange the Gaussian component means in antipodal pairs on a cylinder
around the $z$-axis, with their horizontal directions rotating as the
height increases (Figure~\ref{fig:gmm-construction-means}).
The mixture weights are chosen so that, along a distinguished RF
trajectory on the $z$-axis, the posterior mass progressively concentrates
on successive pairs of components.
This particular construction, coupled with the accumulated rotational effect of the  components along the RF trajectory, produces an endpoint Jacobian with two distinct negative
eigenvalues, corresponding to a net half-turn in the transverse
directions.
By Proposition~\ref{ns:prop:criterion}, this implies non-straightness
of the coupling obtained after the first rectification.
The proof heuristics and details are deferred to Appendices~\ref{sec: ns gmm proof heuristic} and~\ref{sec: ns gmm proof}.  

\section{Approximate straightness and choosing the number of rectification steps}
\label{sec: approximate straightness}

In the previous sections, Theorem \ref{theorem: straightness theorem} provided sufficient conditions for the endpoint coupling generated by $1$-RF to be straight, and hence for $2$-RF to produce a straight flow, when starting from an independent coupling. However, exact straightness is restrictive in practice as one often only has access to the data, and it is hard to verify the structural assumptions required for exact straightness just from the observed data.


From a practical perspective, each additional rectification requires training a
new velocity field, which can be computationally expensive. Consequently,
although additional rectification typically improves the straightness of the
learned flow (see \cite{liu2022rectified} for more details), continuing the procedure indefinitely is neither necessary nor
computationally desirable.{ Empirically, 
\cite{lee2024improving} demonstrate that a single
reflow iteration can produce nearly straight
trajectories, enabling accurate few-step generation.
Related work by \cite{wang2025rectified} has also explored efficient
generation without requiring exactly straight
trajectories.}
Motivated by these recent findings, we argue that a \emph{sufficiently straight} flow is
often adequate for achieving near-optimal sampling performance. This naturally
raises the following practical question.

\begin{quote}
\emph{Given a trained $K$th rectified flow, can we determine from finite data
whether the learned flow is sufficiently straight, or whether another
rectification step is statistically warranted?}
\end{quote}

To give an answer to this question, we first define the concept of {\it approximate straightness}. Let $(\bm Z_0,\sfZ_1\lb\bm Z_0\rb)$ be a deterministic population coupling between two densities $p_0$ and $p_1$, with \emph{finite second moments}. 
Recall that $\sfH_t\lb\bm x\rb=(1-t)\bm x+t\sfZ_1\lb\bm x\rb$.
\begin{defi}[Approximate straightness]
\label{def: approximate straightness}

For $\delta\geq0$, we say that the coupling $(\bm Z_0,\sfZ_1\lb\bm Z_0\rb)$ is $\delta$-approximately straight if
\[
\begin{aligned}
    S(\bm{\mc Z})
    &:=
    \bbE_{\bm Z_0,t}\left[
        \Big\|
            \sfZ_1\lb\bm Z_0\rb-\bm Z_0
            -
            \bbE\left[
                \sfZ_1\lb\bm Z_0\rb-\bm Z_0
                \,\middle|\,
                \sfH_t\lb\bm Z_0\rb
            \right]
        \Big\|_2^2
    \right]
    \leq\delta,
\end{aligned}
\]
almost surely with respect to independent draws of $\bm Z_0\sim p_0$ and
$t\sim\operatorname{Unif}[0,1]$.
\end{defi}


Clearly, when $\delta=0$, this reduces to a straight coupling in the sense of Definition~\ref{def: Qiang straightness}. For the population endpoint coupling obtained after $K$ rectifications,
we denote this defect by $S^{(K)}(\mc Z)$. 
\cite{liu2023flow} showed that successive rectifications lead to a vanishing straightness defect, i.e., $\lim_{K \to \infty} S^{(K)}(\mc Z) = 0$. To be precise, Theorem D.7 in \cite{liu2023flow} states that 
\[
\sum_{k = 0}^K S^{(k)}(\mc Z) \le  \bbE\norm{\bm X_1-\bm X_0}_2^2<\infty 
\quad \text{for all $K$},
\]
and hence, the  limiting straightness claim mentioned above follows. 
In this section we consider this setting, with the goal of minimizing the number of reflow steps before obtaining a solution that, though not straight, has low enough defect to lead to practicable sampling methodologies. To this end, the concept of approximate straightness in Definition \ref{def: approximate straightness} is useful since one can construct a consistent estimator for it, given the learned vector field in a given rectification round, and construct a statistically valid stopping rule. 
In the case where the data belong to the class of smooth, strongly log-concave distributions, one can see a clear connection between the straightness result in Theorem \ref{2RF straight-logconcave} and the approximate straightness concept in Definition \ref{def: approximate straightness}.
\begin{prop}[Approximate straightness under strong log-concavity]
\label{prop: approximate straightness logconcave}
Suppose that $p_1$ satisfies \eqref{strong convex+smoothness}.
Let $\bm{\mc Z}$ be the population rectified flow starting from
$\bcN\lb\bm 0,\bI_d\rb\otimes p_1$, with endpoint coupling
$(\bm Z_0,\sfZ_1\lb\bm Z_0\rb)$.
Then
\begin{equation}
\label{eq: approximate straightness logconcave}
    S(\bm{\mc Z})
    \leq
    \frac{2d}{(mL)^{1/4}} \cdot 
    \sinh\left(
        \sqrt{
            \left[ \,
                \frac{\log^2\lb L/m\rb}{16}-\pi^2
            \, \right]_+
        }
    \right)
\end{equation}
where $[a]_+:=\max\{a,0\}$.
\end{prop}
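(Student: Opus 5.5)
The plan is to integrate the straightness defect over time and compare it with the maximum-rotation analysis behind Theorem~\ref{2RF straight-logconcave}. The first step is to write
\[
S(\bm{\mc Z})=\int_0^1 S_t\,dt, \qquad S_t=t^{-2}\,\bbE\norm{\bm Z_0-\bbE[\bm Z_0\mid \sfH_t(\bm Z_0)]}^2,
\]
as in \eqref{ns:eq:defect-as-chord}. For any measurable $\bm g$ we have $S_t\le t^{-2}\bbE\norm{\bm Z_0-\bm g(\sfH_t(\bm Z_0))}^2$, so it suffices to construct a good approximate left inverse of $\sfH_t$.

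\emph{Regime $L/m\le e^{4\pi}$.} Here the bracket vanishes, $\sinh(0)=0$, and the claim is exactly Theorem~\ref{2RF straight-logconcave}: $\sfH_t$ is injective and $S_t=0$.

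\emph{Regime $\Theta:=\tfrac14\log(L/m)>\pi$.} The plan is to make the rotation estimate quantitative, with two ingredients.

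The first ingredient is a two-sided control on $D\bm v_t$. Along the flow, the smooth strong log-concavity assumption \eqref{strong convex+smoothness} confines the spectrum of $D\bm v_t$ (a symmetric matrix, since $\bm v_t$ is a gradient field for Gaussian-to-$p_1$ RF) to the interval
\[
\Bigl[\tfrac{t-L(1-t)}{t^2+L(1-t)^2},\ \tfrac{t-m(1-t)}{t^2+m(1-t)^2}\Bigr].
\]
This is the same bound that yields the $\tfrac14\log(L/m)$ rotation estimate. Integrating the endpoints gives, for the propagator $\bm\Phi$ of \eqref{eq: ode J_t} (or \eqref{eq: ode delta_t}), singular values between $L^{-1/2}$ and $m^{-1/2}$. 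Their geometric mean $(mL)^{-1/4}$ is, I expect, the source of the prefactor.

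The second ingredient is to pass to polar coordinates for $\bm\delta_t$ restricted to the two-plane spanned by the top and bottom eigendirections. Write $\rho_t=\log\norm{\bm\delta_t}$ and let $\theta_t$ be the angle. One gets
\[
\dot\theta_t=\tfrac12\bigl(\lambda_{\max}-\lambda_{\min}\bigr)\sin(2\psi_t), \qquad \dot\rho_t=\bar\lambda_t+\tfrac12\bigl(\lambda_{\max}-\lambda_{\min}\bigr)\cos(2\psi_t),
\]
for a phase $\psi_t$. Reparametrizing time by $d\tau=\tfrac12(\lambda_{\max}-\lambda_{\min})\,dt$, with total budget $\tau\le\Theta$, turns "how far beyond angle $\pi$ can $\bm\delta_1$ be pushed" into a scalar comparison problem of Sturm/Riccati type: $\ddot u=u$ under the constraint that a half-turn consumes budget $\pi$. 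The extremal solution of that problem is exactly $\sinh\bigl(\sqrt{\Theta^2-\pi^2}\bigr)$; the square root comes from a Pythagorean split of the budget into a rotating part (length $\pi$) and a stretching part.

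From these I would bound the set where $\sfH_t$ folds. Two points $\bm x\neq\bm y$ collide only if $\Angle(\bm\delta_1,\bm\delta_0)=\pi$, and the comparison problem bounds the relative displacement $\norm{\bm x-\bm y}$ of such pairs. Concretely, I would aim for the estimate
\[
\bbE\norm{\bm Z_0-\bbE[\bm Z_0\mid\sfH_t(\bm Z_0)]}^2\le t^2\cdot 2d\,(mL)^{-1/4}\sinh\bigl(\sqrt{\Theta^2-\pi^2}\bigr),
\]
where the factor $d$ comes from summing the per-direction contributions, for example via a trace bound on the conditional covariance. Integrating in $t$ then yields \eqref{eq: approximate straightness logconcave}.

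The main obstacle is the last step: converting the pointwise angle-excess bound, which concerns pairs of trajectories, into an $L^2$ bound on the conditional variance of $\bm Z_0$ given $\sfH_t(\bm Z_0)$. The angle bound controls how far apart colliding preimages can be, but not their probability mass. My first attempt would be to bound the conditional variance by the squared diameter of the fibres $\sfH_t^{-1}(\bm h)$, control that diameter by the comparison ODE, and use the Gaussian law of $\bm Z_0$ only through $\bbE\norm{\bm Z_0}^2=d$. If that loses too much, I would instead choose $\bm g$ as the inverse of the "unfolded" linearization $(1-t)\bI_d+t\bm\Phi$ and bound the residual directly.
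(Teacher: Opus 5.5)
There is a genuine gap, and it sits where you flagged it, but the issue is more basic than losing constants. The step ``the comparison problem bounds the relative displacement $\norm{\bm x-\bm y}$ of colliding pairs'' cannot work. The vector $\bm\delta_t$ solves the \emph{linear homogeneous} equation $\dot{\bm\delta}_t=\bA_t\bm\delta_t$, and the spectral bounds on $\bA_t$ from Lemma~\ref{lem:Jacobian-bound-logconcave} are uniform in $(\bm x,\bm y)$. So every constraint the rotation analysis produces is invariant under rescaling $\bm\delta_0$. It restricts only the direction and the log-stretch $\log(\norm{\bm\delta_1}/\norm{\bm\delta_0})$, never the size of $\bm y-\bm x$, and there is no fibre-diameter bound to feed into a conditional-variance estimate. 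For the same reason you have no mechanism for the per-time target $S_t\le 2d(mL)^{-1/4}\sinh(\sqrt{\Theta^2-\pi^2})$ uniformly in $t$. The $\sinh$ also does not arise as the extremal solution of a Sturm/Riccati problem; that part of the plan is reverse-engineered from the answer. The planar reduction to a fixed two-plane is not valid either for $d\ge3$, since $\bm\delta_t$ leaves any fixed plane and the eigenbasis of $\bA_t$ moves.

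The missing idea is to bound the \emph{set of bad times} rather than the size of the defect at each time. Suppose $\sfH_t(\bm x)=\sfH_t(\bm y)$ with $\bm x\ne\bm y$. Then $\bm\delta_1=-\frac{1-t}{t}\bm\delta_0$, which pins down both endpoints: $\bm n_1=-\bm n_0$, and $\int_0^1 a_s\,ds=\log\frac{1-t}{t}$ with $a_s=\bm n_s^\top\bA_s\bm n_s=\frac{d}{ds}\log\norm{\bm\delta_s}$. Let $b_s,r_s$ be the midpoint and half-width of the spectral interval $[f_L(s),f_m(s)]$, where $f_a(s)=\frac{s-a(1-s)}{s^2+a(1-s)^2}$. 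The identity $\norm{(\bA_s-b_s\bI_d)\bm n_s}^2=\norm{\dot{\bm n}_s}^2+(a_s-b_s)^2\le r_s^2$ holds in any dimension. Combined with Minkowski's integral inequality and $\int_0^1\norm{\dot{\bm n}_s}\,ds\ge\pi$, it gives
\[
\Theta=\int_0^1 r_s\,ds\ \ge\ \sqrt{\pi^2+\log^2\!\Bigl(\tfrac{1-t}{t\rho}\Bigr)},\qquad \rho=\exp\Bigl(\int_0^1 b_s\,ds\Bigr)=(mL)^{-1/4}.
\]
This is your Pythagorean split, but with the stretching leg fixed by $t$; your guess that $(mL)^{-1/4}$ is the ``center'' of the stretch is right in this sense. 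Hence non-injectivity can occur only for $t\in I=\bigl[(1+\rho e^{\eta})^{-1},(1+\rho e^{-\eta})^{-1}\bigr]$, where $\eta=\sqrt{[\Theta^2-\pi^2]_+}$. Off $I$ one has $S_t=0$. On $I$ the crude bound $S_t=t^{-2}\bbE\norm{\bm Z_0-\bbE[\bm Z_0\mid\sfH_t(\bm Z_0)]}^2\le d/t^2$ suffices. Integrating, $S(\bm{\mc Z})\le d\int_I t^{-2}\,dt=d\rho(e^{\eta}-e^{-\eta})$. So the $\sinh$ is just the $dt/t^2$-length of $I$, and the factor $d$ is $\bbE\norm{\bm Z_0}^2$. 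This also covers the boundary case $L/m=e^{4\pi}$, where $I$ degenerates to a point. Your first regime does not quite handle that case, because Theorem~\ref{2RF straight-logconcave} needs the strict inequality.
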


The main consequence of the last result is that, under the stated assumptions, the coupling is $\delta_{m,L}$-approximately straight,
where $\delta_{m,L}$ denotes the right-hand side of \eqref{eq: approximate straightness logconcave}.
It is immediate that when $\log (L/m) \leq 4\pi$, Proposition \ref{prop: approximate straightness logconcave} yields the straightness of the coupling obtained from $2$-RF. Interestingly, the bound in \eqref{eq: approximate straightness logconcave} is of order $O \lb d/\sqrt{m} \rb$ for all large $L,m$. In fact, we can check that
\[
\frac{2d}{(mL)^{1/4}} \cdot 
    \sinh\left(
        \sqrt{
            \left[ \,
                \frac{\log^2\lb L/m\rb}{16}-\pi^2
            \, \right]_+
        }
    \right) \leq \frac{d}{\sqrt{m}}
\]
when the ratio $L/m$ is large. This implies that even when exact straightness fails, approximate straightness  can still hold, as long as the data distribution is generated by a potential function with large strong-convexity parameter. The detailed proof is deferred to Appendix~\ref{sec: proof of approx-straightness results}.


An important consideration in designing such a certification procedure is that
its computational overhead should be negligible compared with the cost of
training an additional rectified flow. Since the objective of the stopping rule
is precisely to decide whether another expensive rectification step is
necessary, the certification procedure itself should avoid solving additional
optimization problems. Motivated by this principle, we introduce a
trajectory-based straightness functional that can be evaluated directly from
independently generated trajectories of the learned flow. Once the velocity
field has been trained, estimating this functional requires only forward
simulation of the learned ordinary differential equation together with
evaluations of the learned velocity field along the generated trajectories.
Consequently, the proposed certification procedure incurs only a modest
computational overhead relative to the cost of training the rectified flow
itself.



\begin{figure}[htbp]
\centering

\begin{tikzpicture}[scale = 0.7, 
transform shape,
>=latex,
every node/.style={font=\small},
box/.style={
draw,
rounded corners,
minimum width=2.6cm,
minimum height=0.7cm,
align=center
}
]


\node at (2.8,3.7) {\bf (a) Trajectory-based straightness};

\draw[thick,blue]
(0,0)
.. controls (0.8,0.8) and (1.7,1.6)
.. (2.8,2.0)
.. controls (3.7,2.2) and (4.7,1.8)
.. (5.8,1.0);

\fill (0,0) circle (2pt);
\fill (5.8,1.0) circle (2pt);

\node[below] at (0,0) {$\bm Z_0^{(K)}$};
\node[right] at (5.8,1.0) {$\bm Z_1^{(K)}$};

\fill[red] (3.4,2.05) circle (2pt);
\node[above] at (3.4,2.05) {$\bm Z_t^{(K)}$};

\draw[->,red,thick]
(3.4,2.05)--+(0.8,0.45);

\node[right] at (4.25,2.45)
{$\widehat{\bm v}^{(K)}\lb \bm Z_t^{(K)},t\rb$};

\draw[->,thick]
(0,-0.45)--(5.8,-0.45);

\node at (2.9,-0.85)
{$\bm Z_1^{(K)}-\bm Z_0^{(K)}$};

\node[align=center] at (2.9,-2.0)
{
Trajectory residual
$
\psi
=
\norm{
\bm Z_1^{(K)}
-
\bm Z_0^{(K)}
-
\widehat{\bm v}^{(K)}\lb \bm Z_t^{(K)},t\rb
}_2^2
$
};


\node at (12.3,3.7) {\bf (b) Statistical certification};

\node[box] (train) at (12.3,3.0)
{Train $K$th\\Rectified Flow};

\node[box] (traj) [below=0.9cm of train]
{Generate\\fresh trajectories};

\node[box] (psi) [below=0.9cm of traj]
{Compute\\trajectory residuals};

\node[box] (ci) [below=0.9cm of psi]
{Estimate $\widehat{\mc T}_N^{(K)}$\\and confidence bound};

\node[diamond,
draw,
aspect=2,
align=center,
below=1.1cm of ci]
(decision)
{$U_{N,K}\le\delta$?};

\node[box] (stop)
[left=1.9cm of decision]
{Stop};

\node[box] (next)
[right=2.0cm of decision]
{Train\\$(K+1)$th RF};

\draw[->] (train)--(traj);
\draw[->] (traj)--(psi);
\draw[->] (psi)--(ci);
\draw[->] (ci)--(decision);

\draw[->] (decision)--node[above]{Yes}(stop);
\draw[->] (decision)--node[above]{No}(next);

\end{tikzpicture}

\caption{
Overview of the proposed statistical certification procedure.
\textbf{(a)} The trajectory-based straightness functional compares the endpoint
displacement with the instantaneous velocity evaluated at a uniformly sampled
time along the learned trajectory.
\textbf{(b)} After training the $K$th rectified flow, fresh trajectories are
generated to estimate the straightness functional, construct a confidence
bound, and determine whether another rectification step is statistically
necessary.
}
\label{fig:trajectory_pipeline}
\end{figure}

\subsection{A trajectory-based straightness functional}
\label{sec: straightness functional}
Suppose that the $K$th rectified flow has been trained, yielding the learned
velocity field $\widehat{\bm v}^{(K)}:\bbR^d\times[0,1]\rightarrow\bbR^d$.
Throughout this section, we condition on the learned velocity field
$\widehat{\bm v}^{(K)}$ during the $K$th rectification stage, and regard it as fixed. All probabilities and expectations
are therefore taken with respect to fresh trajectories generated from the
learned rectified flow. The inferential target is thus the straightness of the
\emph{learned} rectified flow rather than that generated by the unknown population-optimal
velocity field.

Figure~\ref{fig:trajectory_pipeline}(b) summarizes the proposed certification
procedure. Starting from a trained $K$th rectified flow, we generate fresh
trajectories from the learned dynamics, evaluate the proposed trajectory-based
straightness functional, construct a confidence interval for the resulting
straightness measure, and determine whether another rectification step is
statistically justified.

Starting from an initial condition
\[
\bm Z_0^{(K)}\sim \bcN\lb\bm 0,\bI_d\rb,
\]
let $\{\bm Z_{t_i}^{(K)}:0\le i\le N\}$ denote the solution to the following Euler discretized ODE:

\begin{equation}
\label{eq: learned ode}
\begin{aligned}
& 0 =t_0 <t_1< \ldots < t_N = 1, \\
\\
& \bm Z_{t_{i+1}}^{(K)}
=
\bm Z^{(K)}_{t_i} + \lb t_{i+1} - t_i\rb\widehat{\bm v}^{(K)}\lb \bm Z_{t_{i}}^{(K)},t_i\rb,
\qquad
\bm Z_0^{(K)}\sim \bcN\lb\bm 0,\bI_d\rb,\quad  i \in \{0, \ldots, N -1\}.
\end{aligned}
\end{equation}




Motivated by the geometric characterization of straight flows (\textit{cf.}~Definition \ref{def: approximate straightness}), we quantify the
remaining deviation from straightness by comparing the endpoint displacement
with the learned velocity evaluated at a uniformly sampled time along the
trajectory. Specifically, let $t\sim\mathrm{Unif}[0,1]$, independent of
$\bm Z_0^{(K)}$, and define

\begin{equation}
\label{eq: trajectory straightness functional}
\mc T^{(K)}
:=
\bbE
\left[
\norm{
\bm Z_1^{(K)}
-
\bm Z_0^{(K)}
-
\widehat{\bm v}^{(K)}
\lb
\bm Z_t^{(K)},
t
\rb
}_2^2
\right].
\end{equation}

The functional $\mc T^{(K)}$ admits a natural geometric interpretation and has also been studied in \cite{liu2023flow} and \cite{liu2022rectified} before.
It measures the variability of the velocity field along trajectories generated by
the learned rectified flow. A perfectly straight flow has constant velocity
along every trajectory, in which case these two quantities coincide almost
surely and $\mc T^{(K)}=0$. Larger values of
$\mc T^{(K)}$
indicate greater deviation from straightness.

An important advantage of the proposed functional is its computational
simplicity. Once the learned velocity field has been obtained, evaluating
$\mc T^{(K)}$ requires only generating independent trajectories from the
learned dynamics and evaluating the trained velocity field along these
trajectories. No additional optimization or retraining is required, making the
resulting certification procedure computationally lightweight relative to the
training of the rectified flow itself.

\subsection{Coupling straightness vs.\ trajectory straightness.}
We now clarify the relationship between the coupling-based
straightness defect $S^{(K)}(\mc Z)$
(Definition~\ref{def: approximate straightness}) and the
trajectory-based functional $\mc T^{(K)}$.
For simplicity, we first consider the exact population
velocity field $\bm v^{(K)}$ (\cf. Figure
\ref{fig:reflow-procedure}), rather than its learned
approximation $\widehat{\bm v}^{(K)}$. Accordingly, throughout this discussion,
$\mc T^{(K)}$ denotes the trajectory-based functional defined in
\eqref{eq: trajectory straightness functional}, with
$\widehat{\bm v}^{(K)}$ replaced by $\bm v^{(K)}$.

The functional $S^{(K)}(\mc Z)$ measures the deviation
from straightness of the endpoint coupling
$(\bm Z_0^{(K)},\bm Z_1^{(K)})$, using the linear
interpolation
\[
\sfH_t(\bm Z_0^{(K)})
=
(1-t)\bm Z_0^{(K)}+t\bm Z_1^{(K)}.
\]
In contrast, $\mc T^{(K)}$ measures the variability
of the instantaneous velocity
$\bm v_t^{(K)}(\bm Z_t^{(K)})$
along the actual ODE trajectories.
Although these functionals measure different aspects
of straightness, they are quantitatively related
under mild regularity conditions.
In particular, the following result shows that
small trajectory straightness defect guarantees
small coupling straightness defect.

\begin{thm}[Trajectory straightness controls coupling straightness]
\label{thm: trajectory controls coupling}
Let $\{\bm Z_t^{(K)}\}_{t\in[0,1]}$ be an
absolutely continuous solution of the RF ODE
\[
\frac{d}{dt}\bm Z_t^{(K)}
=
\bm v_t^{(K)}(\bm Z_t^{(K)}).
\]
Suppose that $\mc T^{(K)}<\infty$ and that the
velocity field $\bm v_t^{(K)}$ is uniformly $\ell_K$--Lipschitz in space.
Then
\begin{equation}
\label{eq: coupling trajectory comparison}
S^{(K)}(\mc Z)
\le
\left(1+\frac{\ell_K}{\pi}\right)^2
\mc T^{(K)}.
\end{equation}
In particular, $\mc T^{(K)}=0$ implies
$S^{(K)}(\mc Z)=0$.
\end{thm}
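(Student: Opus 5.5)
The plan is to compare the conditional expectation in $S^{(K)}(\mc Z)$ with one explicit, suboptimal predictor: the velocity field evaluated on the chord. Write $\bm D:=\bm Z_1^{(K)}-\bm Z_0^{(K)}$ and $\sfH_t:=\bm Z_0^{(K)}+t\bm D$. Conditional expectation is the $L^2$ projection onto $\sigma(t,\sfH_t)$-measurable functions. Hence, for the jointly measurable map $g(t,\bm x):=\bm v^{(K)}_t(\bm x)$, we get
\[
S^{(K)}(\mc Z)\le \bbE_{\bm Z_0,t}\norm{\bm D-\bm v_t^{(K)}(\sfH_t)}_2^2 .
\]
This reduces the problem to a pathwise estimate along each trajectory.

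Next I split the residual as
\[
\bm D-\bm v_t^{(K)}(\sfH_t)=\bigl[\bm D-\bm v_t^{(K)}(\bm Z_t^{(K)})\bigr]+\bigl[\bm v_t^{(K)}(\bm Z_t^{(K)})-\bm v_t^{(K)}(\sfH_t)\bigr].
\]
I then apply Minkowski's inequality in $L^2(\Omega\times[0,1])$. The first bracket has $L^2$ norm exactly $\sqrt{\mc T^{(K)}}$ by the definition of $\mc T^{(K)}$. By the uniform Lipschitz assumption, the second bracket is bounded in norm by $\ell_K\norm{\bm Z_t^{(K)}-\sfH_t}_2$.

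The key step is to control the deviation of the trajectory from its chord. Set $\bm w(s):=\bm v_s^{(K)}(\bm Z_s^{(K)})-\bm D$ and $\bm f(t):=\bm Z_t^{(K)}-\sfH_t=\int_0^t \bm w(s)\,ds$. By absolute continuity of the trajectory, $\bm f$ is absolutely continuous with $\bm f'=\bm w$ a.e. Moreover $\bm f(0)=\bm 0$ and $\bm f(1)=\bm Z_1^{(K)}-\bm Z_0^{(K)}-\bm D=\bm 0$. The Wirtinger (Dirichlet--Poincaré) inequality on $[0,1]$, whose sharp constant is $1/\pi^2$ with first eigenfunction $\sin(\pi t)$, applied coordinatewise gives
\[
\int_0^1\norm{\bm f(t)}_2^2\,dt\le \frac1{\pi^2}\int_0^1\norm{\bm w(t)}_2^2\,dt .
\]
Taking expectation over $\bm Z_0^{(K)}$ shows that $\norm{\bm Z_t-\sfH_t}_{L^2}\le \pi^{-1}\sqrt{\mc T^{(K)}}$. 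Combining the pieces yields
\[
\sqrt{S^{(K)}(\mc Z)}\le \Bigl(1+\frac{\ell_K}{\pi}\Bigr)\sqrt{\mc T^{(K)}},
\]
and squaring gives \eqref{eq: coupling trajectory comparison}. The final claim, that $\mc T^{(K)}=0$ forces $S^{(K)}(\mc Z)=0$, is then immediate.

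I expect the main obstacle to be integrability and measurability rather than the inequality itself. We need $\bm w\in L^2$ pathwise for almost every $\bm Z_0$, which follows from $\mc T^{(K)}<\infty$ via Fubini. We also need the predictor $\bm v_t^{(K)}(\sfH_t)$ to lie in $L^2$ so that the projection bound is legitimate; this follows from the Lipschitz bound together with the two $L^2$ estimates above. Finally, the conditioning in the definition of $S$ must be read as conditioning on $(t,\sfH_t)$ jointly, so that $g(t,\sfH_t)$ is an admissible competitor.
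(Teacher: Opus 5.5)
Your proposal is correct and follows essentially the same route as the paper: bound $S^{(K)}(\mc Z)$ by the $L^2$ error of the suboptimal predictor $\bm v_t^{(K)}(\sfH_t)$, split it with Minkowski and the $\ell_K$-Lipschitz bound, and control the chord deviation $\bm Z_t^{(K)}-\sfH_t$, which vanishes at $t=0$ and $t=1$, by Wirtinger's inequality with constant $1/\pi^2$. Your remarks on pathwise square-integrability via Fubini and on the admissibility of the competitor are somewhat more careful than the paper's write-up, but the argument is the same.
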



Theorem~\ref{thm: trajectory controls coupling}
provides a theoretical justification for using
$\mc T^{(K)}$ as a surrogate for coupling straightness.
In particular, controlling the trajectory-based defect
also controls the coupling-based defect, up to a
multiplicative factor determined by the spatial
regularity of the velocity field. This conforms with the results in Lemma \ref{lemma: straight coupling and flow}. More importantly, the same comparison applies to a
learned velocity field $\widehat{\bm v}^{(K)}$,
provided that the corresponding exact learned ODE
trajectories satisfy the assumptions of the theorem. The proof is deferred to Appendix~\ref{sec: proof of trajectory controls straightness}.

 Finally, we also point out that the Lipschitz condition, although strong, is a standard assumption in the existing RF literature \citep{bansal2025wasserstein, guan2026mirror, low_kumar2026flow, roy2026optimal}. In our settings, we start from a standard Gaussian distribution and the target distributions are isotropic GMMs or log-concave distributions absolutely continuous with respect to Lebesgue measure. Then, the Lipschitz condition holds in both cases. However, in general \cite{mena2025statistical} showed that the uniform Lipschitz conditions may fail for irregular distributions.

\begin{figure}[t]
    \centering
    \includegraphics[width=0.75\linewidth]{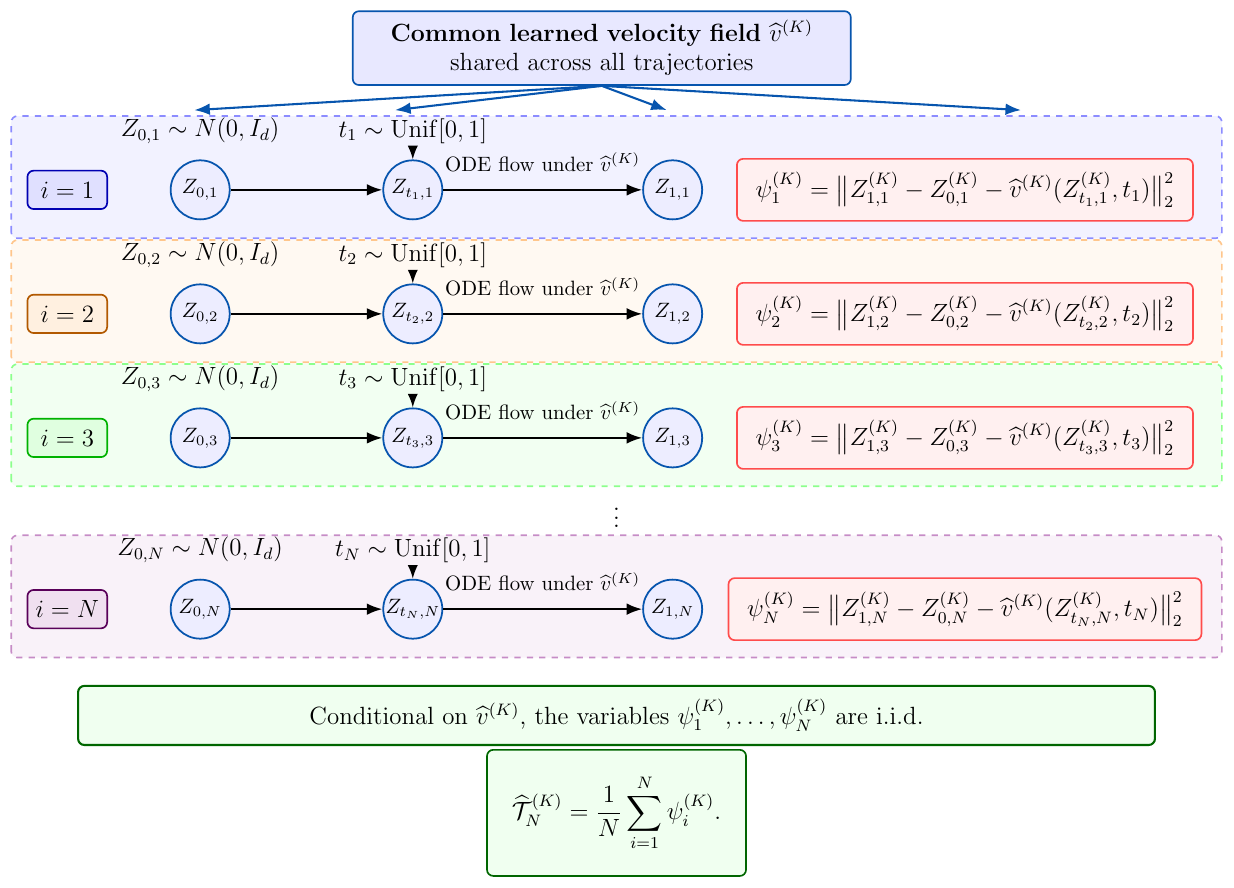}
    \caption{Construction of the trajectory functionals.}
    \label{fig: Trajectory_functionals}
\end{figure}

\subsection{Estimation and asymptotic inference}

The characterization above suggests a simple Monte Carlo estimator of
$\mc T^{(K)}$.
After training the $K$th rectified flow, we generate
$N$
independent trajectories using the discretized sampler
\eqref{eq: learned ode} starting from $N$ samples $\{\bm Z_{0,i}^{(K)}\}_{i = 1}^N \overset{i.i.d}{\sim}\bcN\lb\bm 0, \bI_d\rb$. For each trajectory, an independent time
$t_i\sim\mathrm{Unif}[0,1]$
is sampled (\cf. Figure \ref{fig: Trajectory_functionals}), and the corresponding trajectory state
$\bm Z_{t_i,i}^{(K)}$
is obtained by evaluating the numerical ODE solution at time
$t_i$.
The resulting trajectory residual is

\begin{equation}
\label{eq:trajectory_residual}
\psi_i^{(K)}
=
\norm{
\bm Z_{1,i}^{(K)}
-
\bm Z_{0,i}^{(K)}
-
\widehat{\bm v}^{(K)}
\lb
\bm Z_{t_i,i}^{(K)},
t_i
\rb
}_2^2.
\end{equation}

Since the learned velocity field is regarded as fixed, the only randomness
arises from the independently generated trajectories and the sampled times.
Consequently, conditional on $\widehat{\bm v}^{(K)}$, the residuals
$\{\psi_i^{(K)}\}_{i=1}^{N}$
are independent and identically distributed with mean
$\mc T^{(K)}$.

The trajectory-based straightness functional can therefore be estimated in a natural way by the
sample average

\begin{equation}
\label{eq:trajectory_estimator}
\widehat{\mc T}_N^{(K)}
=
\frac{1}{N}
\sum_{i=1}^{N}
\psi_i^{(K)}.
\end{equation}
We first establish consistency and asymptotic normality of the proposed estimator, which is a direct consequence of the weak law of large numbers and classical central limit theory.

\begin{thm}[Consistency and asymptotic normality]
\label{thm:trajectory_clt}

Let $\mc F_{K-1}$ be the sigma-field generated by the samples obtained via $(K-1)$th rectified flow. Then, conditioned on $\mc F_{K-1}$, we have

\[
\widehat{\mc T}_N^{(K)}
\xrightarrow{p}
\mc T^{(K)}.
\]
Additionally, if $\sigma^2_K :=\Var(\psi_i^{(K)} \mid \mc F_{K-1}) \in (0, \infty)$ almost surely, then conditional on
$\mc F_{K-1}$,

\[
\frac{\sqrt N
\lb 
\widehat{\mc T}_N^{(K)}
-
\mc T^{(K)}
\rb}{\sigma_K}
\xrightarrow{d}
\bcN\lb0,1\rb \quad \text{almost surely.}
\]
\end{thm}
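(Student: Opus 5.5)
Conditional on $\mc F_{K-1}$, the velocity field $\widehat{\bm v}^{(K)}$ is a fixed deterministic function. We therefore argue entirely under the conditional law $\pr(\cdot\mid\mc F_{K-1})$ and reduce both claims to the classical i.i.d.\ limit theorems.

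\emph{Step 1: conditional i.i.d.\ structure.} Each residual is a fixed function of fresh randomness:
\[
\psi_i^{(K)}=g\lb\bm Z_{0,i}^{(K)},t_i\rb .
\]
Here the map $g$ is built from the Euler recursion \eqref{eq: learned ode} applied to the fixed field $\widehat{\bm v}^{(K)}$. It is Borel measurable because it is a finite composition of measurable maps, with $\bm Z_{t,i}^{(K)}$ obtained from the grid values in a measurable way, for instance by interpolation. The pairs $(\bm Z_{0,i}^{(K)},t_i)$ are drawn i.i.d.\ from $\bcN(\bm 0,\bI_d)\otimes\mathrm{Unif}[0,1]$, independently of $\mc F_{K-1}$. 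Hence, conditional on $\mc F_{K-1}$, the variables $\psi_1^{(K)},\dots,\psi_N^{(K)}$ are i.i.d., nonnegative, and have mean $\mc T^{(K)}$.

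Formally, we use a regular conditional distribution, or equivalently the freezing lemma: when $X$ is $\mc F_{K-1}$-measurable and $Y$ is independent of $\mc F_{K-1}$, one has $\bbE[h(X,Y)\mid\mc F_{K-1}]=\bbE[h(x,Y)]\big|_{x=X}$. This lets us treat the problem as a deterministic-$\widehat{\bm v}$ problem for almost every realization.

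\emph{Step 2: consistency.} Assume $\mc T^{(K)}<\infty$ almost surely, which is implicit since $\mc T^{(K)}$ must be well defined. The weak law of large numbers for i.i.d.\ integrable variables then gives $\widehat{\mc T}_N^{(K)}\to\mc T^{(K)}$ in conditional probability, for almost every realization of $\mc F_{K-1}$. Dominated convergence then upgrades this to unconditional convergence in probability, if desired.

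\emph{Step 3: asymptotic normality.} Under the additional assumption $\sigma_K^2\in(0,\infty)$ almost surely, the Lindeberg--L\'evy CLT applied to the conditional i.i.d.\ sequence gives the standardized convergence to $\bcN(0,1)$ for almost every realization. The phrase ``almost surely'' in the statement refers exactly to this: the conditional characteristic functions converge to $e^{-s^2/2}$ on an event of full probability.

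\emph{Main obstacle.} The only genuinely delicate point is the measure-theoretic justification of conditioning. We must ensure that conditioning on $\mc F_{K-1}$ indeed renders $\widehat{\bm v}^{(K)}$ fixed, i.e., that the training randomness of round $K$ is $\mc F_{K-1}$-measurable or is absorbed into it, and that the fresh trajectories and times are independent of it. We state this explicitly as part of the sampling protocol and then invoke the freezing lemma. Everything after that is routine.
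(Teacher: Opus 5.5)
Your proposal is correct and follows the same route as the paper: condition on $\mc F_{K-1}$ so that $\widehat{\bm v}^{(K)}$ is frozen and the residuals $\psi_i^{(K)}$ are conditionally i.i.d.\ with mean $\mc T^{(K)}$. You then apply the conditional weak law of large numbers, under the integrability assumption $\bbE[\psi_1^{(K)}\mid\mc F_{K-1}]<\infty$ (which, since $\psi_i^{(K)}\ge 0$, is exactly your $\mc T^{(K)}<\infty$), and the conditional Lindeberg--L\'evy CLT. Your freezing-lemma argument, and your remark that any extra training randomness must be absorbed into $\mc F_{K-1}$, make precise what the paper only asserts as ``conditional on $\mc F_{K-1}$ (equivalently, conditional on the learned velocity field).''
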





The asymptotic normality property above eventually leads to the uncertainty quantification of the straightness functional. Towards that end, we begin with the natural plug-in estimator of the asymptotic variance.

\[
\widehat\sigma_{N,K}^2
=
\frac{1}{N-1}
\sum_{i=1}^{N}
\lb 
\psi_i^{(K)}
-
\widehat{\mc T}_N^{(K)}
\rb^2.
\]

The next result establishes its consistency.
\begin{prop}[Variance estimation]
\label{prop:variance}

Let the assumptions of
Theorem~\ref{thm:trajectory_clt} hold. Then, conditional on
$\mc F_{K-1}$, 

\[
\widehat\sigma_{N,K}^2
\xrightarrow{p}
\sigma_K^2.
\]

Consequently,

\[
\frac{
\widehat{\mc T}_N^{(K)}
-
\mc T^{(K)}
}
{\widehat\sigma_{N,K}/\sqrt N}
\xrightarrow{d}
\bcN\lb0,1\rb.
\]

\end{prop}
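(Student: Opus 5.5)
The proof of Proposition~\ref{prop:variance} runs in two stages: first the consistency of $\widehat\sigma_{N,K}^2$ via the weak law of large numbers, then Slutsky's lemma combined with Theorem~\ref{thm:trajectory_clt}. Throughout we work conditionally on $\mc F_{K-1}$ (equivalently, with $\widehat{\bm v}^{(K)}$ fixed). Under this conditioning the residuals $\psi_1^{(K)},\dots,\psi_N^{(K)}$ are i.i.d.\ with mean $\mc T^{(K)}$ and variance $\sigma_K^2\in(0,\infty)$, as noted before \eqref{eq:trajectory_estimator}. Finiteness of $\sigma_K^2$ gives $\bbE[(\psi_i^{(K)})^2\mid\mc F_{K-1}]<\infty$, which is exactly what the WLLN needs in the second step.

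For consistency, expand the sample variance algebraically:
\[
\widehat\sigma_{N,K}^2=\frac{N}{N-1}\left(\frac1N\sum_{i=1}^N\lb\psi_i^{(K)}\rb^2-\lb\widehat{\mc T}_N^{(K)}\rb^2\right).
\]
By Theorem~\ref{thm:trajectory_clt} (the WLLN part), $\widehat{\mc T}_N^{(K)}\xrightarrow{p}\mc T^{(K)}$. The continuous mapping theorem then gives $(\widehat{\mc T}_N^{(K)})^2\xrightarrow{p}(\mc T^{(K)})^2$. Applying the WLLN to the i.i.d.\ integrable variables $(\psi_i^{(K)})^2$ gives
\[
\frac1N\sum_{i=1}^N(\psi_i^{(K)})^2\xrightarrow{p}\bbE[(\psi_1^{(K)})^2\mid\mc F_{K-1}]=\sigma_K^2+(\mc T^{(K)})^2.
\]
Since $N/(N-1)\to1$, combining these limits yields $\widehat\sigma_{N,K}^2\xrightarrow{p}\sigma_K^2$.

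For the studentized limit, write
\[
\frac{\widehat{\mc T}_N^{(K)}-\mc T^{(K)}}{\widehat\sigma_{N,K}/\sqrt N}=\frac{\sqrt N(\widehat{\mc T}_N^{(K)}-\mc T^{(K)})}{\sigma_K}\cdot\frac{\sigma_K}{\widehat\sigma_{N,K}}.
\]
The first factor converges in distribution to $\bcN(0,1)$ by Theorem~\ref{thm:trajectory_clt}. Because $\sigma_K>0$ and $x\mapsto\sigma_K/\sqrt{x}$ is continuous at $\sigma_K^2$, the continuous mapping theorem gives $\sigma_K/\widehat\sigma_{N,K}\xrightarrow{p}1$. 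On the event $\{\widehat\sigma_{N,K}=0\}$ the ratio can be given any value; that event has probability tending to zero, so this does not affect the limit. Slutsky's lemma then gives the stated $\bcN(0,1)$ limit.

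The only point needing care is the meaning of ``conditional on $\mc F_{K-1}$'': the arguments above should be applied under the regular conditional distribution given $\mc F_{K-1}$, for almost every realization. This is legitimate because, for each fixed learned field, the residuals are genuinely i.i.d.\ and the hypotheses $\sigma_K^2\in(0,\infty)$ hold almost surely. No other obstacle is expected.
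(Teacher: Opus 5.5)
Your proposal is correct and follows the paper's proof step for step. Both use the same algebraic expansion of $\widehat\sigma_{N,K}^2$, the conditional WLLN applied to $\psi_i^{(K)}$ and $(\psi_i^{(K)})^2$ together with the continuous mapping theorem, and then the same factorization combined with Theorem~\ref{thm:trajectory_clt} and Slutsky's lemma; your explicit treatment of the event $\{\widehat\sigma_{N,K}=0\}$ and of the regular conditional distribution only makes precise details the paper leaves implicit.
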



Let $z_{1-\alpha}$ denote the $(1- \alpha)$th quantile of the standard Gaussian distribution. Based on the above result, the one-sided upper confidence bound
\[
U_{N,K}
=
\widehat{\mc T}_N^{(K)}
+
z_{1-\alpha}
\frac{\widehat\sigma_{N,K}}{\sqrt N}
\]
will be used in the next subsection to construct a statistically principled
stopping rule for determining the required number of rectification steps.
\subsection{A statistically principled stopping rule}
\label{sec:straightness-stopping}

The confidence bound developed in the previous subsection naturally leads to
a statistically principled criterion for determining whether an additional
rectification step is warranted. Rather than seeking perfect straightness, we
instead aim to identify the smallest number of rectification steps for which
the learned flow is sufficiently straight.

Let $\delta>0$ denote a user-specified tolerance representing the maximum
acceptable deviation from trajectory straightness. At the $K$th rectification
step, we consider the one-sided hypothesis

\[
H_0:\;
\mc T^{(K)}>\delta,
\qquad
\text{versus}
\qquad
H_1:\;
\mc T^{(K)}\le\delta.
\]
Since $\mc T^{(K)}$ is unknown, we base the decision on the upper
one-sided confidence bound

\[
U_{N,K}
:=
\widehat{\mc T}_N^{(K)}
+
z_{1-\alpha}
\frac{\widehat\sigma_{N,K}}{\sqrt N}.
\]
The null hypothesis is rejected whenever $
U_{N,K}\le\delta$,
indicating that the learned rectified flow is sufficiently straight with
asymptotic confidence level $1-\alpha$.
\begin{prop}[Validity of the stopping criterion]
\label{prop:stopping}

Under the assumptions of
Theorem~\ref{thm:trajectory_clt}, the upper one-sided confidence bound

\[
U_{N,K}
=
\widehat{\mc T}_N^{(K)}
+
z_{1-\alpha}
\frac{\widehat\sigma_{N, K}}{\sqrt N}
\]
satisfies the coverage probability 

\[
\lim_{N \to \infty}\pr\!\lb 
\mc T^{(K)}
\le
U_{N,K}
\rb
=
1-\alpha.
\]
Consequently, the stopping rule
\[
U_{N,K}\le\delta
\]
ensures the following type-I error control:

\[
\lim_{N \to \infty}\pr_{H_0}\!\lb 
U_{N,K}\le\delta
 \rb
\le
\alpha,
\]
i.e., the proposed stopping rule falsely certifies a learned flow as
sufficiently straight with asymptotic probability at most
$\alpha$. 





\end{prop}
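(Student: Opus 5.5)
The argument is the standard Studentized one-sided interval argument, done conditionally on $\mc F_{K-1}$ (equivalently, on the learned field $\widehat{\bm v}^{(K)}$).

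\emph{Coverage.} By Proposition~\ref{prop:variance}, conditional on $\mc F_{K-1}$,
\[
T_N:=\frac{\widehat{\mc T}_N^{(K)}-\mc T^{(K)}}{\widehat\sigma_{N,K}/\sqrt N}\xrightarrow{d}\bcN(0,1).
\]
Rewrite the event $\{\mc T^{(K)}\le U_{N,K}\}$ as $\{T_N\ge -z_{1-\alpha}\}$. The standard normal distribution function is continuous, so the portmanteau theorem applies at the point $-z_{1-\alpha}$ and gives
\[
\pr(T_N\ge -z_{1-\alpha}\mid\mc F_{K-1})\to 1-\Phi(-z_{1-\alpha})=1-\alpha .
\]
This holds almost surely in the conditioning variable. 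The conditional probabilities are bounded by one, so dominated convergence lets us pass from the conditional to the unconditional probability, which yields the stated limit. On the event $\widehat\sigma_{N,K}=0$ the ratio $T_N$ is undefined. Since $\widehat\sigma_{N,K}\to\sigma_K>0$ in probability, this event has vanishing probability and can be ignored.

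\emph{Type-I error.} Under $H_0$ we have $\mc T^{(K)}>\delta$. Hence on $\{U_{N,K}\le\delta\}$ we get $U_{N,K}<\mc T^{(K)}$, that is, $\{U_{N,K}\le\delta\}\subseteq\{\mc T^{(K)}>U_{N,K}\}$. Taking $\limsup$ and using the coverage result,
\[
\limsup_{N\to\infty}\pr_{H_0}(U_{N,K}\le\delta)\le 1-(1-\alpha)=\alpha .
\]
The stated ``$\lim\le\alpha$'' should be read as this $\limsup$ bound. In fact, for fixed $\mc T^{(K)}>\delta$, consistency of $\widehat{\mc T}_N^{(K)}$ and $\widehat\sigma_{N,K}/\sqrt N\to0$ give $U_{N,K}\to\mc T^{(K)}$ in probability, so the limit actually exists and equals $0$.

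\emph{Main subtlety.} The only delicate point is the conditioning bookkeeping. Both the convergence in Proposition~\ref{prop:variance} and the positivity $\sigma_K>0$ are statements conditional on $\mc F_{K-1}$, holding almost surely. The plan is to carry out every step conditionally, keep $\mc T^{(K)}$ fixed in those steps, and remove the conditioning only at the end via dominated convergence.
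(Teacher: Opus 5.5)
Your proposal is correct and follows essentially the same route as the paper. Both arguments take the conditional Studentized convergence from Proposition~\ref{prop:variance}, evaluate the one-sided event at $-z_{1-\alpha}$, and remove the conditioning on $\mc F_{K-1}$ by dominated convergence; the type-I bound comes from the fact that, under $H_0$, the rejection event lies inside the non-coverage event. The paper phrases that last step as dropping the negative shift $(\delta-\mc T^{(K)})\sqrt N/\widehat\sigma_{N,K}$, and your extra remark that the limit under a fixed null actually exists and equals $0$ is a correct refinement that also justifies the paper's use of $\lim$ rather than $\limsup$.
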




The proposed rule terminates the rectification procedure at the first iteration
for which the remaining trajectory straightness is certified to be no larger
than the prescribed tolerance. Consequently, the stopping decision explicitly
accounts for the sampling variability of the estimated straightness functional
, while avoiding unnecessary rectification steps. The steps are detailed in Algorithm \ref{alg:stopping} below.

\begin{remark}
Algorithm~\ref{alg:stopping} repeatedly evaluates the same hypothesis (that the remaining straightness is below $\delta$) across iterations. Using a fixed level $\alpha$ in every iteration generally does \emph{not} control the overall type-I error at $\alpha$.

If one enforces a maximum reflow budget $K_{\max}$, then family-wise type-I error can be controlled via an \emph{alpha-spending} schedule: choose (possibly adaptive, data-dependent) levels $\{\alpha_k\}_{k=1}^{K_{\max}}$ such that $\sum_{k=1}^{K_{\max}} \alpha_k \le \alpha$, and use $z_{1-\alpha_k}$ at iteration $k$. A simple choice is Bonferroni ($\alpha_k=\alpha/K_{\max}$), while geometric schedules (e.g., $\alpha_k=\alpha/2^k$) spend more alpha early.
\end{remark}

\begin{algorithm}[h]
\caption{Adaptive selection of the number of rectification steps}
\label{alg:stopping}
\begin{algorithmic}[1]
\STATE Choose confidence level $1-\alpha$ and tolerance $\delta$.
\STATE Initialize $K=1$.

\REPEAT
    \STATE Train the $K$th rectified flow and obtain $\widehat{\bm v}^{(K)}$.
    \STATE Generate independent trajectories from the learned ODE.
    \STATE Compute the trajectory residuals
    $\{\psi_i^{(K)}\}_{i=1}^N$
    using \eqref{eq:trajectory_residual}.
    \STATE Compute
    $\widehat{\mc T}_N^{(K)}$
    and
    $\widehat\sigma_{N,K}$.
    \STATE Form
    \[
    U_{N,K}
    =
    \widehat{\mc T}_N^{(K)}
    +
    z_{1-\alpha}
    \frac{\widehat\sigma_{N,K}}{\sqrt N}.
    \]
    \IF{$U_{N,K}>\delta$}
        \STATE Perform one additional rectification.
        \STATE Set $K\leftarrow K+1$.
    \ENDIF
\UNTIL{$U_{N,K}\le\delta$}

\RETURN $\widehat K=K$.
\end{algorithmic}
\end{algorithm}

\section{Numerical experiments}

\subsection{2D Gaussian mixtures experiment}
\label{sec:exp-gmm}

We first illustrate the effect of iterative rectification on a simple two-dimensional example, where the geometry of the learned trajectories can be directly visualized. We consider the source distribution
\(
p_0= \bcN(0, \bm I_2)
\)
and the symmetric two-component Gaussian mixture
\[
p_1
=
\frac12  \bcN(\bm\mu,\sigma^2 \bm I_2)
+
\frac12  \bcN(-\bm \mu,\sigma^2 \bm I_2),
\qquad
\bm \mu=(3,0)^\top,\qquad \sigma=0.5.
\]
The initial rectified flow (1-RF) is trained using independent samples
\(\bm X_0\sim p_0\) and \(\bm X_1\sim p_1\), with the usual linear interpolation
\[
\bm X_t=(1-t) \bm X_0+t \bm X_1.
\]
We then perform two additional rounds of rectification. In each round, we generate endpoint pairs by solving the ODE associated with the previously learned velocity field and train the next velocity field using the resulting coupling. 

\begin{figure}[htbp]
    \centering
    \includegraphics[width=\linewidth]{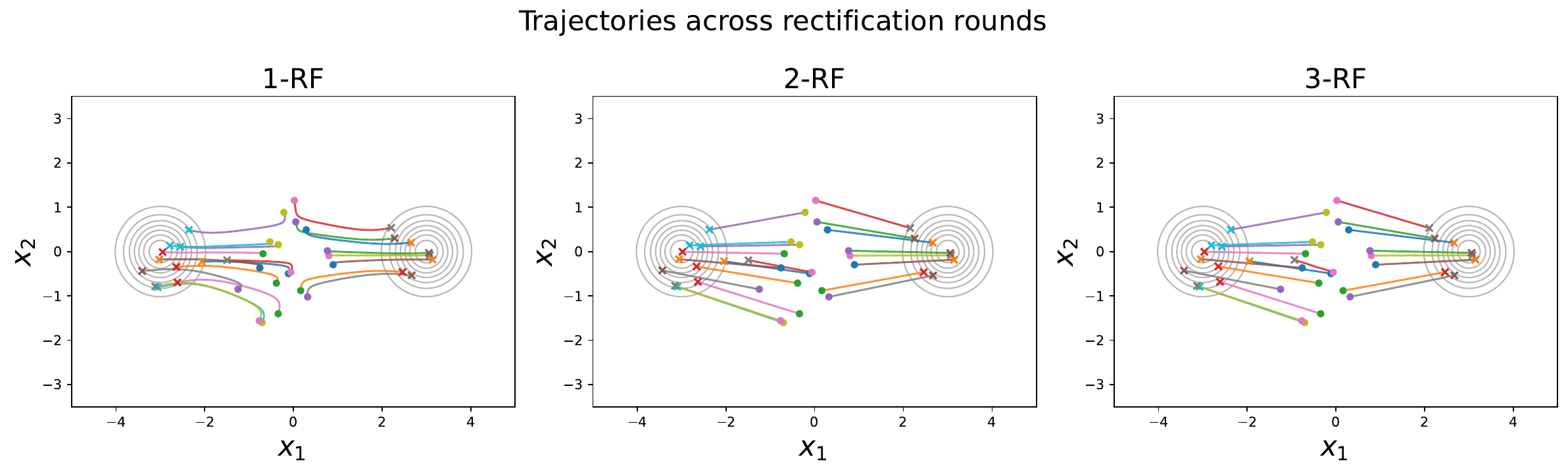}
    \caption{The figure shows that successive reflow steps straighten the trajectories for the 2D-GMM example.}
    \label{fig:gmm-trajectories}
\end{figure}

\paragraph*{Trajectory straightening}
Figure~\ref{fig:gmm-trajectories} displays the trajectories produced by the three learned flows. To facilitate comparison, we use the same initial points \(\bm Z_0\sim p_0\) in all three panels. The effect of rectification is visually pronounced. The trajectories of 1-RF exhibit substantial curvature as the mass originating near the center is separated toward the two modes of the target distribution. After one additional rectification, the trajectories become nearly linear. The third round produces only a minor additional geometric improvement.

We quantify this phenomenon using the trajectory-based straightness functional introduced in Section~\ref{sec: straightness functional}. For each \(K\), we generate \(N=5000\) fresh trajectories and compute
\[
\widehat {\mc T}_N^{(K)}
=
\frac1N
\sum_{i=1}^N
\left\|
\bm Z_{1,i}^{(K)}- \bm Z_{0,i}^{(K)}
-
\widehat {\bm v}^{(K)}
\bigl(\bm Z_{t_i,i}^{(K)},t_i\bigr)
\right\|_2^2,
\qquad
t_i\stackrel{\mathrm{i.i.d.}}{\sim}\operatorname{Unif}[0,1].
\]
We also report the corresponding standard error and the one-sided, per-round \(95\%\) upper confidence bound
\[
U_{N,K}
=
\widehat {\mc T}_N^{(K)}
+
z_{0.95}\frac{\widehat\sigma_{N,K}}{\sqrt N}.
\]

\begin{table}[t]
\centering
\caption{Straightness and generation quality for the two-dimensional Gaussian-mixture experiment. The empirical \(W_2\) distance is reported as mean $\pm$ standard error over five independent runs.}
\label{tab:gmm-straightness}
\small
\setlength{\tabcolsep}{6pt}
\renewcommand{\arraystretch}{1.15}
\begin{tabular}{lcccc}
\toprule
Method & $\widehat{\mc T}_N^{(K)}$ & $\mathrm{SE}$ & $U_{N,K}$ & Empirical $W_2$ \\
\midrule
1-RF & 1.766545 & 0.030910 & 1.817388 & $0.5319\pm0.0666$ \\
2-RF & 0.000575 & 0.000139 & 0.000804 & $0.4757\pm0.0387$ \\
3-RF & 0.000342 & 0.000100 & 0.000506 & $0.4882\pm0.0581$ \\
\bottomrule
\end{tabular}
\end{table}

The numerical results agree closely with the trajectory visualization. A single reflow step reduces the estimated straightness defect from $1.7665$
to $5.75\times10^{-4}$, 
a reduction of more than three orders of magnitude. A further rectification decreases the defect to \(3.42\times10^{-4}\), but the improvement is considerably smaller. Thus, in this example, most of the straightening effect occurs during the first reflow step, which conforms with the prior theoretical result in this paper.

\paragraph*{Stopping rule}
The confidence bounds lead to the same conclusion. In particular,
\[
U_{N,1}=1.8174,
\qquad
U_{N,2}=8.04\times10^{-4},
\qquad
U_{N,3}=5.06\times10^{-4}.
\]
Consequently, for each of the prescribed tolerances
\[
\delta\in\{0.50,0.20,0.10,0.05\},
\]
the stopping rule in Algorithm \ref{alg:stopping} rejects
\(H_0:\mc T^{(K)}>\delta\) for the first time at \(K=2\). Hence, the per-round confidence rule selects 2-RF, in agreement with the visual observation in Figure \ref{fig:gmm-trajectories}.

Finally, we examine whether the improvement in straightness comes at the expense of distributional accuracy. We compute an empirical Wasserstein-2 distance between the \(1000\) samples generated by each RF and the \(1000\) fresh samples from the target distribution, averaged over five independent runs. As shown in Table~\ref{tab:gmm-straightness}, the empirical \(W_2\) distances remain comparable across the three rounds, with 2-RF attaining the smallest average value. Thus, in this simple example, rectification dramatically straightens the trajectories without a corresponding deterioration in the quality of the generated distribution.

This observation is consistent with the population interpretation of rectification: the rectified coupling preserves the endpoint marginals while progressively simplifying the geometry of the transport. However, in finite samples, the learned velocity fields and numerical ODE solutions only approximate this idealized operation. The empirical \(W_2\) diagnostic therefore provides a complementary check that the observed reduction in the straightness defect is not merely obtained by sacrificing the target distribution.

\subsection{MNIST data experiment}
\label{sec:exp-mnist}

We next consider the MNIST dataset to study the proposed straightness criterion
in a higher-dimensional setting. The images are normalized to $[-1,1]$, so
that the target distribution is supported on $[-1,1]^{28\times28}$, while the
source distribution is
\(
p_0= \bcN(0,\bm I_{784}).
\)
We first train a rectified flow between independent samples
$\bm X_0\sim p_0$ and MNIST images $\bm X_1\sim p_1$, and subsequently perform
two additional rounds of rectification. At each round, the new RF is trained
using endpoint pairs generated from the preceding learned flow.

\begin{figure}[htbp]
    \centering
    \includegraphics[width=\linewidth]{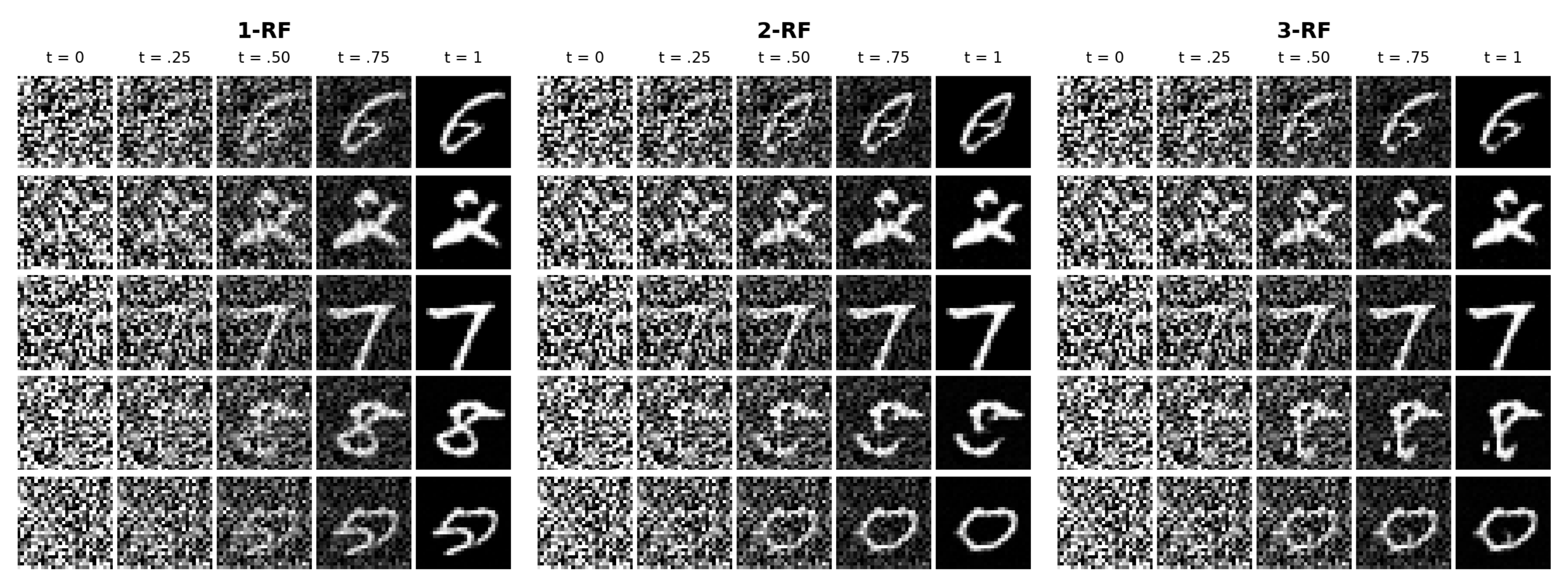}
    \caption{Evolution of the same eight initial Gaussian samples under 1-RF, 2-RF,
    and 3-RF. It is seen that successive rectification visually degrades the quality of the generated images in Row 4. For Row 1, the generation quality is visually worse for 2-RF compared to 1-RF and 3-RF.}
    \label{fig:mnist-trajectories}
\end{figure}

\paragraph*{Trajectory straightening and stopping}
Figure~\ref{fig:mnist-trajectories} shows the evolution of the same eight
initial Gaussian samples under 1-RF, 2-RF, and 3-RF. We quantify the
straightening using $N=2000$ fresh trajectories at each round. The resulting
straightness estimates, upper confidence bounds, and generation quality are
reported in Table~\ref{tab:mnist-straightness}.

\begin{table}[tbh]
\centering
\caption{Straightness and generation quality for MNIST. The MNIST feature FID is reported as mean $\pm$ standard error over ten independent runs.}
\label{tab:mnist-straightness}
\small
\setlength{\tabcolsep}{6pt}
\renewcommand{\arraystretch}{1.15}
\begin{tabular}{lcccc}
\toprule
Method & $\widehat{\mc T}_N^{(K)}$ & $\mathrm{SE}$ & $U_{N,K}$ & FID ($\downarrow$) \\
\midrule
1-RF & 56.8886 & 1.3000 & 59.0269 & $10.52\pm0.49$ \\
2-RF & 4.3075  & 0.6389 & 5.3584  & $13.40\pm0.51$ \\
3-RF & 3.2905  & 0.6393 & 4.3421  & $16.99\pm0.64$ \\
\bottomrule
\end{tabular}
\end{table}

The first reflow reduces the estimated straightness defect by approximately
$92.4\%$, from $56.89$ to $4.31$, whereas the second reflow gives a more
modest reduction to $3.29$. Thus, as in the Gaussian-mixture experiment, most
of the straightening occurs after a single reflow step.

In high dimensions, directly specifying a fixed tolerance on the total
straightness $\mc T^{(K)}$ can be difficult to achieve, since it aggregates
the squared discrepancy over all coordinates. We therefore consider the
dimension-normalized criterion
\[
\frac{\mc T^{(K)}}{d}\leq \delta_0,
\qquad\text{or equivalently}\qquad
\mc T^{(K)}\leq d\delta_0.
\]
Here $\delta_0$ represents the desired average squared straightness defect per
coordinate. Besides making the tolerance comparable across dimensions, such a
criterion can be more practically meaningful in high-dimensional generative
problems: one may only require the average coordinate-wise deviation from a
straight trajectory to be small, rather than imposing a dimension-independent
bound on the aggregate discrepancy.
For MNIST, $d=28\times28=784$ is the full dimension. Taking $\delta_0=0.01$, for example, gives
$\delta=7.84$. Since
\[
U_{N,1}=59.03>7.84,
\qquad
U_{N,2}=5.36<7.84,
\]
Algorithm~\ref{alg:stopping} stops at $K=2$. Similarly, if $\delta_0 = 6 \times 10^{-3}$, then $\delta 
\approx 4.7 > 4.34 = U_{N,3}$, and Algorithm \ref{alg:stopping} stops at $K = 3$.

\paragraph*{Generation quality}
Table~\ref{tab:mnist-straightness} also reveals an important trade-off. While successive reflow substantially improves straightness, the MNIST feature FID increases from $10.52$ for 1-RF to $13.40$ and $16.99$ for 2-RF and 3-RF, respectively. This is consistent with recent observations that recursive reflow can accumulate approximation errors and lead to distributional drift or model collapse \citep{zhu2026modelcollapse,kim2025balanced}. Several remedies have been proposed, including incorporating real data in subsequent reflow rounds \citep{zhu2026modelcollapse,kim2025balanced}. An alternative approach is optimal self-distillation \citep{roy2026optimal}, which avoids recursive replacement of endpoint pairs and instead mixes true and teacher-generated velocity targets on fixed interpolants, with the goal of improving velocity estimation and downstream generation.

Our stopping rule provides a complementary perspective: rather than modifying the training procedure for subsequent rounds, it asks whether an additional reflow is necessary at all. In the present experiment, most of the improvement in straightness is already achieved by 2-RF, whereas an additional reflow provides only a modest reduction in the straightness defect while further increasing FID. Thus, statistically certifying approximate straightness can provide a principled mechanism for avoiding unnecessary reflow and the associated accumulation of distributional error.
\section{Conclusion}

In this paper, we have developed a criterion for the straightness of the couplings obtained from applying $2$-RF to the independent couplings formed by the data and a standard Gaussian distribution. Our criterion reduces to interpretable conditions for some common classes of data distributions, such as the class of isotropic Gaussian mixture models and the class of smooth, strongly log-concave distributions.  Furthermore, we show that this criterion is indeed sharp in the worst case by developing techniques for constructing non-straight couplings. Finally, we also develop statistical tests based on a trajectory-based functional to design stopping rules for reflow procedures.

Our work opens up the possibility for understanding further statistical properties of RFs. In particular, it would be interesting to study the behavior of couplings obtained from a higher number of rectification steps. We note that understanding a higher number of rectification steps requires more tools and new ideas than what we develop in the present article. One challenge is due to the fact that the solution of the ODE in \eqref{eq: ode-true} might not be unique for higher rectification steps; see recent papers \cite{mena2025statistical} and \cite{hertrich2026relation} for more details. 

\section*{Acknowledgments}
We gratefully acknowledge NSF grants CCF-2019844, CCF-2505865, and 2217069.



\paragraph*{AI statement.} The authors use GPT 6-Astra for generating the construction of the limiting ODEs, for performing various calculations in the proof and for proof auditing. In particular, the use of Piola's identity (Lemma~\ref{ns:lem:Piota-identity}) was suggested by GPT 6-Astra as an alternative to using topological degree.  The authors take full responsibility for all content and its accuracy.

\bibliographystyle{plainnat}
\bibliography{paper-ref-url}

\clearpage
\IfFileExists{arxiv/arxiv_supp}{\input{arxiv/arxiv_supp}}{\input{arxiv_supp}}

\end{document}

%% file: arxiv_supp.tex
\section*{Appendix}

\medskip
This supplement contains all the proofs and technical results for the main article.

\renewcommand{\thefigure}{S\arabic{figure}}
\renewcommand{\thetable}{S\arabic{table}}
\renewcommand{\theHfigure}{supp.\arabic{figure}}
\renewcommand{\theHtable}{supp.\arabic{table}}
\renewcommand{\theHsubfigure}{supp.\arabic{figure}.\arabic{subfigure}}
\setcounter{figure}{0}
\setcounter{table}{0}
\setcounter{class}{0}
\setcounter{defi}{2}
\setcounter{question}{0}
\setcounter{secnumdepth}{3}
\appendix
\section{Proof of Lemma~\ref{lemma: straight coupling and flow}}
\label{sec: proof of straight flow and coupling}
We have already discussed that if $(\bm X_0, \bm X_1)$ is a straight coupling, then $\Rectflow{\bm X_0, \bm X_1}$ yields a straight flow. In fact, the flow $\bm X_t := (1-t)\bm X_0 + t \bm X_1$ is exactly the solution to the 2-RF ODE, and it has a constant velocity almost surely. 

For the other direction, we write $\{\bm Z_t\}_{t\in [0,1]} = \Rectflow{\bm X_0, \bm X_1}$ and assume that it is a straight flow. In other words, the flow has constant velocity and takes the form $\bm Z_t = (1-t)\bm Z_0 + t \bm Z_1$. Also, due to the marginal preserving property of RF, we have $\Law(\bm Z_t) = \Law(\bm X_t)$. Collecting all of these facts, we can conclude

\begin{align*}
\mathbb E\|\bm Z_t\|^2
&=(1-t)\mathbb E\|\bm Z_0\|^2
+t\mathbb E\|\bm Z_1\|^2
-t(1-t)\mathbb E\|\bm Z_1-\bm Z_0\|^2,
\end{align*}
and 
\begin{align*}
\mathbb E\|\bm X_t\|^2
&=(1-t)\mathbb E\|\bm X_0\|^2
+t\mathbb E\|\bm X_1\|^2-t(1-t)\mathbb E\|\bm X_1-\bm X_0\|^2.
\end{align*}
This shows that
$\bbE\norm{\bm X_1-\bm X_0}_2^2
=\bbE\norm{\bm Z_1-\bm Z_0}_2^2$.
By Jensen's inequality, marginal preservation, and the
conditional variance identity,
\[
\begin{aligned}
0
&\leq
\int_0^1
\bbE\Big\|
    \bm X_1-\bm X_0
    -\bbE[\bm X_1-\bm X_0\mid\bm X_t]
\Big\|_2^2\,dt
\leq
\bbE\norm{\bm X_1-\bm X_0}_2^2
-\bbE\norm{\bm Z_1-\bm Z_0}_2^2
=0.
\end{aligned}
\]
Since the integrand is nonnegative, it follows that
\[
\bbE[\bm X_1-\bm X_0\mid\bm X_t]
=\bm X_1-\bm X_0
\qquad\text{almost surely}
\]
for almost every $t\in(0,1)$.
Thus $(\bm X_0,\bm X_1)$ is a straight coupling,
which completes the proof. $\hfill$ $\square$

\section{Proof of the results in Section~\ref{sec: a sufficient condition for straightness}}

  Let us start with some useful connections between the velocity $\bm v_t\lb \bm x\rb$ and the score function  
$$\bm s_t\lb \bm x\rb:= \nabla \log p_t\lb \bm x\rb$$
{ where $p_t$ is defined as in \eqref{eq:p_t}.}

 For $t\in(0,1)$, Equation \eqref{eq: drift-true} can be rewritten as
\begin{equation}
\label{eq: v_tweedie}
\bm v_t\lb \bm x\rb = \bbE[\bm X_1 - \bm X_0 \mid \bm X_t = \bm x] = \frac{1}{1-t}\Big( \bbE[\bm X_1 \mid \bm X_t = \bm x] - \bm x \Big) 
= \frac{1}{1-t} \cdot \left[  \frac{\bm x}{t} + \frac{\lb 1-t\rb^2}{t} \cdot \bm s_t\lb \bm x\rb - \bm x \right].
\end{equation}
Indeed, the above follows from first-order Tweedie's formula \citep{efron2011tweedie, meng2021estimating}, which guarantees that for any choice of $t \in (0,1)$ and $\bm x \in \bbR^d$,
\[
\bbE[\bm X_1 \mid \bm X_t = \bm x]=   \frac{\bm x}{t} + \frac{\lb 1-t\rb^2}{t} \cdot \bm s_t\lb \bm x\rb,\] 
Also, the second-order Tweedie's formula gives 
\[
\Cov_{\mu_{\bm x}}\lb \bm X_1
\rb = \frac{\lb 1-t\rb^2}{t^2} \cdot \bI_d + \frac{\lb 1-t\rb^4}{t^2} \cdot  D_{\bm x} \left[ \bm s_t\lb \bm x\rb \right]
\]
where $\mu_{\bm x}:= \mbox{Law} \lb \bm X_1 \Big| \bm X_t = \bm x \rb$. 

Substituting this in \eqref{eq: v_tweedie}, we obtain that 
\begin{equation}\label{eq:tweedie2}
\bm v_t\lb \bm x\rb = \frac{\bm x}{t} + \frac{\lb 1-t\rb}{t} \cdot  \bm s_t\lb \bm x\rb, \qquad
\text{and} \qquad 
D_{\bm x} \left[ \bm v_t\lb \bm x\rb \right] = - \frac{\bI_d}{1-t} + \frac{t}{\lb 1-t\rb^3} \cdot \Cov_{\mu_{\bm x}}\lb\bm X_1\rb.
\end{equation}
  The reformulation of $\bm v_t$ essentially shows that estimating $\bm v_t$ is equivalent to estimating the score $\bm s_t$ for $t \in \lb 0,1\rb$.

\subsection{Injectivity of terminal flow map}
\label{sec: proof of injectivity of terminal flow map}

In this subsection, we establish a basic injectivity property of the terminal
flow map associated with an ODE. Specifically, given a (possibly time-dependent)
velocity field $\bm v:[0,1]\times\bbR^d\to\bbR^d$, we write
$\bm v_t(\bm x):=\bm v(t,\bm x)$ and let $\sfZ_t(\bm z)$ denote the solution at
time $t$ to the initial value problem starting from $\bm z$. Under mild local
integrability and spatial regularity assumptions ensuring local uniqueness of
solutions, we show that the endpoint map $\sfZ_1:\bbR^d\to\bbR^d$ is
injective.

\begin{lemma}[Injectivity of the endpoint map]
\label{lemma: endpoint injectivity}
Let $\bm v:[0,1]\times\bbR^d\to\bbR^d$ be Borel measurable
and continuously differentiable in space for almost every $t$.
Suppose that, for every $R>0$,
\[
    \int_0^1 \left(
        \|\bm v_t(\bm 0)\|_2
        + \sup_{\Vert\bm z\Vert_2\le R}
          \|D\bm v_t(\bm z)\|_{\mathrm{op}}
    \right)\,dt<\infty.
\]
Assume that, for every $\bm z\in\bbR^d$, 
the ODE
\[
    \frac{d}{dt}\sfZ_t(\bm z)
    =\bm v_t\bigl(\sfZ_t(\bm z)\bigr),
    \qquad \sfZ_0(\bm z)=\bm z,
\]
admits an absolutely continuous solution on $[0,1]$.
Then $\sfZ_1:\bbR^d\to\bbR^d$ is injective.
\end{lemma}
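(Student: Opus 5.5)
We argue by contradiction, exploiting uniqueness of solutions backward in time. Suppose $\bm z\neq\bm z'$ but $\sfZ_1(\bm z)=\sfZ_1(\bm z')=:\bm w$. Let $\bm a_t:=\sfZ_t(\bm z)$ and $\bm b_t:=\sfZ_t(\bm z')$. Both are absolutely continuous curves on $[0,1]$ that solve the same ODE and share the terminal value $\bm w$ at $t=1$. Since $\bm a_0\neq\bm b_0$, the contradiction will come from showing that two solutions agreeing at time $1$ must agree on all of $[0,1]$.

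The first step is a localization. Both curves are continuous on the compact interval $[0,1]$, so they stay in a ball of some radius $R$: $\sup_{t}\max(\|\bm a_t\|_2,\|\bm b_t\|_2)\le R$. Set
\[
\ell(t):=\sup_{\|\bm z\|_2\le R}\|D\bm v_t(\bm z)\|_{\mathrm{op}} .
\]
The hypothesis gives $\ell\in L^1(0,1)$. For almost every $t$, $\bm v_t$ is $C^1$ in space, and the ball is convex. The mean value inequality along the segment joining $\bm a_t$ and $\bm b_t$ therefore yields
\[
\|\bm v_t(\bm a_t)-\bm v_t(\bm b_t)\|_2\le \ell(t)\,\|\bm a_t-\bm b_t\|_2
\]
for almost every $t$.

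The second step is a backward Grönwall argument. Define $\bm e_t:=\bm a_t-\bm b_t$. It is absolutely continuous, with $\frac{d}{dt}\bm e_t=\bm v_t(\bm a_t)-\bm v_t(\bm b_t)$ for almost every $t$, and $\bm e_1=\bm 0$. The fundamental theorem of calculus for absolutely continuous functions, integrated from $t$ to $1$, gives
\[
\|\bm e_t\|_2\le\int_t^1 \ell(s)\|\bm e_s\|_2\,ds .
\]
Set $u(t):=\|\bm e_t\|_2$, which is continuous. The integral form of Grönwall's inequality, in its backward version with the $L^1$ kernel $\ell$ and zero inhomogeneous term, forces $u\equiv 0$. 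One way to see this is to let $F(t):=\int_t^1\ell u\,ds$ and note that $\frac{d}{dt}\bigl(F(t)e^{\int_t^1\ell}\bigr)\ge 0$ almost everywhere while $F(1)=0$. In particular $\bm e_0=\bm 0$, which contradicts $\bm z\neq\bm z'$. This is exactly the uniqueness statement of \citet[Lemma 8.1.4]{ambrosio2005gradient} applied to the time-reversed field $-\bm v_{1-t}$, and one could simply cite it.

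The main technical point is the localization step, since the derivative bound is assumed only on balls and only in integrated form. The Lipschitz bound must be applied to the specific pair of curves, which are a priori bounded by continuity, rather than to a global Lipschitz constant. The measurability of $\ell$ is also a small issue. It can be handled either by restricting the supremum to a countable dense subset of the ball, using continuity of $D\bm v_t$, or by working with the outer integral. The bound on $\|\bm v_t(\bm 0)\|_2$ is not needed for injectivity itself; it only guarantees that the assumed solutions are meaningful.
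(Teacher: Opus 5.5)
Your argument is correct, but it is organized differently from the paper's proof.

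\textbf{The paper's route.} The paper first uses the mean value theorem to check the hypotheses of \citet[Lemma~8.1.4]{ambrosio2005gradient}. This is the only place the bound on $\|\bm v_t(\bm 0)\|_2$ enters: it controls $\sup_{\|\bm z\|_2\le R}\|\bm v_t(\bm z)\|_2$, which is part of the cited lemma's assumptions. The paper thereby obtains local uniqueness through every space-time point $(s,\bm x)$. It then argues that the coincidence set $A=\{t:\sfZ_t(\bm x)=\sfZ_t(\bm y)\}$ is nonempty, relatively closed, and relatively open in $[0,1]$, so $A=[0,1]$.

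\textbf{Your route.} You fix the two trajectories first and use their compactness to get one integrable Lipschitz modulus $\ell$ valid along the pair on all of $[0,1]$. A single backward Gr\"onwall estimate on $\|\bm e_t\|_2$ then finishes the proof.

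\textbf{Comparison.} Your route is self-contained and needs no local-to-global connectedness step. It also makes clear that only the integrable local Lipschitz bound matters for injectivity, so your remark that the $\|\bm v_t(\bm 0)\|_2$ term is superfluous is right. The paper's route delegates the analysis to the citation, at the cost of meeting that lemma's full hypotheses and adding a topological step.

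\textbf{A sign slip.} With $F(t)=\int_t^1\ell u\,ds$, the function $F(t)e^{\int_t^1\ell}$ has derivative $-\ell(t)e^{\int_t^1\ell}\bigl(u(t)+F(t)\bigr)\le 0$. That yields nothing, so your claimed inequality $\ge 0$ is false as written. The correct integrating factor is $e^{-\int_t^1\ell}$, for which $\frac{d}{dt}\bigl(F(t)e^{-\int_t^1\ell}\bigr)=\ell(t)e^{-\int_t^1\ell}\bigl(F(t)-u(t)\bigr)\ge 0$ almost everywhere. This product is absolutely continuous, nonnegative, and vanishes at $t=1$. It is therefore identically zero, so $F\equiv 0$ and hence $u\equiv 0$.
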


\begin{proof}
We begin by recalling the mean-value theorem for vector-valued functions:
\[
\bm v_t(\bm x) - \bm v_t(\bm y)
=
\left[
\int_0^1 D\bm v_t\bigl(\bm y+\xi(\bm x-\bm y)\bigr)\,d\xi
\right](\bm x-\bm y).
\]
Then, for a fixed $\bm z$ with $\Vert \bm z \Vert_2\le R$, the mean-value theorem, by setting $\bm x =
\bm z$ and $\bm y = \bm 0$, leads to
\[
\|\bm v_t(\bm z) - \bm v_t(\bm 0)\|_2
\le \|\bm z\|_2\sup_{\xi \in [0,1]}
       \|D\bm v_t(\xi\bm z)\|_{\mathrm{op}}.
\]
Therefore, a simple rearrangement of terms yields
\[
    \sup_{\|\bm z\|_2\le R}\|\bm v_t(\bm z)\|_2
    \le \|\bm v_t(\bm 0)\|_2
    +R\sup_{\|\bm z\|_2\le R}
       \|D\bm v_t(\bm z)\|_{\mathrm{op}},
    \qquad
    \operatorname{Lip}(\bm v_t;\{\|\bm z\|_2\le R\})
    \le \sup_{\|\bm z\|_2\le R}
       \|D\bm v_t(\bm z)\|_{\mathrm{op}},
\]
where $\operatorname{Lip}(\bm v_t;\{\|\bm z\|_2\le R\})$ denotes the uniform Lipschitz constant of $\bm v_t(\cdot)$ in the ball $\{\bm z: \Vert \bm z \Vert_2\le R\}$.
Hence, the local integrability hypothesis of
\citet[Lemma~8.1.4]{ambrosio2005gradient} holds.
In particular, local uniqueness holds through every
$(s,\bm x)\in[0,1]\times\bbR^d$, on a relatively open
time interval $I(s, \bm x)\ni s$.

Suppose $\sfZ_1(\bm x)=\sfZ_1(\bm y)$, and define
\[
    A:=\{t\in[0,1]:\sfZ_t(\bm x)=\sfZ_t(\bm y)\}.
\]
The continuity of the trajectories implies that $A$ is
relatively closed, and $1\in A$.
For any $s\in A$, the two trajectories solve the same
initial-value problem at time $s$. Local uniqueness therefore
gives an $\varepsilon>0$ such that
\[
    (s-\varepsilon,s+\varepsilon)\cap[0,1]\subseteq A.
\]
Thus $A$ is also relatively open. Since $[0,1]$ is connected, and $A$ is a non-empty \textit{clopen} subset of $[0,1]$, we can conclude
$A=[0,1]$. In particular,
\[
    \bm x=\sfZ_0(\bm x)=\sfZ_0(\bm y)=\bm y,
\]
which proves injectivity.
\end{proof}
\subsection{Proof of Lemma~\ref{lemma: injectivity lemma} and Theorem~\ref{theorem: straightness theorem}} \label{sec:proof-injectivity+straightness}

\noindent \textbf{Proof of Lemma~\ref{lemma: injectivity lemma}.}
Injectivity is obvious. If $\sfH_t$ is continuous and injective, then its inverse on its image is measurable. Hence the displacement
$\sfZ_1(\bm Z_0)-\bm Z_0$ is measurable with respect to
$\sfH_t(\bm Z_0)$, and most importantly the $\sigma$--field generated by $\sfH_t(\bm Z_0)$ coincides with that of $\bm Z_0$. Therefore, it follows that
\[
\bbE\!\left[
\sfZ_1(\bm Z_0)-\bm Z_0
\mid t, \sfH_t(\bm Z_0)
\right] = \bbE\!\left[
\sfZ_1(\bm Z_0)-\bm Z_0
\mid \bm Z_0
\right]  = 
\sfZ_1(\bm Z_0)-\bm Z_0
\quad\text{\it almost surely}.
\]
This is precisely the straightness property in
Definition~\ref{def: Qiang straightness}. $\hfill$ $\square$



Before proving the theorem, we state the following elementary bound.

    \begin{lemma} \label{lemma: eigen variance lemma}
Let $\bA$ be a symmetric matrix and $\bm{u}$ be a unit vector. Then, we have $\bm{u}^\top \bA^2 \bm{u} - (\bm{u}^\top \bA \bm{u})^2 \le \frac{\br{\lambda_{\max}(\bA) - \lambda_{\min}(\bA)}^2}{4}$.
\end{lemma}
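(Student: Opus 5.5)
The statement to prove is Lemma~\ref{lemma: eigen variance lemma}: for a symmetric $\bA$ and a unit vector $\bm u$, the quantity $\bm u^\top \bA^2 \bm u - (\bm u^\top \bA \bm u)^2$ is at most $(\lambda_{\max}(\bA)-\lambda_{\min}(\bA))^2/4$. The plan is to read this quantity as the variance of a discrete random variable and then apply the Popoviciu bound for variables with bounded range.

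First, diagonalize $\bA = \sum_{i=1}^d \lambda_i \bm q_i \bm q_i^\top$ with an orthonormal eigenbasis $\{\bm q_i\}$. Set $w_i := (\bm q_i^\top \bm u)^2$. Then $w_i \ge 0$, and $\sum_i w_i = \norm{\bm u}_2^2 = 1$ because $\bm u$ is a unit vector, so $\bm w = (w_1,\dots,w_d)$ is a probability vector. In this basis we have $\bm u^\top \bA \bm u = \sum_i w_i \lambda_i$ and $\bm u^\top \bA^2 \bm u = \sum_i w_i \lambda_i^2$. The left-hand side is therefore exactly $\Var_{\bm w}(\{\lambda_i\})$, the variance of a random variable $\Lambda$ that equals $\lambda_i$ with probability $w_i$.

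Next, bound this variance by Popoviciu's argument. Let $a = \lambda_{\min}(\bA)$, $b = \lambda_{\max}(\bA)$, and $c = (a+b)/2$. The mean minimizes expected squared deviation, so $\Var(\Lambda) \le \bbE(\Lambda - c)^2$. Since $\Lambda \in [a,b]$ almost surely, $|\Lambda - c| \le (b-a)/2$, and hence $\bbE(\Lambda - c)^2 \le (b-a)^2/4$. This is the claimed bound.

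As a check that avoids probabilistic language, one can also verify directly that $(\bA - a\bI)(b\bI - \bA) \succeq 0$. Expanding $\bm u^\top(\bA - a\bI)(b\bI - \bA)\bm u \ge 0$ and writing $m := \bm u^\top \bA \bm u$ gives $\bm u^\top \bA^2 \bm u \le (a+b)m - ab$. Subtracting $m^2$ leaves $(a+b)m - ab - m^2 = (b-m)(m-a) \le (b-a)^2/4$, the last step by AM--GM. There is no real obstacle here; the only point needing care is the observation that the weights $w_i$ sum to one, which uses that $\bm u$ is a unit vector.
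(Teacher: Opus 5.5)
Your proof is correct and follows the paper's argument: the paper likewise writes $\bm u^\top \bA^2\bm u-(\bm u^\top\bA\bm u)^2$ as $\Var_{\balpha}(\{\lambda_j\})$, with weights $\alpha_j=(\bm u^\top\bm p_j)^2$ that sum to one, and then applies Popoviciu's inequality. The only difference is that you prove Popoviciu's inequality inline (by centering at $(a+b)/2$, or alternatively via $(\bA-a\bI)(b\bI-\bA)\succeq 0$), whereas the paper simply cites it.
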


\noindent \textbf{Proof of Lemma \ref{lemma: eigen variance lemma}.}
    Let us begin with the spectral form of $\bA$, i.e., $\bA =\sum_{j \in [d]} \lambda_j \bm{p}_j \bm{p}_j^\top$, where $\bc{\bm{p}_j}_{j \in [d]}$ is the orthonormal set of eigenvectors and $\lambda_1 \ge \ldots \ge \lambda_d$ are the corresponding eigenvalues. Let us define $\alpha_j = (\bm{u}^\top \bm{p}_j)^2$, and note that $\sum_{j \in [d]} \alpha_j = \bm{u}^\top (\sum_{j \in [d]} \bm{p}_j \bm{p}_j^\top) \bm{u}  = \bm{u}^\top \bm{u} = 1$.
    Using the above spectral form, we have 
    \begin{align*}
        \bm{u}^\top \bA^2 \bm{u} - (\bm{u}^\top \bA \bm{u})^2 & = \sum_{j \in [d]} \lambda_j^2 (\bm{u}^\top \bm{p}_j)^2  - \br{\sum_{j \in [d]} \lambda_j (\bm{u}^\top \bm{p}_j)^2}^2\\
        &  = \Var_\balpha(\{\lambda_j\}_{j \in [d]})\\
        & \le \frac{\br{\lambda_{\max}(\bA) - \lambda_{\min}(\bA)}^2}{4},
    \end{align*}
    where the last inequality follows from the well-known Popoviciu inequality \citep{popoviciu1935equations, bhatia2012some}. $\hfill$ $\square$

    We are now ready to prove Theorem~\ref{theorem: straightness theorem}.

\noindent \textbf{Proof of Theorem~\ref{theorem: straightness theorem}.} 
For notational brevity, we define $\bA_t := \bA_t(\bm x, \bm y)$ and $\bm{u}_t:= \bm{\delta}_t(\bm x, \bm y)$ for a fixed pair of distinct points $\bm x, \bm y$. Then, we recall ODE \eqref{eq: ode delta_t}:
\begin{equation}
    \label{eq: u_t ODE}
    \frac{d \bm{u}_t}{dt} = \bA_t \bm{u}_t, \quad \bm{u}_0 = \bm{\delta}_0(\bm x, \bm y) = \bm y - \bm x.
\end{equation}
The main strategy of the proof is to track the evolution of the angle $\theta_t^{\bm{u}} := \Angle(\bm{u}_t, \bm{u}_0)$. 
\paragraph{Assuming Condition (a):}
If we assume that $[\bA_t, \bA_s] = 0$, then using standard theory \citep{magnus1954exponential}, it follows that 
\[
\bm{u}_t = \underbrace{\exp \lb \int_0^t \bA_s \; ds\rb}_{ \succ \bm{0}} \bm{u}_0.
\]
Therefore, $\theta_1^{\bm{u}} < \pi/2 < \pi$. Then the proof concludes by using Lemma~\ref{lemma: injectivity lemma}.

\paragraph{Assuming Condition (b):}

We begin from the basic inequality, valid for all $t \in [0,1]$,
\begin{align} \label{eq:ode-norm-u}
    \frac{d \norm{\bm{u}_t}_2^2}{dt} & = 2 \bm{u}_t^\top \dot{\bm{u}}_t = 2 \bm{u}_t^\top \bA_t \bm{u}_t \ge 2\lambda_{\min}(\bA_t)\norm{\bm{u}_t}_2^2,
\end{align}
which, by Gronwall's lemma, yields that $\norm{\bm{u}_t}_2^2 \ge \norm{\bm{u}_0}_2^2 \exp \br{2 \int_0^t \lambda_{\min}(\bA_s) ds}>0$. This and \eqref{eq: u_t ODE} ensure that $t \mapsto \norm{\bm{u}_t}_2$ is a differentiable map over $[0,1]$. 
Now, we define the unit vector $\bm{n}_t:= \bm{u}_t/\norm{\bm{u}_t}_2$, so we can trivially write $ \bm{n}_t \norm{\bm{u}_t}_2 = \bm{u}_t$. 
Differentiating both sides of this identity and using \eqref{eq: u_t ODE}, we obtain
\begin{align*}
\dot{\bm{u}}_t  = \bA_t \bm{u}_t & =   \dot{\bm{n}}_t \norm{\bm{u}_t}_2 + \bm{n}_t \ddt \norm{\bm{u}_t}_2\\
& = \dot{\bm{n}}_t \norm{\bm{u}_t}_2 + \bm{n}_t \times \frac{1}{2 \norm{\bm{u}_t}_2} \times 2\innerprod{\dot{\bm{u}}_t, \bm{u}_t}\\
& =   \dot{\bm{n}}_t \norm{\bm{u}_t}_2 + \bm{n}_t \innerprod{\bA_t \bm{u}_t, \bm{n}_t}. 
\end{align*}
Dividing both sides by $\norm{\bm{u}_t}_2 > 0$ yields 
\[
\bA_t \bm{n}_t = \dot{\bm{n}}_t + \bm{n}_t \bm{n}_t^\top \bA_t \bm{n}_t
\]
and, rearranging, 
\begin{equation}\label{eq:n_t-ode}
 \dot{\bm{n}}_t = (\bI_d - \bm{n}_t \bm{n}_t^\top)\bA_t \bm{n}_t. 
\end{equation}
Next, the endpoint angle $\theta_t^{\bm{u}}$ is bounded above by the length of the normalized trajectory on the unit sphere; that is,
\[
\theta_t^{\bm{u}} \le \int_0^t \norm{\dot{\bm{n}}_s}_2 ds.
\]

Therefore, using the ODE \eqref{eq:n_t-ode},
\begin{equation}
\label{eq: bound on theta_t}
\begin{aligned}
    \theta_t^{\bm{u}}  &  \le \int_0^t \norm{(\bI_d - \bm{n}_s\bm{n}_s^\top) \bA_s \bm{n}_s}_2 \; ds\\
    & = \int_0^t \sqrt{\norm{(\bI_d - \bm{n}_s \bm{n}_s^\top) \bA_s \bm{n}_s}_2^2} \; ds\\
    & = \int_0^t \bc{ \bm{n}_s^\top \bA_s^2 \bm{n}_s - \br{\bm{n}_s^\top \bA_s \bm{n}_s}^2}^{1/2}\; ds\\
    & \le \int_{0}^t \sqrt{\frac{(\lambda_{\max}(\bA_s) - \lambda_{\min}(\bA_s))^2}{4}} \; ds\\
    & = \int_{0}^t\frac{\lambda_{\max}(\bA_s) - \lambda_{\min}(\bA_s)}{2}\; ds,
\end{aligned}
\end{equation}
where the inequality is due to Lemma \ref{lemma: eigen variance lemma}.
The claimed result under condition (b) follows from Lemma~\ref{lemma: injectivity lemma}. $\hfill$ $\square$

\subsection{Proofs of Theorems 3.2 and 3.3}

Before proving Theorem~\ref{2RF straight-logconcave}, we collect a useful Jacobian estimate. 
    \begin{lemma} \label{lem:Jacobian-bound-logconcave}
    Recall $\bm v( \bm x,t)$ in \eqref{eq: v_tweedie}. Then, for all $t \in [0,1]$, it holds that 
    \begin{align} \label{Jacobian est}
      \frac{t-L(1-t)}{t^2+L(1-t)^2}\cdot \bI_d \preceq  D_{\bm{x}} \bm{v}(\bm{x},t) \preceq       \frac{t-m(1-t)}{t^2+m(1-t)^2}\cdot \bI_d
    \end{align}
    for all $\bm{x} \in \mathbb{R}^d$.
\end{lemma}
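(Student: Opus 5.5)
We will prove Lemma~\ref{lem:Jacobian-bound-logconcave} from the second identity in \eqref{eq:tweedie2}:
\[
D_{\bm x}\bm v_t(\bm x)=-\frac{\bI_d}{1-t}+\frac{t}{(1-t)^3}\Cov_{\mu_{\bm x}}(\bm X_1),
\]
where $\mu_{\bm x}=\Law(\bm X_1\mid \bm X_t=\bm x)$. This reduces the lemma to two-sided bounds on the posterior covariance $\Cov_{\mu_{\bm x}}(\bm X_1)$, uniform in $\bm x$. The endpoints $t\in\{0,1\}$ will be handled afterwards, either by continuity or by direct computation.

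\textbf{Step 1: the posterior is uniformly log-concave.} By Bayes' rule,
\[
\mu_{\bm x}(\bm x_1)\propto \exp\Big(-V(\bm x_1)-\frac{\norm{\bm x-t\bm x_1}^2}{2(1-t)^2}\Big).
\]
Its negative log-density therefore has Hessian
\[
\nabla^2 V(\bm x_1)+\frac{t^2}{(1-t)^2}\bI_d,
\]
which lies between $a_m\bI_d$ and $a_L\bI_d$, where
\[
a_c:=c+\frac{t^2}{(1-t)^2}=\frac{t^2+c(1-t)^2}{(1-t)^2}.
\]

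\textbf{Step 2: covariance bounds.} The upper bound $\Cov_{\mu_{\bm x}}\preceq a_m^{-1}\bI_d$ is the Brascamp--Lieb inequality for strongly log-concave measures. The lower bound $\Cov_{\mu_{\bm x}}\succeq a_L^{-1}\bI_d$ is the reverse inequality for measures whose potential has Hessian bounded above. I would prove the lower bound in one line via Cramér--Rao: apply the identity
\[
\Cov(\bm X)\succeq \bbE[\nabla^2 U]^{-1},
\]
valid for a density $e^{-U}$, which follows from Cauchy--Schwarz applied to $\bbE[(\bm X-\bbE\bm X)\nabla U^\top]=\bI_d$ (integration by parts). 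Together with $\nabla^2 U\preceq a_L\bI_d$ this gives the claim. This step is the main obstacle, and the care goes into checking that the boundary terms in the integration by parts vanish. They do, since the posterior has Gaussian tails.

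\textbf{Step 3: algebra.} Substituting the covariance bounds into the Jacobian formula gives
\[
-\frac{1}{1-t}+\frac{t}{(1-t)^3}\cdot\frac{(1-t)^2}{t^2+c(1-t)^2}
=\frac{-(t^2+c(1-t)^2)+t}{(1-t)\,(t^2+c(1-t)^2)}.
\]
The numerator equals $t(1-t)-c(1-t)^2=(1-t)\big(t-c(1-t)\big)$, so the expression simplifies to
\[
\frac{t-c(1-t)}{t^2+c(1-t)^2}.
\]
Taking $c=L$ gives the lower bound and $c=m$ gives the upper bound, which is \eqref{Jacobian est}.

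\textbf{Endpoints.} At $t=0$ the velocity is $\bm v_0(\bm x)=\bbE\bm X_1-\bm x$, so $D_{\bm x}\bm v_0=-\bI_d$, matching both bounds at $t=0$. At $t=1$ both bounds equal $1$, and the statement follows by taking limits $t\to 1$ (or by interpreting the Jacobian by continuity).
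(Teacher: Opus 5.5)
Your proposal is correct and follows essentially the same route as the paper: uniform strong log-concavity of the posterior $\mu_{\bm x}$ (Hessian $\nabla^2 V+\frac{t^2}{(1-t)^2}\bI_d$), Brascamp--Lieb for the upper covariance bound and Cram\'er--Rao for the lower, then substitution into the Jacobian formula. The paper obtains that formula by differentiating $\bm v=\nabla\bigl(\|\bm x\|^2/(2t)+\frac{1-t}{t}\log p_t\bigr)$, which is algebraically identical to your second Tweedie identity. Your endpoint check is an extra the paper omits, and at $t=1$ you need no limiting argument: $\bm v_1(\bm x)=\bm x-\bbE\bm X_0=\bm x$ gives $D\bm v_1=\bI_d$ exactly.
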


\noindent \textbf{Proof of Lemma \ref{lem:Jacobian-bound-logconcave}.} Recall that $\bm v$ has the exact formula
\begin{align} \label{v formula}
\bm{v}(\bm{x},t) = \nabla_{\bm{x}} \left( \frac{\| \bm{x} \|^2}{2t} + \frac{1-t}{t} \log p_t(\bm{x}) \right)
\end{align}
where 
\[
p_t(\bm{x}) \propto  \int_{\mathbb{R}^d} \exp \left[ -\frac{\| \bm{x}- t\bm{y} \|^2}{2(1-t)^2} - V(\bm{y}) \right] d \bm{y}.
\]
From the expression above, we find that
\begin{align} \label{log p-t}
    \nabla^2_{\bm{x}} \log p_t(\bm{x}) = - \frac{1}{(1-t)^2} \bI_d + \frac{t^2}{(1-t)^4} \mbox{Cov}_{\mu_{\bm{x}}} \left( \bm{Y} \right)   
\end{align}
where $\bm{Y} \sim \mu_{\bm{x}}$ and $\mu_{\bm{x}}$ is the measure 
\[
\mu_{\bm{x}}(d\bm{y}) \propto  \exp \left[ -V(\bm{y}) -  \frac{\| \bm{x}- t\bm{y} \|^2}{2(1-t)^2}  \right] d\bm{y}. 
\]
It is easy to see that for every $\bm{x}$, the measure $\mu_{\bm{x}}(d\bm{y})$ is a strongly log-concave distribution, and 
\[
\nabla^2_{\bm{y}} \left[ V(\bm{y}) + \frac{\| \bm{x}- t\bm{y} \|^2}{2(1-t)^2}  \right] = \nabla^2 V(\bm{y}) + \frac{t^2}{(1-t)^2} \bI_d. 
\]
Therefore, the Brascamp-Lieb inequality and the Cram\'er--Rao lower bound (see Lemma 7 in \cite{chewi2023entropic}) give
\begin{align*}
 \left[  \frac{(1-t)^2}{t^2+L(1-t)^2} \right] \cdot \bI_d  \preceq \mbox{Cov}_{\mu_{\bm{x}}} \left( \bm{Y} \right)  \preceq \left[  \frac{(1-t)^2}{t^2+m(1-t)^2} \right] \cdot \bI_d.
\end{align*}
The proof is completed by plugging the above expression into \eqref{log p-t} and then \eqref{v formula}. 
$\hfill$ $\square$

\noindent \textbf{Proof of Theorem~\ref{2RF straight-logconcave}.} Using Lemma \ref{lem:Jacobian-bound-logconcave}, we deduce that 
\begin{align*}
 \Angle(\bm{\delta}_1(\bm{x},\bm{y}), \bm{\delta}_0(\bm{x},\bm{y})) &\leq \frac{1}{2} \left[ \int_0^1 \frac{s-m(1-s)}{s^2 +m(1-s)^2} ds  - \int_0^1 \frac{s-L(1-s)}{s^2 +L(1-s)^2} ds \right] = \frac{1}{4} \log \left( \frac{L}{m} \right).
\end{align*}
As a result, Condition (b) in Theorem~\ref{theorem: straightness theorem} is verified provided $L/m < \exp(4\pi)$. $\hfill$ $\square$

\noindent \textbf{Proof of Theorem~\ref{theorem: straightnes GMM}.} In this setting the score takes the form
\[
\bm{s}_t(\bm{x}) = \sum_{i=1}^K \pi_{i, t}(\bm{x}) \br{\frac{t\bm{\mu}_i-\bm{x}}{\sigma^2_{t}}},\]
where $\sigma^2_{t} = (1-t)^2 + t^2\sigma^2$ and 

\[\pi_{i, t}(\bm{x}) = \frac{\pi_i \exp\br{\frac{-\norm{\bm{x}-t\bm{\mu}_i}^2}{2\sigma_{t}^2}}}{\sum_{j=1}^K \pi_j\exp\br{\frac{-\norm{\bm{x}-t\bm{\mu}_j}^2}{2\sigma_{t}^2}}}.\]
Substituting in \eqref{eq:tweedie2}, we  get 
\begin{equation}
    \label{eq: velocity equation}
    \bm{v}_t(\bm{x}) = \frac{(1+\sigma^2)t -1}{\sigma_t^2} \bm{x} + \frac{1-t}{\sigma_t^2} \sum_{i \in [K]} \pi_{i,t}(\bm{x}) \bm{\mu}_i,
\end{equation}

\begin{equation}
    \label{eq: grad_velocity}
    D_{\bm{x}} \bm{v}_t(\bm{x}) = \frac{(1+\sigma^2)t-1}{\sigma_t^2} \bI_d +  \frac{t(1-t)}{2 \sigma_t^4} \underbrace{\sum_{i \ne j} \pi_{i,t}(\bm{x}) \pi_{j,t}(\bm{x}) (\bm{\mu}_i - \bm{\mu}_j) (\bm{\mu}_i - \bm{\mu}_j)^\top}_{\bB_t(\bm{x})}.
\end{equation}
Next, note that $\bB_t(\bm{x})$ in \eqref{eq: grad_velocity} is positive semi-definite uniformly over $\bm{x} \in \bbR^d$. Therefore
\begin{align*}
    \lambda_{\max} \lb D_{\bm x} \bm{v}_t(\bm{x}) \rb - \lambda_{\min} \lb D_{\bm x} \bm{v}_t(\bm{x}) \rb \le \frac{t(1-t)}{\sigma_t^4} \sup_{\bm{\alpha} : \norm{\bm{\alpha}}_2 =1}\Var_{\bpi_t}(\bc{\bm{\alpha}^\top \bm{\mu}_i}_{i \in [K]}) \le \frac{t(1-t)}{4 \sigma_t^4} \Dmax^2.
\end{align*}
Using the identity
 $$\int_0^1 \frac{t(1-t)}{\sigma_t^4}dt = \frac{1}{2 \sigma^2},$$ 
  Condition (b) in Theorem~\ref{theorem: straightness theorem} holds as long as $\Dmax^2 < 16 \pi \sigma^2$. The result then follows from Theorem~\ref{theorem: straightness theorem}. $\hfill$ $\square$

\section{Proof of Proposition~\ref{ns:prop:criterion}}
Before proving Proposition~\ref{ns:prop:criterion}, we need a technical result.  We assume that the endpoint map $\sfZ_1$ is $C^1$ and that the data's law has a strictly positive density with finite second moment. 
\begin{lemma}\label{ns:lem:degree-defect}
Suppose that the endpoint map has the form
\begin{equation}
\label{ns:eq:T-shape}
    \sfZ_1( \bm{x} )=\bB  \bm{x} + \bm{R} ( \bm{x} ),
    \qquad \bB \in\Sp_{++}^3,
\end{equation}
where $\sup_{ \bm{x} }\norm{ \bm{R} ( \bm{x} )}<\infty $.
Recall the chord map $\sfH_t$ and the straightness defect $S_t$ in \eqref{eq:chord-map} and \eqref{ns:eq:defect-as-chord}, respectively. Assume that the law of $\bm X$ admits a strictly positive density. If $\det D\sfH_t ( \bm{x} _*)<0$ for some $ \bm{x} _*$ and some
$t\in(0,1)$, then $S_t>0$. 
\end{lemma}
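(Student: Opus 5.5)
The plan is to argue by contradiction and show that $S_t=0$ forces $\sfH_t$ to be injective, up to a null set, in a way incompatible with $\det D\sfH_t(\bm x_*)<0$. By \eqref{ns:eq:defect-as-chord}, $S_t=0$ means $\bm X=\bbE[\bm X\mid \sfH_t(\bm X)]$ almost surely. Hence $\bm X$ is a.s.\ equal to a measurable function $g(\sfH_t(\bm X))$. Since $\bm X$ has a strictly positive density, there is a full-Lebesgue-measure set $E\subset\bbR^3$ on which $g\circ \sfH_t=\mathrm{Id}$. In particular, $\sfH_t$ restricted to $E$ is injective.

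The contradiction will come from a degree count.

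First, I will establish properness and the degree at infinity. Write $\sfH_t(\bm x)=((1-t)\bI+t\bB)\bm x+t\bm R(\bm x)$. The matrix $(1-t)\bI+t\bB$ is positive definite, and $\bm R$ is bounded. So $\sfH_t$ is proper, and it is homotopic through proper maps (scale $\bm R$ by $s\in[0,1]$) to a linear map with positive determinant. Its Brouwer degree is therefore $+1$.

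Second, I will produce a region with two preimages. By continuity of $D\sfH_t$ (here $\sfZ_1\in C^1$), $\det D\sfH_t<0$ on a ball $U$ around $\bm x_*$, and $\sfH_t$ is a local diffeomorphism there. By Sard's theorem, almost every $\bm y\in \sfH_t(U)$ (a set of positive measure) is a regular value. For such $\bm y$, the degree formula
\[
\sum_{\bm x\in \sfH_t^{-1}(\bm y)}\operatorname{sign}\det D\sfH_t(\bm x)=1
\]
together with a preimage of sign $-1$ in $U$ forces at least two preimages of sign $+1$, hence at least three preimages in total.

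Third, I will turn this multiplicity into a positive-measure set of non-injectivity inside $E$. Pick a regular $\bm y_0$ with preimages $\bm x_1\in U$ and $\bm x_2\neq\bm x_1$. By the inverse function theorem there are disjoint neighborhoods $U_1\ni\bm x_1$ and $U_2\ni\bm x_2$ that are mapped diffeomorphically onto a common open neighborhood $W$ of $\bm y_0$. The sets $U_i\cap\sfH_t^{-1}(W)\setminus E$ are null, and local diffeomorphisms preserve null sets. So for a.e.\ $\bm y\in W$ both local preimages lie in $E$, which contradicts injectivity of $\sfH_t$ on $E$. Therefore $S_t>0$.

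Equivalently, one can phrase this probabilistically: conditional on $\sfH_t(\bm X)\in W$, the law of $\bm X$ splits into at least two branches with positive density, so $\bm X$ is not determined by $\sfH_t(\bm X)$.

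The main obstacle is the degree step, specifically justifying properness and homotopy invariance with only $C^1$ regularity and bounded $\bm R$. If the degree route is awkward, my fallback is the Piola identity mentioned in the acknowledgment. For $\varphi$ compactly supported, the integral $\int \varphi(\sfH_t(\bm x))\det D\sfH_t(\bm x)\,d\bm x$ equals $\deg\cdot\int\varphi$. Taking $\varphi$ concentrated near $\sfH_t(\bm x_*)$ then shows that the preimage count is at least $2$ on a positive-measure set.
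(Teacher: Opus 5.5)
Your proof is correct. Its outer structure matches the paper's:
\begin{itemize}
\item a degree-one count for $\sfH_t$, coming from the homotopy $\bm F_s(\bm x):=((1-t)\bI_3+t\bB)\bm x+st\bm R(\bm x)$, $s\in[0,1]$;
\item two local inverse branches over a common open set;
\item a contradiction with the Borel left inverse $g$ on a full-measure set.
\end{itemize}
Your third step is the paper's Step 3, and your handling of the null sets (pushing $U_i\setminus E$ forward by the local diffeomorphisms) is more explicit than the paper's one-line version.

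The difference is in how the degree is produced and exploited. The paper deliberately avoids Brouwer degree and Sard's theorem. It proves the signed change-of-variables identity $\int\varphi(\sfH_t(\bm x))\det D\sfH_t(\bm x)\,d\bm x=\int\varphi(\bm z)\,d\bm z$ for $\varphi\in C_c^\infty$. The proof shows that $I(s)=\int\varphi(\bm F_s)\det D\bm F_s$ is constant along the same homotopy, via the Piola identity and the divergence theorem, with mollification to pass from $C^2$ to $C^1$. It then takes $\varphi\ge 0$ supported in $O=\sfH_t(U)$. This gives $\int_{\bbR^3\setminus U}\varphi(\sfH_t)\det D\sfH_t=2\int_O\varphi>0$, which directly yields a point $\bm x_+\notin U$ with $\sfH_t(\bm x_+)\in O$ and $\det D\sfH_t(\bm x_+)>0$. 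No regular values are needed.

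Your route instead imports standard machinery: the degree of proper maps, homotopy invariance, the regular-value formula, and Sard's theorem (valid for $C^1$ maps here because source and target dimensions agree). In exchange it is shorter and gives the slightly stronger conclusion that almost every point of $\sfH_t(U)$ has at least three preimages. Also, regularity of $\bm y_0$ automatically makes every preimage a point where the inverse function theorem applies. The paper instead extracts the positive-Jacobian point from the positivity of an integral.

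The obstacle you flag is not a real one. The bound $\norm{\bm F_s(\bm x)}\ge\lambda_{\min}((1-t)\bI_3+t\bB)\norm{\bm x}-t\sup_{\bm x}\norm{\bm R(\bm x)}$ holds uniformly in $s$. So for any fixed target point, the homotopy has no solutions on the boundary of a sufficiently large ball, and degree theory needs only continuity there. Your stated Piola fallback is exactly the paper's argument.
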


\noindent \textbf{Proof of Lemma~\ref{ns:lem:degree-defect}.}
All the diffeomorphisms in the proof below are understood as $C^1$. The argument below is based on topological degree, although our proof is elementary and does not make use of such a concept. The argument below is adapted from \cite{simader2005homotopy}.  Write

$$
 \bm{F} :=\sfH_t ,
\qquad
\bA _t:=(1-t)\bI _3+t\bB ,
\qquad
 \bm{q} _t:=t \bm{R} ,
$$
so that
\begin{align} \label{eq:F-requirement}
 \bm{F} ( \bm{x} )=\bA _t \bm{x} + \bm{q} _t( \bm{x} ),
\qquad
\bA _t\in\Sp_{++}^3,
\qquad
\sup_{ \bm{x} }\norm{ \bm{q} _t( \bm{x} )}<\infty.
\end{align}

\underline{\it Step 1: Signed change of variables.} We first prove the signed change-of-variables 
\begin{equation}
\label{ns:eq:signed-cov}
\int_{\bbR ^3}
\varphi( \bm{F} ( \bm{x} ))
\cdot \det D \bm{F} ( \bm{x} )\dd\bm{x} 
=
\int_{\bbR ^3}\varphi( \bm{z} )\dd\bm{z} ,
\qquad \text{for all} \qquad 
\varphi\in C_c^\infty(\bbR ^3),
\end{equation}
and for all $\bm F$ satisfying \eqref{eq:F-requirement}.

 Let us pause for a moment before proving \eqref{ns:eq:signed-cov} and explain what it means. If $\bm F$ is a diffeomorphism on $\mb R^3$, then the usual change of variable formula (Theorem 2.47, section 2.6 in \cite{folland1999real}) gives
\[
\int_{\bbR ^3}
\varphi( \bm{F} ( \bm{x} ))
\cdot \left| \det D \bm{F} ( \bm{x} ) \right| \dd\bm{x}
=
\int_{\bbR ^3}\varphi( \bm{z} )\dd\bm{z} ,
\qquad \text{for all} \qquad 
\varphi\in C_c^\infty(\bbR ^3).
\]
The above display differs from \eqref{ns:eq:signed-cov} in the  absolute value sign, and that it requires $\bm F$ to be a $C^1$ diffeomorphism. The point of \eqref{ns:eq:signed-cov} is that the identity holds without the absolute value sign and the diffeomorphism requirement on $\bm F$. We instead assume only \eqref{eq:F-requirement}.

To prove \eqref{ns:eq:signed-cov}, keep $t \in (0,1)$ fixed and define the interpolation
$$
 \bm{F} _s( \bm{x} )
=
\bA _t \bm{x} +s \bm{q} _t( \bm{x} ),
\qquad 0\leq s\leq1,
$$
and set
$$
I(s) = \int_{\bbR ^3} \varphi( \bm{F} _s( \bm{x} )) \det D \bm{F} _s( \bm{x} )\,d \bm{x} .
$$
Because $ \bm{q} _t$ is bounded and $\bA _t$ is invertible, the
integrands are supported in one fixed bounded set, uniformly in $s$. Thus $I(s)<\infty$ for all $s \in [0,1]$. On the other hand, Lemma \ref{ns:lem:Piota-identity} gives
$$
\frac{d}{ds}
\Big[
\varphi( \bm{F} _s)\det D \bm{F} _s
\Big]
=
\operatorname{div} \left[ \varphi( \bm{F} _s) \operatorname{cof}(D \bm{F} _s)^{\top}  \bm{q} _t \right].
$$
Hence 
\[
I'(s)= \int_{\bbR ^3} \frac{d}{ds} \Big[ \varphi( \bm{F} _s)\det D \bm{F} _s \Big] \, d  \bm{x}  = \int_{\bbR ^3} 
\operatorname{div} \left[ \varphi( \bm{F} _s) \operatorname{cof}(D \bm{F} _s)^{\top}  \bm{q} _t \right] d  \bm{x} =0.
\]

 Let us supply more details on the last equality. The function $\bm G(\bm x):=  \varphi( \bm{F} _s) \operatorname{cof}(D \bm{F} _s)^{\top}  \bm{q} _t$ inside the divergence has compact support because $\varphi$ has compact support and 
\[
\| \bm F_s(\bm x) \| \geq \lambda_{\rm min} \lb \bm A_t \rb \cdot \| \bm x \| - \| \bm q_t \|_\infty \to \infty, \qquad 
\text{as} \qquad  \| \bm x\| \to \infty.
\]
Therefore, one can pick a ball $B \lb \bm 0, R \rb$ that contains the support of $\bm G$ and use integration by parts to obtain
\[
\int_{\mb R^3} \mbox{div} \lb G(\bm x) \rb \, d\bm x = \int_{ \la \bm x \in \mb R^3: \| \bm x \|= R \ra} G \cdot \bm n \, dS = 0
\]

Thus $I'(s)=0$ for all $s \in [0,1]$. Consequently,
$$
I(1)=I(0)
=
\int_{\bbR ^3}
\varphi(\bA _t \bm{x} )
\det\bA _t\,d \bm{x} 
=
\int_{\bbR ^3}\varphi( \bm{z} )\,d \bm{z} ,
$$
which proves \eqref{ns:eq:signed-cov}. 

Note that the argument above implicitly uses that $q_t$ is $C^2$ (so that Lemma \ref{ns:lem:Piota-identity} applies). This does not affect the conclusion in \eqref{ns:eq:signed-cov} even if $q_t $ is only $C^1$ since one can smooth out $q_t$ by taking the convolution with a sequence of smooth mollifiers, see the argument in Lemma \ref{ns:lem:Piota-identity}. 

\underline{\it Step 2: Non-injectivity of $ \bm{F} $.} Since $\det D \bm{F} ( \bm{x} _*)<0$, the inverse function theorem gives the existence of open sets
$U\ni \bm{x} _*$ and $O\ni \bm{F} ( \bm{x} _*)$ such that

$$
 \bm{F} :U\longrightarrow O \qquad \text{is a diffeomorphism},
$$
and
$$
\det D \bm{F} <0 \qquad\text{on }U.
$$
Choose $\varphi\in C_c^\infty(O)$ with $\varphi\geq0$ and
$\varphi\not\equiv0$. The usual change of variables formula gives
$$
\int_U
\varphi( \bm{F} ( \bm{x} ))
\det D \bm{F} ( \bm{x} )\,d \bm{x} 
= - \int_U
\varphi( \bm{F} ( \bm{x} ))
\left| \det D \bm{F} ( \bm{x} ) \right|\,d \bm{x} 
=
-\int_O\varphi( \bm{z} )\,d \bm{z} .
$$
Combining this with \eqref{ns:eq:signed-cov},
$$
\int_{\bbR ^3\setminus U}
\varphi( \bm{F} ( \bm{x} ))
\det D \bm{F} ( \bm{x} )\,d \bm{x} 
=
2\int_O\varphi( \bm{z} )\,d \bm{z} 
>0.
$$
Consequently, there exists $ \bm{x} _+\notin U$ such that

$$
\varphi( \bm{F} ( \bm{x} _+))>0,
\qquad
\det D \bm{F} ( \bm{x} _+)>0
$$
and $ \bm{z} _0:= \bm{F} ( \bm{x} _+)\in O$.
Next, let
$$
 \bm{x} _-:=( \bm{F} |_U)^{-1}( \bm{z} _0),
$$
which is well defined since $\bm{F} |_U$ is a diffeomorphism into $O$.
Observe that 
$$
 \bm{x} _-\neq \bm{x} _+,
\qquad
 \bm{F} ( \bm{x} _-)= \bm{F} ( \bm{x} _+)= \bm{z} _0,
$$
and both Jacobians are invertible. The inverse
function theorem once again gives, after possibly shrinking the neighborhoods, three open sets $U_1, U_2, O_1$ such that
\[
U_1 \subset U, \qquad U_2 \subset \bbR ^3\setminus U, \qquad O_1 \subset O
\]
and 
\[
\bm{F}: U_1 \longrightarrow O_1 \qquad \text{and} \qquad \bm{F}: U_2 \longrightarrow O_1
\]
are diffeomorphisms.
Thus, $ \bm{F} $ is non-injective on an open set.

\underline{\it Step 3: Non-injectivity yields positive straightness defect.}
Reasoning by contradiction, assume that $S_t=0$. By \eqref{ns:eq:defect-as-chord}, there exists a Borel map
$\bm g: \mb R^3 \to \mb R^3$ such that
\[
\bm X=\bm g \lb \bm F(\bm X) \rb = \bm g \lb \sfH_t \lb \bm X \rb \rb, \qquad \text{almost surely}.
\]
Since $\bm X$ has a strictly positive density, $\bm x=\bm g(\bm F(\bm x))$ for Lebesgue-almost every $\bm x$ in each $U_i$, $i=1,2$.
The fact that $\bm F:U_1 \to O_1$ and $\bm F: U_2 \to O_1$ are both diffeomorphisms forces $\bm g(\bm z) \in U_1 \cap U_2$ for almost every $\bm z \in O_1$, which is a contradiction. Therefore, $S_t>0$. $\hfill$ $\square$

\noindent \textbf{Proof of Proposition~\ref{ns:prop:criterion}}
For $t\in(0,1)$,
\[
    \det D\sfH_t\lb\bm x_*\rb
    =\prod_{j=1}^3\lb1-t+t\lambda_j\rb.
\]
If $(1-\lambda_1)^{-1}<t<(1-\lambda_2)^{-1}$, the first factor is
negative and the other two are positive. Hence, the determinant is negative
throughout this nonempty open interval. Lemma~\ref{ns:lem:degree-defect}
therefore gives $S_t>0$ on that interval, so the coupling is
not straight. $\hfill$ $\square$

\section{Proof of Theorem~\ref{ns:thm:main}} \label{sec:nonstraight-proofs}

Throughout, $\Sp_{++}^k$ denotes the set of real symmetric positive-definite $k\times k$ matrices. Write points of $\bbR ^3$ as $( \bm{x} ,z)\in\bbR ^2\times\bbR $ and set
\begin{align} \label{ns:eq:zeta(r)}
\zeta(r)=r/\sqrt{1+r^2}.
\end{align}
Our construction has two ingredients. We first show that a sufficiently nice potential function with a desirable condition number exists, namely an ``admissible potential'' as in Definition \ref{ns:def:admissible} below. The second step is to show that starting with an admissible potential, one can get the desired potential function by taking its ``zero-temperature limit'', see Theorem \ref{ns:thm:zero-temperature-nonstraight} for the precise statement. Theorem~\ref{ns:thm:main} follows as a corollary of Theorems \ref{ns:thm:existence} and \ref{ns:thm:zero-temperature-nonstraight}.

\begin{defi}[Admissible potential]
\label{ns:def:admissible}
We say that a potential function $W\in C^\infty(\bbR ^3)$ is $(\kappa,\Lambda)$-\emph{admissible} if
\begin{enumerate}[label=\textnormal{(\Roman*)},ref=\Roman*,leftmargin=2.8em]
\item \label{cond:W-curvature}
$\kappa\bI _3\preceq\nabla^2W\preceq\Lambda\bI _3$ on $\bbR ^3$, for some
$0<\kappa\leq\Lambda<\infty$;
\item \label{cond:W-quadratic}
for some $\bQ \in\Sp_{++}^3$, it holds that
\[\sup_{\bm y}\left|W\lb\bm y\rb-\tfrac12\bm y^\top\bQ\bm y\right|<\infty.\]

\item \label{cond:W-axis}
$\nabla W( \bm{0} ,z)=( \bm{0} ,z)$ for all $z\in\bbR $;
\item \label{cond:W-negative-eigenvalues}
for $\bM_W$ the solution to the differential equation
\begin{equation}
\label{ns:eq:A4-ode}
    \bM _W'(r)=r\left[  \bG _W \lb \zeta(r) \rb+r^2 \cdot \bI _2\right]^{-1} \cdot \bM _W(r),
    \qquad \bM _W(0)=\bI _2,
\end{equation}
with $\bG_W$ defined by the identity
\begin{equation}
\label{ns:eq:axis-Hessian}
    \nabla^2W( \bm{0} ,z)=\begin{pmatrix}\bG _W(z)& \bm{0} \\ \bm{0} ^{\top}&1\end{pmatrix},
    \qquad z\in\bbR,
\end{equation}
it holds that the matrix $\bPi_W=\lim_{r\to\infty}\bM_W(r)/r$ is well-defined and has two distinct negative
eigenvalues.
\end{enumerate}
\end{defi}



The first condition \eqref{cond:W-curvature} requires $W$ to be smooth and strongly convex with parameters $\Lambda$ and $\kappa$, respectively. The second condition \eqref{cond:W-quadratic} requires that $W$ is within an $O(1)$ distance from a quadratic form, which will later be used to show that the endpoint map corresponding to $\bcN\lb\bm0,\bI_3\rb\otimes \exp \lb -\beta W(\bm x) \rb$ is proper, for all $\beta>0$. Conditions \eqref{cond:W-axis} and \eqref{cond:W-negative-eigenvalues} are needed to guarantee the existence of two distinct negative eigenvalues so that Proposition~\ref{ns:prop:criterion} is applicable.

\begin{thm} \label{ns:thm:existence}
    For any $S>2\pi$ and $\ve  \in (0,1/10)$, there exists a potential function $W \in C^\infty(\bbR ^3)$ that is $(\kappa, \Lambda)$-admissible in the sense of Definition \ref{ns:def:admissible} with 
    \begin{align} \label{ns:eq:condition-number-bound}
    e^{2S}< \frac{\Lambda}{\kappa} < (1+\ve ) e^{2S}. 
    \end{align}
\end{thm}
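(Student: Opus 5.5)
The plan is to reduce the construction to a two-dimensional control problem along the $z$-axis, solve that problem explicitly with a rotating Hessian, and then extend the resulting axis data to a globally admissible $W$ by gluing. Conditions \eqref{cond:W-axis} and \eqref{cond:W-negative-eigenvalues} only involve $W$ near the axis $\{(\bm 0,z)\}$, and \eqref{ns:eq:A4-ode} only samples $\bG_W$ at $z=\zeta(r)\in[0,1)$. I will therefore first choose a smooth family $\bG(z)$, $z\in[0,1)$, constant near $z=0$ and for $z$ close to $1$, of the form
\[
\bG(z)=\bm R(\theta(z))\,\mathrm{diag}(\kappa',\Lambda')\,\bm R(\theta(z))^\top,
\]
where $\bm R(\theta)$ is the planar rotation and $\Lambda'/\kappa'$ is strictly between $e^{2S}$ and $(1+\ve)e^{2S}$. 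Afterwards I will build $W$ with $\nabla^2W(\bm0,z)=\mathrm{diag}(\bG(z),1)$.

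\textbf{Step 1 (axis ODE).} Write $\bA(r)=r(\bG(\zeta(r))+r^2\bI_2)^{-1}$. It has eigenvalues $a(r)=r/(\kappa'+r^2)$ and $b(r)=r/(\Lambda'+r^2)$, with eigenvectors rotated by $\theta(\zeta(r))$. Passing to the rotating frame $\bm N=\bm R(\theta)^\top\bM$ gives
\[
\bm N'=\bigl(\mathrm{diag}(a,b)-\theta'\,\bJ\bigr)\bm N,
\]
where $\bJ$ is the rotation generator. I will reparametrize by $\tau(r)=\int_0^r\tfrac{a-b}{2}$, whose total value is $\tfrac14\log(\Lambda'/\kappa')>\pi$ since $S>2\pi$. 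I will also prescribe $\theta'=c\,\tfrac{a-b}{2}$ with a constant $c>1$. After removing the scalar part $\tfrac{a+b}{2}$, the system becomes autonomous in $\tau$ with matrix $\begin{pmatrix}1&-c\\ c&-1\end{pmatrix}$. Its flow is an explicit elliptic rotation with frequency $\sqrt{c^2-1}$. Since $\int_0^r \tfrac{a+b}{2}\sim\log r$, the limit $\bPi=\lim \bM(r)/r$ exists. It equals a positive scalar times $\bm R(\theta_\infty)\,\exp\bigl(\tau_\infty\begin{pmatrix}1&-c\\ c&-1\end{pmatrix}\bigr)$, where $\theta_\infty=c\,\tau_\infty$.

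Using the slack $\tau_\infty>\pi$, I will tune $c$ (and, if needed, add a short non-rotating segment) so that this product is close to $-\mathrm{diag}(1,1)$ but slightly anisotropic. That gives two distinct negative eigenvalues. This is a finite-dimensional continuity and genericity argument, and it is the concrete realization of the sharpness heuristic behind the bound $\tfrac14\log(L/m)$ in Theorem~\ref{2RF straight-logconcave}. I will then smooth $\theta$ at the endpoints; the eigenvalue conclusion is an open condition, so it survives this perturbation.

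\textbf{Step 2 (extension to $\bbR^3$).} I will set $W(\bm x,z)=\tfrac12 z^2+\tfrac12\bm x^\top\bG(\tilde z)\bm x$ near the axis, where $\tilde z$ is a smooth clamp of $z$ into $[0,1)$ region where $\bG$ is constant. This immediately gives \eqref{cond:W-axis} and \eqref{ns:eq:axis-Hessian}. Away from the axis, $W$ must become $\tfrac12\bm y^\top\bQ\bm y$ up to a bounded error, with the global Hessian staying in $[\kappa,\Lambda]$ and $\Lambda/\kappa<(1+\ve)e^{2S}$. I take $\bQ=\mathrm{diag}(\bar\bG,1)$, with $\bar\bG$ a fixed matrix having eigenvalues in $[\kappa',\Lambda']$.

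\textbf{Main obstacle.} The difficulty is the Hessian of the glued function. The $z$-dependence of $\bG$ creates mixed terms of size $|\bG'||\bm x|$ and $|\bG''||\bm x|^2$, and a radial cutoff creates terms of size $|\bG-\bQ|\,|\chi''|\,|\bm x|^2$. The latter are not small under naive interpolation. What I will try first is to avoid interpolating Hessians and instead glue convex functions directly: take $W=\max$ (smoothed) or an infimal convolution of the near-axis quadratic family, cut off to a thin tube of radius $\delta$, with the global quadratic shifted down by a constant. Smoothed maxima of functions with Hessians in $[\kappa',\Lambda']$ keep the lower bound exactly and inflate the upper bound only by the smoothing parameter. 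The tube radius $\delta$ is chosen so that $\delta\sup|\bG'|+\delta^2\sup|\bG''|$ is below the margin between $\Lambda'/\kappa'$ and $(1+\ve)e^{2S}$. If the smoothed-max route loses too much, the fallback is a logarithmic radial cutoff, which makes the $\chi''|\bm x|^2$ terms of order $1/\log$, combined with flattening $\bG$ to $\bar\bG$ in $z$ outside the sampled window. Verifying that the curvature window $[\kappa,\Lambda]$ survives this gluing with the required $(1+\ve)$ precision is the step I expect to be hardest.
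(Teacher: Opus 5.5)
Your Step~1 is sound, but it works in a different regime from Lemma~\ref{ns:lem:ODE-sol-construction}. In your normalization the paper rotates at rate $c=2\pi/S<1$, so the total rotation is exactly $\pi$. Then $\bm R_\pi=-\bI_2$ supplies the sign, and the hyperbolic factor $\exp\begin{pmatrix}S&\pi\\ -\pi&0\end{pmatrix}$ supplies two real eigenvalues, giving $-\exp\bigl(\pm\tfrac12\sqrt{S^2-4\pi^2}\bigr)$ in closed form. Your elliptic choice $c>1$ also works, but only after tuning. For instance, take $c=\cosh u$ with $e^{-u}\tau_\infty=\pi$, and set $\bm K_c=\begin{pmatrix}1&-c\\ c&-1\end{pmatrix}$. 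The determinant-one matrix $\bm R(c\tau_\infty)\exp(\tau_\infty\bm K_c)$ then has trace $-2-2k\bigl(1+\cos(\tau_\infty^2/\pi)\bigr)$, where $k=(e^{2u}-1)^{-1}>0$. This is $<-2$ unless $\cos(\tau_\infty^2/\pi)=-1$, and a small change of $u$ removes that case. You should also take $\kappa'\le 1\le\Lambda'$, since \eqref{cond:W-axis} forces the axial Hessian eigenvalue $1$.

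The gap is Step~2, which is where the real work of the theorem lies. The route you put first fails as stated, for two reasons.

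(i) Any max-type gluing of $W_0$ with $Q-\alpha$ that returns to $Q-\alpha$ off a tube needs $\alpha-\tfrac12\bm x^\top(\bar{\bG}-\bG(z))\bm x<0$ for large $\norm{\bm x}$, uniformly in $z$. That is, it needs $\bar{\bG}\succ\bG(z)$ for all $z$. The top eigenvector of $\bG(z)$ sweeps every direction ($\theta_\infty>\pi$), so this forces $\lambda_{\min}(\bar{\bG})>\Lambda'$. With spectrum in $[\kappa',\Lambda']$ as you propose, the set $\{W=W_0\}$ is unbounded in $\bm x$. There the block $\bG'(z)\bm x$ of $\nabla^2W_0$ is unbounded, so \eqref{cond:W-curvature} fails. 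This is why the paper gives $Q$ the $\bm x$-block $(1+\widetilde{\ve})e^S\bI_2$, spending part of the slack in \eqref{ns:eq:condition-number-bound}.

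(ii) Smoothing a maximum does not inflate the upper bound ``only by the smoothing parameter.'' The extra Hessian term is $\tfrac12\chi''(\Delta)\nabla\Delta\nabla\Delta^\top$. Along the top eigendirection of $\bm D(z)=\bar{\bG}-\bG(z)$ one has $\norm{\nabla\Delta}^2\approx 2\norm{\bm D}(\alpha-\Delta)$, so this term has size about $\norm{\bm D}\,\chi''(\Delta)(\alpha-\Delta)$. Keeping $W=W_0$ on the axis requires a crease of width $h<\alpha$. For an even mollification of $|\cdot|$ at that width, $\chi''(0)\,\alpha\gtrsim\alpha/h>1$, which is an $O(1)$ relative inflation of $\Lambda$. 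Reducing it to $O(\beta)$ needs $\chi''(s)\le\beta/(\alpha-s)$ together with $\int\chi''=2$ and $\int s\chi''=0$. Then $2\le\beta\log\frac{\alpha+\delta_-}{\alpha-\delta_+}$, so the crease must span a range on which $\alpha-s$ changes by a factor $e^{2/\beta}$. Constructing such a $\chi$ is exactly Lemma~\ref{ns:lem:crease}, and it is the missing idea.

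Your logarithmic-cutoff fallback is the right mechanism. The paper's crease is precisely a logarithmic profile in $\bm x^\top\bm D(z)\bm x$, placed inside a smoothed maximum so that the extra term is positive semidefinite and only affects the upper bound. Your fallback can also be closed directly. Take $\psi(\bm x)=\eta\bigl(\log(\norm{\bm x}/\delta^2)/\log(1/\delta)\bigr)$, with $\eta$ a smooth step from $1$ to $0$ on $[0,1]$, and set $W=Q+\psi\,(W_0-Q)$. The Hessian terms outside the convex combination are then $O\bigl(\norm{\bG-\bar{\bG}}/\log(1/\delta)+\delta\bigr)$. They have no sign, so you need $\log(1/\delta)\gg\Lambda'/\kappa'$, but no domination $\bar{\bG}\succ\bG(z)$ is required. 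As written, however, this estimate, which is the heart of the proof, is announced rather than carried out.
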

Given an admissible potential $W$, let $p_1^\beta\lb\bm x\rb\propto\exp\lb-\beta W\lb\bm x\rb\rb$. The following theorem transfers non-straightness from the zero-temperature limit to a finite value of $\beta$.
\begin{thm} \label{ns:thm:zero-temperature-nonstraight}
    Suppose $W$ is $(\kappa, \Lambda)$-admissible in the sense of Definition \ref{ns:def:admissible} and satisfies \eqref{ns:eq:condition-number-bound}. Then, for all sufficiently large $\beta>0$, $2$-RF yields a non-straight flow from the independent coupling $N \lb  \bm{0} , \bI _3 \rb \otimes p_1^\beta $, and the condition number of the potential $\beta W$ satisfies \eqref{ns:eq:condition-number-bound}.
\end{thm}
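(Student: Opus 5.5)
The plan is to verify the hypotheses of Proposition~\ref{ns:prop:criterion} for the endpoint map $\sfZ_1^\beta$ of the $1$-RF associated with $p_1^\beta\propto e^{-\beta W}$, for all sufficiently large $\beta$.

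\emph{Condition number.} This part is immediate. Condition~\eqref{cond:W-curvature} gives $\beta\kappa\bI_3\preceq\nabla^2(\beta W)\preceq\beta\Lambda\bI_3$, so the ratio $L/m=\Lambda/\kappa$ does not depend on $\beta$, and \eqref{ns:eq:condition-number-bound} is inherited from $W$.

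\emph{Structure \eqref{eq:properness}.} Condition~\eqref{cond:W-quadratic} says $\beta W$ is a bounded perturbation of the quadratic $\tfrac\beta2\bm y^\top\bQ\bm y$. For the Gaussian target $\bcN(\bm 0,(\beta\bQ)^{-1})$, the RF endpoint map is linear, $\bm x\mapsto \bB_\beta\bm x$ with $\bB_\beta=(\beta\bQ)^{-1/2}\in\Sp^3_{++}$; this uses the commutativity case of Theorem~\ref{theorem: straightness theorem}. A bounded perturbation of the potential changes the posterior $\mu_{\bm x}$ by a bounded likelihood ratio. Using the formula \eqref{eq:tweedie2} for $\bm v_t$, I would show that the velocity difference is bounded by $C(1-t)^{-1}\cdot(\text{something integrable})$. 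Combined with the uniform Lipschitz bound from Lemma~\ref{lem:Jacobian-bound-logconcave}, this gives $\sup_{\bm x}\|\sfZ_1^\beta(\bm x)-\bB_\beta\bm x\|<\infty$. I would handle $t$ near $1$ by comparing the two flows after the time change in which both are Lipschitz uniformly. Smoothness of $\sfZ_1^\beta$ follows from smoothness of $W$.

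\emph{Jacobian on the axis.} Condition~\eqref{cond:W-axis} makes the $z$-axis invariant to leading order, and along it the Hessian is block diagonal by \eqref{ns:eq:axis-Hessian}. Take a starting point $\bm x_*$ on the axis; the $z$-coordinate is chosen depending on $\beta$. By \eqref{eq:tweedie2},
\[
D\bm v_t=-\tfrac{1}{1-t}\bI_3+\tfrac{t}{(1-t)^3}\Cov_{\mu_{\bm x}}.
\]
For large $\beta$, a Laplace approximation gives
\[
\Cov_{\mu_{\bm x}}\approx(1-t)^2\bigl[t^2\bI_3+\beta(1-t)^2\nabla^2W(\bm y^*)\bigr]^{-1},
\]
where $\bm y^*$ is the posterior mode. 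Introduce the time change $r=t/((1-t)\sqrt\beta)$, which runs over $[0,\infty)$. On the axis, the mode $\bm y^*$ should sit at height $\zeta(r)$ after the appropriate scaling; this is where the choice of $z_*$ enters. In these variables, the transverse $2\times2$ block of the rescaled Jacobian $\bJ_t$ from \eqref{eq: ode J_t} satisfies, up to $o(1)$ errors as $\beta\to\infty$, exactly the ODE \eqref{ns:eq:A4-ode}. Hence the transverse block of $D\sfZ_1^\beta(\bm x_*)$, after the normalization $\bM/r$ as $r\to\infty$, converges to $\bPi_W$. Condition~\eqref{cond:W-negative-eigenvalues} then gives two distinct negative eigenvalues for $\beta$ large, since distinct real eigenvalues are stable under small perturbations.

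\emph{Axial direction and off-diagonal terms.} The axial entry is the derivative of a one-dimensional flow, which is commutative. It therefore equals $\exp\int(\cdot)>0$, giving $\lambda_3>0$. Off-diagonal couplings between the axial and transverse directions are $o(1)$ after normalization, so the real spectrum $\lambda_1<\lambda_2<0<\lambda_3$ persists. Proposition~\ref{ns:prop:criterion} then yields $S_t>0$ on an interval, which gives non-straightness of the coupling and, by Lemma~\ref{lemma: straight coupling and flow}, of the 2-RF flow.

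\emph{Main obstacle.} The hard step is making the zero-temperature limit rigorous uniformly in $r\in[0,\infty)$. This requires a Laplace expansion of $\Cov_{\mu_{\bm x}}$ with errors that are integrable against $r\,dr$ after the time change. It also requires control of the limit $\bM(r)/r$ at both ends: near $t=0$ the flow is almost the identity, and near $t=1$ the posterior degenerates. My first attempt would use Brascamp--Lieb type bounds, as in Lemma~\ref{lem:Jacobian-bound-logconcave}, to get uniform a priori bounds for the tails in $r$. On compact $r$-intervals I would use dominated convergence, and then a Grönwall comparison between the true Jacobian ODE and \eqref{ns:eq:A4-ode}.
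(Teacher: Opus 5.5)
Your proposal is correct and follows essentially the same route as the paper. The condition number is $\beta$-invariant, and properness comes from comparing with $\bcN(\bm 0,(\beta\bQ)^{-1})$ via the bounded likelihood ratio (Lemma~\ref{ns:lem:affine-tail}). The Jacobian at $\sqrt\beta\,\bm e_3$ is obtained through the time change $r=t/((1-t)\sqrt\beta)$ and the rescaled Jacobian $\bK_\beta$ solving $\bK_\beta'=D_{\bm h}\bm y_\beta\,\bK_\beta$, with the mode-driven limit trajectory from $\bm e_3$ exactly $\sqrt{1+r^2}\,\bm e_3$, so that $\lim_{r\to\infty}\bK(r)/r=\operatorname{diag}(\bPi_W,1)$. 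The paper handles the limit interchange you flag exactly as you propose, and needs no Laplace rate: only local uniform convergence $D_{\bm h}\bm y_\beta\to D_{\bm h}\bm y$ on compact $r$-sets plus the $\beta$-uniform bound $\|\partial_r(\bK_\beta/r)\|\le C r^{-3}$ from the Brascamp--Lieb/Cram\'er--Rao sandwich.
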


Theorems~\ref{ns:thm:existence} and \ref{ns:thm:zero-temperature-nonstraight} give Theorem~\ref{ns:thm:main} by taking $S$ sufficiently close to $2\pi$. It remains to prove Theorems \ref{ns:thm:existence} and \ref{ns:thm:zero-temperature-nonstraight}. Before doing so, we need some technical results.

\subsection{Technical preliminaries}

We need some technical results for the construction of admissible potential functions. The first result is the construction of an appropriate convex, smooth function that is needed for the gluing argument used later on.

\begin{lemma}
\label{ns:lem:crease}
For every $\alpha>0$ and $\beta\in(0,1/10)$, there exist $\delta_\pm>0$ and
$\chi\in C^\infty(\bbR )$ that depend only on $\alpha,\beta$ such that
\begin{enumerate}[label=\textnormal{(\roman*)},leftmargin=2.6em]
\item $\chi''\geq0$ on $\bbR $, so that $\chi$ is convex;
\item $\chi(s)=|s|$, for $s\notin(-\delta_-,\delta_+)$;
\item $|\chi''(s)|\cdot |\alpha-s|\leq\beta$ for every $s\in\bbR $,
\end{enumerate}
and with
\begin{equation}
\label{ns:eq:crease-inner}
    \frac{\alpha}{\beta\,e^{2/\beta}}\;\leq\;\alpha-\delta_+,
    \qquad 
      \delta_-\;\leq\;\frac{4\alpha}{\beta}\, .
\end{equation}
\end{lemma}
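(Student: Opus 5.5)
The plan is to construct $\chi$ from its second derivative. Set $g:=\chi''$ and look for a smooth density $g\ge 0$ supported in $[-\delta_-,\delta_+]$ with three properties: $\int g=2$, $\int s\,g(s)\,ds=0$, and $g(s)\le \beta/|\alpha-s|$. Given such a $g$, define $\chi'(s)=-1+\int_{-\infty}^s g$ and $\chi(s)=-s$ for $s\le-\delta_-$. Then (i) is immediate. For $s\ge\delta_+$ we get $\chi'=1$ and
\[
\chi(s)=-s+\int_{-\infty}^{s}(s-u)g(u)\,du=s-\int u\,g(u)\,du=s .
\]
This uses the mass condition and the zero-mean condition, and gives (ii). Property (iii) is exactly the pointwise bound on $g$. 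So the lemma reduces to a mass/moment-balancing problem under the envelope $\beta/|\alpha-s|$.

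The first step is the right piece $g_R$, supported in $[0,\delta_+]$ with $\delta_+<\alpha$. Take $g_R\le \beta/(\alpha-s)$, e.g.\ $g_R(s)=\beta\,\eta(s)/(\alpha-s)$ with $\eta$ a smooth cutoff, $0\le\eta\le1$, equal to $1$ on $[\epsilon,b]$ and vanishing outside $(0,\delta_+)$. Since $\int_0^b \beta/(\alpha-s)\,ds=\beta\log\bigl(\alpha/(\alpha-b)\bigr)$, any mass $m_R\le 2$ is attained once
\[
\alpha-b\approx \alpha e^{-m_R/\beta}\ge \alpha e^{-2/\beta}.
\]
There is a factor $1/\beta>10$ of slack relative to the required $\alpha-\delta_+\ge \alpha/(\beta e^{2/\beta})$, and this slack absorbs the cutoff errors. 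Its first moment satisfies $M_R:=\int s\,g_R\le \alpha m_R$, and it is close to $\alpha m_R$ up to a correction $\int(\alpha-s)g_R\le \beta\delta_+\le\beta\alpha$.

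The second step is the left piece $g_L$, supported in $[-L,0]$ with $L=\delta_-\le 4\alpha/\beta$. It has the form $\theta\,\beta\,\tilde\eta(s)/(\alpha+|s|)$, where $\tilde\eta$ is a cutoff and $\theta\in(0,1]$ is a tuning parameter; this respects the envelope there. At $\theta=1$ and $L=4\alpha/\beta$ its mass is
\[
m_L\approx\beta\log(1+4/\beta)<1,
\]
and its (negative) moment has size about $\beta L-\alpha m_L\approx 4\alpha-\alpha m_L$. I choose the right mass as $m_R=2-m_L$, so that $\int g=2$ holds automatically for every $\theta$ (the right piece is re-tuned via $b$). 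I then balance moments via the intermediate value theorem in $\theta$. At $\theta=1$ the left moment magnitude $\approx(4-m_L)\alpha$ exceeds the right moment $\lesssim (2-m_L)\alpha$. As $\theta\downarrow0$, the left moment tends to $0$ while the right moment tends to something positive. Continuity of both moments in $\theta$ (with $b=b(\theta)$ depending continuously on $\theta$) then gives a $\theta$ with $\int s\,g=0$.

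The main obstacle is keeping the bookkeeping honest. First, the cutoff losses must be small enough that the strict inequalities at $\theta=1$ survive for every $\beta\in(0,1/10)$. Second, the choice of $b(\theta)$ must keep $\alpha-\delta_+$ above $\alpha/(\beta e^{2/\beta})$, which holds since $m_R<2$ and we have the $1/\beta$ slack. Third, $\delta_\pm$ must depend only on $\alpha,\beta$, which is ensured by fixing all cutoffs as functions of $\alpha,\beta$ before the IVT step and taking $\delta_+$ as the fixed outer edge of the right cutoff at the maximal needed $b$. I would first verify the $\theta=1$ inequality numerically at the worst case $\beta=1/10$.
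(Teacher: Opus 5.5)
Your reduction is correct and is the same as the paper's. It suffices to build a smooth $g=\chi''\ge 0$ supported in $[-\delta_-,\delta_+]$ with $\int g=2$, $\int s\,g=0$ and $g\le\beta/(\alpha-s)$. The gap is in the first inequality of \eqref{ns:eq:crease-inner}. The slack you invoke runs the wrong way, and with your choices that inequality actually fails.

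\textbf{Why the inequality fails.}
\begin{itemize}
\item The threshold is $\alpha/(\beta e^{2/\beta})=\beta^{-1}\alpha e^{-2/\beta}$, which is $1/\beta$ times \emph{larger} than $\alpha e^{-2/\beta}$. So $m_R<2$ is not enough.
\item Ignoring cutoffs, $\alpha-b=\alpha e^{-m_R/\beta}=\alpha e^{-2/\beta}e^{m_L/\beta}$. You therefore need $e^{m_L/\beta}>1/\beta$, i.e.\ the left piece must carry mass exceeding $\beta\log(1/\beta)$.
\item Now locate your IVT root. Without cutoffs, $\int s\,g_R=\alpha m_R-\beta b$ and $\int s\,g_L=\alpha m_L-\theta\beta L$, so the total moment is $2\alpha-\beta b-\theta\beta L$. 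With $L=4\alpha/\beta$, any root must satisfy $\theta=\tfrac12-\beta b/(4\alpha)<\tfrac12$.
\item At that root, $e^{m_L/\beta}=(1+4/\beta)^{\theta}<\sqrt{1+4/\beta}$. This is below $1/\beta$ exactly when $\beta^2+4\beta<1$, i.e.\ for all $\beta<\sqrt{5}-2$, hence on all of $(0,1/10)$. At $\beta=1/10$ you get $41^{0.475}\approx 5.8<10$.
\item Hence $\alpha-\delta_+\le\alpha-b<\alpha/(\beta e^{2/\beta})$. Sharp cutoffs only perturb these quantities slightly, so they cannot rescue the bound. The numerical check you planned (the $\theta=1$ endpoint inequality) would pass and would not reveal the problem.
\end{itemize}

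\textbf{How to fix it.} Do not spread the left piece out. For a fixed left moment $\gamma_L L\approx 2\alpha$ (here $\gamma_L=\theta\beta$), the left mass $\gamma_L\log(1+L/\alpha)$ is decreasing in $L$. So you want the shortest admissible left support, i.e.\ $\gamma_L$ close to $\beta$ and $L\approx 2\alpha/\beta$. Then $e^{m_L/\beta}\approx 1+2/\beta$, about twice the needed $1/\beta$.

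This is exactly the paper's construction:
\begin{itemize}
\item Take $q(s)=\gamma(\alpha-s)^{-1}\mathbf 1_{[-a,b]}(s)$ with a single coefficient $\gamma<\beta$ on both sides, and mollify at scale $\varepsilon$.
\item The constraints $\gamma\log\frac{\alpha+a}{\alpha-b}=2$ and $a+b=2\alpha/\gamma$ solve in closed form. This gives $\alpha-b=\frac{2\alpha}{\gamma(e^{2/\gamma}-1)}$, which tends to about twice the threshold as $\gamma\uparrow\beta$, and $a\approx\alpha(2/\beta-1)<4\alpha/\beta$.
\item Property (iii) follows from $\gamma\bigl(1+\varepsilon/(\alpha-b)\bigr)\le\beta$.
\end{itemize}
No intermediate value argument is needed.
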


\noindent \textbf{Proof of Lemma \ref{ns:lem:crease}.}
Put $p(x)=\chi''(x)$. Suppose we have chosen $\delta_-$ and $\delta_+$, to satisfy conditions (i) and (ii), we only need
\begin{align} \label{ns:eq:p-constraint}
p \geq 0, \qquad \int_{\bbR } p(x) \, dx=2, \qquad \int_{\bbR } xp(x) \, dx=0, \qquad \supp \lb p \rb \subset \lb -\delta_{-}, \delta_{+} \rb.
\end{align}
Indeed, (i) follows from $p \geq 0$, and 
\[
\chi(s) =  -s + \int_{-\infty}^s (s-t) \cdot p(t) \, dt =|s|, \qquad \text{if $s \notin  \lb -\delta_{-}, \delta_{+} \rb$}.
\]
It suffices to pick $\delta_{\pm}$ and $p$ so that \eqref{ns:eq:crease-inner} and (iii) hold. Let $\eta \in C_c^\infty \lb (-1,1) \rb$ be an even, non-negative mollifier. For example, we can take 
\[
\eta(x)=c_\eta
\begin{cases}
\exp\left[-\dfrac{1}{1-4x^2}\right],& |x|<1/2,\\
0,& |x|\geq1/2,
\end{cases}
\qquad
c_\eta:=
\left(
\int_{-1/2}^{1/2}
\exp\left[-\frac{1}{1-4u^2}\right]\,du
\right)^{-1}.
\]
Pick $a,b,\gamma>0$ with $b<\alpha$, all to be chosen later, and choose $p$ of the form 
\[
p(s)=  q * \eta_\ve (s)
\]
where
\[
\eta_\ve (x):= \frac{1}{\ve } \cdot \eta \lb \frac{x}{\ve } \rb,
\qquad 
q(s):= \frac{\gamma}{\alpha-s}\cdot \mathbf 1 _{[-a,b]}(s).
\]
Now, observe that, since convolution preserves $L_1$ norm and also first moment, if the mollifier is even, we have
\[
\int_{\bbR } p(s) \, ds =  \int_{\bbR } q * \eta_\ve (x) \, dx = \int_{-a}^b  \frac{\gamma}{\alpha-s} \, ds = \gamma \cdot \log \lb \frac{\alpha+a}{\alpha-b} \rb,
\]
and
\[
\int_{\bbR } s\cdot p(s) \, ds = \int_{\bbR } s \cdot  q(s) \, ds
= \gamma \lb \alpha \cdot \log \frac{\alpha+a}{\alpha-b} - a -b   \rb
\]
Thus, to get the second and third constraints in \eqref{ns:eq:p-constraint}, we need  
\[
\gamma \cdot \log \lb \frac{\alpha+a}{\alpha-b} \rb=2, \qquad \alpha\cdot \log \frac{\alpha+a}{\alpha-b} - a -b=0.
\]
Plugging the first equation into the second one, we have $a+b=(2\alpha)/\gamma$. Solving these equations, we get
\begin{align} \label{ns:eq:a-b}
b=\alpha\cdot \left[ 1 - \frac{2}{\gamma(e^{2/\gamma}-1)} \right], \qquad a= \alpha\cdot \left[ \frac{2}{\gamma} + \frac{2}{\gamma \lb e^{2/\gamma}-1 \rb}-1 \right].
\end{align}
Note that $\supp\lb \eta_\ve  \rb \subset (-\ve ,\ve )$ and  $\supp \lb p \rb \subset [-a-\ve ,b+\ve ]$. It suffices to choose $a,b,\ve $ for given $\alpha,\beta$. The term $\ve $ is harmless since we can make it as small as possible once a and b are chosen. We will then set 
$\delta_-=a+\ve $ and $\delta_+=b+\ve $.

To get (iii), note that
\begin{align*}
    (\alpha-s)p(s) &= \gamma \int_{\bbR } \eta_\ve (u) \cdot \frac{\alpha-s}{\alpha-s+u} \cdot \mathbf 1 _{[-a,b]}(s-u) \, du \\
    &\leq \gamma \int_{\bbR } \eta_\ve (u) \cdot \lb 1 +  \frac{\ve }{\alpha-b} \rb \, du = \gamma \cdot \lb 1 +  \frac{\ve }{\alpha-b} \rb,
\end{align*}
where the last inequality follows from the fact that the function inside the integral takes non-zero value only when $|u| \leq \ve $ and $s-u \in [-a,b]$, which in turn yields
\[
\frac{\alpha-s}{\alpha-s+u}=1 - \frac{u}{\alpha-s+u} \leq 1 + \left| \frac{u}{\alpha-s+u} \right| \leq 1+ \frac{\ve }{\alpha-b}
\]
on the set $\la u: \eta_\ve (u) \cdot \frac{\alpha-s}{\alpha-s+u} \cdot \mathbf 1 _{[-a,b]}(s-u) \neq 0 \ra$.

Finally, with $a,b$ chosen as in \eqref{ns:eq:a-b}, for (i)--(iii) and \eqref{ns:eq:crease-inner} to hold, we need
\[
\gamma \cdot \lb 1 +  \frac{\ve }{\alpha-b} \rb \leq \beta, \qquad
\alpha-b-\ve  \geq \frac{\alpha}{\beta \cdot e^{2/\beta}}, \qquad 
a+\ve  \leq \frac{4\alpha}{\beta}.
\]
Such a choice of $\lb \gamma, \ve  \rb$ exists because we can first choose $\gamma$ to approach $\beta$ from the left and then take $\ve $ sufficiently small. The proof is completed. $\hfill$ $\square$

We will also need the following elementary ODE result. 

\begin{lemma} \label{ns:lem:ODE-sol-construction}
    For all $S>2\pi$, there exists a smooth function $\bm{H} : \lb 0, \infty \rb \to \Sp _{++}^2$, which can be constructed explicitly as in \eqref{ns:eq:H} below, such that the ODE
    \[
    \bm{M} '(r) = r \lb \bm{H} (r) + r^2 \bm{I} _2 \rb^{-1} \cdot \bm{M} (r), \qquad \bm{M} (0)= \bm{I} _2 
    \]
    has a unique solution $\bm{M}  \in C^1 \Big( [0,\infty),  \rm GL_2 \lb \mb R \rb \Big)$ 
    with 
    \[
    \bm{\Pi}  = \lim_{r \to \infty} \frac{\bm{M} (r)}{r} = -e^{-S/2} \cdot \exp \begin{pmatrix}
        S & \pi \\
        -\pi &0
    \end{pmatrix}.
    \]
    In particular, $\bm{\Pi} $ has two distinct negative eigenvalues: $- \exp \lb -\frac{\sqrt{S^2-4\pi^2}}{2} \rb$ and $- \exp \lb \frac{\sqrt{S^2-4\pi^2}}{2} \rb$.
\end{lemma}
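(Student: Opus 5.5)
The plan is to build $\bm H$ so that the ODE becomes, in a logarithmic time variable, a scalar growth term plus a \emph{symmetric, traceless} generator whose eigenframe rotates at constant speed. A rotating frame then turns the time-ordered product into a single matrix exponential of the non-symmetric matrix in the statement. The starting point is to write
\[
\bm{K}(r):=r\lb\bm H(r)+r^2\bI_2\rb^{-1}.
\]
For $\bm H\in\Sp_{++}^2$ this is symmetric with spectrum in $(0,1/r)$. Conversely, any smooth symmetric $\bm K(r)$ with spectrum in $(0,1/r)$ defines a smooth $\bm H=r\bm K^{-1}-r^2\bI_2\in\Sp_{++}^2$. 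So it suffices to prescribe $\bm K$.

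The key observation is the following. Let $\bm R(\theta)$ be the rotation by $\theta$, $\bm J=\bm R(\pi/2)$, and $\bm D=\mathrm{diag}(S/2,-S/2)$. Consider a generator $\bm A(\tau)=\bm R(\pi\tau)\bm D\bm R(\pi\tau)^\top$ for $\tau\in[0,1]$. The solution of $\bm Y'=\bm A(\tau)\bm Y$, $\bm Y(0)=\bI_2$, satisfies, via $\bm W=\bm R(\pi\tau)^\top\bm Y$, the equation $\bm W'=(\bm D-\pi\bm J)\bm W$. This has \emph{constant} coefficients, and $\bm D-\pi\bm J=\begin{pmatrix}S/2&\pi\\-\pi&-S/2\end{pmatrix}=:\bm B$. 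Hence
\[
\bm Y(1)=\bm R(\pi)\exp(\bm B)=-\exp(\bm B)=-e^{-S/2}\exp\begin{pmatrix}S&\pi\\-\pi&0\end{pmatrix}.
\]
This is exactly the claimed $\bm\Pi$. Since $S>2\pi$, $\bm B$ has real eigenvalues $\pm\tfrac12\sqrt{S^2-4\pi^2}$, which gives the stated distinct negative eigenvalues of $\bm\Pi$.

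To realize this inside the constraint $\mathrm{spec}\,\bm K\subset(0,1/r)$, I will work on a window $r\in[r_0,r_1]$ in the variable $\tau=\log(r/r_0)/\log(r_1/r_0)$. On this window I set
\[
\bm K(r)=\frac{1}{2r}\bI_2+\frac{g(r)}{S}\,\bm R(\pi\tau)\bm D\bm R(\pi\tau)^\top,
\qquad g(r)=\frac{1}{r\log(r_1/r_0)},
\]
so the eigenvalues are $\tfrac{1}{2r}\pm\tfrac{g}{2}$. These lie in $(0,1/r)$ as soon as $\log(r_1/r_0)>1$. The scalar part contributes the commuting factor $\sqrt{r/r_0}$, and the traceless part, after the time change, is exactly $\bm A(\tau)$. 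Outside the window I take $\bm K$ scalar, $\bm K=\frac{r}{r^2+c(r)}\bI_2$. With $c$ constant for large $r$, the solution behaves like $\sqrt{(r^2+c)/c}$ times a fixed matrix, so $\bm M(r)/r$ converges. On $[0,r_0]$ the solution is a positive scalar. Multiplying the pieces gives $\bm\Pi=\rho\cdot(-\exp(\bm B))$ for an explicit $\rho>0$. I then tune the free scalar parameters (the constants $c$ on the two sides, and $r_0$) so that $\rho=1$; monotone dependence on $c$ should give this by continuity. Existence and uniqueness of a $C^1$ solution in $\mathrm{GL}_2$ is standard, since the coefficient is smooth and locally bounded on $[0,\infty)$ and Liouville's formula keeps the determinant positive.

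The main obstacle is smoothness at the gluing points $r_0,r_1$. The traceless part must be switched on and off smoothly while keeping the total rotation angle $\pi$ and the integrated anisotropy exactly right. I will handle this by replacing the linear $\tau$ with a smooth reparametrization $\tau=\sigma(r)$, flat at both ends, with $\sigma'$ vanishing to infinite order there, and taking $g=\sigma'$. Because the rotating-frame identity only uses $\theta'=\pi\sigma'$ and the anisotropy $\propto\sigma'$ in the same ratio, the frame ODE is still constant-coefficient in $\sigma$, so the product remains $-\exp(\bm B)$. The spectral constraint $\sigma'(r)S/2<1/(2r)$ is met by taking the window long enough, $\log(r_1/r_0)$ large. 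Similarly, the scalar part is interpolated smoothly between $\frac{1}{2r}$ and $\frac{r}{r^2+c}$; this only changes the positive scalar $\rho$, which is then renormalized as above.
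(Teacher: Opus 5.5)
Your mechanism is the paper's. There, $r(\bm H+r^2\bI_2)^{-1}=b\,\bI_2+\delta\,\bm P_{\vartheta}$ with $\vartheta'=\frac{\pi}{S}\delta$. This is a scalar plus $\sigma'\,\bm R_{\pi\sigma}\bm D\bm R_{\pi\sigma}^\top$ with $\sigma=\vartheta/\pi$, and the explicit solution $\sqrt{1+e^{-S}r^2}\,\bm R_{\vartheta}\exp\bigl[\vartheta\bigl(\tfrac{S}{\pi}\bm P_0-\bm\Omega\bigr)\bigr]$ is your rotating-frame formula up to a scalar factor.

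There is, however, a slip in your first display. With coefficient $g/S$ and $g=d\tau/dr$, the traceless part in $\tau$-time is $\bm A(\tau)/S$, not $\bm A(\tau)$. The frame matrix then becomes $\operatorname{diag}(\tfrac12,-\tfrac12)-\pi\bm J$, whose eigenvalues $\pm\sqrt{1/4-\pi^2}$ are not real. The coefficient must be $g$, which gives eigenvalues $\frac{1}{2r}\pm\frac{gS}{2}$ and therefore requires $\log(r_1/r_0)>S$ rather than $>1$; your last paragraph already uses this scaling. With that correction your gluing and normalization go through. Tuning $\rho$ is in fact a one-liner: replacing $\bm H$ by $\lambda^{-2}\bm H(\lambda\,\cdot)$ multiplies $\bm\Pi$ by $\lambda$.

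The genuine difference is how $\bm H$ is realized, and it matters for what the lemma is used for. The paper keeps $\operatorname{spec}\bm H(r)=\{e^{-S},e^{S}\}$ for every $r$, as in \eqref{ns:eq:H}, and spreads the rotation over all of $[0,\infty)$. Since $\int_0^\infty\delta=S$ exactly, the total turn is automatically $\pi$, $\bm H$ is smooth with no gluing, and $\sqrt{1+e^{-S}r^2}/r\to e^{-S/2}$ yields the normalization with no tuning.

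Above all, it gives $e^{-S}\bI_2\preceq\bm H\preceq e^{S}\bI_2$, which is optimal. If $\operatorname{spec}\bm H\subset[\kappa,\Lambda]$, the total anisotropy $\int_0^\infty(\lambda_{\max}-\lambda_{\min})\bigl(r(\bm H+r^2\bI_2)^{-1}\bigr)\,dr$ is at most $\frac12\log(\Lambda/\kappa)$, with equality for \eqref{ns:eq:H}.

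Your windowed $\bm H=r\bm K^{-1}-r^2\bI_2$ equals $r^2\bI_2$ where the anisotropy switches off at the two ends of the window. Its eigenvalue ratio is therefore at least $(r_1/r_0)^2>e^{2S}$. With the scalar part pinned at $\frac{1}{2r}$ it cannot be brought close to $e^{2S}$; one checks a loss of at least a factor $(1+\sqrt{1-e^{-S}})^4\approx16$.

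The displayed statement does not include a spectral bound, so read literally your argument proves it. But Theorem~\ref{ns:thm:existence} sets $\bm G(\zeta(r))=\bm H(r)$ and needs the condition number of $W$ within a factor $1+\ve$ of $e^{2S}$ with $S$ close to $2\pi$. This is exactly what ties the hypothesis $S>2\pi$ to the threshold $e^{4\pi}$. In your construction $S>2\pi$ only makes the eigenvalues of $\bm B$ real and is decoupled from the spectral range of $\bm H$, so it would not support the sharpness result.
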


We note that the assumption $S>2\pi$ is crucial here. For $0<S<2\pi$, the matrix $\bm{\Pi}$ has nonreal eigenvalues; at $S=2\pi$, it has the repeated negative eigenvalue $-1$. Neither case is useful for our construction of an admissible potential function. It is also worth noting that for the construction of an admissible potential function to work, the solution $\bm{M}$ of the ODE above has to be non-symmetric, at least for some $r>0$.\\

\noindent \textbf{Proof of Lemma \ref{ns:lem:ODE-sol-construction}.} Consider the functions over $[0,\infty)$
  \[
    b(r) := \frac{r}{r^2+e^{S}}, \qquad \delta(r):= \frac{r}{r^2+e^{-S}} - \frac{r}{r^2 +e^{S}} \qquad \text{and} \qquad  \vartheta(r) = \frac{\pi}{S} \int_0^r \delta(s) \, ds .
    \]
Then,
\[
 \vartheta(0) = 0, \qquad \vartheta(\infty)=\pi, \qquad \vartheta'(r) = \frac{\pi}{S} \cdot \delta(r)
\]
and
\[
\int_0^\infty \delta(x) \, dx= \lim_{r \to \infty} \int_0^r \delta(x) \, dx = \lim_{r \to \infty} \la S + \frac{1}{2}\log \lb \frac{r^2+e^{-S}}{r^2+e^S} \rb \ra = S.
\]
Let $ \bm{u} _\phi =\lb \cos \phi, \sin \phi \rb^\top$ and define the rank-one matrix 
\begin{align} \label{ns:eq:rank-one}
\bm{P} _\phi =  \bm{u} _\phi  \bm{u} _\phi^\top.
\end{align}
Set
\begin{align} \label{ns:eq:H}
\bm{H} (r) = e^{-S} \cdot \bm{P} _{\vartheta(r)} + e^S \cdot \lb \bm{I} _2 - \bm{P} _{\vartheta(r)} \rb.
\end{align}
It is easy to check that for all $r \geq 0$, $\operatorname{spec}  \bm{H} (r)  = \la e^{-S}, e^{S} \ra$, so that
\[
 e^{-S} \bm{I} _2 \preceq \bm{H} (r) \preceq e^S \bm{I} _2, \qquad \text{for all } r \geq 0.
\]
Thus, Lemma \ref{ns:lem:Pi-exists} yields the existence and uniqueness of a solution $\bm{M}  \in C^1 \Big( [0,\infty), \mbox{GL}_2(\mb R) \Big)$. Next, using the Sherman--Morrison formula
\[
\lb a \bm{I}  - b \,  \bm{u}   \bm{u} ^\top \rb^{-1} = \frac{1}{a} \cdot \bm{I}  + \frac{b}{a^2-ab} \cdot  \bm{u}   \bm{u} ^\top,
\]
we get
\begin{align*}
r\cdot \lb \bm{H} (r) + r^2\cdot \bm{I} _2 \rb^{-1} &= r \cdot \left[ \lb r^2 + e^S \rb\cdot \bm{I} _2 - \lb e^S -e^{-S} \rb \cdot \bm{P} _{\vartheta(r)}  \right]^{-1} \\
&= \frac{r}{r^2+e^{S}} \cdot \bm{I} _2 + \lb \frac{r}{r^2+e^{-S}} - \frac{r}{r^2 +e^{S}} \rb \cdot \bm{P} _{\vartheta(r)} \\
&= b(r) \cdot \bm{I} _2 + \delta(r) \cdot \bm{P} _{\vartheta(r)}. 
\end{align*}
Thus, the ODE for $\bm{M} $ becomes 
\[
\bm{M} '(r) = \left[ b(r) \cdot \bm{I} _2 + \delta(r) \cdot \bm{P} _{\vartheta(r)}  \right] \cdot \bm{M} (r), \qquad r \geq 0.
\]
Its solution is
\[
\bm{M} (r) = \sqrt{1+e^{-S} r^2} \cdot \bm{R} _{\vartheta(r)} \cdot \exp \left[ \vartheta(r) \lb \frac{S}{\pi} \bm{P} _0 - \bm{\Omega}  \rb \right], \qquad r \geq 0,
\]
where
\begin{align} \label{eq:Omega+R-theta}
\bm{\Omega} = \begin{pmatrix}
0 &-1 \\
1 & 0
\end{pmatrix} \qquad \text{and} \qquad
\bm{R} _\phi = \exp \lb \phi \bm{\Omega}  \rb=\begin{pmatrix}
    \cos \phi & -\sin \phi \\
    \sin \phi & \cos \phi
\end{pmatrix}.
\end{align}
Indeed, it is easy to check that $\bm{M} (0)=\bm{I} _2$ and,  setting $\bm{Q} := \frac{S}{\pi} \bm{P} _0 - \bm{\Omega} $,
\begin{align*}
\bm{M} '(r) &=  \frac{d}{dr} \left[ \sqrt{1+e^{-S}r^2} \, \right] \cdot \bm{R} _{\vartheta(r)} \cdot \exp \left[ \vartheta(r) \, \bm{Q}  \right]
+ \sqrt{1+e^{-S}r^2} \cdot  \frac{d}{dr} \left[ \bm{R} _{\vartheta(r)} \right] \cdot \exp \left[ \vartheta(r) \, \bm{Q}  \right] \\
&+ \sqrt{1+e^{-S}r^2} \cdot  \bm{R} _{\vartheta(r)} \cdot \frac{d}{dr} \la  \exp \left[ \vartheta(r) \, \bm{Q}  \right] \ra\\
&= b(r) \cdot \bm{M} (r) + \sqrt{1+e^{-S}r^2} \cdot \vartheta'(r) \, \bm{\Omega}   \, \bm{R} _{\vartheta(r)} \cdot \exp \left[ \vartheta(r) \, \bm{Q}  \right] \\
&+ \sqrt{1+e^{-S}r^2} \cdot  \bm{R} _{\vartheta(r)} \cdot \vartheta'(r) \bm{Q}  \cdot \exp \left[ \vartheta(r) \, \bm{Q}  \right] \\
&= b(r) \cdot \bm{M} (r) + \sqrt{1+e^{-S}r^2} \cdot \vartheta'(r) \, \bm{R} _{\vartheta(r)} \lb \bm{\Omega}  + \bm{Q}  \rb
\cdot \exp \left[ \vartheta(r) \, \bm{Q}  \right] \\
&= \left[ b(r) \cdot \bm{I} _2 + \delta(r) \cdot \bm{P} _{\vartheta(r)}  \right] \cdot \bm{M} (r),
\end{align*}
where the last equality follows from the fact that $\bm{R} _\phi$ commutes with $\bm{\Omega} $ and from $\bm{P} _{\vartheta(r)} =  \bm{R} _{\vartheta(r)} \bm{P} _0 \bm{R} _{-\vartheta(r)}$.
The proof is completed by noting that
\[
\lim_{r \to \infty} \frac{\bm{M} (r)}{r} = e^{-S/2} \cdot \bm{R} _{\vartheta(\infty)} \cdot \exp \left[ \vartheta(\infty) \cdot \lb \frac{S}{\pi} \bm{P} _0 - \bm{\Omega}   \rb \right] =   -e^{-S/2} \cdot \exp \begin{pmatrix}
        S & \pi \\
        -\pi &0
    \end{pmatrix},
\]
because $\vartheta(\infty)=\pi$ and $\bm{R} _\pi=-\bm{I} _2$. $\hfill$ $\square$\\

We will also need the following perturbation result for the endpoint map.  Let $\sfZ_1^{\beta,W}$ be the flow generated by the ODE 
\[
\frac{d}{dt} \sfZ ^{\beta,W}_t\lb   \bm{x} \rb  =  \bm{v}_{t}^\beta\lb  \sfZ ^{\beta,W}_t\lb   \bm{x} \rb  \rb, \qquad \sfZ ^{\beta,W}_0\lb   \bm{x} \rb  =  \bm{x}   
\]
where
\[
 \bm{v}_{t}^\beta\lb  \bm{x} \rb := \bbE _{\lb  \bm{X} , \widetilde{ \bm{Y} } \rb \sim N \lb  \bm{0} , \bI _3 \rb \otimes p_1^\beta } \left[ \widetilde{ \bm{Y} } -  \bm{X}   \Big|  t \widetilde{ \bm{Y} }+\lb 1-t\rb  \bm{X}  =  \bm{x}   \right].
\]
The next lemma asserts that bounded perturbation of the potential function results in bounded perturbation of the endpoint map.

\begin{lemma} \label{ns:lem:affine-tail}
Let $W$ be a function on $\mathbb{R}^3$ of the form
$$W\lb  \bm{y} \rb =\tfrac12  \bm{y} ^{\top}\bQ  \bm{y} +U\lb  \bm{y} \rb, \qquad \bm y \in \mathbb{R}^3, $$ 
with 
\[
\norm{U}_\infty<\infty, \qquad \bQ \in\Sp_{++}^3 \qquad \text{and} \qquad q_-=\lambda_{\min}\lb \bQ \rb.
\]
Assume also that $W$ satisfies \textnormal{\eqref{cond:W-curvature}} and \textnormal{\eqref{cond:W-quadratic}} in Definition~\ref{ns:def:admissible}. Then, for each fixed $\beta\geq1$,
\begin{equation}
\label{ns:eq:T-asymptotic}
    \sfZ_1^{\beta,W} \lb  \bm{x} \rb =\lb \beta\bQ \rb ^{-1/2} \bm{x} +\bm{R}_\beta \lb  \bm{x} \rb 
    \qquad \text{and} \qquad
    \sup_{ \bm{x} \in\bbR ^3}\norm{\bm{R}_\beta \lb  \bm{x} \rb }
      <\infty.
\end{equation}
\end{lemma}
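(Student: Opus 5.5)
The plan is to compare the flow for $p_1^\beta$ with the flow for the Gaussian reference $\bcN(\bm 0,(\beta\bQ)^{-1})$, whose RF endpoint map is exactly linear. Write $\bm\Sigma=(\beta\bQ)^{-1}$.

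\textbf{Gaussian reference flow.} When the target is $\bcN(\bm 0,\bm\Sigma)$, the RF velocity is linear: $\bm v_t^0(\bm x)=\bm K_t\bm x$. Here $\bm K_t$ is built from $\bm\Sigma_t=(1-t)^2\bI+t^2\bm\Sigma$, and all these matrices commute. The flow map is therefore $\bm x\mapsto \bm\Sigma_t^{1/2}\bm x$, and its endpoint is $\bm\Sigma^{1/2}\bm x=(\beta\bQ)^{-1/2}\bm x$.

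\textbf{Velocity perturbation.} Next I will show that $\bm e_t:=\bm v_t^\beta-\bm v_t^0$ is bounded, with an integrable bound on $[0,1]$. Tweedie's formula \eqref{eq:tweedie2} gives
\[
\bm v_t(\bm x)=\frac{1}{1-t}\Big(\bbE[\bm Y\mid \bm X_t=\bm x]-\bm x\Big),
\]
so the velocity difference is $\frac1{1-t}$ times the difference of posterior means. The posterior of $\bm Y$ given $\bm X_t=\bm x$ under $p_1^\beta$ is the Gaussian posterior tilted by $e^{-\beta U}$, where $U$ is bounded. That Gaussian posterior has precision $\beta\bQ+\frac{t^2}{(1-t)^2}\bI$. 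The tilted posterior is also strongly log-concave: by \eqref{cond:W-curvature} its precision is at least $\beta\kappa+t^2/(1-t)^2$.

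The density ratio between the two posteriors lies in $[e^{-2\beta\|U\|_\infty},e^{2\beta\|U\|_\infty}]$. This bounds the mean difference by a constant times the posterior standard deviation. Concretely, I will use the Donsker--Varadhan (or $\chi^2$/Cauchy--Schwarz) bound
\[
|\bbE_\mu f-\bbE_\nu f|\le \sqrt{\Var_\nu f}\,\sqrt{e^{4\beta\|U\|_\infty}-1},
\]
applied to linear $f$, together with Brascamp--Lieb for the variance. This gives
\[
\|\bbE_{\mu}[\bm Y]-\bbE_{\nu}[\bm Y]\|\le C_\beta\Big(\beta\kappa+\tfrac{t^2}{(1-t)^2}\Big)^{-1/2}\lesssim C_\beta\min\Big(1,\tfrac{1-t}{t}\Big).
\]
Dividing by $1-t$ yields $\sup_{\bm x}\|\bm e_t(\bm x)\|\le C_\beta'\min\big(\tfrac{1}{1-t},\tfrac1t\big)$. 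This bound is bounded near $t=1$ and near $t=0$, so $\int_0^1\sup_{\bm x}\|\bm e_t\|\,dt<\infty$. Near $t=0$ the velocity tends to $\bbE\bm Y-\bm x$, which is fine. I expect this uniform-in-$\bm x$ bound near $t=1$ to be the main obstacle. The fallback is the pointwise formula $D\bm v_t=-\frac{\bI}{1-t}+\frac{t}{(1-t)^3}\mathrm{Cov}$, together with covariance bounds from Lemma~\ref{lem:Jacobian-bound-logconcave}, to control growth.

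\textbf{Closing the comparison.} Let $\bm w_t=\sfZ_t^{\beta,W}(\bm x)-\bm\Sigma_t^{1/2}\bm x$. It satisfies
\[
\dot{\bm w}_t=\bm K_t\bm w_t+\bm e_t\big(\sfZ_t^{\beta,W}(\bm x)\big).
\]
By Duhamel's formula,
\[
\bm w_1=\int_0^1 \bm\Sigma_1^{1/2}\bm\Sigma_s^{-1/2}\,\bm e_s(\cdot)\,ds .
\]
Since $\bm\Sigma_s\succeq c\bI$ uniformly on $[0,1]$, the propagator is bounded. Hence $\sup_{\bm x}\|\bm R_\beta(\bm x)\|\le C\int_0^1\sup\|\bm e_s\|\,ds<\infty$, which is \eqref{ns:eq:T-asymptotic}.
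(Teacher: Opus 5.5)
Your proposal is correct and follows essentially the same route as the paper. You compare with the Gaussian reference $\bcN(\bm 0,(\beta\bQ)^{-1})$, bound the velocity difference uniformly in $(t,\bm x)$ through the density ratio $e^{\pm 2\beta\|U\|_\infty}$ between the tilted and the Gaussian posteriors, and close the argument with the exact linear propagator $\bm\Sigma_1^{1/2}\bm\Sigma_s^{-1/2}$ (Duhamel), where the paper instead uses Gr\"onwall with the Lipschitz constant of $\bm v^G$. Also note that your bound $\min\{(1-t)^{-1},t^{-1}\}\le 2$ is already uniform in both $\bm x$ and $t$, so the ``main obstacle'' you anticipate does not arise and the fallback is unnecessary.
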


\noindent \textbf{Proof of Lemma \ref{ns:lem:affine-tail}.} The idea of the proof is to compare the end-point maps obtained for the Gaussian distribution $N\lb\bm0,(\beta\bQ)^{-1}\rb$ and the distribution $p_1^\beta\lb\bm y\rb\propto\exp\lb-\beta W\lb\bm y\rb\rb$.

When $\bm{X} , \widetilde{ \bm{Y} }  \sim N \lb  \bm{0} , \bI _3 \rb \otimes p_1^\beta$ and $t < 1$, the posterior distribution of $\widetilde{ \bm Y}$ given $t \widetilde{ \bm Y} +\lb 1-t\rb  \bm{X} = \bm x$ takes the form
\begin{align*}
\pi_{t, \bm{x} } \lb d  \bm{y}  \rb &\propto \exp \la -\beta W\lb  \bm{y} \rb  - \frac{\norm{\bm{x}  - t \bm{y}}^2}{2\lb 1-t\rb ^2}  \ra \, d \bm{y} \\
&\propto e^{ - \beta U\lb  \bm{y} \rb  } \cdot \exp \la  - \frac{\beta }{2} \cdot  \bm{y} ^\top \bQ   \bm{y}  - \frac{\norm{\bm{x}  - t \bm{y}}^2}{2\lb 1-t\rb ^2} \ra \, d \bm{y}.
\end{align*}
Using this posterior, we can conveniently express $\bm{v}_{t}^{\beta}$ as
\[
\bm{v}_{t}^{\beta}\lb   \bm{x}  \rb = \frac{1}{1-t} \bbE _{ \bm{Y}  \sim \pi_{t, \bm{x} }} [ \bm{Y} ] - \frac{\bm x}{1-t}, \qquad 0 \leq t < 1.
\]

Next, let $ \bm{Y}  \sim N \lb  \bm{0} , \lb \beta \bQ  \rb^{-1}  \rb$ and let $\sfZ^{G} _t \in C^1 \lb [0,1], \bbR ^3 \rb$ be the flow generated by the ODE
\[
\frac{d}{dt} \sfZ^{G} _t\lb   \bm{x} \rb  =  \bm{v}_{t}^{G}\lb  \sfZ^{G} _t\lb   \bm{x} \rb  \rb, \qquad \sfZ^{G} _0\lb   \bm{x} \rb  =  \bm{x},
\]
where 
\begin{align*}
 \bm{v}_{t}^{G}\lb   \bm{x}  \rb:&= \bbE _{\lb  \bm{X} ,  \bm{Y}  \rb \sim N \lb  \bm{0} , \bI _3 \rb \otimes  N \lb  \bm{0} , \lb \beta \bQ  \rb^{-1}  \rb} \left[  \bm{Y}  -  \bm{X}   \,\Big| \,  t \bm{Y} +\lb 1-t\rb  \bm{X}  =  \bm{x}   \right]  \\
 &=  \bbE \left[ \bm Y \, \Big| \,  t \bm{Y} +\lb 1-t\rb  \bm{X}  =  \bm{x}  \right] 
 -  \bbE \left[ \bm X \, \Big| \,  t \bm{Y} +\lb 1-t\rb  \bm{X}  =  \bm{x}  \right] \\
 &=  \frac{t}{\beta} \bm Q^{-1} \cdot \left[ (1-t)^2 \bm I_3 + \frac{t^2}{\beta} \bm Q^{-1} \right]^{-1} \cdot \bm x - (1-t)\left[ (1-t)^2 \bm I_3 + \frac{t^2}{\beta} \bm Q^{-1} \right]^{-1} \cdot \bm x   \\
&= \left[ \frac t \beta \cdot  \bQ ^{-1} - \lb 1-t\rb  \bI _3 \right] \cdot \left[ \lb 1-t\rb^2 \bI _3 + \frac{t^2}{\beta} \bQ ^{-1} \right]^{-1}  \bm{x} . 
\end{align*}
 Above, the third line follows from the Gaussian conditional mean formula
\begin{align*}
\bbE \left[ \bm Y \, \Big| \,  t \bm{Y} +\lb 1-t\rb  \bm{X}  =  \bm{x}  \right] 
&= \mbox{Cov} \left[ \bm Y, t \bm{Y} +\lb 1-t\rb  \bm{X}  \right] \cdot \la \mbox{Cov} \left[ t \bm{Y} +\lb 1-t\rb  \bm{X} \right]  \ra^{-1} \cdot \bm x \\
&= \frac{t}{\beta} \bm Q^{-1} \cdot \left[ (1-t)^2 \bm I_3 + \frac{t^2}{\beta} \bm Q^{-1} \right]^{-1} \cdot \bm x.
\end{align*}
and
\begin{align*}
    \bbE \left[ \bm X \, \Big| \,  t \bm{Y} +\lb 1-t\rb  \bm{X}  =  \bm{x}  \right]
    &=  \mbox{Cov} \left[ \bm X, t \bm{Y} +\lb 1-t\rb  \bm{X}  \right] \cdot \la \mbox{Cov} \left[ t \bm{Y} +\lb 1-t\rb  \bm{X} \right]  \ra^{-1} \cdot \bm x \\
    &= (1-t) \bm I_3 \cdot \left[ (1-t)^2 \bm I_3 + \frac{t^2}{\beta} \bm Q^{-1} \right]^{-1} \cdot \bm x \\
    &= (1-t) \cdot \left[ (1-t)^2 \bm I_3 + \frac{t^2}{\beta} \bm Q^{-1} \right]^{-1} \cdot \bm x.
\end{align*}

In particular, for some $L_{\beta, \bQ }<\infty$, we have 
\[
\sup_{t \in [0,1]}\norm{\bm{v}_{t}^{G}\lb   \bm{x}  \rb -  \bm{v}_{t}^{G}\lb   \bm{y}  \rb} \leq L_{\beta, \bQ } \cdot \norm{\bm{x}  -  \bm{y}}
\]
for all $ \bm{x} ,  \bm{y}  \in \bbR ^3$. 

For $\bm{X} ,  \bm{Y}  \sim N \lb  \bm{0} , \bI _3 \rb \otimes  N \lb  \bm{0} , \lb \beta \bQ  \rb^{-1}  \rb$ and $t < 1$, denote by $\widetilde{\pi}_{t, \bm{x} }$ the posterior distribution of $\bm Y$ given $t \bm{Y} +\lb 1-t\rb  \bm{X} = \bm x$, which is given by
\[
\widetilde{\pi}_{t, \bm{x} } \lb  \bm{y}  \rb \propto \exp \la -\frac{\beta }{2} \cdot  \bm{y} ^\top \bQ   \bm{y}  - \frac{\norm{\bm{x}  - t \bm{y}}^2}{2\lb 1-t\rb ^2}\ra
\, d \bm{y}  \sim N \lb  \bm{\mu} _{t, \bm{x} }, \bA _{t, \bm{x} }^{-1} \rb,
\]
where
\[
\bA _{t,  \bm{x} }:= \beta \bQ  + \frac{t^2}{\lb 1-t\rb ^2} \bI _3, \qquad  \bm{\mu} _{t, \bm{x} } = \lb \bA _{t, \bm{x} }  \rb^{-1} \cdot  \frac{t \bm{x} }{\lb 1-t\rb ^2},
\]
using the explicit formulae for the conditional distribution of a sub-vector of a Gaussian vector. Similarly, for all $0 \leq t < 1$,
\[
\bm{v}_{t}^{G}\lb   \bm{x}  \rb = \frac{1}{1-t} \bbE _{ \bm{Y}  \sim \widetilde{\pi}_{t, \bm{x} }} [ \bm{Y} ] - \frac{\bm x}{1-t}.
\]





From the form of the posterior distributions,  we obtain that their Radon-Nikodym derivative at any $\bm y \in \mathbb{R}^3$ is
\[
\frac{d \, \pi_{t, \bm{x} }}{d \, \widetilde{\pi}_{t, \bm{x} }} \lb  \bm{y}  \rb = \frac{e^{-\beta \, U\lb  \bm{y} \rb }}{\bbE _{\, \widetilde{\pi}_{t, \bm{x} }} \left[ 
e^{-\beta U \lb  \bm{Y}  \rb}
\right]}
\leq e^{2\beta \norm{U}_\infty}.
\]
Putting all the pieces together, for all $t<1$ and $ \bm{x}  \in \bbR ^3$, 
\begin{align*}
    \norm{\bm{v}_{t}^{\beta}\lb   \bm{x}  \rb -  \bm{v}_{t}^G\lb   \bm{x}  \rb} &= \frac{1}{1-t} \cdot \norm{\bbE _{ \bm{Y}  \sim \pi_{t,  \bm{x} }} [ \bm{Y} ] - \bbE _{ \bm{Y}  \sim \widetilde{\pi}_{t, \bm{x} }} [ \bm{Y} ]} \\
    &\leq \frac{e^{2\beta \, \norm{U}_\infty}+1}{1-t} \cdot   \bbE _{ \bm{Y}  \sim \widetilde{\pi}_{t, \bm{x} }} \Big[  \norm{\bm{Y}  - \bbE _{ \bm{Y}  \sim \widetilde{\pi}_{t, \bm{x} }} [ \bm{Y} ]} \Big]  \\
    &\leq \frac{2e^{2\beta \, \norm{U}_\infty}}{1-t} \cdot \sqrt{\operatorname{tr} \lb \bA _{t, \bm{x} }^{-1} \rb} \\
    &\leq \frac{2e^{2\beta \, \norm{U}_\infty}}{1-t} \cdot \frac{\sqrt{3}}{\sqrt{\beta q_- + t^2\lb 1-t\rb ^{-2}}} 
     \leq 6e^{2\beta \norm{U}_\infty} \cdot \sqrt{1+ \frac{1}{\beta q_-}}:=\psi\lb \beta,q_-\rb,
\end{align*}
where $q_-= \lambda_{\rm min}(\bm Q)$. The second inequality follows from  Jensen's inequality, while the last follows from the fact that
\[
\bm A_{t, \bm x} \succeq \left[ \beta q_{-} + \frac{t^2}{(1-t)^2} \right] \cdot \bm I_3
\]
or, equivalently, 
\[
\bm A_{t, \bm x}^{-1} \preceq \left[ \beta q_{-} + \frac{t^2}{(1-t)^2} \right]^{-1} \cdot \bm I_3,
\]
which implies
\[
\mbox{tr} \lb \bm A_{t,\bm x}^{-1} \rb \leq \frac{3}{\beta q_- + t^2\cdot (1-t)^{-2}}.
\]

To finish the proof, put $E_t:=  \norm{ \sfZ _t^{\beta,W}\lb  \bm{x} \rb  - \sfZ^{G} _t \lb  \bm{x}  \rb}  $ and write 
\begin{align*}
    E_t &=  \left\| \sfZ _t^{\beta,W}\lb  \bm{x} \rb - \bm x - \left[ \sfZ^{G} _t \lb  \bm{x}  \rb - \bm x \right]   \right\| \\
    &=  \left\|  \int_0^t \bm{v}_{s}^\beta\lb  \sfZ ^{\beta,W}_s \rb \,ds - \int_0^t \bm{v}_{s}^{G}\lb  \sfZ ^{G}_s \rb \, ds \right\|  \\
    &\leq \int_0^t \norm{ \bm{v}_{s}^\beta\lb  \sfZ ^{\beta,W}_s \rb -  \bm{v}_{s}^{G}\lb  \sfZ ^{\beta,W}_s \rb} \, ds
    + \int_0^t \norm{\bm{v}_{s}^{G}\lb  \sfZ ^{\beta,W}_s \rb -  \bm{v}_{s}^{G}\lb  \sfZ^{G} _s \rb} \, ds \\
    &\leq \psi\lb \beta,q_-\rb  + L_{\beta,\bQ } \cdot \int_0^t E_s \, ds 
\end{align*}
Gronwall's inequality then yields 
\[
E_t \leq \psi\lb \beta,q_-\rb  \cdot e^{L_{\beta,\bQ }}, \qquad \forall t \in [0,1).
\]
Letting $t\uparrow1$ finishes the proof.
  $\hfill$ $\square$

\subsection{Proof of Theorem \ref{ns:thm:existence}}
 Take $S>2\pi$ and $\ve  \in (0,1/10)$. We first use Lemma \ref{ns:lem:ODE-sol-construction} to construct the $2\times 2$ block $\bm{G} _W(z)$ in \eqref{ns:eq:axis-Hessian}. For simplicity, we will drop the subscript $W$ and simply write $\bm{G} (z)$. Define
 \[
 \Theta(s) = \frac{\pi}{S} \la S + \frac 12 \cdot \log \left[  \frac{s+e^{-S}(1-s)}{s+e^S(1-s)}  \right] \ra, \qquad  - e^{-S} < s < 1 + e^{-S}.
 \]
 Take a smooth, compactly supported function $\widetilde{\Theta}(s)$ that agrees with $\Theta(s)$ in a neighborhood of $[0,1]$ and define
 \begin{align} \label{ns:eq:G}
 \bm{G} (z):= e^{-S} \cdot \bm{P} _{\widetilde{\Theta} \lb z^2 \rb} + e^S \cdot \lb \bm{I} _2 - \bm{P} _{\widetilde{\Theta} \lb z^2 \rb} \rb
 \end{align}
 where $\bm{P} _\phi$ is the rank-one map defined in \eqref{ns:eq:rank-one}. In other words, with $\zeta(r)$ as in \eqref{ns:eq:zeta(r)}, our choice $\bm{G} (z)$ gives
 \begin{align} \label{ns:eq:G=H}
 \bm{G} (\zeta(r)) =  \bm{H} (r)
 \end{align}
where $\bm{H} (r)$ is the matrix-valued function constructed in Lemma \ref{ns:lem:ODE-sol-construction}, see \eqref{ns:eq:H}. Note that this construction gives $\operatorname{spec} \, \bm{G} (z) = \la e^{-S}, e^S \ra$ for all $z \in \bbR $.
 
 The reason we choose $\widetilde{\Theta}(s)$ instead of $\Theta(s)$ is to extend $\bm{G} (z)$ smoothly to all $z \in \bbR $. Note that the construction yields $\operatorname{spec} \, \bm{G} (z) = \la e^{-S}, e^S \ra$ for all $z \in \bbR $, and thus
 \begin{align} \label{ns:eq:spectrum-G}
 e^{-S} \cdot \bm{I} _2 \preceq \bm{G} (z) \preceq e^S \cdot \bm{I} _2, \qquad \forall z \in \bbR . 
 \end{align}
 For $ \bm{x}  \in \bbR ^2$ and $\alpha,\widetilde{\ve },\widetilde{\ve }_1>0$ to be chosen later, define 
\[
W_0( \bm{x} ,z):= \frac{z^2}{2} +  \frac{1}{2}\bm{x} ^\top \, \bm{G} (z) \,  \bm{x} , \qquad 
Q \lb  \bm{x} , z \rb:= \lb 1+\widetilde{\ve } \rb \cdot e^{S} \cdot \frac{\norm{\bm{x}}^2}{2} + \frac{z^2}{2} - \alpha,
\]
and 
\[
\Delta \lb  \bm{x} , z \rb:= W_0( \bm{x} ,z)- Q \lb  \bm{x} , z \rb= \alpha - \frac 12 \cdot  \bm{x} ^\top \, \left[ \lb 1+\widetilde{\ve } \rb \cdot e^{S} \cdot \bm{I} _2 - \bm{G} (z) \right] \,  \bm{x} .
\]
We will take $W$ to be of the form
\[
W = \frac{1}{2} \left[ W_0 + Q + \chi \lb \Delta \rb  \right],
\]
where $\chi $ is the function from Lemma \ref{ns:lem:crease}, applied with parameters $\alpha$ and $\beta=\widetilde{\ve }_1$.

Before going further, let us explain why $W$ is chosen to be of this form. Firstly, $W_0$ satisfies \eqref{cond:W-axis},
but is not necessarily convex globally because $\bm G(z)$ varies with $z$. Fortunately, it is strongly convex with condition number roughly equal to $e^{2S}$ in a small cylinder $\la \lb  \bm{x} , z \rb: \norm{\bm{x}} \leq \varsigma \ra$, for some small $\varsigma>0$, due to \eqref{ns:eq:spectrum-G} and the fact that
\begin{align} \label{ns:eq:Hessian-W0}
\nabla^2 W_0 \lb  \bm{x} , z \rb =  \begin{pmatrix}
    \bm{G} (z)   & \bm{G} '(z) \,  \bm{x}  \\
     \bm{x} ^\top \, \bm{G} '(z) & 1 +\frac 12 \cdot  \bm{x} ^\top \, \bm{G}'' (z) \,  \bm{x}  
\end{pmatrix}
= \begin{pmatrix}
    \bm{G} (z) &  \bm{0} _{2\times 1} \\
     \bm{0} _{1\times 2} & 1 
\end{pmatrix}
+ O \lb \norm{\bm{x}} + \norm{\bm{x}}^2 \rb.
\end{align}
Therefore, we will let $W$ equal $W_0$ only in the cylinder where it can be shown to be strongly convex, and let $W$ equal $Q$ outside a larger neighborhood. Generally speaking, $W$ glues $W_0$ and $Q$ together in such a way that \eqref{cond:W-curvature}--\eqref{cond:W-axis} are preserved and the condition number of $W_0$ is only slightly perturbed (these properties are justified below).   

We visualize this gluing construction of $W$ in Figure \ref{ns:fig:cylinder} below. With $\delta_{\pm} = \delta_{\pm} \lb \alpha, \widetilde{\ve }_1 \rb$ as in Lemma \ref{ns:lem:crease} with $\delta_+ < \alpha < \delta_-$, we have
\begin{align*}
    W= \begin{cases}
        W_0, \qquad &\text{if $\Delta \geq \delta_+$ },  \\
         Q, \qquad &\text{if $\Delta \leq -\delta_-$ }.
    \end{cases}
\end{align*}
The region $\la \lb  \bm{x} , z \rb: \Delta \geq \delta_+ \ra$ is the green  twisted elliptical tube depicted in Figure \ref{ns:fig:cylinder}. In the region $\la \lb  \bm{x} , z \rb: -\delta_- \leq \Delta(\lb  \bm{x} , z \rb) \leq \delta_+ \ra$, $W$ interpolates smoothly between $W_0$ and $Q$ and is depicted as the complement of the green region in the cylinder. 
Put
\[
 \bm{D} (z) := \lb 1+\widetilde{\ve } \rb \cdot e^{S} \cdot \bm{I} _2 - \bm{G} (z)
\]
and note that for all $z \in \bbR $,
\begin{align} \label{ns:eq:spectrum-bound}
 \widetilde{\ve }e^S \cdot \bm{I} _2  \preceq \bm{D} (z) \preceq \left[  \lb 1+\widetilde{\ve } \rb \cdot e^{S}- e^{-S} \right] \cdot \bm{I} _2
 .
\end{align}

\begin{figure}[h]
\centering
\begin{tikzpicture}[x=1cm,y=1cm,>=Stealth]
 
\pgfmathsetmacro{\K}{0.45}      
\pgfmathsetmacro{\Hgt}{6.6}     
\pgfmathsetmacro{\Htop}{8.6}    
\pgfmathsetmacro{\Rad}{2.6}     
\pgfmathsetmacro{\Aa}{1.55}     
\pgfmathsetmacro{\Bb}{1.00}     
\pgfmathsetmacro{\Ns}{13}       
 
\tikzset{
  lead/.style={
    cylline,
    line width=.6pt,
    -{Stealth[length=4.5pt,width=3.5pt]}
  }
}
 
\fill[cylfill] (-\Rad,0) rectangle (\Rad,\Htop);
\fill[cylfill] (0,0)
  ellipse [x radius=\Rad, y radius=\K*\Rad];
\fill[cylfill] (0,\Htop)
  ellipse [x radius=\Rad, y radius=\K*\Rad];
 
\draw[cylline,dashed,line width=.4pt] (-3.35,0) -- (0,0);
\draw[cylline,dashed,line width=.4pt] (-3.35,\Hgt) -- (0,\Hgt);
\node[anchor=east,font=\footnotesize] at (-3.4,0) {$z=0$};
\node[anchor=east,font=\footnotesize] at (-3.4,\Hgt) {$z=1$};
 
\draw[cylline,dashed,line width=.5pt]
  (-\Rad,0)
  arc[
    start angle=180,
    end angle=0,
    x radius=\Rad,
    y radius=\K*\Rad
  ];
 
\foreach \s in {1,-1}{%
  \draw[bladeedge,line width=1.1pt]
    plot[variable=\f,domain=0:1,samples=80,smooth]
      ({\s*\Aa*cos(90+180*\f)},
       {\Hgt*\f+\K*\s*\Aa*sin(90+180*\f)});
}
 
\foreach \i in {0,...,13}{%
  \pgfmathsetmacro{\f}{\i/\Ns}
  \pgfmathsetmacro{\Zz}{\Hgt*\f}
  \pgfmathsetmacro{\Ph}{90+180*\f}
  \draw[bladeline,line width=.5pt,fill=bladefill]
    plot[variable=\t,domain=0:360,samples=61,smooth cycle]
      ({\Aa*cos(\t)*cos(\Ph)-\Bb*sin(\t)*sin(\Ph)},
       {\Zz+\K*(\Aa*cos(\t)*sin(\Ph)+\Bb*sin(\t)*cos(\Ph))});
}
 
\fill[bladefill] (-\Bb,\Hgt) rectangle (\Bb,\Htop);
\fill[bladefill] (0,\Htop)
  ellipse [x radius=\Bb, y radius=\K*\Aa];
\draw[bladeline,line width=.5pt]
  (-\Bb,\Hgt) -- (-\Bb,\Htop);
\draw[bladeline,line width=.5pt]
  (\Bb,\Hgt) -- (\Bb,\Htop);
\draw[bladeline,line width=.5pt]
  (0,\Htop) ellipse [x radius=\Bb, y radius=\K*\Aa];
\draw[bladeline!45,line width=.4pt]
  (0,\Hgt+0.9) ellipse [x radius=\Bb, y radius=\K*\Aa];
 
\draw[
  axiscol,
  line width=1.7pt,
  -{Stealth[length=5pt,width=4pt]}
]
  (0,-1.95) -- (0,\Htop+0.55);
\fill[axiscol] (0,\Hgt) circle (2.6pt);
 
\draw[cylline,line width=.8pt]
  (-\Rad,0) -- (-\Rad,\Htop);
\draw[cylline,line width=.8pt]
  (\Rad,0) -- (\Rad,\Htop);
\draw[cylline,line width=.8pt]
  (-\Rad,0)
  arc[
    start angle=180,
    end angle=360,
    x radius=\Rad,
    y radius=\K*\Rad
  ];
\draw[cylline,line width=.8pt]
  (0,\Htop) ellipse [x radius=\Rad, y radius=\K*\Rad];
 
\draw[lead] (4.0,8.9) -- (\Bb+1.7,8.1);
\node[anchor=west,align=left] at (4.1,8.95)
  {\footnotesize outside the cylinder, $W \equiv Q$
   };
 
\draw[lead] (4.0,6.9) -- (0.22,\Hgt+0.06);
\node[anchor=west,align=left] at (4.1,6.9)
  {\footnotesize $ \bm{e} _3=( \bm{0} ,1)$};
 
 
\draw[lead] (4.0,2.75) -- (1.55,2.65);
\node[anchor=west,align=left] at (4.1,2.75)
  {$W\equiv W_0$ in the green region \\[2pt]
   };
 
\draw[lead] (1.55,-1.72) -- (0.11,-1.72);
\node[anchor=west,font=\footnotesize] at (1.6,-1.72)
  {the $z$-axis};
 
 
\end{tikzpicture}
\caption{
Geometry of the gluing construction. 
}
\label{ns:fig:cylinder}
\end{figure}

To finish the proof, we check that $W$ satisfies \eqref{cond:W-curvature}--\eqref{cond:W-negative-eigenvalues} with a condition number satisfying \eqref{ns:eq:condition-number-bound}.

\underline{\it Checking \eqref{cond:W-negative-eigenvalues}.} $\bm{G} _W(z)$ in \eqref{ns:eq:axis-Hessian} has the form as in \eqref{ns:eq:G} by our construction, so \eqref{cond:W-negative-eigenvalues} follows from \eqref{ns:eq:G=H} and Lemma \ref{ns:lem:ODE-sol-construction}.

\underline{\it Checking \eqref{cond:W-axis}.} This is obvious because $W=W_0$ along a cylinder such that its interior contains the $z$-axis, which yields
\[
\nabla W( \bm{0} , z)= \nabla W_0( \bm{0} , z) = \lb  \bm{0} , z  \rb.
\]

\underline{\it Checking \eqref{cond:W-quadratic}.} Take $\bm{Q}  =  \operatorname{diag} \la \lb 1+\widetilde{\ve } \rb \cdot e^{S}; \lb 1+\widetilde{\ve } \rb \cdot e^{S}; 1  \ra$. Writing $ \bm{y}  = \lb  \bm{x} , z \rb \in \bbR ^3$, we have 
\[
Q \lb  \bm{x} , z \rb = \frac{1}{2}  \bm{y} ^\top \bm{Q}   \bm{y}  -\alpha
\]
for all $ \bm{y}  \in \bbR ^3$. We will verify that
\[
\sup_{ \bm{y}  \in \bbR ^3} \left| W( \bm{y} ) - \frac{1}{2}  \bm{y} ^\top \bm{Q}   \bm{y}   \right|<\infty.
\]
In the region $\la  \bm{y} : \Delta(\bm y) \leq -\delta_- \ra$, we have $ W( \bm{y} ) = Q( \bm{y} )$, so 
\[
\sup_{ \bm{y}  \in \la  \bm{y} : \Delta \leq -\delta_- \ra} \left| W( \bm{y} ) - \frac{1}{2}  \bm{y} ^\top \bm{Q}   \bm{y}   \right| = \alpha.
\]
In the region $\la  \bm{y} : \Delta(\bm y) \geq \delta_+  \ra$,  we have 
\[
 0 \leq  \bm{x} ^\top \, \bm{D} (z) \,  \bm{x}  \leq  2(\alpha-\delta_+)
\]
Thus
\[
\sup_{ \bm{y}  \in \la  \bm{y} : \Delta \geq \delta_+ \ra} \left| W( \bm{y} ) - \frac{1}{2}  \bm{y} ^\top \bm{Q}   \bm{y}   \right|
\leq \alpha+ \sup_{ \bm{y}  \in \la  \bm{y} : \Delta \geq \delta_+ \ra}  \bm{x} ^\top \, \bm{D} (z) \,  \bm{x}  
\leq \alpha + 2(\alpha-\delta_+). 
\]
Finally, in the region $\la  \bm{y} : -\delta_- \leq \Delta \leq \delta_+ \ra$, we have 
\[
 2(\alpha+\delta_-) \geq     \bm{x} ^\top \, \bm{D} (z) \,  \bm{x}  \geq 2 \lb \alpha - \delta_+ \rb.
\]
Consequently,
\begin{align*}
    \sup_{ \bm{y}  \in \la  \bm{y} : -\delta_- \leq \Delta \leq \delta_+ \ra} \left| W( \bm{y} ) - \frac{1}{2}  \bm{y} ^\top \bm{Q}   \bm{y}   \right|
    &=   \sup_{ \bm{y}  \in \la  \bm{y} : -\delta_- \leq \Delta \leq \delta_+ \ra} \left| \frac{1}{2} \lb W_0 + Q +\chi(\Delta) \rb - Q - \alpha  \right| \\
    &\leq \frac{1}{2} \cdot \sup_{ \bm{y}  \in \la  \bm{y} : -\delta_- \leq \Delta \leq \delta_+ \ra} \left| W_0( \bm{y} ) - Q( \bm{y} ) \right| +\alpha + \frac{1}{2} \sup_{-\delta_- \leq t \leq \delta_+} |\chi(t)| \\
    &\leq \frac{1}{2} \cdot \sup_{ \bm{y}  \in \la  \bm{y} : -\delta_- \leq \Delta \leq \delta_+ \ra}
    \left|  \bm{x} ^\top \, \bm{D} (z) \,  \bm{x}   \right| + \frac{3\alpha}{2} + \frac{1}{2} \sup_{-\delta_- \leq t \leq \delta_+} |\chi(t)| \\
    &\leq \alpha + \delta_- + \frac{3\alpha}{2} + \frac{1}{2} \sup_{-\delta_- \leq t \leq \delta_+} |\chi(t)|<\infty.
\end{align*}

\underline{\it Checking \eqref{cond:W-curvature}.}  
We first note that due to the construction of $\bm{G} $ and the fact that $\widetilde{\Theta}$ has compact support, we have
\[
\max \la \sup_{z \in \bbR } \norm{\bm{G} '(z)}_{\op}; \, \sup_{z \in \bbR } \norm{\bm{G} ''(z)}_{\op}   \ra =C_S<\infty,
\]
where $C_S$ depends only on $S$.

With $ \bm{y}  = \lb  \bm{x} ,z \rb \in \bbR ^3$, and omitting  for readability the argument $ \bm{y}$ in the functions $W_0$, $Q$ and $\Delta$,
\begin{align*}
    & \nabla^2 W \lb  \bm{y}  \rb \\
    =& \frac{1}{2} \cdot \nabla^2 \left[ W_0( \bm{y} ) + Q( \bm{y} ) + \chi \lb W_0(\bm y) - Q (\bm y)\rb \right] \\
    =& \frac 12 \cdot \nabla \left[ \nabla W_0 + \nabla Q + \chi' \lb W_0 - Q \rb \cdot \nabla \lb W_0 -Q \rb  \right] \\
    =& \frac 12 \cdot \left[ \nabla^2 W_0 + \nabla^2Q + \chi' \lb W_0 - Q \rb \cdot \nabla^2 \lb W_0 -Q \rb + 
    \chi'' \lb W_0-Q  \rb \cdot \nabla \lb W_0- Q \rb \cdot \nabla \lb W_0- Q \rb^\top   \right] \\
    =& \frac 12 \cdot \la \nabla^2 W_0 + \nabla^2Q  +\chi' \lb \Delta \rb \cdot \nabla^2 \lb \Delta  \rb 
    + \chi''(\Delta) \cdot \left[ \nabla  \Delta  \cdot \lb \nabla  \Delta \rb^\top \right] \ra \\
    =& \frac{1+\chi'\lb \Delta \rb}{2} \cdot \nabla^2 W_0 + \frac{1-\chi'\lb \Delta \rb}{2} \cdot \nabla^2 Q
    + \frac 12 \chi''(\Delta) \cdot \left[ \nabla  \Delta  \cdot \lb \nabla \Delta \rb^\top \right] \\
    =& \bm{L} _{ \bm{y} } + \bm{N} _{ \bm{y} },
\end{align*}
where
\[
\bm{L}_{ \bm{y} } := \frac{1+  \chi'\lb \Delta( \bm{y} ) \rb}{2} \cdot \nabla^2 W_0( \bm{y} ) + \frac{1-  \chi'\lb \Delta( \bm{y} ) \rb}{2} \cdot \nabla^2 Q( \bm{y} )
\]
and
\[
\bm{N} _{ \bm{y} } :=  \frac 12 \chi''(\Delta( \bm{y} )) \cdot \left[ \nabla \Delta( \bm{y} )  \cdot \lb\nabla  \Delta ( \bm{y} )\rb^\top \right].
\]
The term $\bm{L} _{ \bm{y} }$, the convex combination of two PSD matrices, is the dominant term in this sum since, due to our construction, $-1 \leq \chi'(s) \leq 1$ for all $s \in \bbR $. We will separately bound the spectrum of $\bm{L} _{ \bm{y} }$ and $\bm{N} _{ \bm{y} }$ in the following. In particular, we will show that 
\begin{align} \label{ns:eq:bound-L_y}
    \operatorname{spec} \, \bm{L} _{ \bm{y} } \subset \left[ e^{-S} - O_{\widetilde{\ve }, \widetilde{\ve }_1}(\sqrt{\alpha}), \lb 1+ \widetilde{\ve } \rb \cdot e^{S} + O_{\widetilde{\ve }, \widetilde{\ve }_1} \lb \sqrt{\alpha} \rb \right]
\end{align}
and 
\begin{align} \label{ns:eq:bound-N_y}
    \operatorname{spec} \, \bm{N} _{ \bm{y} } \subset \left[ 0, \lb 1 + \widetilde{\ve }\rb \, \widetilde{\ve }_1 \, e^S + O_{\widetilde{\ve }, \widetilde{\ve }_1}(\alpha) \right].
\end{align}
Given \eqref{ns:eq:bound-L_y} and \eqref{ns:eq:bound-N_y}, both \eqref{cond:W-curvature} and \eqref{ns:eq:condition-number-bound} follow because
\[
\frac{ \sup_{\bm y\in\bbR^3} \lambda_{\max}\lb\nabla^2W\lb\bm y\rb\rb}{ \inf_{\bm y\in\bbR^3}
\lambda_{\min}\lb\nabla^2W\lb\bm y\rb\rb}
\leq \frac{ \lb 1 + \widetilde{\ve }\rb \, \lb 1+\widetilde{\ve }_1 \rb \, e^S + O_{\widetilde{\ve }, \widetilde{\ve }_1}(\sqrt{\alpha})  }{e^{-S} - O_{\widetilde{\ve }, \widetilde{\ve }_1}(\sqrt{\alpha})}
\stackrel{\alpha \to 0}{\longrightarrow} \lb 1 + \widetilde{\ve }\rb \, \lb 1+\widetilde{\ve }_1 \rb \, e^{2S}.
\]
We then get \eqref{ns:eq:condition-number-bound} by picking $\widetilde{\ve }, \widetilde{\ve }_1$ such that 
\[
\lb 1 + \widetilde{\ve }\rb \, \lb 1+\widetilde{\ve }_1 \rb < 1+\ve .
\]
For the strict lower bound, choose $\kappa$ and $\Lambda$ as the global infimum and supremum of the Hessian eigenvalues, respectively. At a point on the $z$-axis the Hessian has the eigenvalue $e^{-S}$, whereas in the exterior region where $W=Q$ it has the eigenvalue $(1+\widetilde\ve)e^S$. Hence
\[
\frac{\Lambda}{\kappa}
\geq (1+\widetilde\ve)e^{2S}>e^{2S}.
\]
Together with the preceding upper bound, this proves \eqref{ns:eq:condition-number-bound}.
It remains to prove \eqref{ns:eq:bound-L_y} and \eqref{ns:eq:bound-N_y}. Let us start with \eqref{ns:eq:bound-L_y}. Note that on the set $\la  \bm{y} : \chi'(\Delta)\neq -1 \ra$, we have $\Delta>-\delta_-$, or equivalently
\[
 \bm{x} ^\top \, \bm{D} (z) \,  \bm{x}  \leq 2 \lb \alpha + \delta_- \rb.
\]
The above, \eqref{ns:eq:spectrum-bound} and \eqref{ns:eq:crease-inner} yield
\[
 \norm{\bm{x}}^2 \leq \frac{2 \lb \alpha + \delta_- \rb}{\widetilde{\ve }e^S} \leq \frac{2\lb \alpha + 4\alpha/\widetilde{\ve }_1  \rb}{\widetilde{\ve } e^S}
 \qquad \text{on $\la \Delta > -\delta_- \ra$}.
\]
Moreover, 
\[
\operatorname{spec} \, \bm{G} (z) \cup \operatorname{spec} \, \nabla^2 Q( \bm{y} ) = \la e^{-S}, 1, e^S, \lb 1+\widetilde{\ve }\rb\cdot e^S \ra
\]
Thus Lemma \ref{ns:lem:spectrum-bound}, the above display and \eqref{ns:eq:Hessian-W0} yield  \eqref{ns:eq:bound-L_y}.

We now prove \eqref{ns:eq:bound-N_y}. It is clear that $\bm{N} _{ \bm{y} } \succeq  \bm{0} $ and that $\bm{N} _{ \bm{y} } \neq  \bm{0} $ only when  $\Delta \in \lb -\delta_-, \delta_+ \rb$. Observe that
\[
\nabla \Delta \lb  \bm{x} , z \rb = \begin{pmatrix}
    -\bm{D} (z)  \bm{x}  \\
    \frac 12  \bm{x} ^\top \, \bm{G} '(z) \,  \bm{x} 
\end{pmatrix}
\]
Thus
\begin{align*}
\norm{\nabla \Delta( \bm{x} , z)}^2 = \norm{\bm{D} (z) \,  \bm{x}}^2 + \frac 14 \cdot \left[  \bm{x} ^\top \, \bm{G} '(z) \,  \bm{x}  \right]^2
&\leq  \bm{x} ^\top  \bm{D} (z)^2   \bm{x}  + O \lb \norm{\bm{x}}^4 \cdot \mathbf 1 _{\la \Delta \in \lb -\delta_-, \delta_+ \rb \ra} \rb \\
&\leq 2 \lb 1+ \widetilde{\ve } \rb \cdot e^S \cdot \lb \alpha - \Delta \rb + O_{\widetilde{\ve }, \widetilde{\ve }_1} \lb \alpha^2 \rb,
\end{align*}
 where the last inequality follows from $ \bm{x} ^\top \, \bm{D} (z) \,  \bm{x}  = 2 \lb \alpha-\Delta \rb$ and
 \[
 \bm{D} (z)^2 \preceq \lb 1+ \widetilde{\ve } \rb \cdot e^S \cdot \bm{D} (z),
 \]
which is due to \eqref{ns:eq:spectrum-bound}.

Consequently,
\begin{align*}
\frac 12 \cdot  \chi'' \lb \Delta \rb \cdot \norm{\nabla \Delta}^2
&\leq \chi'' \lb \Delta \rb \cdot \left[ \lb 1+ \widetilde{\ve } \rb \cdot e^S \cdot  \lb \alpha - \Delta \rb
+ O_{\widetilde{\ve }, \widetilde{\ve }_1} \lb \alpha^2 \rb \right] \\
&\leq \widetilde{\ve }_1 \cdot \lb 1+ \widetilde{\ve } \rb \cdot e^S + O_{\widetilde{\ve }, \widetilde{\ve }_1} \lb 
\frac{\alpha^2}{\alpha-\Delta}
\rb
\end{align*}
where the second line follows from property (iii) of $\chi$ in Lemma \ref{ns:lem:crease}. 

Furthermore, on  $\la \lb  \bm{x} ,z \rb: \Delta \in \lb -\delta_-, \delta_+ \rb \ra$, we get, by using \eqref{ns:eq:crease-inner},
\[
\frac{\alpha^2}{\alpha-\Delta} \leq \frac{\alpha^2}{\alpha-\delta_+} \leq \widetilde{\ve }_1 \, e^{2/\widetilde{\ve }_1} \cdot \alpha.
\]
Thus 
\[
\operatorname{spec} \, \bm{N} _{ \bm{y} } = \la 0; \frac 12 \cdot  \chi'' \lb \Delta \rb \cdot \norm{\nabla \Delta}^2 \ra
\subset \left[ 0, \lb 1 + \widetilde{\ve }\rb \, \widetilde{\ve }_1 \, e^S + O_{\widetilde{\ve }, \widetilde{\ve }_1}(\alpha) \right],
\]
which is \eqref{ns:eq:bound-N_y}. The proof is completed. $\hfill$ $\square$


\subsection{Proof of Theorem \ref{ns:thm:zero-temperature-nonstraight}}
   Let $ \bm{e} _3=\lb 0,0,1\rb ^{\top}\in\bbR ^3$ and put 
$$\bm{\mc{P}} _\beta\lb  \bm{x} \rb  :=  \sfZ_1^{\beta,W} \lb \sqrt\beta\, \bm{x} \rb .$$
where $ \sfZ_1^{\beta,W} $ is the endpoint of the flow obtained from $N \lb  \bm{0} , \bI _3 \rb \otimes p_1^\beta $. The reader is referred to Section \ref{sec:zero-temperature-ode} for some basic terminologies and properties of $\bm \sfZ_1^{\beta,W}$.

By Proposition~\ref{ns:prop:criterion} and Lemma \ref{ns:lem:affine-tail}, it suffices to show that for all $\beta>0$ sufficiently large, the Jacobian $D\sfZ_1^{\beta,W} \lb \sqrt\beta\, \bm{e} _3\rb $ has two distinct negative eigenvalues. By using the fact 
$$D \bm{\mc{P}} _\beta \lb  \bm{e} _3 \rb=\sqrt{\beta} \cdot D \sfZ_1^{\beta,W} \lb \sqrt\beta\, \bm{e} _3\rb  $$  
along with Proposition \ref{ns:prop:zero-temperature-endpoint}(iii), we only need to check that $D \bm{\mc{P}}  \lb  \bm{e} _3 \rb$ has two distinct negative eigenvalues. 

Indeed, given such a property on $D \bm{\mc{P}}  \lb  \bm{e} _3 \rb$, Proposition \ref{ns:prop:zero-temperature-endpoint}(iii) implies that $D \sfZ_1^{\beta,W} \lb \sqrt\beta\, \bm{e} _3\rb $ also has two distinct negative eigenvalues (with the remaining eigenvalue positive), for all $\beta$ sufficiently large. We then get non-straightness from Lemma  \ref{ns:lem:affine-tail}, \eqref{cond:W-quadratic} and Proposition~\ref{ns:prop:criterion}.

Define
\[
\bK \lb r, \bm{x} \rb := D_{ \bm{x} }  \bm{h}  \lb r,  \bm{x}  \rb \qquad \text{and} \qquad \bK \lb r\rb := D_{ \bm{x} }  \bm{h}  \lb r,  \bm{x}  \rb |_{ \bm{x}  =  \bm{e} _3}.
\]
Now, by Proposition \ref{ns:prop:zero-temperature-endpoint}(ii), 
\begin{align} \label{ns:DP-limit}
D \bm{\mc{P}}  \lb  \bm{e} _3 \rb =  \lim_{r \to \infty} \frac{\bK \lb r\rb }{r}.
\end{align}
Next, let us get an ODE for $\bK \lb r\rb $. Observe that
\begin{align*}
    \frac{d}{dr} \bK \lb r,  \bm{x} \rb  =  \frac{d}{dr} \left[ D_{ \bm{x} }  \bm{h}  \lb r,  \bm{x}  \rb  \right]
    &= D_{ \bm{x} } \left[ \frac{d}{dr}  \bm{h}  \lb r,  \bm{x}  \rb \right] \\
    &= D_{ \bm{x} } \left[ \bm{y} \lb r,  \bm{h}  \lb r,  \bm{x}  \rb \rb \right] \\
    &= D_{ \bm{h} } \bm{y} \lb r,  \bm{h}  \rb \cdot D_{ \bm{x} } \bm{h} \lb r,  \bm{x}  \rb \\
    &= r \cdot \left[ \nabla^2W\lb  \bm{y} \lb r, \bm{h} \rb \rb+r^2\bI _3 \right]^{-1} \cdot \bK  \lb r,  \bm{x}  \rb,
\end{align*}
where the last equality follows from \eqref{ns:eq:Dh-y}.

By setting $ \bm{x}  =  \bm{e} _3$, we deduce that
\[
\bK '\lb r\rb  =  r \cdot \left[ \nabla^2W\lb  \bm{y} \lb r, \bm{h} \rb \rb+r^2\bI _3 \right]^{-1} \cdot \bK \lb r\rb , 
\qquad
\bK \lb 0\rb  =  \bI _3.
\]
By \textnormal{\eqref{cond:W-axis}}, the trajectory $\lb  \bm{h} \lb r\rb ,  \bm{y} \lb r\rb   \rb$ starting at $ \bm{e} _3$ is precisely
$$
 \bm{h} \lb r, \bm{e} _3\rb 
=
\lb  \bm{0} ,\sqrt{1+r^2}\rb = \sqrt{1+r^2} \cdot  \bm{e} _3
\qquad \text{and} \qquad
 \bm{y} \lb r\rb 
=
\lb  \bm{0} ,\frac{r}{\sqrt{1+r^2}}\rb=\frac{r}{\sqrt{1+r^2} }\cdot  \bm{e} _3.
$$
Indeed, one can check from $\nabla W \lb  \bm{0} , z \rb =  \lb  \bm{0} , z  \rb$ that 
\[
\frac{d}{dr}  \bm{h}  \lb r,  \bm{e} _3 \rb =  \bm{y}  \lb r,  \bm{h}  \lb r,  \bm{e} _3 \rb \rb \qquad \text{and} \qquad  \nabla W \lb  \bm{y} \lb r\rb  \rb + r^2  \bm{y} \lb r\rb  - r \bm{h} \lb r\rb =\bm{0}.
\]
Together with \eqref{cond:W-negative-eigenvalues}, we find that
\[
\left[ \nabla^2W\lb  \bm{y} \lb r, \bm{h} \rb \rb+r^2\bI _3 \right]^{-1} =  \left[ \nabla^2W \lb \frac{r}{\sqrt{1+r^2}} \cdot  \bm{e} _3 \rb +r^2\bI _3 \right]^{-1} 
= \begin{pmatrix}
    \lb \bG_W \lb \zeta\lb r\rb  \rb + r^2 \bI _2 \rb^{-1}  &  \bm{0}  \\
     \bm{0}     &  \frac{1}{r^2+1}
\end{pmatrix}
\]
where $\zeta\lb r\rb =r/\sqrt{1+r^2}$.

Consequently, if we write $\bK \lb r\rb  =  \lb \bM _W\lb r\rb , k\lb r\rb   \rb$ with $\bM _W \in \bbR ^{2\times 2}$, then 
\[
\bM '_W\lb r\rb  = r \cdot \lb \bG_W \lb \zeta\lb r\rb  \rb + r^2 \bI _2 \rb^{-1} \cdot \bM _W\lb r\rb , \qquad \bM _W\lb 0\rb  =  \bI _2.
\]
and
\[
k'\lb r\rb  = \frac{r}{1+r^2} \cdot k\lb r\rb , \qquad k\lb 0\rb =1.
\]
It is easy to check that $k\lb r\rb  \equiv \sqrt{1+r^2}$, which yields the block form
\[
\bK \lb r\rb  =  \begin{pmatrix}
    \bM _W\lb r\rb   &  \bm{0}  \\
     \bm{0}    & \sqrt{1+r^2}
\end{pmatrix}.
 \]
By \eqref{cond:W-negative-eigenvalues}, $\bK \lb r\rb /r$ has two distinct negative eigenvalues for all $r$ large enough. This finishes the proof. $\hfill$ $\square$

\section{Proof of Theorem~\ref{ns:thm:gmm}}

\subsection{Heuristic construction}
\label{sec: ns gmm proof heuristic}

{
We explain the construction in dimension three with $\sigma=1$.
Choose an integer $N\geq2$, a horizontal radius $R>0$, and a
vertical scale $\eta>0$. The parameters $R,\eta$ will be used to control how close $D_{\rm max}^2$ is to $16\pi$. Arrange the means of the $2N$ components as
\[
\bm\mu_j^\pm
=
\bigl(\pm R\bm u_{\pi\theta_j^2},\,\eta\theta_j\bigr),
\qquad
\theta_j=\frac{2j-1}{2N},
\qquad
\bm u_\phi=
\begin{pmatrix}\cos\phi\\ \sin\phi\end{pmatrix},
\qquad 1 \leq j \leq N.
\]
The mixture is chosen to be 
\[
\sum_{j=1}^N  \left( p_j^{+} \cdot \bcN  \lb \bm \mu_j^+, \bm I_d \rb + p_j^- \cdot \bcN \lb \bm \mu_j^-, \bm I_d \rb \right),
\]
for some appropriately chosen weights $\la p_j^{\pm}; 1 \leq j \leq N \ra$. 

By pairwise symmetry of the means, any trajectory that begins on the $z$-axis stays on the $z$-axis at all times. Moreover, it can be checked that $4R^2 \leq D_{\rm max}^2 \leq 4R^2+\eta^2$, and we pick $R^2$ to be close to $4\pi$ so that the requirement that $D^2_{\rm max}$ be close to $16\pi$ holds. 
\begin{figure}[h]
    \centering
    \subfloat[Rotating antipodal component means.\label{fig:gmm-construction-means}]{%
    \begin{minipage}[t]{0.48\linewidth}
        \centering
        \includegraphics[width=\linewidth]{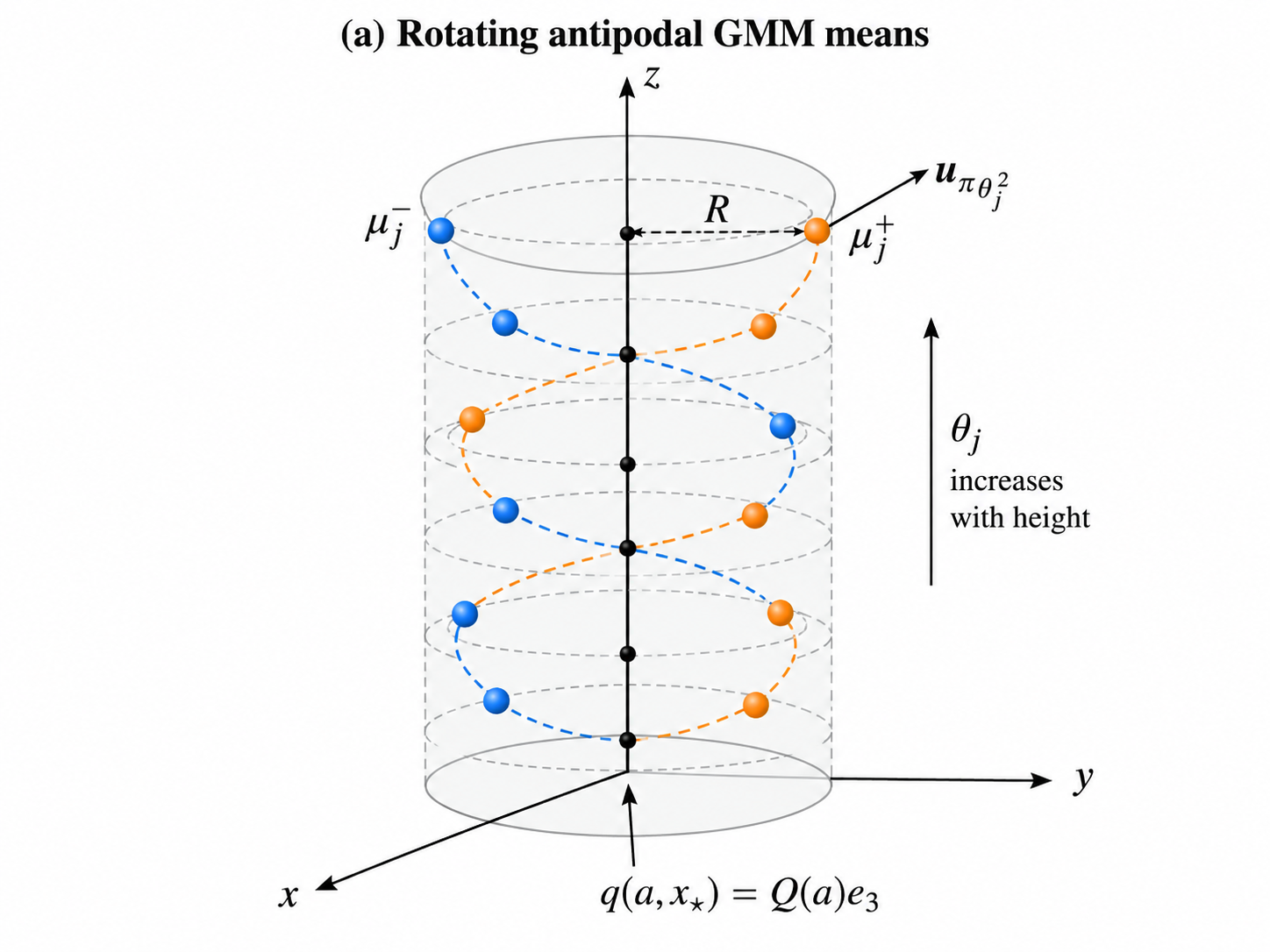}
    \end{minipage}}
    \hfill
    \subfloat[Rotating posterior covariance along
        the distinguished axial trajectory.\label{fig:gmm-rotating-covariance}]{%
    \begin{minipage}[t]{0.48\linewidth}
        \centering
        \includegraphics[width=\linewidth]{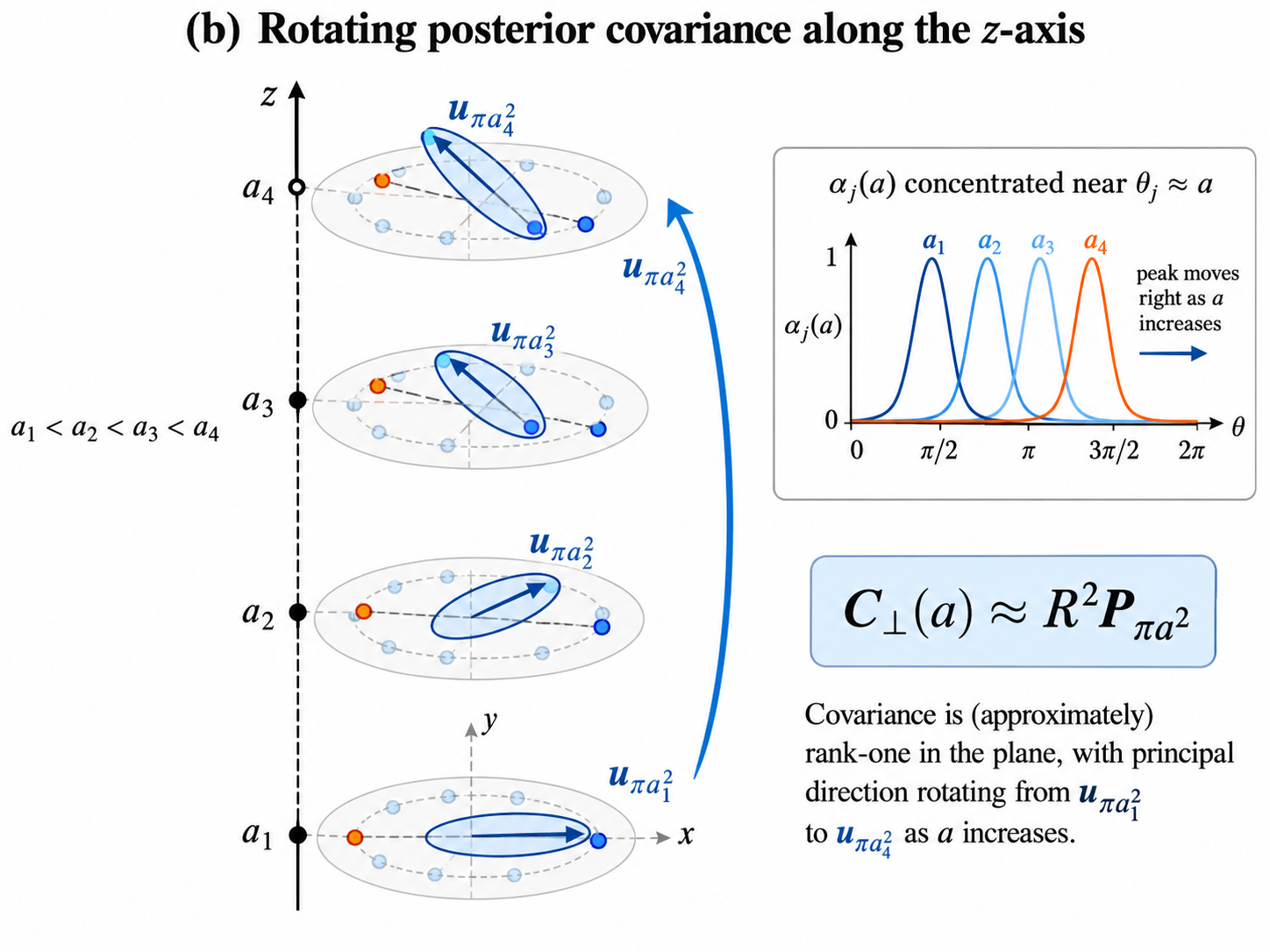}
    \end{minipage}}

    \caption{Geometric construction of the GMM.
    The mixture means are antipodal pairs with
    rotating horizontal directions. Along a chosen RF
    trajectory on the $z$-axis, the posterior progressively
    concentrates on successive pairs, inducing a rotating
    covariance in the transverse plane.}
    \label{fig:gmm-construction}
\end{figure}

\begin{figure}[h]
    \centering
    \includegraphics[width=0.5\linewidth]{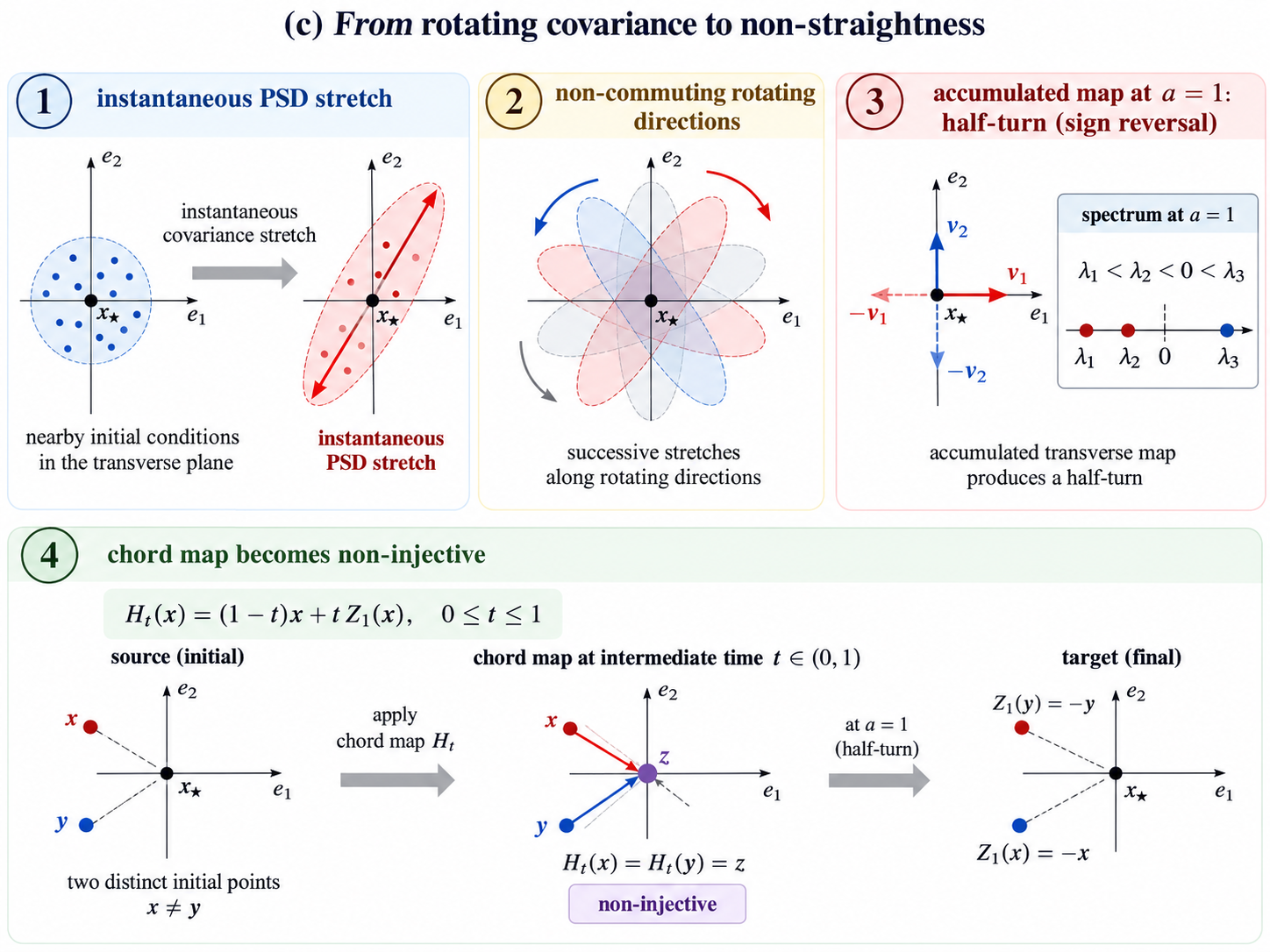}

    \caption{Mechanism of non-straightness.
    The rotating posterior covariance induces
    non-commuting positive semidefinite transverse
    stretches. Their accumulation produces an endpoint
    Jacobian with two distinct negative eigenvalues.
    By Proposition~\ref{ns:prop:criterion}, the resulting
    coupling is non-straight.}
    \label{fig:gmm-nonstraightness}
\end{figure}

Write $p_1$ for this mixture and let $\sfZ_t^{p_1}$ denote the
corresponding RF flow map. To get a cleaner form of the endpoint Jacobian, we introduce
the rescaled flow $\bm q$ satisfying
\[
\sfZ_t^{p_1}(\bm x)
=\rho_t\,\bm q(a(t),\bm x),
\]
where
\[
\rho_t:=\sqrt{(1-t)^2+t^2} \qquad \text{and} \qquad a(t):=\frac{t}{\rho_t}.
\]
As shown in \eqref{eq:ODE-K-gmm} below, the Jacobian of this rescaled flow
$\bm K(a,\bm x):=D_{\bm x}\bm q(a,\bm x)$ solves the matrix ODE
\[
\partial_a\bm K(a,\bm x)
=
a\,\bm C\bigl(a,\bm q(a,\bm x)\bigr) \cdot \bm K(a,\bm x),
\qquad
\bm K(0,\bm x)=\bI_3.
\]
Here,
\[
\bm C(a,\bm y)
:=
\Cov\bigl(\bm\mu\mid a\bm\mu+\bm G=\bm y\bigr)
\]
where $\bm\mu$ is the random component mean with
$\mb P(\bm\mu=\bm\mu_j^\pm)=p_j^\pm$, and
$\bm G\sim\mathcal N(\bm0,\bI_3)$ is independent of $\bm\mu$.
Since $\rho_1=a(1)=1$,  the endpoint Jacobians of the two flows coincide, i.e.
\[
D\sfZ_1^{p_1}(\bm x)=\bm K(1,\bm x).
\]
Using Proposition~\ref{ns:prop:criterion}, it is sufficient to choose the mixture weights $\la p_j^{\pm}; 1\leq j \leq N \ra$ so that the matrix
ODE of $\bm K$ approximates one whose solution has two distinct negative
eigenvalues at the endpoint $a=1$, for some initial location $\bm x_\star$. Towards that goal, we choose
\[
p_j^\pm
=
\frac{\exp(-B\theta_j^2)}
{2\sum_{\ell=1}^N\exp(-B\theta_\ell^2)},
\qquad
\bm x_\star=\frac{2B}{\eta}\bm e_3
\qquad \text{and} \qquad
\bm e_3=(0,0,1)^\top,
\]
for a sufficiently large $B>0$.

The evolution of the posterior covariance $\bm C(a, . )$ over the time variable $a>0$ can be described via the posterior distribution
\[
\alpha_j(a)
:=
\mb P\left(
\bm\mu\in\{\bm\mu_j^+,\bm\mu_j^-\}
\,\middle|\,
a\bm\mu+\bm G=\bm q(a,\bm x_\star)
\right),
\qquad \sum_{j=1}^N \alpha_j(a)=1. 
\]
More precisely, due to the fact that the trajectory stays on the $z$-axis, we get
\[
\bm C\bigl(a,\bm q(a,\bm x_\star)\bigr)
=
\begin{pmatrix}
\bm C_\perp(a)&\bm0_{2\times1}\\
\bm0_{1\times2}&c_3(a)
\end{pmatrix},
\]
where
\[
\bm C_\perp(a)
=
R^2\sum_{j=1}^N\alpha_j(a)\bm P_{\pi\theta_j^2},
\qquad
\bm P_\phi:= \bm u_\phi\bm u_\phi^\top
=
\begin{pmatrix}
\cos^2\phi & \cos\phi\sin\phi\\
\cos\phi\sin\phi & \sin^2\phi
\end{pmatrix},
\]
and
\[
c_3(a)
=
\eta^2
\left[
\sum_{j=1}^N\alpha_j(a)\theta_j^2
-
\left(\sum_{j=1}^N\alpha_j(a)\theta_j\right)^2
\right].
\]
See  the two panels in Figure~\ref{fig:gmm-construction}. The choice of the initial mixture weights $\la p_j^{\pm}; 1\leq j \leq N \ra$ gives,
as $N,B\to\infty$,
\[
\bm C_\perp(a)\to R^2\bm P_{\pi a^2},
\qquad
c_3(a)\to 0,
\]
uniformly for $a\in[0,1]$. 

The limiting $2\times2$ upper left block Jacobian therefore solves
\[
\bm M'(a)
=
aR^2\bm P_{\pi a^2} \cdot \bm M(a),
\qquad
\bm M(0)=\bI_2.
\]
We solve this ODE explicitly in Lemma \ref{lem:ODE-gmm} and show that,
when $R^2>4\pi$, $\bm M(1)$ has real, distinct and negative eigenvalues 
\[
-\exp\left(
\frac{R^2}{4}
\pm\sqrt{\frac{R^4}{16}-\pi^2}
\right).
\]

Note that the negative eigenvalues happen because the projections in the family $\la \bm P_a; a \in \mb R \ra$ generally do not commute. By choosing $N,B$ large (but finite), we get (using standard perturbation theory for ODEs) that 
\[
D\sfZ_1^{p_1}(\bm x_\star)
=
\bm K(1,\bm x_\star)
\approx
\begin{pmatrix}
\bm M(1)&\bm0\\
\bm0&1
\end{pmatrix}.
\]
Here, both the mixture $p_1$ and the starting point $\bm x_\star$ depend on $N,B$.
The final step connects this endpoint reversal to the folding of straight endpoint chords. That is, for
\[
\sfH_t(\bm x)
=
(1-t)\bm x+t\sfZ_1^{p_1}(\bm x),
\]
we have
\[
\det D\sfH_t(\bm x_\star)
=
\prod_{i=1}^3(1-t+t\lambda_i)<0
\qquad\text{when}\qquad
\frac{1}{1-\lambda_1}<t<\frac{1}{1-\lambda_2}.
\]
Lemma~\ref{lem:properness} verifies that
$\sfZ_1^{p_1}(\bm x)-\bm x$ is bounded.
Under this global condition,
Proposition~\ref{ns:prop:criterion} shows that the negative
determinant forces overlapping inverse branches of $\sfH_t$
and hence $S_t>0$. This gives the non-straightness mechanism, as illustrated in Figure~\ref{fig:gmm-nonstraightness}.
}

\subsection{Detailed proof of Theorem~\ref{ns:thm:gmm}}
\label{sec: ns gmm proof}



Before proving Theorem~\ref{ns:thm:gmm}, we first construct an explicit ODE whose solution has a Jacobian with two distinct, negative eigenvalues. Recall $\bm P_\theta$, $\bm R_\theta$ and $\bm \Omega$ in \eqref{ns:eq:rank-one} and \eqref{eq:Omega+R-theta}, respectively. Also recall the relation 
\[
\bm P_\theta = \bm R_\theta \bm P_0 \bm R_\theta^\top.
\]

\begin{lemma} \label{lem:ODE-gmm}
    Suppose $R^2>4\pi$. The equation
    \[
    \bm M'(a) = aR^2 \bm P_{\pi a^2} \cdot  \bm M(a) , \qquad \bm M(0)=\bm I_2, \qquad 0 \leq a \leq 1,
    \]
    has a unique solution $\bm M \in C^1 \Big( [0,1], \rm GL_2 \lb \mb R \rb \Big)$ such that $\bm M(1)$ has two distinct negative eigenvalues.  
\end{lemma}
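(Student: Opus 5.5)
The plan is to remove the rotation of the coefficient by passing to a co-rotating frame, which turns the equation into a constant-coefficient linear system, exactly as in the proof of Lemma~\ref{ns:lem:ODE-sol-construction}.

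Existence and uniqueness come first. The coefficient $a\mapsto aR^2\bm P_{\pi a^2}$ is continuous on $[0,1]$, so the linear equation has a unique $C^1$ solution. By Liouville's formula,
\[
\det\bm M(a)=\exp\Bigl(\int_0^a sR^2\operatorname{tr}\bm P_{\pi s^2}\,ds\Bigr)=e^{R^2a^2/2}>0,
\]
since $\operatorname{tr}\bm P_\phi=1$. Hence $\bm M\in C^1\bigl([0,1],\mathrm{GL}_2(\mb R)\bigr)$.

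Next I reparametrize by the angle $\phi=\pi a^2$, so that $d\phi=2\pi a\,da$. Writing $\widetilde{\bm M}(\phi)=\bm M(\sqrt{\phi/\pi})$, the equation becomes
\[
\widetilde{\bm M}'(\phi)=c\,\bm P_\phi\widetilde{\bm M}(\phi),\qquad c:=\frac{R^2}{2\pi},\qquad \phi\in[0,\pi].
\]
I then set $\widetilde{\bm M}(\phi)=\bm R_\phi\bm N(\phi)$ and use $\bm R_\phi'=\bm\Omega\bm R_\phi$, $\bm P_\phi=\bm R_\phi\bm P_0\bm R_\phi^\top$, and the fact that $\bm R_\phi$ commutes with $\bm\Omega$. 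This gives $\bm\Omega\bm R_\phi\bm N+\bm R_\phi\bm N'=c\bm R_\phi\bm P_0\bm N$, that is,
\[
\bm N'=(c\bm P_0-\bm\Omega)\bm N,\qquad \bm N(0)=\bm I_2 .
\]
Therefore $\bm N(\phi)=\exp\bigl(\phi(c\bm P_0-\bm\Omega)\bigr)$. At $a=1$ we have $\phi=\pi$ and $\bm R_\pi=-\bm I_2$, so
\[
\bm M(1)=-\exp\begin{pmatrix}R^2/2&\pi\\-\pi&0\end{pmatrix}.
\]

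For the eigenvalues, the exponent matrix has trace $R^2/2$ and determinant $\pi^2$. Its eigenvalues are therefore
\[
\frac{R^2}{4}\pm\sqrt{\frac{R^4}{16}-\pi^2},
\]
which are real and distinct exactly when $R^2>4\pi$. A real $2\times2$ matrix with distinct real eigenvalues is diagonalizable, so its exponential has the distinct positive eigenvalues $\exp\bigl(\tfrac{R^2}{4}\pm\sqrt{R^4/16-\pi^2}\bigr)$. Negating them gives two distinct negative eigenvalues of $\bm M(1)$, matching the values stated in the heuristic.

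The only step that needs care is the change of frame, and it is a direct verification once the substitution $\phi=\pi a^2$ has been made. The threshold $R^2>4\pi$ appears exactly as the discriminant condition, with no further obstacle.
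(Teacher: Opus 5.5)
Your proposal is correct and is essentially the paper's proof: the paper writes down the same co-rotating solution $\bm M(a)=\bm R_{\pi a^2}\exp\bigl[\pi a^2\bigl(\tfrac{R^2}{2\pi}\bm P_0-\bm\Omega\bigr)\bigr]$, verifies it by differentiating directly in $a$, and reads off $\bm M(1)=-\exp\begin{pmatrix}R^2/2&\pi\\-\pi&0\end{pmatrix}$ with the same two eigenvalues. The differences are cosmetic. You derive the formula through $\phi=\pi a^2$. The inverse $a=\sqrt{\phi/\pi}$ is not differentiable at $\phi=0$, but this is harmless: compose the $\phi$-solution with the smooth map $a\mapsto\pi a^2$ and invoke uniqueness, which is effectively what the paper's direct verification does. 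You also add Liouville's formula $\det\bm M(a)=e^{R^2a^2/2}$ to certify invertibility, which the paper leaves implicit in the product form of the solution.
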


\noindent \textbf{Proof of Lemma \ref{lem:ODE-gmm}.} The existence, uniqueness and $C^1$ regularity follow easily from Theorem 3.9 in \cite{teschl2012ordinary} since the coefficients are smooth. Its solution can be computed  explicitly as
\[
\bm M (a):= \bm R_{\pi a^2} \cdot \exp \left[ \pi a^2 \lb \frac{R^2}{2\pi} \bm P_0 - \bm \Omega \rb \right]. 
\]
Indeed, this can be checked directly by differentiating the above expression. Using the representation $\bm R_\theta = \exp \lb \theta \bm  \Omega \rb$ (for all real $\theta$) and setting $\bm K:= \lb R^2/2\pi \rb \bm P_0 - \bm \Omega $, we can write
\begin{align*}
     \bm M' (a) &= \frac{d}{da} \left[  \exp \lb \pi a^2 \bm \Omega \rb \cdot \exp \lb \pi a^2 \cdot \bm K \rb  \right] \\
    &= 2\pi a \cdot \bm \Omega \cdot \bm M(a) + \exp \lb \pi a^2 \bm \Omega  \rb \cdot 2\pi a \cdot \bm K \cdot \exp \lb \pi a^2  \cdot \bm K \rb \\
    &= 2\pi a \cdot \left[ \bm \Omega +  \exp \lb \pi a^2 \bm \Omega  \rb \cdot \bm K \cdot  \exp \lb -\pi a^2 \bm \Omega  \rb \right] \cdot \bm M(a) \\
    &= 2\pi a \cdot \left[ \bm \Omega +  \bm R_{\pi a^2} \bm K \bm R_{\pi a^2}^\top \right] \cdot \bm M(a) \\
    &= 2\pi a \cdot \left[ \bm \Omega + \frac{R^2}{2\pi} \bm P_{\pi a^2} - \bm \Omega \right] \cdot \bm M(a) = aR^2 \bm P_{\pi a^2} \cdot  \bm M(a)
\end{align*}
where the second last equality follows from the fact that $\bm \Omega$ commutes with $\bm R_\theta$, for all $\theta  \in \mb R$.

At $a=1$, we have $\bm R_\pi=-\bm I_2$, so 
\[
\bm M(1) = -\exp \left[ \pi \begin{pmatrix}
    R^2/(2\pi) & 1 \\
    -1 & 0
\end{pmatrix}  \right].
\]
The eigenvalues of $\bm M(1)$ are 
\[
-\exp \left[ \frac{\pi}{2} \lb \frac{R^2}{2\pi} + \sqrt{\frac{R^4}{4\pi^2}-4} \rb \right], \qquad -\exp \left[ \frac{\pi}{2} \lb \frac{R^2}{2\pi} - \sqrt{\frac{R^4}{4\pi^2}-4} \rb \right].
\]
$\hfill$ $\square$

\noindent \textbf{Proof of Theorem~\ref{ns:thm:gmm}.}  Without loss of generality, we may assume that $\sigma=1$. Take $\ve>0$ and choose
\[
R^2 := 4 \pi + \frac{\ve}{8} \qquad \text{and} \qquad \eta := \frac{\sqrt{\ve}}{2}.
\]
Recall $\bm u_\theta$ from \eqref{ns:eq:rank-one}. Let $N \in \mb N$ and $B >0$ be two parameters sufficiently large to be chosen later and define
\begin{align} \label{eq:theta-mu-p}
 \theta_j:= \frac{2j-1}{2N};  \qquad   \bm \mu^{\pm}_j:= \begin{pmatrix}
     \pm R \bm u_{\pi \theta_j^2} \\  
     \eta \theta_j 
 \end{pmatrix}
 \in \mb R^3, 
 \qquad 
 p_j^{\pm}:= \frac{ \exp \lb - B \theta_j^2 \rb}{2\sum_{k=1}^N  \exp \lb - B \theta_k^2 \rb}.
\end{align}
The desired target distribution is 
\[
p_1:= \sum_{j=1}^N \left[ p_j^{+} \cdot \bcN \lb \bm \mu_j^+, \bm I_3 \rb + p_j^- \cdot \bcN \lb \bm \mu_j^-, \bm I_3 \rb \right]
\]
Roughly speaking, $p_1$ is a mixture with $2N$ components, with equal weights within each antipodal pair. By Lemma \ref{lem:maximum-interpoint} and the definitions of $R,\eta$, we get 
\[
16\pi \leq D_{\rm max}^2 \leq 16 \pi + \ve. 
\]
Therefore, by using Proposition~\ref{ns:prop:criterion}, we only need to show that the endpoint map's Jacobian of $\sfZ_1^{p_1}$, where
\begin{align} \label{eq:sfZ_1}
\la \sfZ_t^{p_1} \ra_{0\leq t \leq 1}:= \Rectflow{\bcN \lb \bm 0, \bm I_3 \rb, p_1},
\end{align}
has exactly two distinct negative eigenvalues at some point. Note that the condition \eqref{eq:properness} is met with $\bm B = \bm I_3$ and by using Lemma \ref{lem:properness}.

We split the proof into a few steps.

\underline{\it Step 1: Rescale the endpoint map.} Make the change of variable 
\begin{align} \label{eq:rescaled-q}
\rho_t:= \sqrt{(1-t)^2+t^2}, \qquad a(t):= \frac{t}{\rho_t} \qquad \text{and} \qquad \sfZ_t^{p_1} \lb \bm x \rb:= \rho_t \cdot \bm q \lb a(t), \bm x \rb.
\end{align}
The last change of variable is valid because $a(t)$ is a $C^1$ homeomorphism on $[0,1]$. Roughly speaking, we will change the time scale from $t$ to $a$. The point of this change of variable is that $\bm q(a,\bm x)$ satisfies a convenient ODE that is easier to analyze.

Letting
\[
\bm K(a,\bm x):= D_{\bm x} \bm q \lb a, \bm x \rb,
\]
observe that
\[
D_{\bm x} \left[ \sfZ_t^{p_1} \lb \bm x \rb \right] = \rho_t \cdot \bm K \lb a(t), \bm x \rb.
\]
Hence, $D_{\bm x} \sfZ^{p_1}_1 \lb \bm x \rb =  \bm K(1, \bm x)$. 

Let us derive the ODE for $\bm K(., \bm x)$. A direct calculation shows that the velocity field in \eqref{eq: v_tweedie} with respect to  $\bcN \lb  \bm 0, \bm I_3  \rb \otimes p_1$ has the form
\[
\bm v^{p_1}_t(\bm z) = \frac{\rho'_t}{\rho_t} \cdot \bm z + \frac{1-t}{\rho_t^2 } \cdot \bm m \lb a(t), \frac{\bm z}{\rho_t} \rb ,
\]
where $\bm m$ is the posterior mean
\[
\bm m \lb a, \bm q \rb := \sum_{\ell=1}^N \left[ w_{\ell}^{+} \lb a, \bm q \rb \cdot \bm \mu_l^{+} + w_{\ell}^{-} \lb a, \bm q \rb \cdot \bm \mu_l^- \right],
\]
with
\[
w^{\pm}_{\ell} \lb a, \bm q \rb := \frac{p_{\ell}^{\pm} \cdot \exp \lb a\bm q^\top \bm \mu_\ell^{\pm} - \frac{a^2}{2} \cdot \| \bm \mu_\ell^{\pm} \|^2 \rb}
{\sum_{k=1}^N  \sum_{\ve \in \la +,- \ra} p_k^\ve \cdot \exp \lb a\bm q^\top \bm \mu_k^{\ve} - \frac{a^2}{2} \cdot \| \bm \mu_k^{\ve} \|^2 \rb }.
\]
Here the probability weights $p_j^{\pm}$'s are as in \eqref{eq:theta-mu-p}. One can think of $\bm m $ as the posterior mean since $\sum_{\ell=1}^N w_\ell^+(a,\bm q)+ w_\ell^-(a,\bm q)=1$ for all $(a,\bm q)$. 

From these representations, we get
\begin{align} \label{eq:ODE-q-gmm}
\partial_a \bm q(a,\bm x) = \bm m \lb a, \bm q(a,\bm x) \rb, \qquad \bm q(0,\bm x)= \bm x. 
\end{align}
Thus, taking the Jacobian $D_{\bm x}$ in the above yields 
\begin{align} \label{eq:ODE-K-gmm}
\partial_a K \lb a, \bm x \rb = a \cdot \bm C \lb a, \bm q(a,\bm x) \rb \cdot K(a,\bm x), \qquad \bm K(0,\bm x)= \bm I_3
\end{align}
where 
\[
\bm C \lb a, \bm q \rb= \sum_{\ell=1}^N \sum_{\ve \in \la +,- \ra}  w_\ell^{\ve}(a,\bm q) \cdot \left[ \bm \mu_\ell^\ve - \bm m(a,\bm q) \right] \cdot \left[ \bm \mu_\ell^\ve - \bm m(a,\bm q) \right]^\top. 
\]
To finish the proof, we will show that $\bm K(1,. )$ has two distinct negative eigenvalues at some point, for all sufficiently large $N,B$.

\underline{\it Step 2: Covariance concentration.}  Define 
\[
L:= \frac{2B}{\eta}, \qquad \bm x_\star = \lb 0, 0, L \rb=L \bm e_3. 
\]
Starting at $\bm x_\star$, the ODE in \eqref{eq:ODE-q-gmm} has an explicit solution given by 
\begin{align} \label{eq:q-solution}
    \bm q \lb a, \bm x_\star \rb = \lb 0,0, Q(a) \rb=Q(a)\cdot \bm e_3 
\end{align}
 where $Q$ solves the one-dimensional ODE
\begin{align} \label{eq:Q-ode-gmm}
Q'(a) = \frac{\eta}{2N} \cdot \sum_{j=1}^N  \left[ (2j-1) \cdot \alpha_j \lb a, Q(a) \rb  \right]
= \eta \cdot \sum_{j=1}^N \left[ \theta_j \cdot \alpha_j \lb a, Q(a) \rb \right],
\qquad Q(0)=L,
\end{align}
with
\[
\alpha_j(a,Q(a)):= 2 w_j^{+} \lb a, Q(a)\cdot \bm e_3 \rb = \frac{ \exp \lb -B\theta_j^2 + a\eta \cdot Q(a)\cdot \theta_j - \frac{a^2\eta^2}{2} \cdot \theta_j^2 \rb}{\sum_{k=1}^N \exp \lb -B\theta_k^2 + a\eta \cdot Q(a)\cdot\theta_k - \frac{a^2\eta^2}{2} \cdot \theta_k^2 \rb } 
\]
In fact, one can derive the ODE in \eqref{eq:Q-ode-gmm} and the solution in \eqref{eq:q-solution} by observing that
\begin{align*}
\partial_a \bm q(a,\bm x_\star)=\bm m \lb a, Q(a) \cdot \bm e_3 \rb &= \sum_{j=1}^N \frac{\alpha_j(a,Q(a))}{2} \cdot  \left[ \,
\begin{pmatrix}
    R \bm u_{\pi \theta_j^2 } \\
    \eta \theta_j
\end{pmatrix}
+  \begin{pmatrix}
    -R \bm u_{\pi \theta_j^2 } \\
    \eta \theta_j
\end{pmatrix} \,
\right] \\
&= \begin{pmatrix}
    0 \\ 0 \\ \eta \cdot \sum_{j=1}^N \alpha_j \lb a, Q(a) \rb \cdot \theta_j
\end{pmatrix}.
\end{align*}
Plug \eqref{eq:q-solution} into the formula of $\bm C$, we get
\begin{align*}
    \bm C \lb a, \bm q(a,\bm x_\star) \rb = \begin{pmatrix}
        R^2 \cdot \sum_{j=1}^N \alpha_j(a) \cdot \bm P_{\pi \theta_j^2} & \bm 0_{2 \times 1}  \\
        \bm 0_{1 \times 2}    & \eta^2 \cdot \mbox{Var}_{\bm \alpha(a)} \lb \la \theta_j \ra_{1\leq j \leq N} \rb
    \end{pmatrix}
\end{align*}
where we used the short-hand notation $\alpha_j(a):= \alpha_j(a,Q(a))$ and  $\bm \alpha(a):= \lb \alpha_1(a),\dots,\alpha_N(a)  \rb$.

Due to the form of $\bm C \lb a, \bm q(a,\bm x_\star) \rb$, the matrix-valued ODE in \eqref{eq:ODE-K-gmm} has solution of block type 
\[
\bm K \lb a, \bm x_* \rb =  \begin{pmatrix}
    \widetilde{\bm M}(a,\bm x_\star)  & \bm 0_{2\times 1} \\
    \bm 0_{1\times 2}   & k(a) 
\end{pmatrix}
\]
where $\widetilde{\bm M}(a,\bm x_\star)$ and $k(a)$ solve the ODEs
\[
\frac{d}{da} \widetilde{\bm M}(a,\bm x_\star) =  aR^2 \cdot \lb  \sum_{j=1}^N \alpha_j(a) \cdot \bm P_{\pi \theta_j^2} \rb \cdot \widetilde{\bm M}(a,\bm x_\star),
\qquad \widetilde{\bm M}(0,\bm x_\star) =  \bm I_2,
\]
and
\[
k'(a)= a\eta^2 \cdot \mbox{Var}_{\bm \alpha(a)} \lb \la \theta_j \ra_{1\leq j \leq N} \rb \cdot k(a), 
\qquad 
k(0)=1,
\]
respectively. 

With $\bm M \in C^1\lb [0,1], \rm GL_2(\mb R) \rb$ being the solution of the ODE in Lemma \ref{lem:ODE-gmm}, Lemma \ref{lem:perturbation-matrix-ode} (applied with $\bm C$ being the matrix in the ODE of $\widetilde{\bm M}$) yields
\begin{align} \label{eq:M-tilde-negative-eigen}
\left\| \widetilde{\bm M}(1,\bm x_\star) - \bm M \lb 1, \bm x_\star \rb \right\| &\leq \exp \lb R^2/2 \rb \cdot \la \exp \left[ \frac{R^2}{2} \cdot \sup_{a \in [0,1]} \left\| \sum_{j=1}^N \alpha_j(a) \cdot \bm P_{\pi \theta_j^2} - \bm P_{\pi a^2} \right\|   \right]-1 \ra \nonumber \\
&\leq \exp \lb 2\pi + \frac{\ve}{16} \rb \cdot \la \exp \left[ \lb 2\pi + \frac{\ve}{16}  \rb \cdot \lb \frac{\pi}{N}+ \frac{\pi\sqrt{2}}{\sqrt{B}} + \frac{\pi\ve}{4B} \rb \right]  -1 \ra,
\end{align}
where the second bound comes from Lemma \ref{lem:block-convergence-gmm} and that $\eta^2=\ve/4$. 

By choosing $N,B$ sufficiently large so that the last term in \eqref{eq:M-tilde-negative-eigen} is sufficiently small, we conclude that $\widetilde{\bm M}(1,\bm x_\star)$ has two distinct negative eigenvalues for such $(N,B)$. Keep in mind that $k(1)>0$ for finite $N,B$, so $D_{\bm x} \sfZ^{p_1}_1 \lb \bm x_\star \rb= \bm K \lb 1, \bm x_\star  \rb$ has two distinct negative eigenvalues and one positive eigenvalue.  The proof is completed. $\hfill$ $\square$

\section{Proofs of approximate straightness results}
\label{sec: proof of approx-straightness results}

\subsection{Proof of Proposition~\ref{prop: approximate straightness logconcave} }
\label{sec: proof of approximate straightness -- log-concave}

Let
\[
    \Gamma:=\frac14\log(L/m),
    \qquad
    \rho:=(mL)^{-1/4}
    \qquad \text{and} \qquad
    \eta:=\sqrt{\Big[ \Gamma^2-\pi^2 \Big]_+}.
\]
We first show that if non-injectivity happens for some $t \in [0,1]$, then 
\[
 \sqrt{\pi^2+
        \log^2\!\left(\frac{1-t}{t\rho}\right)} \leq \Gamma .
\]
Towards that end, suppose that $\sfH_t(\bm x)=\sfH_t(\bm y)$ for some distinct
$\bm x,\bm y$ and some $t\in(0,1)$. Then, as argued in \eqref{eq: H_t not 1-1},
\[
    \bm\delta_1(\bm x,\bm y)
    =-\frac{1-t}{t}\cdot \bm\delta_0(\bm x,\bm y).
\]
For any $s \in [0,1]$, write $\bm u_s:=\bm\delta_s(\bm x,\bm y)$ and
$\bm n_s:=\bm u_s/\norm{\bm u_s}_2$.
Then $\bm n_1=-\bm n_0$ and
\[
    \frac{\norm{\bm u_1}_2}{\norm{\bm u_0}_2}
    =\frac{1-t}{t}.
\] 
Define
\[
    b_s:=\frac{f_m(s)+f_L(s)}2,
    \qquad \text{and} \qquad 
    r_s:=\frac{f_m(s)-f_L(s)}2.
\]
where, for any $a > 0$, $f_a$ is the function 
\[
s \in[0,1] \mapsto  f_a(s):=\frac{s-a(1-s)}{s^2+a(1-s)^2}.
    \qquad
\]
Let $\bA_s$ be the matrix in \eqref{eq:A_t} and set  $a_s:=\bm n_s^\top\bA_s\bm n_s$.
Using the normalized-vector
calculation in the proof of
Theorem~\ref{theorem: straightness theorem}, we obtain
\begin{align*}
    a_s = \frac{\bm u_s^\top \, \bm A_s \, \bm u_s}{ \| \bm u_s \|_2^2} = \frac{1}{2\| \bm u_s \|_2^2} \cdot \frac{d}{ds} \lb \| \bm u_s \|_2^2 \rb = \frac{d}{ds} \lb. \log\norm{\bm u_s}_2 \rb
\end{align*}
where the second equality follows from the second equality in \eqref{eq:ode-norm-u}. The above display also yields
\[
\int_0^1 a_s \, ds =  \la \log\norm{\bm u_s}_2 \ra \Big|_{s=0}^{s=1} =  \log \lb \frac{\| \bm u_1\|_2}{\| \bm u_0 \|_2} \rb.
\]
By using the ODE \eqref{eq:n_t-ode} and the fact that $\bm n_s$ is orthogonal to $\dot{\bm n_s}$, we have 
\[
\| \lb \bm A_s - b_s \bm I_d \rb \, \bm n_s   \|_2^2 = \| \dot{\bm n}_s \|_2^2 + \lb a_s-b_s \rb^2 \cdot  \| \bm n_s \|_2^2 =  \| \dot{\bm n}_s \|_2^2 + \lb a_s-b_s \rb^2.
\]
Additionally, the bound $    f_L(s) \cdot \bI_d\preceq\bA_s\preceq f_m(s) \cdot \bI_d$ (due to Lemma \ref{lem:Jacobian-bound-logconcave}) yields 
\[
-r_s \cdot \bm I_d \preceq \bm A_s - b_s \cdot \bm I_d \preceq r_s \cdot \bm I_d.
\]
Consequently,
\begin{align} \label{eq:r_s-upper bound}
r_s^2 \geq \| \lb \bm A_s - b_s \bm I_d \rb \, \bm n_s   \|_2^2 \geq \| \dot{\bm n}_s \|_2^2 + \lb a_s-b_s \rb^2.
\end{align}
Moreover, since $\bm n_1=-\bm n_0$, the fact that the geodesic distance achieves the shortest length among all smooth curves connecting two points gives
\begin{align} \label{eq:geodesics}
\pi\le \int_0^1\norm{\dot{\bm n}_s}_2\,ds.
\end{align}
Putting the bounds together, we get from triangle inequality (for vector-valued integral) that
\begin{align*}
    \Gamma \stackrel{(i)}{=}  \int_0^1 r_s \, ds \stackrel{(ii)}{\geq} \int_0^1  \sqrt{\norm{\dot{\bm n}_s}_2^2+(a_s-b_s)^2}
    \,ds & \stackrel{(iii)}{\geq} 
    \sqrt{ \lb \int_0^1\norm{\dot{\bm n}_s}_2\,ds \rb^2 +  \left[  \int_0^1 (a_s - b_s)\, ds \right] ^2} \\
    & \stackrel{(iv)}{\geq} \sqrt{\pi^2 +  \left[  \int_0^1 (a_s - b_s)\, ds \right] ^2 } .
\end{align*}
Here $(i)$ follows from a simple calculation, $(ii)$ follows from \eqref{eq:r_s-upper bound}, $(iv)$ follows from \eqref{eq:geodesics} and $(iii)$ follows from Minkowski's integral inequality (Theorem 6.19 in \cite{folland1999real}): for the vector-valued function  $s \in [0,1] \mapsto \bm w_s:= \lb \| \dot{\bm n_s}\|_2, a_s-b_s \rb^\top$, Minkowski's integral inequality for the Euclidean norm (and integrating over the space $\{ 1,2\} \times [0,1]$ with respect to the product of the counting and Lebesgue measures) yields
\[
\sqrt{ \lb \int_0^1\norm{\dot{\bm n}_s}_2\,ds \rb^2 +  \left[  \int_0^1 (a_s - b_s)\, ds \right] ^2}
= \left\| \int_0^1 \bm w_s \, ds \right\| \leq \int_0^1 \| \bm w_s \|_2 \, ds
= \int_0^1  \sqrt{\norm{\dot{\bm n}_s}_2^2+(a_s-b_s)^2}
    \,ds.
\]

Let us evaluate the integral on the right-hand side of $(iv)$. Write
\[
\int_0^1 (a_s - b_s)\, ds= \int_0^1 a_s \, ds - \int_0^1 b_s \, ds = \log \lb \frac{\| \bm u_1\|_2}{\| \bm u_0 \|_2} \rb - \log \rho = \log \lb \frac{1-t}{t \rho} \rb.
\]
Therefore,
\[
\Gamma \geq  \sqrt{\pi^2+     \log^2\!\left(\frac{1-t}{t\rho}\right)}.
\]
Consequently, every non-injective time belongs to
\[
    I:=
    \left[
        \frac1{1+\rho e^\eta},
        \frac1{1+\rho e^{-\eta}}
    \right].
\]
Finally, put
\[
    \bm D:=\sfZ_1(\bm Z_0)-\bm Z_0,
    \qquad
    \bm H_t:=\sfH_t(\bm Z_0)
    \qquad \text{and} \qquad 
    S_t:=\bbE\norm{
        \bm D-\bbE[\bm D\mid\bm H_t]
    }_2^2.
\]
By Definition~\ref{def: approximate straightness},
$S(\bm{\mc Z})=\int_0^1S_t\,dt$.
Outside $I$, the continuous injective map $\sfH_t$ has a
measurable inverse on its image, so $S_t=0$.
For $t>0$, the identity $\bm D=(\bm H_t-\bm Z_0)/t$ gives
\[
    S_t
    =\frac1{t^2}\bbE\norm{
        \bm Z_0-\bbE[\bm Z_0\mid\bm H_t]
    }_2^2
    \le\frac d{t^2}.
\]
Therefore,
\[
S(\bm{\mc Z})
\leq d\int_I\frac{dt}{t^2}
= d\rho(e^\eta-e^{-\eta})
= 2d\rho\sinh\eta.
\]
which is the asserted bound. This also covers
$\Gamma\le\pi$, since then $I$ is a singleton. $\hfill$ $\square$

\subsection{Proof of Theorem~\ref{thm: trajectory controls coupling}}
\label{sec: proof of trajectory controls straightness}
To simplify notation, we suppress the superscript $K$
throughout the proof. Write
\[
\bm D=\bm Z_1-\bm Z_0,
\qquad
\bm H_t=(1-t)\bm Z_0+t\bm Z_1.
\]
By the optimality of the conditional expectation, we have
\[
S(\mc Z)
\le
\bbE\int_0^1
\norm{\bm D-\bm v_t(\bm H_t)}_2^2\,dt.
\]
Using Minkowski's inequality with respect to the product measure $\mathbb{P}_{\bm Z_0, \bm Z_1} \otimes {\rm Unif}[0,1]$  
 and the $\ell_K$-Lipschitz
continuity of $\bm v_t$, it follows that
\begin{align*}
\sqrt{S(\mc Z)}
&\le
\left(
\bbE\int_0^1
\norm{\bm D-\bm v_t(\bm Z_t)}_2^2\,dt
\right)^{1/2} +
\ell_K
\left(
\bbE\int_0^1
\norm{\bm Z_t-\bm H_t}_2^2\,dt
\right)^{1/2}\\
&=
\sqrt{\mc T}
+
\ell_K
\left(
\bbE\int_0^1
\norm{\bm Z_t-\bm H_t}_2^2\,dt
\right)^{1/2}.
\end{align*}

Next, define $\bm h_t=\bm Z_t-\bm H_t$.
Since $\bm h_0=\bm h_1=\bm 0$ and
\[
\frac{d}{dt}\bm h_t
=
\bm v_t(\bm Z_t)-\bm D.
\]
Then, Wirtinger's inequality (\cf. \cite{hardy1952inequalities}, p. 184 -- 187)
yields
\[
\int_0^1\norm{\bm h_t}_2^2\,dt
\le
\frac{1}{\pi^2}
\int_0^1
\norm{\bm v_t(\bm Z_t)-\bm D}_2^2\,dt.
\]
Taking expectations gives
\[
\bbE\int_0^1
\norm{\bm Z_t-\bm H_t}_2^2\,dt
\le
\frac{\mc T}{\pi^2}.
\]
Consequently,
\[
\sqrt{S(\mc Z)}
\le
\left(1+\frac{\ell_K}{\pi}\right)
\sqrt{\mc T},
\]
which proves the claim.\hfill $\square$

\subsection{Proof of Theorem~\ref{thm:trajectory_clt}}

Let $n:=N$, and recall from the discussion preceding \eqref{eq:trajectory_estimator} that, conditional on $\mc F_{K-1}$ (equivalently, conditional on the learned velocity field), the residuals $\{\psi_i^{(K)}\}_{i=1}^n$ are i.i.d. with conditional mean $\bbE[\psi_1^{(K)}\mid\mc F_{K-1}]=\mc T^{(K)}$.

\paragraph{Consistency.} Assume $\bbE\bigl[\lvert\psi_1^{(K)}\rvert\mid\mc F_{K-1}\bigr]<\infty$ almost surely. Then, by the conditional weak law of large numbers (applied on the conditional probability space given $\mc F_{K-1}$),
\[
\widehat{\mc T}_N^{(K)}
=\frac1n\sum_{i=1}^n\psi_i^{(K)}
\xrightarrow{p}
\bbE[\psi_1^{(K)}\mid\mc F_{K-1}]
=\mc T^{(K)}.
\]

\paragraph{Asymptotic normality.} If in addition $\sigma_K^2:=\Var(\psi_1^{(K)}\mid\mc F_{K-1})<\infty$ almost surely, then by the (conditional) Lindeberg--L\'evy central limit theorem,
\[
\frac{\sqrt n\,\bigl(\widehat{\mc T}_N^{(K)}-\mc T^{(K)}\bigr)}{\sigma_K}
\xrightarrow{d}
\bcN\lb 0,1\rb
\qquad \text{conditional on }\mc F_{K-1} \text{ a.s.}
\]
Since $n=N$, this yields the stated convergence.\hfill $\square$

\subsection{Proof of Proposition~\ref{prop:variance}}

Let $\bar\psi_{N,K}:=\widehat{\mc T}_N^{(K)}=\frac1N\sum_{i=1}^N\psi_i^{(K)}$. Conditional on $\mc F_{K-1}$, the variables $\{\psi_i^{(K)}\}_{i\ge 1}$ are i.i.d. with $\bbE[\psi_1^{(K)}\mid\mc F_{K-1}]=\mc T^{(K)}$ and $\Var(\psi_1^{(K)}\mid\mc F_{K-1})=\sigma_K^2<\infty$.

\paragraph{Consistency of $\widehat\sigma_{N,K}^2$.} Using the standard identity,
\[
\widehat\sigma_{N,K}^2
=\frac{1}{N-1}\sum_{i=1}^N\bigl(\psi_i^{(K)}-\bar\psi_{N,K}\bigr)^2
=\frac{N}{N-1}\left(\frac1N\sum_{i=1}^N\bigl(\psi_i^{(K)}\bigr)^2-\bar\psi_{N,K}^2\right).
\]
By the (conditional) weak law of large numbers,
\[
\bar\psi_{N,K}\xrightarrow{p}\mc T^{(K)},
\qquad
\frac1N\sum_{i=1}^N\bigl(\psi_i^{(K)}\bigr)^2\xrightarrow{p}\bbE\bigl[(\psi_1^{(K)})^2\mid\mc F_{K-1}\bigr],
\]
where the second convergence uses $\bbE[(\psi_1^{(K)})^2\mid\mc F_{K-1}]<\infty$ almost everywhere. By the continuous mapping theorem, $\bar\psi_{N,K}^2\xrightarrow{p}(\mc T^{(K)})^2$, hence
\[
\frac1N\sum_{i=1}^N\bigl(\psi_i^{(K)}\bigr)^2-\bar\psi_{N,K}^2
\xrightarrow{p}
\bbE\bigl[(\psi_1^{(K)})^2\mid\mc F_{K-1}\bigr]-(\mc T^{(K)})^2
=\sigma_K^2.
\]
Finally, $\frac{N}{N-1}\to 1$, so $\widehat\sigma_{N,K}^2\xrightarrow{p}\sigma_K^2$.

\paragraph{Studentization.} By Theorem~\ref{thm:trajectory_clt}, conditional on $\mc F_{K-1}$,
\[
\frac{\sqrt N\,\bigl(\bar\psi_{N,K}-\mc T^{(K)}\bigr)}{\sigma_K}\xrightarrow{d}\bcN\lb 0,1\rb.
\]
Moreover, $\widehat\sigma_{N,K}^2\xrightarrow{p}\sigma_K^2$ implies $\widehat\sigma_{N,K}\xrightarrow{p}\sigma_K$ (if $\sigma_K=0$, the conclusion is trivial). Therefore,
\[
\frac{\bar\psi_{N,K}-\mc T^{(K)}}{\widehat\sigma_{N,K}/\sqrt N}
=
\frac{\sqrt N\,\bigl(\bar\psi_{N,K}-\mc T^{(K)}\bigr)}{\sigma_K}\cdot\frac{\sigma_K}{\widehat\sigma_{N,K}}
\xrightarrow{d}\bcN\lb 0,1\rb
\]
by Slutsky's theorem.\hfill $\square$

\subsection{Proof of Proposition~\ref{prop:stopping}}

The results follow immediately from
Theorem~\ref{thm:trajectory_clt},
Proposition~\ref{prop:variance},
and the dominated convergence theorem (DCT).

\paragraph{Coverage probability.} Indeed, due to Proposition~\ref{prop:variance} we have the following. 
\[
\lim_{N \to \infty}\pr\lb \mc T^{(K)} \le U_{N,K} \mid \mc F_{K-1}\rb = 1- \alpha.
\]
Now, let us denote by $\bbE_{K-1}(\cdot)$ the expectation with respect to the randomness in $\mc F_{K-1}$. Since $\pr(\mc T^{(K)} \le U_{N, K}) = \bbE_{K-1} [\pr\lb \mc T^{(K)} \le U_{N,K} \mid \mc F_{K-1}\rb]$, we can directly apply DCT to conclude 
\[
\lim_{N \to \infty} \pr(\mc T^{(K)} \le U_{N, K}) = \bbE_{K-1} \left[\lim_{N \to \infty} \pr\lb \mc T^{(K)} \le U_{N,K} \mid \mc F_{K-1}\rb \right] = 1-\alpha.
\]

\paragraph{Type-I error control:} Similar to before, conditional on $\mc F_{K-1}$, we have 

\begin{equation}
\label{eq: type-I control}
    \begin{aligned}
        \pr_{H_0}(U_{N, K} \le \delta \mid  \mc F_{K-1}) & = \pr_{H_0}\lb \frac{\widehat{\mc T}^{(K)} - \mc T^{(K)}}{\widehat \sigma_{N, K}/\sqrt{N}} \le - z_{1- \alpha}+ \frac{\delta - \mc T^{(K)}}{\widehat \sigma_{N, K}/\sqrt{N}} \mid  \mc F_{K-1}\rb\\
        & \le \pr_{H_0}\lb \frac{\widehat{\mc T}^{(K)} - \mc T^{(K)}}{\widehat \sigma_{N, K}/\sqrt{N}} \le - z_{1- \alpha} \mid \mc F_{K-1}\rb \overset{N \to \infty}{\longrightarrow} \alpha.
    \end{aligned}
\end{equation}
Therefore, we can again apply the DCT on \eqref{eq: type-I control} to conclude 
\[
\lim_{N \to \infty}\pr_{H_0}(U_{N, K} \le \delta  ) = \bbE_{K-1}[\lim_{N \to \infty} \pr_{H_0}(U_{N, K} \le  \delta \mid  \mc F_{K-1})] \le \alpha.
\]\hfill $\square$

\section{Technical results used in the proof of Theorem~\ref{ns:thm:main}}

\subsection{Endpoint maps under zero-temperature limit} \label{sec:zero-temperature-ode}

Suppose $W \in C^3 \lb \bbR ^3 \rb$  such that \eqref{cond:W-curvature} in Definition \ref{ns:def:admissible} holds. Let $ \bm{Y} _{\beta,W}$ be the random vector taking values in $\bbR^3$ with law $ p_1^\beta  \propto \exp \lb -\beta W \rb$. The velocity field $\bm{v}_{t}^\beta( \bm{z} )$ corresponding to the independent coupling $N \lb  \bm{0} , \bI _3 \rb \otimes p_1^\beta $ has the form
\[
\bm{v}_{t}^\beta\lb   \bm{z}  \rb = \bbE  \left[  \bm{Y} _{\beta,W} -  \bm{X}  \Big| (1-t)  \bm{X}  + t \bm{Y} _{\beta,W}=  \bm{z}   \right], \qquad  \bm{X}  \sim N(  \bm{0} , \bI _3)
\]
Let $\sfZ_t^{\beta,W}\lb\bm x\rb$ be the flow generated by $\bm v_t^\beta$, starting at $\bm x$. The corresponding endpoint map is $\sfZ_1^{\beta,W}$, obtained by evaluating the flow at $t=1$. We refer to the limit $\beta\to\infty$ as the zero-temperature limit. We will show below in Proposition \ref{ns:prop:zero-temperature-endpoint}(iii) that, under appropriate rescalings, both the endpoint map and its Jacobian converge as $\beta\to\infty$.

Define the posterior measure
\[
\pi_{\beta,r,  \bm{h} }  \lb d  \bm{y}  \rb=  \frac{ \exp \la -\beta \cdot \left[  W ( \bm{y} ) + \frac{1}{2} \cdot \norm{  \bm{h}  - r \bm{y}  }^2 \right] \ra}{ \int_{\bbR ^3} \exp \la -\beta \cdot \left[  W ( \bm{y} ) + \frac{1}{2} \cdot \norm{  \bm{h}  - r \bm{y}  }^2 \right] \ra \, d  \bm{y}   }.
\]
\begin{prop} \label{ns:prop:ode-r-to-infty}
Let $r = r(t) := \frac{t}{\sqrt{\beta}(1-t)}$ and set
    \[
     \bm{h} _\beta \lb r,  \bm{x}  \rb:= \lb 1+r\sqrt{\beta} \rb \cdot \frac{ \sfZ_{(\sqrt{\beta}r)/(1+\sqrt{\beta}r)}^{\beta,W} \lb \sqrt{\beta}. \,  \bm{x}  \rb}{\sqrt{\beta}}.
     \]
    It holds that 
    \begin{itemize}
        \item For each fixed $\beta>0$, $ \bm{h} _\beta$ solves the ODE
        \begin{align}\label{ns:ODE-h-beta}
        \frac{d}{dr}  \bm{h} _\beta \lb r,  \bm{x}  \rb =  \bm{y} _\beta \lb r,  \bm{h} _\beta \lb r,  \bm{x}  \rb  \rb,
        \qquad  \bm{h} _\beta(0,  \bm{x} )= \bm{x} ,
        \qquad 
         \bm{y} _\beta = \bbE _{ \bm{Y}  \sim \pi_{\beta,r,  \bm{h} }} \left[  \bm{Y}  \right].
        \end{align}
        \item Locally on compact sets,
        \[
         \sfZ_1^{\beta,W}  \lb \sqrt{\beta} \,  \bm{x}  \rb =  \lim_{r \to \infty} \frac{ \bm{h} _\beta \lb r,  \bm{x}  \rb}{r}, \qquad 
        \text{and} \qquad 
        \sqrt{\beta} \cdot D  \sfZ_1^{\beta,W}  \lb \sqrt{\beta} \,  \bm{x}  \rb =  \lim_{r \to \infty} \frac{D \bm{h} _\beta \lb r,  \bm{x}  \rb}{r}.
        \]
    \end{itemize}
\end{prop}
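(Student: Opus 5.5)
The plan is to prove both bullets by an explicit change of variables relating $\bm h_\beta$ to the flow $\sfZ_t^{\beta,W}$, and then to pass to the limit $t\uparrow 1$ using the uniform-in-time Lipschitz control of the velocity that strong log-concavity provides.

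\emph{Step 1: time change and posterior.} Fix $\beta$ and $\bm x$. Under $r=t/(\sqrt\beta(1-t))$ we have $t=\sqrt\beta r/(1+\sqrt\beta r)$ and $1-t=(1+\sqrt\beta r)^{-1}$. Put $\bm z_t:=\sfZ_t^{\beta,W}(\sqrt\beta\,\bm x)$, so that $\bm h_\beta(r,\bm x)=\bm z_t/(\sqrt\beta(1-t))$, or equivalently $\bm z_t=\sqrt\beta(1-t)\,\bm h_\beta$.

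By Bayes' rule, the law of $\bm Y_{\beta,W}$ given $(1-t)\bm X+t\bm Y_{\beta,W}=\bm z$ has density proportional to
\[
\exp\bigl(-\beta W(\bm y)-\|\bm z-t\bm y\|^2/(2(1-t)^2)\bigr).
\]
Substituting $\bm z=\sqrt\beta(1-t)\bm h$ and $t/(1-t)=\sqrt\beta r$ turns the exponent into $-\beta[W(\bm y)+\tfrac12\|\bm h-r\bm y\|^2]$. So this posterior is exactly $\pi_{\beta,r,\bm h}$, and its mean is $\bm y_\beta(r,\bm h)$.

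\emph{Step 2: the ODE \eqref{ns:ODE-h-beta}.} Recall that $\bm v_t^\beta(\bm z)=(\bbE[\bm Y\mid\cdot]-\bm z)/(1-t)$. Differentiating $\bm h_\beta=(1+\sqrt\beta r)\bm z_t/\sqrt\beta$ in $r$ and using $dt/dr=\sqrt\beta(1-t)^2$ gives
\[
\frac{d\bm h_\beta}{dr}=\frac{\bm z_t}{\sqrt\beta(1-t)}\cdot\sqrt\beta(1-t)+(1-t)\,\bm v_t^\beta(\bm z_t)=\bm z_t+\bigl(\bm y_\beta-\bm z_t\bigr)=\bm y_\beta(r,\bm h_\beta).
\]
At $r=0$ we have $t=0$, so $\bm h_\beta(0,\bm x)=\sqrt\beta\,\bm x/\sqrt\beta=\bm x$. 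This is a direct verification; it only needs $t\mapsto\bm z_t$ to be $C^1$ on $[0,1)$.

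\emph{Step 3: the limits.} We have $\bm h_\beta(r,\bm x)/r=\frac{1+\sqrt\beta r}{\sqrt\beta r}\,\bm z_{t(r)}$, and $t(r)\to1$ as $r\to\infty$. The first limit therefore follows once $\sfZ_t^{\beta,W}\to\sfZ_1^{\beta,W}$ uniformly on compact sets as $t\uparrow1$.

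For the Jacobian, $D\bm h_\beta(r,\bm x)=\frac{1+\sqrt\beta r}{\sqrt\beta}\cdot\sqrt\beta\,D\sfZ_t^{\beta,W}(\sqrt\beta\,\bm x)$. Dividing by $r$ gives
\[
\frac{D\bm h_\beta(r,\bm x)}{r}=\frac{1+\sqrt\beta r}{r}\,D\sfZ_t^{\beta,W}(\sqrt\beta\,\bm x)\longrightarrow\sqrt\beta\,D\sfZ_1^{\beta,W}(\sqrt\beta\,\bm x),
\]
provided $D\sfZ_t^{\beta,W}\to D\sfZ_1^{\beta,W}$ locally uniformly.

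\emph{Main obstacle: regularity up to $t=1$.} This is where I expect the real work. Apply Lemma~\ref{lem:Jacobian-bound-logconcave} with $m=\beta\kappa$ and $L=\beta\Lambda$. The bounds $f_{\beta\Lambda}(t)\bI_3\preceq D\bm v_t^\beta\preceq f_{\beta\kappa}(t)\bI_3$ involve functions continuous on the closed interval $[0,1]$, since the denominators $t^2+a(1-t)^2$ never vanish. Hence $D\bm v_t^\beta$ is bounded uniformly in $(t,\bm z)$, and $\bm v_t^\beta(\bm 0)$ is bounded as well, which I will get from the Tweedie representation \eqref{eq: v_tweedie} and the finite moments of $p_1^\beta$.

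With these bounds, the flow and its Jacobian ODE \eqref{eq: ode J_t} have integrable coefficients on $[0,1]$. Gronwall's lemma then gives that $t\mapsto\sfZ_t$ and $t\mapsto D\sfZ_t$ are Cauchy as $t\uparrow1$, uniformly for $\bm x$ in compact sets, with limits $\sfZ_1$ and $D\sfZ_1$. The remaining point to check is that the derivative of the limit equals the limit of the derivatives; this follows from the locally uniform convergence of the Jacobians.
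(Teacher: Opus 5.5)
Your proof is correct. The first bullet is the same direct verification the paper does, but for the limits you take a genuinely different route.

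The paper stays in the $r$-variable throughout:
\begin{itemize}
\item it bounds $\norm{\bm h_\beta(r,\bm x)}$ by an inhomogeneous Gronwall argument;
\item it uses Stein's identity to get $\partial_r[\bm h_\beta/r]=-r^{-3}\,\bbE[\nabla W(\bm Y)]$;
\item it uses the posterior covariance sandwich $\frac{r}{\Lambda+r^2}\bI_3\preceq D_{\bm h}\bm y_\beta\preceq\frac{r}{\kappa+r^2}\bI_3$ to get $\partial_r[\bK_\beta/r]=O(r^{-3})$.
\end{itemize}
Both normalized quantities are then Cauchy, with explicit $O(R^{-2})$ tails.

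You instead pull back through the identities $\bm h_\beta/r=\sfZ_t/t$ and $D\bm h_\beta/r=(\sqrt\beta/t)\,D\sfZ_t$. You then note that Lemma~\ref{lem:Jacobian-bound-logconcave} (the same Brascamp--Lieb/Cram\'er--Rao estimate, written in $t$-coordinates) makes $\bm v_t^\beta$ Lipschitz in space uniformly on the closed interval $[0,1]$. Hence $\sfZ_t$ and $D\sfZ_t$ are Lipschitz in $t$ and extend continuously to $t=1$. This is shorter, and it avoids both Stein's identity and the growth bound \eqref{ns:norm-h-beta}.

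What your route gives up is uniformity in $\beta$. The Lipschitz bound for $\bm v_t^\beta$ from that lemma is of order $\sqrt{\beta\Lambda}$ (near $1-t\approx(\beta\Lambda)^{-1/2}$). Also, for fixed $r$ the time $t(r)$ itself depends on $\beta$. So your constants degrade as $\beta$ grows. By contrast, \eqref{ns:norm-h-beta} and \eqref{ns:normalized-Jacobian} are uniform over $\beta\ge1$, and the paper reuses exactly these in Proposition~\ref{ns:prop:zero-temperature-endpoint}(ii)--(iii) to interchange $\beta\to\infty$ and $r\to\infty$. For the present fixed-$\beta$ statement this does not matter.

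One loosely justified step: boundedness of $\bm v_t^\beta(\bm 0)$ as $t\uparrow1$ does not follow from pointwise Tweedie plus finite moments alone; for a general target it can blow up. It does follow at once from your uniform Lipschitz bound together with two facts:
\begin{itemize}
\item $\bbE\norm{\bm v_t^\beta(\bm X_t)}\le(\bbE\norm{\bm Y_{\beta,W}-\bm X}^2)^{1/2}$ by Jensen, with $\bm X_t=(1-t)\bm X+t\bm Y_{\beta,W}$;
\item $\sup_t\bbE\norm{\bm X_t}<\infty$.
\end{itemize}
Alternatively, it follows from the posterior-mean bounds in Lemmas~\ref{ns:lem:zero-temp} and~\ref{ns:lem:sandwich}.
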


\noindent \textbf{Proof of Proposition \ref{ns:prop:ode-r-to-infty}.}
Fix $\beta>0$ and write
\[
    t=t(r):=\frac{\sqrt\beta\,r}{1+\sqrt\beta\,r},
    \qquad \text{and} \qquad
    \frac{dt}{dr}=\sqrt\beta(1-t)^2.
\]
By definition, we have 
\[
    \bm{v}_{t}^\beta( \bm{z} )
    =
    \frac{ \bm{y} _\beta(r, \bm{h} )- \bm{z} }{1-t}.
\]
Thus
\begin{align*}
    \partial_r \bm{h} _\beta(r, \bm{x} )
    &=
    \sfZ_{t}^{\beta,W}(\sqrt\beta\, \bm{x} )
    +(1-t) \cdot 
      \bm{v}_{t}^\beta\lb \sfZ_{t}^{\beta,W}(\sqrt\beta\, \bm{x} )\rb 
      =
     \bm{y} _\beta\lb r, \bm{h} _\beta(r, \bm{x} )\rb .
\end{align*}
Also,
\[
     \bm{h} _\beta(0, \bm{x} )
    =\frac{\sfZ_{0}^{\beta,W}(\sqrt\beta\, \bm{x} )}{\sqrt\beta}
    = \bm{x} .
\]
This proves \eqref{ns:ODE-h-beta}.

We now prove the second claim. Lemmas \ref{ns:lem:zero-temp} and \ref{ns:lem:sandwich} give
\begin{align} \label{ns:eq:posterior-first-moment}
    \norm{  \bm{y} _\beta(r, \bm{h} )}
    &\leq \bbE _{\pi_{\beta,r, \bm{h} }}\norm{ \bm{Y} }
    \leq \frac{r\norm{ \bm{h} }+\norm{\nabla W( \bm{0} )}}{\kappa+r^2}
    +\sqrt{\frac{3}{\beta(\kappa+r^2)}}.
\end{align}
Together with the Lipschitz bound in Lemma \ref{ns:lem:sandwich}, this ensures that the ODE \eqref{ns:ODE-h-beta} has a unique global
solution, and is $C^1$ with respect to $ \bm{x} $. We split the rest of the proof into a few steps.

\underline{\it Step 1: Bounding $\norm{ \bm{h} _\beta(r, \bm{x} )}$.} We will show that for all fixed $ \bm{x}  \in \bbR ^3$,
\begin{equation}  \label{ns:norm-h-beta}
    \norm{ \bm{h} _\beta(r, \bm{x} )}
    \leq
    \sqrt{\frac{\kappa+r^2}{\kappa}}
    \lb 
        \norm{ \bm{x} }
        +\frac{\norm{\nabla W( \bm{0} )}}{\sqrt{\kappa}}
        +\frac{\pi}{2}\sqrt{\frac{3}{\beta}}
    \rb. 
\end{equation}
To show this, put $f(r):= \norm{  \bm{h} _\beta(r, \bm{x} )}$. Triangle inequality yields, for all $\delta>0$
\begin{align*}
    |f(r+\delta)-f(r)| \leq \norm{  \bm{h} _\beta(r+\delta, \bm{x} ) -  \bm{h} _\beta(r, \bm{x} ) } &\leq \int_{r}^{r+\delta}
    \norm{ \partial_s   \bm{h} _\beta(s, \bm{x} )    } \, ds \\
    &= \int_{r}^{r+\delta} \norm{  \bm{y} _\beta \lb s, \bm{h} _\beta(s, \bm{x} ) \rb } \, ds.
\end{align*}
Dividing both sides by $\delta$, letting $\delta\to0$, and using \eqref{ns:eq:posterior-first-moment}, we deduce that, for almost every $r>0$,
\begin{align*}
    |f'(r)| \leq \norm{  \bm{y} _\beta \lb r,  \bm{h} _\beta(r, \bm{x} ) \rb } &\leq
    \frac{r \cdot \norm{ \bm{h} _\beta(r, \bm{x} ) }+ \norm{\nabla W( \bm{0} )}}{\kappa+r^2}
    +\sqrt{\frac{3}{\beta(\kappa+r^2)}} \\
    &= \frac{r}{\kappa+r^2} \cdot f(r) +     \frac{\norm{\nabla W( \bm{0} )}}{\kappa+r^2} +\sqrt{\frac{3}{\beta(\kappa+r^2)}}.
\end{align*}
Lemma \ref{ns:lem:gronwall-inhomogeneous} then gives
\begin{align*}
    f(r) &\leq \norm{  \bm{x}  } \cdot \exp \la \int_0^r \frac{u}{\kappa+u^2} \, du \ra
    + \int_0^r \left[  \frac{ \norm{\nabla W( \bm{0} )} }{\kappa+s^2}
    +\sqrt{\frac{3}{\beta(\kappa+s^2)}} \, \right] \cdot \exp \la \int_s^r \frac{u}{\kappa+u^2} \, du \ra \, ds \\
    &=  \norm{  \bm{x}  } \cdot \sqrt{\frac{\kappa+r^2}{\kappa}} + 
    \sqrt{\kappa+r^2} \cdot \int_0^r  \left[  \frac{\norm{\nabla W( \bm{0} )}}{\lb \kappa + s^2 \rb^{3/2}} + \frac{\sqrt{3/\beta}}{\kappa+s^2}  \right] \, ds.
\end{align*}
Finally, the elementary bounds 
\[
\int_0^r \frac{ds}{\lb \kappa+s^2 \rb^{3/2}} \leq \frac 1\kappa, \qquad  \int_0^r \frac{ds}{\kappa+s^2} \leq \frac{\pi}{2\sqrt{\kappa}}
\]
give \eqref{ns:norm-h-beta}.

\underline{\it Step 2: Local convergence of $ \bm{h} _\beta$.}
By using  Stein's identity (see Lemma 2.2 in \cite{pmlr-v48-liub16})
$$\bbE _{ \bm{X}  \sim p} \left[ \nabla \log p \lb  \bm{X}  \rb \right] =  \bm{0} $$ 
 on $\pi_{\beta,r, \bm{h} }$, we obtain 
\[
    \bbE _{\pi_{\beta,r, \bm{h} _\beta}(r, \bm{x} )}[\nabla W( \bm{Y} )]
    +r^2 \cdot  \bm{y} _\beta \lb r, \bm{h} _\beta(r, \bm{x} ) \rb-r\cdot  \bm{h} _\beta (r, \bm{x} )
    = \bm{0} .
\]
Consequently, along the transformed flow,
\begin{equation}
\label{ns:eq:normalized-flow-derivative}
    \partial_r\left[\frac{  \bm{h} _\beta(r, \bm{x} )}r\right]
    = \frac{\partial_r  \bm{h} _\beta(r, \bm{x} )}{r} - \frac{ \bm{h} _\beta(r, \bm{x} )}{r^2}
  =  -\frac1{r^3} \cdot 
    \bbE _{\pi_{\beta,r, \bm{h} _\beta(r, \bm{x} )}}
       [\nabla W( \bm{Y} )].
\end{equation}
Recall that $ \bm{y} (r, \bm{h} )$ is the mode of the posterior distribution $\pi_{\beta,r, \bm{h} }$, defined as in \eqref{ns:y(r,h)} below. For any compact $K\subset\bbR ^3$, \eqref{ns:norm-h-beta} and Lemma \ref{ns:lem:sandwich} yield
\begin{align*}
\sup_{\substack{r\geq1\\ \bm x\in K}} \bbE_{\pi_{\beta,r,\bm h_\beta(r,\bm x)}}
\norm{\bm Y} &\leq \sup_{\substack{r\geq1\\ \bm x\in K}}
\norm{\bm y\lb r,\bm h_\beta(r,\bm x)\rb}
+\sqrt{\frac{3}{\beta(\kappa+1)}}\\
&\leq
C_K+\sqrt{\frac{3}{\beta(\kappa+1)}}<\infty.
\end{align*}
On the other hand, since
\[
    \norm{\nabla W( \bm{u} )}
    \leq\norm{\nabla W( \bm{0} )}+\Lambda\norm{ \bm{u} },
\]
the expectation on the right-hand side of
\eqref{ns:eq:normalized-flow-derivative} is also uniformly bounded.
Hence, for $s\geq R\geq1$,
\[
\begin{aligned}
\sup_{\bm{x}\in K}
\norm{
    \frac{\bm{h}_\beta(s,\bm{x})}{s}
    -\frac{\bm{h}_\beta(R,\bm{x})}{R}
}
&\leq
\int_R^s
\sup_{\bm{x}\in K}
\norm{
    \partial_u\left[
        \frac{\bm{h}_\beta(u,\bm{x})}{u}
    \right]
}\,du \\
&\leq
C_{\beta,K,\kappa,\Lambda}
\int_R^s u^{-3}\,du \leq
\frac{C_{\beta,K,\kappa,\Lambda}}{2R^2}.
\end{aligned}
\]
Thus $ \bm{h} _\beta(r,\cdot)/r$ converges locally uniformly.

By definition of $ \bm{h} _\beta$,
\[
    \sfZ_{t(r)}^{\beta,W}(\sqrt\beta\, \bm{x} )
    =t(r)\frac{ \bm{h} _\beta(r, \bm{x} )}r.
\]
Since $t(r)\to1$, the preceding convergence establishes the
continuous endpoint and gives
\[
     \sfZ_1^{\beta,W} (\sqrt\beta\, \bm{x} )
    = \lim_{t(r) \to 1}     \sfZ_{t(r)}^{\beta,W}(\sqrt\beta\, \bm{x} ) =
    \lim_{r\to\infty}\frac{ \bm{h} _\beta(r, \bm{x} )}r,
\]
locally uniformly in $ \bm{x} $.

\underline{\it Step 3: Local convergence of the normalized spatial derivatives.}
Put
$
    \bK _\beta(r, \bm{x} )
    :=D_{ \bm{x} } \bm{h} _\beta(r, \bm{x} ).
$
Differentiating \eqref{ns:ODE-h-beta} with respect to $ \bm{x} $ gives
\[
    \partial_r\bK _\beta(r, \bm{x} )
    =
    D_{ \bm{h} } \bm{y} _\beta
    \lb r, \bm{h} _\beta(r, \bm{x} )\rb  \cdot 
    \bK _\beta(r, \bm{x} ),
    \qquad
    \bK _\beta(0, \bm{x} )=\bI _3.
\]
By Lemma~\ref{ns:lem:sandwich},
\[
    \frac{r}{\Lambda+r^2}\bI _3
    \preceq
    D_{ \bm{h} } \bm{y} _\beta
    \lb r, \bm{h} _\beta(r, \bm{x} )\rb 
    \preceq
    \frac{r}{\kappa+r^2}\bI _3.
\]
Consequently, for every fixed $ \bm{v} \in\bbR ^3$,
\begin{align*}
    \frac{d}{dr}\norm{\bK _\beta(r, \bm{x} ) \bm{v} }^2 &= 2  \bm{v} ^\top \cdot \bK^\top _\beta \lb r, \bm{x}   \rb
    \cdot D_{ \bm{h} } \bm{y} _\beta
    \lb r, \bm{h} _\beta(r, \bm{x} )\rb  \cdot \bK _{\beta} \lb r,  \bm{x}  \rb \cdot \bm v\\
    &\leq
    \frac{2r}{\kappa+r^2} \cdot 
    \norm{\bK _\beta(r, \bm{x} ) \bm{v} }^2.
\end{align*}
Gronwall's inequality yields
\[
    \norm{\bK _\beta(r, \bm{x} ) \bm{v} }^2
    \leq
    \exp\lb \int_0^r\frac{2s}{\kappa+s^2}\,ds\rb
    \norm{ \bm{v} }^2
    =
    \frac{\kappa+r^2}{\kappa}\norm{ \bm{v} }^2.
\]
Taking the supremum over unit vectors $ \bm{v} $ gives
\[
    \norm{\bK _\beta(r, \bm{x} )}
    \leq\sqrt{\frac{\kappa+r^2}{\kappa}}.
\]
Moreover, the sandwich bound  above also implies, for $r>0$,
\[
     \bm{0} 
    \preceq
    \frac1r\bI _3-
    D_{ \bm{h} } \bm{y} _\beta
    \lb r, \bm{h} _\beta(r, \bm{x} )\rb 
    \preceq
    \frac{\Lambda}{r(\Lambda+r^2)}\bI _3.
\]
Thus
\[
    \norm{
    D_{ \bm{h} } \bm{y} _\beta
    \lb r, \bm{h} _\beta(r, \bm{x} )\rb 
    -\frac1r\bI _3
    }
    \leq\frac{\Lambda}{r(\Lambda+r^2)}.
\]
Differentiating the normalized matrix $\bK _\beta/r$ therefore gives
\begin{align*}
    \norm{
    \partial_r \la \frac{\bK _\beta(r, \bm{x} )}r \ra
    }
    &=
    \norm{
    \la
    D_{ \bm{h} } \bm{y} _\beta
    \lb r, \bm{h} _\beta(r, \bm{x} )\rb 
    -\frac1r\bI _3
    \ra \cdot 
    \frac{\bK _\beta(r, \bm{x} )}r
    }\\
    &\leq
    \frac{\Lambda}{r^2(\Lambda+r^2)}
    \sqrt{\frac{\kappa+r^2}{\kappa}}
    \leq\frac{C}{r^3},
\end{align*}
for all $r\geq 1$, where $C_{\kappa,\Lambda}<\infty$ depends only on $\kappa$ and $\Lambda$.

Thus, for $S>R\geq1$,
\begin{align} \label{ns:normalized-Jacobian}
    \sup_{ \bm{x} \in\bbR ^3}
    \norm{
    \frac{\bK _\beta(S, \bm{x} )}S
    -\frac{\bK _\beta(R, \bm{x} )}R
    }
    \leq
    \int_R^S\frac{C_{\kappa,\Lambda}}{r^3}\,dr
    \leq\frac{C_{\kappa,\Lambda}}{2R^2}.
\end{align}
Hence $\bK _\beta(r,\cdot)/r$ converges uniformly, and in particular locally uniformly. Let this limit be 
\[
\bL _\beta \lb  \bm{x}  \rb:= \lim_{r \to \infty} \frac{\bK _\beta(r, \bm{x} )}{r}.
\]
It suffices to show that $\bL _\beta( \bm{x} ) = \sqrt{\beta} \cdot D  \sfZ_1^{\beta,W}  \lb \sqrt{\beta} \,  \bm{x}  \rb$. Consider a coordinate vector $ \bm{e} _j$. For all $r,t >0$, we have
\[
\frac{ \bm{h} _\beta(r,  \bm{x}  + t  \bm{e} _j ) -  \bm{h} _\beta(r, \bm{x} )}{r} = \int_0^t  \frac{\bK _\beta \lb r,  \bm{x}  + s  \bm{e} _j \rb}{r} \cdot  \bm{e} _j \, ds
\]
Let $r \to \infty$, using the local convergence in Step 2, we deduce that
\[
 \sfZ_1^{\beta,W}  \lb \sqrt{\beta} \lb  \bm{x}  + t  \bm{e} _j \rb \rb -  \sfZ_1^{\beta,W}  \lb \sqrt{\beta}  \bm{x}  \rb = \int_0^t \bL _\beta \lb
 \bm{x}  + s  \bm{e} _j
\rb \bm e_j \, ds
\]
Divide both sides by $t$ and send $t\to 0$, we get
\[
\partial_{ x _j} \la  \sfZ_1^{\beta,W}  \lb \sqrt{\beta}  \bm{x}  \rb \ra = \bL _\beta( \bm{x} ) \cdot  \bm{e} _j. 
\]
Since $j$ is an arbitrary coordinate, the conclusion follows.
\hfill$\square$

Define
\begin{align} \label{ns:y(r,h)}
 \bm{y} (r, \bm{h} )
:=
\operatorname*{argmin}_{ \bm{u} \in\bbR ^3}
\la 
W( \bm{u} )+\frac12\norm{ \bm{h} -r \bm{u} }^2
\ra.
\end{align}
Equivalently, $ \bm{y} (r, \bm{h} )$ is the unique solution of
\[
\nabla W( \bm{y} )+r^2 \bm{y} -r \bm{h} = \bm{0} .
\]
We interpret the above as $ \bm{y}  \lb r,  \bm{h}  \rb$ being a function of $r$ and $ \bm{h} $: such a function is well-defined because due to strong convexity of $W$,  there exists a unique $ \bm{y} $ satisfying the above equality for fixed $r, \bm{h} $. 
Lemma \ref{ns:lem:y(r,h)-smooth} yields that, as a function of $\lb r,  \bm{h}  \rb$, $\bm{y} \lb r,  \bm{h}   \rb$ is of class $C^1$.

\begin{prop}
\label{ns:prop:zero-temperature-endpoint}
Under the same assumptions as in Proposition \ref{ns:prop:ode-r-to-infty}, it holds that
\begin{enumerate}[label=\textnormal{(\roman*)},leftmargin=2.8em]

\item There exists a unique map
$
 \bm{h} \in C^1\lb [0,\infty)\times\bbR ^3;\bbR ^3\rb 
$
satisfying
\begin{align} \label{ns:zero-temp-ODE}
\frac{\partial}{\partial r} \bm{h} (r, \bm{x} )
=
 \bm{y} \lb r, \bm{h} (r, \bm{x} )\rb ,
\qquad
 \bm{h} (0, \bm{x} )= \bm{x} .
\end{align}
Moreover,
$  \bm{h} _\beta\to  \bm{h}  \quad\text{in }
C^1_{\mathrm{loc}}
\lb [0,\infty)\times\bbR ^3;\bbR ^3\rb .
$

\item The limits
\[
\bm{\mc{P}}( \bm{x} )
:=
\lim_{r\to\infty}\frac{ \bm{h} (r, \bm{x} )}{r},
\qquad
D\bm{\mc{P}}( \bm{x} )
=
\lim_{r\to\infty}\frac{D_{ \bm{x} } \bm{h} (r, \bm{x} )}{r}
\]
exist and the convergences above hold locally uniformly in $ \bm{x} $. Moreover, we have 
$\bm{\mc{P}}\in C^1(\bbR ^3;\bbR ^3)$.

\item If
$
\bm{\mc{P}}_\beta( \bm{x} )
:=
 \sfZ_1^{\beta,W} (\sqrt{\beta}\, \bm{x} ),
$
then
\[
\bm{\mc{P}}_\beta
\longrightarrow
\bm{\mc{P}}
\qquad\text{in }C^1_{\mathrm{loc}}(\bbR ^3;\bbR ^3).
\]

\end{enumerate}
\end{prop}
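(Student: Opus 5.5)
The proof splits into three tasks: pass to the limit $\beta\to\infty$ in the ODE \eqref{ns:ODE-h-beta}, repeat the $r\to\infty$ argument of Proposition~\ref{ns:prop:ode-r-to-infty} for the limiting ODE, and then interchange the two limits.

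\emph{Part (i).} First I establish the $C^1$ regularity and the two-sided bound
\[
\frac{r}{\Lambda+r^2}\bI_3\preceq D_{\bm h}\bm y(r,\bm h)\preceq\frac{r}{\kappa+r^2}\bI_3 .
\]
Both come from the implicit function theorem applied to $\nabla W(\bm y)+r^2\bm y-r\bm h=\bm 0$, which gives $D_{\bm h}\bm y=r\,[\nabla^2W(\bm y)+r^2\bI_3]^{-1}$ (cf.\ Lemma~\ref{ns:lem:y(r,h)-smooth}). This makes $\bm y$ globally Lipschitz in $\bm h$, locally uniformly in $r$, so the ODE \eqref{ns:zero-temp-ODE} has a unique global $C^1$ solution. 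The same Gr\"onwall argument as in Step~1 of Proposition~\ref{ns:prop:ode-r-to-infty}, now without the $\sqrt{3/\beta}$ term, bounds $\norm{\bm h(r,\bm x)}$.

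For the convergence $\bm h_\beta\to\bm h$, I compare the posterior mean with the mode. Laplace-type estimates (Lemmas~\ref{ns:lem:zero-temp} and \ref{ns:lem:sandwich}) should give $\norm{\bm y_\beta(r,\bm h)-\bm y(r,\bm h)}\lesssim\beta^{-1/2}$ and $\norm{D_{\bm h}\bm y_\beta-D_{\bm h}\bm y}\to0$, locally uniformly in $(r,\bm h)$. The derivative convergence should use $D_{\bm h}\bm y_\beta=\beta r\,\Cov_{\pi_{\beta,r,\bm h}}$ together with the posterior covariance converging to $\beta^{-1}[\nabla^2W(\bm y)+r^2\bI]^{-1}$; this step needs $W\in C^3$. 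Gr\"onwall applied to $\bm h_\beta-\bm h$, and then to the variational equations for $\bK_\beta-\bK$, gives $C^1_{\rm loc}$ convergence on $[0,R]\times K$.

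\emph{Part (ii).} I repeat Steps~2--3 of Proposition~\ref{ns:prop:ode-r-to-infty} with $\beta=\infty$. The first-order condition gives
\[
\partial_r(\bm h/r)=-r^{-3}\nabla W(\bm y(r,\bm h)),
\]
and this is $O(r^{-3})$ locally uniformly thanks to the bound on $\bm h$. The sandwich bound on $D_{\bm h}\bm y$ gives $\norm{\partial_r(\bK/r)}\le C_{\kappa,\Lambda}r^{-3}$ uniformly in $\bm x$. Hence $\bm h/r$ and $D_{\bm x}\bm h/r$ are uniformly Cauchy as $r\to\infty$. Uniform convergence of the derivatives then shows that $\bm{\mc P}$ is $C^1$ with $D\bm{\mc P}=\lim_{r\to\infty} D_{\bm x}\bm h/r$, using the same fundamental-theorem argument as at the end of that proof.

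\emph{Part (iii)} is a $3\varepsilon$ argument. The constants $C_{\kappa,\Lambda}/(2R^2)$ in \eqref{ns:normalized-Jacobian} and its analogue for $\bm h_\beta/r$ must be uniform in $\beta\ge1$. For the Jacobian this already holds. For $\bm h_\beta/r$, I bound $\bbE_{\pi}\nabla W(\bm Y)$ using \eqref{ns:norm-h-beta}, whose right side only improves as $\beta$ grows. Then, for $\bm x\in K$,
\[
\norm{D\bm{\mc P}_\beta-D\bm{\mc P}}\le\norm{\tfrac{\bK_\beta(R)}{R}-\tfrac{\bK(R)}{R}}+\tfrac{C}{R^2}.
\]
I choose $R$ large first and then $\beta$ large using part (i), and argue the same way for $\bm{\mc P}_\beta$ itself.

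The main obstacle is the $\beta$-uniform, $C^1$-level posterior-mean-versus-mode estimate in part (i). It needs uniform control of $\beta\,\Cov_{\pi_{\beta,r,\bm h}}-[\nabla^2W(\bm y)+r^2\bI]^{-1}$ on compact $(r,\bm h)$ sets. I would get it from a second-order Laplace expansion: log-concavity with Hessian $\succeq\beta(\kappa+r^2)$ gives Gaussian concentration at scale $\beta^{-1/2}$ around the mode, and the $C^3$ bound on $W$ controls the non-Gaussian error by $O(\beta^{-1/2})$. The rest is Gr\"onwall bookkeeping.
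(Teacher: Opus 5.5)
Your proposal is correct and has the same architecture as the paper's proof. Part (i) rests on locally uniform convergence of $\bm y_\beta$ and $D_{\bm h}\bm y_\beta$ plus ODE perturbation. Part (ii) uses the $O(r^{-3})$ bounds on $\partial_r(\bm h/r)$ and $\partial_r(\bK/r)$. Part (iii) is a $3\varepsilon$ argument based on the $\beta$-uniform tail estimates. You differ from the paper in two places.

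\textbf{The covariance step in (i).} You flag this as the main obstacle, but the paper does not use a Laplace expansion there. It sandwiches $\beta\,\Cov(\bm Y)$ between $\lb\bbE[\nabla^2W(\bm Y)+r^2\bI_3]\rb^{-1}$ (Cram\'er--Rao) and $\bbE[(\nabla^2W(\bm Y)+r^2\bI_3)^{-1}]$ (Brascamp--Lieb). Lemma~\ref{ns:lem:inverse-lipschitz} then bounds the distance of both sides from $\bA(r,\bm h)^{-1}$ by $\kappa^{-2}\,\bbE\|\nabla^2W(\bm Y)-\nabla^2W(\bm y(r,\bm h))\|$. This term is made small by uniform continuity of $\nabla^2W$ on compacts, the global bound $2\Lambda$ elsewhere, and Markov's inequality applied to the second-moment bound of Lemma~\ref{ns:lem:sandwich}. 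That argument needs only continuity of the Hessian and no tail bookkeeping, but it gives no rate.

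Your Laplace route would give an explicit $O(\beta^{-1/2})$ rate, at the cost of third-derivative control and extra care. For example, if you compute the covariance from moments centered at the mode, you need $\beta\|\bm y_\beta-\bm y\|^2\to0$. The $O(\beta^{-1/2})$ mean--mode bound from Lemma~\ref{ns:lem:sandwich} only makes this quantity bounded. You would have to extract the sharper $O(\beta^{-1})$ mean--mode gap from the expansion, which the paper's sandwich argument sidesteps entirely.

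\textbf{The Cauchy bound in (ii).} You redo the $r\to\infty$ estimates directly for the limiting flow. The paper instead gets the $C^1(K)$ Cauchy bound for $\bm h(r,\cdot)/r$ by letting $\beta\to\infty$ in the $\beta$-uniform bound for $\bm h_\beta(r,\cdot)/r$, using (i). Both work: your version is self-contained, while the paper's avoids repeating the estimates.
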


To be precise, Proposition \ref{ns:prop:zero-temperature-endpoint}(i) claims that for every $R<\infty$ and every compact
$K\subset\bbR ^3$,
\[
\begin{split}
\sup_{\substack{0\leq r\leq R\\  \bm{x} \in K}}
\la
\norm{ \bm{h} _\beta(r, \bm{x} )- \bm{h} (r, \bm{x} )}
+\norm{\partial_r \bm{h} _\beta(r, \bm{x} )
       -\partial_r \bm{h} (r, \bm{x} )}
+\norm{D_{ \bm{x} } \bm{h} _\beta(r, \bm{x} )
       -D_{ \bm{x} } \bm{h} (r, \bm{x} )}
\ra
\to 0.
\end{split}
\]
Similarly, Proposition \ref{ns:prop:zero-temperature-endpoint}(iii) yields that  $D \bm{\mc{P}} _\beta \to D \bm{\mc{P}} $ uniformly over any compact set $K \subset \bbR ^3$
\[
\sup_{ \bm{x}  \in K} \norm{  D \bm{\mc{P}} _\beta ( \bm{x} ) - D \bm{\mc{P}}  ( \bm{x} ) }_{\rm op} \to 0.
\]
 Let us explain briefly why these results should be expected. The ODE in Proposition \ref{ns:prop:zero-temperature-endpoint}(i) has a $C^1$ solution $\bm h$ due to Lemma \ref{ns:lem:zero-temp} and the classical existence results for ODEs. The convergence $\bm h_\beta\to\bm h$ is suggested by taking $\beta\to\infty$ in \eqref{ns:ODE-h-beta} and observing that the posterior mean under $\pi_{\beta,r,\bm h}$ converges to the minimizer:
\[
         \bm{y} _\beta \lb r,  \bm{h}  \rb = \bbE _{ \bm{Y}  \sim \pi_{\beta,r,  \bm{h} }} \left[  \bm{Y}  \right] \stackrel{\beta\to\infty}{\longrightarrow}
         \bm{y} (r, \bm{h} ) = \operatorname*{argmin}_{ \bm{u} \in\bbR ^3} \la  W( \bm{u} )+\frac12\norm{ \bm{h} -r \bm{u} }^2\ra,
\]
uniformly over compact subsets.

Propositions \ref{ns:prop:zero-temperature-endpoint}(ii) and (iii) also follow from taking the derivatives and sending $\beta \to \infty$, which can be justified rigorously by bounding the derivatives of $ \bm{y} $. We note that the pair $( \bm{y} ,  \bm{h} )$ is best understood in the ODE sense. If one views $ \bm{y} $, $ \bm{h} $ as functions of $r$  and considers a flow $ \bm{h}  =  \bm{h} (r)$ from the ODE \eqref{ns:zero-temp-ODE}, then the joint trajectory $\lb  \bm{y} (r),  \bm{h} (r)  \rb$ satisfies
\[
\nabla  W  \lb  \bm{y} (r) \rb + r^2\cdot  \bm{y} (r) - r \cdot  \bm{h}  \lb r,  \bm{x}  \rb =  \bm{0} 
\]
with $ \bm{y} (r)=  \bm{y}  \lb r,  \bm{h} (r, \bm{x} ) \rb$.

\noindent\textbf{Proof of Proposition
\ref{ns:prop:zero-temperature-endpoint}.}
Put
\[
    \Phi( \bm{u} )
    :=W( \bm{u} )+\frac12\norm{ \bm{h} -r \bm{u} }^2,
    \qquad
    \bA (r, \bm{h} )
    :=\nabla^2\Phi_{ \bm{y} } ( \bm{y} (r, \bm{h} )) =  \nabla^2 W \lb  \bm{y} (r, \bm{h} ) \rb + r^2 \bI _3.
\]
All expectations below are taken with respect to $\pi_{\beta,r, \bm{h} }$.

\medskip
\underline{\it Proof of (i).}
The existence and uniqueness of $ \bm{h} \in C^1\lb [0,\infty)\times\bbR ^3;\bbR ^3\rb $ follow from  Lemma~\ref{ns:lem:sandwich}, which yields the Lipschitz property of $ \bm{y} $, and Lemma \ref{ns:lem:y(r,h)-smooth}, which yields the differentiability of $ \bm{y} $. Put
\[
    \bK _\beta \lb r,  \bm{x}  \rb:=D_{ \bm{x} } \bm{h} _\beta \lb r,  \bm{x}  \rb,
    \qquad
    \bK  \lb r,  \bm{x}  \rb:=D_{ \bm{x} } \bm{h}  \lb r,  \bm{x}  \rb.
\]
The pair $\lb  \bm{h} _\beta, \bK _\beta \rb$ solves the coupled ODE 
\[
\frac{d}{dr}\begin{pmatrix}
     \bm{h} _\beta \lb r,  \bm{x}  \rb \\
    \bK _\beta(r,  \bm{x} )
\end{pmatrix}
= \begin{pmatrix}
     \bm{y} _\beta \lb r,  \bm{h} _\beta \rb \\
    D  \bm{y} _{\beta} \lb r,  \bm{h} _\beta \rb \cdot \bK _\beta 
\end{pmatrix}, 
\qquad 
\begin{pmatrix}
     \bm{h} _\beta  \\ \bK _\beta 
\end{pmatrix}
\lb 0,  \bm{x}   \rb = \lb  \bm{x} , \bI _3 \rb^\top.
\]
On the other hand, $\lb  \bm{h} , \bK  \rb$ solves the coupled ODE
\[
\frac{d}{dr}\begin{pmatrix}
     \bm{h}  \lb r,  \bm{x}  \rb \\
    \bK (r,  \bm{x} )
\end{pmatrix}
= \begin{pmatrix}
     \bm{y}  \lb r,  \bm{h}  \rb \\
    D  \bm{y}  \lb r,  \bm{h}  \rb \cdot \bK  
\end{pmatrix}, 
\qquad 
\begin{pmatrix}
     \bm{h}   \\ \bK  
\end{pmatrix}
\lb 0,  \bm{x}   \rb = \lb  \bm{x} , \bI _3 \rb^\top.
\]
By perturbation results for ODEs (see, for example, Theorem 2.8 in \cite{teschl2012ordinary}), we only need to show that uniformly over (space-time) compact sets,
\[
 \bm{y} _\beta \lb r,  \bm{h}  \rb \stackrel{\beta \to \infty}{\longrightarrow}  \bm{y}   \lb r,  \bm{h}  \rb
\qquad 
\text{and} \qquad
D \bm{y} _\beta \lb r,  \bm{h}  \rb \stackrel{\beta \to \infty}{\longrightarrow} D \bm{y}  \lb r,  \bm{h}  \rb.
\]
The first convergence is immediate from Lemma~\ref{ns:lem:sandwich}, because
\[
    \norm{ \bm{y} _\beta(r, \bm{h} )- \bm{y} (r, \bm{h} )}
    \leq 
    \sqrt{\frac{3}{\beta(\kappa+r^2)}}.
\]
To prove the convergence of the Jacobian, note that
\[
D_{ \bm{h} }  \bm{y} _\beta(r, \bm{h} ) = \beta r \cdot \Cov \lb  \bm{Y}  \rb, 
\qquad 
D_{ \bm{h} }  \bm{y} (r, \bm{h} ) = r \cdot \left[  \nabla^2 W \lb  \bm{y} (r, \bm{h} ) \rb + r^2 \bI _3 \right]^{-1} = r \cdot \bA (r, \bm{h} )^{-1}.
\]
It suffices to show that
\[
\beta \cdot \Cov \lb  \bm{Y}  \rb \stackrel{\beta \to \infty}{\longrightarrow}  \bA (r, \bm{h} )^{-1}.
\]
The Brascamp--Lieb inequality and the Cram\'er--Rao lower bound yield
\[
 \la \bbE_{\bm Y \sim \pi_{r,\beta, \bm h }}  \left[   \nabla ^2 W( \bm{Y} ) + r^2 \bI _3  \right] \ra^{-1}
\preceq 
\beta \cdot \Cov \lb  \bm{Y}  \rb \preceq
\bbE_{\bm Y \sim \pi_{r,\beta,\bm h}} \la  \left[     \nabla ^2 W( \bm{Y} ) + r^2 \bI _3 \right]^{-1} \ra
\]
Put $\bH =    \nabla ^2 W( \bm{Y} ) + r^2 \bI _3$. By using Lemma \ref{ns:lem:inverse-lipschitz}, we obtain 
\begin{align*}
    \norm{ \lb \bbE_{\bm Y \sim \pi_{r,\beta,\bm h}}  \bH   \rb^{-1} - \bA (r, \bm{h} )^{-1}  } &\leq \frac{    \bbE_{\bm Y \sim \pi_{r,\beta,\bm h}}  \norm{  \nabla^2 W \lb  \bm{Y} \rb - \nabla^2 W \lb  \bm{y} (r, \bm{h} ) \rb  }}{\kappa^2} \\
    \norm{ \bbE_{\bm Y \sim \pi_{r,\beta,\bm h}}  \left[ \bH ^{-1} \right] - \bA (r, \bm{h} )^{-1} } &\leq  \frac{    \bbE_{\bm Y \sim \pi_{r,\beta,\bm h}}  \norm{  \nabla^2 W \lb  \bm{Y} \rb - \nabla^2 W \lb  \bm{y} (r, \bm{h} ) \rb  }}{\kappa^2}
\end{align*}
Consequently,
\begin{align*}
  -\frac{    \bbE_{\bm Y \sim \pi_{r,\beta,\bm h}}  \norm{  \nabla^2 W \lb  \bm{Y} \rb - \nabla^2 W \lb  \bm{y} (r, \bm{h} ) \rb  }}{\kappa^2} \cdot \bI _3 &\preceq
  \beta \cdot \Cov \lb  \bm{Y}  \rb - \bA (r, \bm{h} )^{-1} \\
  &\preceq    \frac{    \bbE_{\bm Y \sim \pi_{r,\beta,\bm h}}  \norm{  \nabla^2 W \lb  \bm{Y} \rb - \nabla^2 W \lb  \bm{y} (r, \bm{h} ) \rb  }}{\kappa^2} \cdot \bI _3
\end{align*}
It remains to bound the expectation term in the last display. Fix $R<\infty$ and a compact set $B\subset\bbR^3$.
Consider the compact set $\{\bm y(r,\bm h):r\in[0,R],\ \bm h\in B\}$.
By uniform continuity of $\nabla^2W$ on a compact
neighborhood of this set, for every $\ve>0$ there exists
a common $\delta=\delta(\ve,R,B)>0$ such that
\[
\norm{\bm u-\bm y(r,\bm h)}\leq\delta \implies
\norm{ \nabla^2W\lb\bm u\rb - \nabla^2W\lb\bm y(r,\bm h)\rb}
\leq\ve
\]
for all $r\in[0,R]$ and $\bm h\in B$.

Consequently,
\begin{align*}
\sup_{\substack{r\in[0,R]\\ \bm h\in B}}
\bbE\norm{
    \nabla^2W\lb\bm Y\rb
    -\nabla^2W\lb\bm y(r,\bm h)\rb
} 
&\quad\leq
\ve+2\Lambda
\sup_{\substack{r\in[0,R]\\ \bm h\in B}}
\mb P\lb
    \norm{\bm Y-\bm y(r,\bm h)}\geq\delta
\rb \\
&\quad\leq
\ve+\frac{6\Lambda}{\beta\kappa\delta^2}.
\end{align*}
where the second inequality follows from Markov's
inequality and the second claim in
Lemma~\ref{ns:lem:sandwich}.
Using the preceding covariance bounds, we obtain
\[
\sup_{\substack{r\in[0,R]\\ \bm h\in B}}
\norm{
D_{\bm h}\bm y_\beta(r,\bm h)
-
D_{\bm h}\bm y(r,\bm h)
}
\leq
\frac{R}{\kappa^2}
\left(
\ve+\frac{6\Lambda}{\beta\kappa\delta^2}
\right).
\]
Taking $\limsup_{\beta\to\infty}$ and then letting
$\ve\downarrow0$ proves that
$D_{\bm h}\bm y_\beta\to D_{\bm h}\bm y$
locally uniformly in $(r,\bm h)$.

\medskip
\underline{\it Proof of (ii) and (iii).}
For a compact set $K\subset\bbR ^3$, write
\[
    \norm{ \bm{F} }_{C^1(K)}
    :=\sup_{ \bm{x} \in K}\norm{ \bm{F} ( \bm{x} )}
      +\sup_{ \bm{x} \in K}\norm{D \bm{F} ( \bm{x} )}.
\]
The estimates in Steps~1--3 of the proof of
Proposition~\ref{ns:prop:ode-r-to-infty} give
\[
    \norm{
    \frac{ \bm{h} _\beta(S,\cdot)}S
    -\frac{ \bm{h} _\beta(R,\cdot)}R
    }_{C^1(K)}
    \leq\frac{C_K}{R^2},
    \qquad S\geq R\geq1,
\]
with $C_K$ independent of $\beta\geq1$.
Indeed, \eqref{ns:norm-h-beta} and \eqref{ns:normalized-Jacobian} are uniform over $\beta\geq1$.

For fixed $S$ and $R$, letting $\beta\to\infty$ and using (i), we obtain
\[
    \norm{
    \frac{ \bm{h} (S,\cdot)}S
    -\frac{ \bm{h} (R,\cdot)}R
    }_{C^1(K)}
    \leq\frac{C_K}{R^2}.
\]
Hence the normalized flow and its Jacobians are locally uniformly Cauchy.
The same fundamental theorem of calculus argument as in
Proposition~\ref{ns:prop:ode-r-to-infty} identifies the
derivative limit with the Jacobian of the function limit.
Thus $\bm{\mc{P}}\in C^1$ and
\[
    \frac{ \bm{h} (R,\cdot)}R
    \longrightarrow\bm{\mc{P}}
    \quad\text{in }C^1_{\mathrm{loc}},
\]
proving (ii).

Letting $S\to\infty$ in the two  estimates above gives
\[
    \norm{
    \bm{\mc{P}}_\beta-\frac{ \bm{h} _\beta(R,\cdot)}R
    }_{C^1(K)}
    \leq\frac{C_K}{R^2},
    \qquad
    \norm{
    \bm{\mc{P}}-\frac{ \bm{h} (R,\cdot)}R
    }_{C^1(K)}
    \leq\frac{C_K}{R^2}.
\]
Therefore,
\[
    \norm{\bm{\mc{P}}_\beta-\bm{\mc{P}}}_{C^1(K)}
    \leq
    \frac{2C_K}{R^2}
    +\frac1R
    \norm{ \bm{h} _\beta(R,\cdot)- \bm{h} (R,\cdot)}_{C^1(K)}.
\]
For fixed $R$, the last term tends to zero by (i).
Taking $\limsup_{\beta\to\infty}$ and then letting
$R\to\infty$ proves (iii).
\hfill$\square$

\subsection{ODE results}

\begin{lemma}[Gronwall-type inequality]
\label{ns:lem:gronwall-inhomogeneous}
Let $T>0$, let $a,b\in L^1([0,T];\bbR )$, and suppose
$f:[0,T]\to\bbR $ is absolutely continuous. Assume that $a(r)\geq0$ for almost every $r\in[0,T]$ and 
\[
    f'(r)\leq a(r)f(r)+b(r)
    \qquad\text{for almost every }r\in[0,T].
\]
Then, for every $r\in[0,T]$,
\[
    f(r)
    \leq
    f(0) \cdot \exp\la \int_0^r a(s)\,ds\ra
    +
    \int_0^r b(s) \cdot 
    \exp\la \int_s^r a(u)\,du\ra ds.
\]
\end{lemma}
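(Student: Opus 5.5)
The plan is the standard integrating-factor argument. Set $E(r):=\exp\{-\int_0^r a(s)\,ds\}$, which is absolutely continuous on $[0,T]$ because $a\in L^1$. It is positive and satisfies $E'(r)=-a(r)E(r)$ for almost every $r$. Next put $g(r):=f(r)E(r)$. This is a product of two absolutely continuous functions on a compact interval, so $g$ is absolutely continuous. The product rule then holds almost everywhere:
\[
g'(r)=E(r)\bigl(f'(r)-a(r)f(r)\bigr)\le E(r)\,b(r)\qquad\text{for a.e. } r\in[0,T].
\]
The inequality uses the hypothesis together with $E>0$. We also need the right-hand side to be integrable, which holds because $0<E\le 1$ (this is where $a\ge0$ is convenient) and $b\in L^1$.

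Since $g$ is absolutely continuous, the fundamental theorem of calculus for Lebesgue integrals gives
\[
g(r)-g(0)=\int_0^r g'(s)\,ds\le\int_0^r E(s)\,b(s)\,ds .
\]
We have $g(0)=f(0)$. Multiplying by $E(r)^{-1}=\exp\{\int_0^r a\}>0$ and using $E(s)/E(r)=\exp\{\int_s^r a(u)\,du\}$ then yields the stated bound exactly.

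The only step needing care is the justification that $g$ is absolutely continuous and that the a.e. product rule applies. I would cite the standard fact that products of absolutely continuous functions on compact intervals are absolutely continuous. The rest is routine. The sign assumption on $a$ is in fact not essential; it merely guarantees $E\le1$ and thus the integrability of $Eb$.
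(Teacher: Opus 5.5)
Your integrating-factor proof is correct, and it takes a different route from the paper's.

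The paper first integrates the differential inequality to get the integral inequality $f(r)\le \alpha(r)+\int_0^r a(s)f(s)\,ds$, where $\alpha(r)=f(0)+\int_0^r b(s)\,ds$. It then applies the integral form of Gr\"onwall's inequality (Lemma~2.7 in Teschl), which is where $a\ge 0$ is genuinely used. Finally, an integration by parts in $s$ turns $\alpha(r)+\int_0^r \alpha(s)a(s)\exp\{\int_s^r a(u)\,du\}\,ds$ into the stated form.

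You instead work directly with the differential inequality through $g=fE$. This needs neither the cited lemma nor the integration-by-parts bookkeeping. It also shows that the hypothesis $a\ge 0$ is superfluous for this differential version.

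In fact the sign condition is not needed even for integrability, which is the one role you assign it. $E$ is continuous on the compact interval $[0,T]$ and bounded by $\exp\{\int_0^T |a(u)|\,du\}$, so $Eb\in L^1$ whatever the sign of $a$. Moreover, $g'$ is integrable automatically because $g$ is absolutely continuous.

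The paper's route passes through the weaker integral inequality, and there the sign condition is genuinely required: integral Gr\"onwall can fail with a negative kernel. Its only advantage is that it reduces to a standard textbook statement.
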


\noindent \textbf{Proof of Lemma \ref{ns:lem:gronwall-inhomogeneous}.}
Set $\alpha(r):=f(0)+\int_0^r b(s)\,ds$.
Integrating the differential inequality gives
\[
    f(r)\leq\alpha(r)+\int_0^r a(s)f(s)\,ds.
\]
By Lemma~2.7 in \cite{teschl2012ordinary},
\begin{align*}
    f(r)
    &\leq
    \alpha(r)+\int_0^r
    \alpha(s) \cdot a(s) \cdot 
    \exp\la \int_s^r a(u)\,du\ra ds\\
    &=
    f(0) \cdot \exp\la \int_0^r a(u)\,du\ra
    +\int_0^r b(s) \cdot 
    \exp\la \int_s^r a(u)\,du\ra ds,
\end{align*}
To justify the last equality, fix $r$ and observe that
\[
    \frac{d}{ds}
    \exp\la \int_s^r a(u)\,du\ra
    =
    -a(s)\exp\la \int_s^r a(u)\,du\ra
\]
for almost every $s\in[0,r]$. Hence integration by parts gives
\begin{align*}
    &\int_0^r
    \alpha(s)a(s)
    \exp\la \int_s^r a(u)\,du\ra ds\\
    =&
    -\left[
       \alpha(s)\exp\la \int_s^r a(u)\,du\ra
    \right]_{s=0}^{s=r}
    +\int_0^r
       \alpha'(s)\exp\la \int_s^r a(u)\,du\ra ds\\
    =&
    -\alpha(r)
    +f(0)\exp\la \int_0^r a(u)\,du\ra
    +\int_0^r
       b(s)\exp\la \int_s^r a(u)\,du\ra ds,
\end{align*}
where we used $\alpha(0)=f(0)$ and $\alpha'=b$ almost everywhere.
Adding $\alpha(r)$ to both sides gives the asserted equality.

 $\hfill$ $\square$

\begin{lemma}
\label{ns:lem:Pi-exists}
Let $\bH :[0,\infty)\to\Sp_{++}^n$, $n\in\{2,3\}$, be continuous with
$\kappa\bI _n\preceq\bH \preceq\Lambda\bI _n$. The following statements hold
\begin{itemize}
    \item The matrix-valued ODE
    $$\bM '(r) =r(\bH +r^2\bI _n)^{-1} \cdot \bM (r), \qquad  \bM (0)=\bI _n.$$  
    has a unique solution $\bM  \in C^1\lb [0,\infty), \bbR ^{n \times n} \rb$.  
    \item $\bM (r)$ is invertible for all $r \geq 0$ and satisfies
$$\norm{\bM (r)}\leq\sqrt{(\kappa+r^2)/\kappa}.$$ 
    \item  The limit $\bPi =\lim_{r\to\infty}\bM (r)/r$ exists  and 
\begin{equation}
\label{ns:eq:Pi-tail}
    \norm{\bPi -\frac{\bM (R)}{R}}
    \leq\frac{\Lambda}{2R^2} \cdot \norm{\frac{\bM (R)}{R}}.
\end{equation}
\end{itemize}
\end{lemma}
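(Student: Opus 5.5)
The plan is to treat the ODE as linear with a bounded, continuous coefficient, and to handle its large-$r$ behaviour by comparison with the scalar function $r$.

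\textbf{Existence, uniqueness and invertibility.} Set $\bA(r):=r(\bH(r)+r^2\bI_n)^{-1}$. Since $\bH$ is continuous with values in $\Sp^n_{++}$, the map $\bA$ is continuous on $[0,\infty)$ and satisfies
\[
\frac{r}{\Lambda+r^2}\bI_n\preceq \bA(r)\preceq \frac{r}{\kappa+r^2}\bI_n .
\]
The ODE $\bM'=\bA\bM$ is therefore linear with locally bounded continuous coefficients. Standard linear ODE theory (e.g.\ Theorem 3.9 in \cite{teschl2012ordinary}) gives a unique global $C^1$ solution. For invertibility, I will use Liouville's formula
\[
\det\bM(r)=\exp\Big(\int_0^r\operatorname{tr}\bA(s)\,ds\Big)>0 .
\]

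\textbf{Norm bound.} This repeats Step 3 of the proof of Proposition~\ref{ns:prop:ode-r-to-infty}. For a fixed vector $\bm v$, since $\bA$ is symmetric,
\[
\frac{d}{dr}\norm{\bM(r)\bm v}^2=2(\bM\bm v)^\top\bA(r)\bM\bm v\le \frac{2r}{\kappa+r^2}\norm{\bM\bm v}^2 .
\]
Gronwall then gives $\norm{\bM(r)\bm v}^2\le \frac{\kappa+r^2}{\kappa}\norm{\bm v}^2$. Taking the supremum over unit $\bm v$ yields $\norm{\bM(r)}\le\sqrt{(\kappa+r^2)/\kappa}$.

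\textbf{Limit and tail estimate.} First differentiate the normalized matrix:
\[
\frac{d}{dr}\frac{\bM(r)}{r}=\Big(\bA(r)-\frac1r\bI_n\Big)\frac{\bM(r)}{r}.
\]
Next I compute the symmetric matrix $\frac1r\bI_n-\bA(r)=\frac1r\bH(\bH+r^2\bI_n)^{-1}$, which lies between $0$ and $\frac{\Lambda}{r(\Lambda+r^2)}\bI_n\preceq\frac{\Lambda}{r^3}\bI_n$. Hence
\[
\norm{\tfrac{d}{dr}(\bM/r)}\le \frac{\Lambda}{r^3}\norm{\bM/r}.
\]
Combined with the norm bound, this gives an integrable derivative of order $r^{-3}$, so $\bM(r)/r$ is Cauchy as $r\to\infty$ and $\bPi$ exists. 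The main obstacle is getting the sharp constant, which requires controlling $\norm{\bM(r)/r}$ for $r\ge R$ in terms of $\norm{\bM(R)/R}$ rather than by the crude bound. To do this, I apply Gronwall to $g(r)=\norm{\bM(r)/r}$, which satisfies $g'\le \Lambda r^{-3}g$. This yields
\[
g(r)\le g(R)\exp\Big(\tfrac{\Lambda}{2R^2}\Big)\quad\text{for } r\ge R,
\]
and therefore
\[
\norm{\bPi-\bM(R)/R}\le \int_R^\infty\frac{\Lambda}{s^3}g(s)\,ds\le\frac{\Lambda}{2R^2}e^{\Lambda/(2R^2)}g(R).
\]
This has an extra exponential factor compared with \eqref{ns:eq:Pi-tail}. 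To remove it, I will use a better bound. Write $\bM(s)/s=\bm\Phi(s,R)\,\bM(R)/R$, where $\bm\Phi$ is the propagator of the system with generator $\bA-\frac1r\bI_n$. Because $\bA(r)-\frac1r\bI_n\preceq 0$ is symmetric, the same energy computation as in the norm bound shows $\norm{\bm\Phi(s,R)}\le1$. Hence $g(s)\le g(R)$ for $s\ge R$. Integrating $\Lambda s^{-3}g(R)$ from $R$ to $\infty$ then gives exactly \eqref{ns:eq:Pi-tail}.
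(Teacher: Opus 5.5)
Your proof is correct and follows the paper's argument essentially step for step: the same coefficient $\bA(r)$ with its sandwich bounds, the same energy estimate for $\norm{\bM(r)\bm v}^2$, and the same key observation that $\bA(r)-\frac1r\bI_n\preceq 0$ makes $r\mapsto\norm{\bM(r)/r}$ non-increasing, which yields \eqref{ns:eq:Pi-tail} after integrating $\Lambda r^{-3}$ (the Gronwall detour with the exponential factor is unnecessary). The only minor deviation is that you get invertibility from Liouville's formula, whereas the paper reads it off the lower energy bound $\norm{\bM(r)\bm v}^2\geq\frac{\Lambda+r^2}{\Lambda}\norm{\bm v}^2$; both are fine.
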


\noindent \textbf{Proof of Lemma \ref{ns:lem:Pi-exists}.}
It is easy to see that $\bA (r):=r(\bH (r)+r^2\bI _n)^{-1}$ is continuous and symmetric with
\begin{equation}
\label{ns:eq:A-sandwich}
    \frac{r}{\Lambda+r^2}\bI _n\preceq\bA (r)
    \preceq\frac{r}{\kappa+r^2}\bI _n .
\end{equation}
A linear system with continuous coefficients has a unique global solution (Theorem 5.1, chapter I of \cite{CoddingtonLevinson1955}). Thus the first claim follows.

We now prove the second claim. Fix $ \bm{v} \in\bbR ^n$ and put
\[
    f_{ \bm{v} }(r):=\norm{\bM (r) \bm{v} }^2.
\]
Taking derivative in $r$, we obtain
\[
    f_{ \bm{v} }'(r)
    =2\langle\bM (r) \bm{v} ,
              \bA (r)\bM (r) \bm{v} \rangle.
\]
Consequently, \eqref{ns:eq:A-sandwich} gives
\[
    \frac{2r}{\Lambda+r^2}f_{ \bm{v} }(r)
    \leq f_{ \bm{v} }'(r)
    \leq \frac{2r}{\kappa+r^2}f_{ \bm{v} }(r).
\]
Equivalently,
\[
    \frac{\dd}{\dd r}
    \left[\frac{f_{ \bm{v} }(r)}{\Lambda+r^2}\right]\geq0,
    \qquad
    \frac{\dd}{\dd r}
    \left[\frac{f_{ \bm{v} }(r)}{\kappa+r^2}\right]\leq0.
\]
Integrating from $0$ to $r$, and using
$\bM (0)=\bI _n$, yields
\[
    \frac{\Lambda+r^2}{\Lambda}\norm{ \bm{v} }^2
    \leq \norm{\bM (r) \bm{v} }^2
    \leq \frac{\kappa+r^2}{\kappa}\norm{ \bm{v} }^2.
\]
Thus $\bM (r)$ is invertible for every $r\geq0$, and taking the supremum over $\norm{ \bm{v} }=1$ in the upper bound gives
\[
    \norm{\bM (r)}
    \leq\sqrt{\frac{\kappa+r^2}{\kappa}},
\]
as claimed.

To prove the third claim, take $r>0$ and define
\[
    \bL (r):=\frac{\bM (r)}{r}.
\]
Differentiating gives
\[
    \bL '(r)
    =\frac{\bM '(r)}{r}-\frac{\bM (r)}{r^2}
    =\bC (r)\bL (r),
\]
where
\begin{align*}
    \bC (r) :=\bA (r)-\frac1r\bI _n
    &=r\lb \bH (r)+r^2\bI _n\rb ^{-1}
       -\frac1r\bI _n\\
    &=-\frac1r\bH (r)
       \lb \bH (r)+r^2\bI _n\rb ^{-1}.
\end{align*}
Therefore, for each fixed $r>0$, $\bC (r)$ is symmetric and has eigenvalues
\[
    -\frac{h}{r(h+r^2)},
    \qquad h\in\operatorname{spec}(\bH (r))
                  \subset[\kappa,\Lambda].
\]
In particular, $\bC (r)$ is negative definite. Moreover, since
$h\mapsto h/(h+r^2)$ is increasing on $(0,\infty)$,
\begin{equation}
\label{ns:eq:C-tail}
    \norm{\bC (r)}
      \leq \frac{\Lambda}{r(\Lambda+r^2)},
    \qquad
    \int_R^\infty\norm{\bC (r)}\dd r
    \leq  \int_{R}^\infty \frac{\Lambda}{r\lb \Lambda +r^2 \rb} \, dr  \leq \frac12\log\lb 1+\frac{\Lambda}{R^2}\rb
    \leq\frac{\Lambda}{2R^2},
\end{equation}
for all $R>0$.

Next, for every fixed $ \bm{v} \in\bbR ^n$,
\[
    \frac{\dd}{\dd r}\norm{\bL (r) \bm{v} }^2
    =2\langle\bL (r) \bm{v} ,
              \bC (r)\bL (r) \bm{v} \rangle
    \leq0.
\]
Thus, for $r\geq R>0$,
\[
    \norm{\bL (r) \bm{v} }\leq\norm{\bL (R) \bm{v} }.
\]
Taking the supremum over all unit vectors gives
\[
    \norm{\bL (r)}\leq\norm{\bL (R)},
    \qquad r\geq R.
\]
Fix $R>0$. For $s\geq t\geq R$, the fundamental theorem of calculus
and the preceding estimates imply
\begin{align*}
    \norm{\bL (s)-\bL (t)} \leq\int_t^s\norm{\bL '(r)}\dd r
    &\leq\int_t^s\norm{\bC (r)}\cdot \norm{\bL (r)}\dd r\\
    &\leq\norm{\bL (R)} \cdot 
          \int_t^\infty\norm{\bC (r)}\dd r
    \leq\frac{\Lambda}{2t^2} \cdot \norm{\bL (R)}.
\end{align*}
The last expression tends to zero as $t\to\infty$, uniformly over
$s\geq t \geq R$, with $R$ fixed. Hence $\bL (r)$ is Cauchy as $r\to\infty$, so there
exists $\bPi \in\bbR ^{n\times n}$ such that
\[
    \bL (r)\longrightarrow\bPi .
\]
Taking $t=R$ and letting $s\to\infty$ in the same estimate yields
\[
    \norm{\bPi -\frac{\bM (R)}{R}}
    \leq\frac{\Lambda}{2R^2}
         \norm{\frac{\bM (R)}{R}},
\]
which proves \eqref{ns:eq:Pi-tail}. The proof is completed. $\hfill$ $\square$

\begin{lemma} \label{ns:lem:zero-temp}
    Recall $ \bm{y} (r, \bm{h} )$ from \eqref{ns:y(r,h)}. We have the estimates 
\begin{equation}
\label{ns:eq:y-lip}
    \norm{ \bm{y} (r, \bm{h} _1)- \bm{y} (r, \bm{h} _2)}
      \leq\frac{r}{\kappa+r^2}\norm{ \bm{h} _1- \bm{h} _2}
    \leq\frac{\norm{ \bm{h} _1- \bm{h} _2}}{2\sqrt\kappa},
    \qquad
    \norm{ \bm{y} (r, \bm{h} )}
      \leq\frac{r\norm{ \bm{h} }+\norm{\nabla W( \bm{0} )}}{\kappa+r^2}.
\end{equation}
\end{lemma}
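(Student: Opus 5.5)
The plan is to treat $\bm y(r,\bm h)$ as the minimizer of the strongly convex function $\Phi(\bm u)=W(\bm u)+\tfrac12\norm{\bm h-r\bm u}^2$. By \eqref{cond:W-curvature}, the Hessian satisfies $\nabla^2\Phi\succeq(\kappa+r^2)\bI_3$. Both estimates then follow from the first-order condition
\[
\nabla W(\bm y)+r^2\bm y=r\bm h,
\]
together with strong monotonicity of $\nabla W$.

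For the Lipschitz bound, write $\bm y_i=\bm y(r,\bm h_i)$. Subtract the two optimality conditions:
\[
\nabla W(\bm y_1)-\nabla W(\bm y_2)+r^2(\bm y_1-\bm y_2)=r(\bm h_1-\bm h_2).
\]
Take the inner product with $\bm y_1-\bm y_2$. Strong convexity gives $\langle\nabla W(\bm y_1)-\nabla W(\bm y_2),\bm y_1-\bm y_2\rangle\ge\kappa\norm{\bm y_1-\bm y_2}^2$. Hence
\[
(\kappa+r^2)\norm{\bm y_1-\bm y_2}^2\le r\norm{\bm h_1-\bm h_2}\,\norm{\bm y_1-\bm y_2},
\]
which is the first inequality. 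The second inequality is AM--GM: $\kappa+r^2\ge2r\sqrt\kappa$, so $r/(\kappa+r^2)\le1/(2\sqrt\kappa)$.

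For the norm bound, take the inner product of the optimality condition with $\bm y$ itself. Write $\nabla W(\bm y)=\nabla W(\bm 0)+[\nabla W(\bm y)-\nabla W(\bm 0)]$. Monotonicity gives
\[
\langle\nabla W(\bm y)-\nabla W(\bm 0),\bm y\rangle\ge\kappa\norm{\bm y}^2,
\]
so
\[
(\kappa+r^2)\norm{\bm y}^2\le r\langle\bm h,\bm y\rangle-\langle\nabla W(\bm 0),\bm y\rangle\le\bigl(r\norm{\bm h}+\norm{\nabla W(\bm 0)}\bigr)\norm{\bm y}.
\]
Dividing by $\norm{\bm y}$ (the case $\bm y=\bm 0$ is trivial) gives the claim.

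The only point needing care is that $\bm y(r,\bm h)$ is well defined, which follows from strong convexity as noted right after \eqref{ns:y(r,h)}. There is no real obstacle in this argument.
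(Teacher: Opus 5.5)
Your proposal is correct and follows essentially the same route as the paper's proof: subtract the first-order conditions and use strong monotonicity of $\nabla W$ for the Lipschitz bound, then take the inner product of the optimality condition (rewritten with $\nabla W(\bm y)-\nabla W(\bm 0)$) with $\bm y$ for the norm bound. The second inequality via $\kappa+r^2\ge 2r\sqrt\kappa$ also matches the paper.
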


\noindent \textbf{Proof of Lemma \ref{ns:lem:zero-temp}.}
By \textnormal{\eqref{cond:W-curvature}}, for every $ \bm{u} , \bm{v} \in\bbR ^3$,
\begin{align*}
    \langle\nabla W( \bm{u} )-\nabla W( \bm{v} ),
             \bm{u} - \bm{v} \rangle
    &\geq \kappa\norm{ \bm{u} - \bm{v} }^2.
\end{align*}
Fix $r\geq0$ and $ \bm{h} _1, \bm{h} _2\in\bbR ^3$, and write
$ \bm{y} _i:= \bm{y} (r, \bm{h} _i)$ for $i=1,2$.
The first-order optimality conditions give
\[
    \nabla W( \bm{y} _i)+r^2 \bm{y} _i=r \bm{h} _i,
    \qquad i=1,2.
\]
Subtracting these equations and taking the inner product with
$ \bm{y} _1- \bm{y} _2$, we obtain
\begin{align*}
    (\kappa+r^2)\norm{ \bm{y} _1- \bm{y} _2}^2
    &\leq
    \langle\nabla W( \bm{y} _1)-\nabla W( \bm{y} _2),
             \bm{y} _1- \bm{y} _2\rangle
    +r^2\norm{ \bm{y} _1- \bm{y} _2}^2\\
    &=r\langle \bm{h} _1- \bm{h} _2, \bm{y} _1- \bm{y} _2\rangle\\
    &\leq r\norm{ \bm{h} _1- \bm{h} _2}\,
             \norm{ \bm{y} _1- \bm{y} _2}.
\end{align*}
If $ \bm{y} _1= \bm{y} _2$, the desired bound is immediate.
Otherwise, dividing by
$(\kappa+r^2)\norm{ \bm{y} _1- \bm{y} _2}$ gives
\[
    \norm{ \bm{y} (r, \bm{h} _1)- \bm{y} (r, \bm{h} _2)}
    \leq \frac{r}{\kappa+r^2}\norm{ \bm{h} _1- \bm{h} _2}.
\]
This yields the first estimate. The second estimate follows from
\[
    \frac{r}{\kappa+r^2}\leq\frac1{2\sqrt{\kappa}}.
\]
For the remaining estimate, fix $ \bm{h} \in\bbR ^3$ and write
$ \bm{y} := \bm{y} (r, \bm{h} )$. Its optimality condition can be rewritten as
\[
    \nabla W( \bm{y} )-\nabla W( \bm{0} )+r^2 \bm{y} 
    =r \bm{h} -\nabla W( \bm{0} ).
\]
Taking the inner product with $ \bm{y} $ and applying the
strong-convexity inequality above with $ \bm{u} = \bm{y} $ and
$ \bm{v} = \bm{0} $, we obtain
\begin{align*}
    (\kappa+r^2)\norm{ \bm{y} }^2
    &\leq
    \langle\nabla W( \bm{y} )-\nabla W( \bm{0} ), \bm{y} \rangle
    +r^2\norm{ \bm{y} }^2\\
    &=\langle r \bm{h} -\nabla W( \bm{0} ), \bm{y} \rangle
    \leq
    \lb r\norm{ \bm{h} }+\norm{\nabla W( \bm{0} )}\rb  \cdot \norm{ \bm{y} }.
\end{align*}
Dividing both sides by $(\kappa+r^2)\norm{ \bm{y} }$ completes the proof. \hfill$\square$

\subsection{Other technical results}

\begin{lemma} \label{ns:lem:y(r,h)-smooth}
    Recall $ \bm{y} (r, \bm{h} )$ from \eqref{ns:y(r,h)}. As a function of $(r, \bm{h} )$, we have $ \bm{y}  \in C^1 \lb (0,\infty) \times \bbR ^3 , \bbR ^3\rb$.
\end{lemma}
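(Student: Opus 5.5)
The natural route is the implicit function theorem applied to the first-order optimality condition that characterizes $\bm y(r,\bm h)$. Define
\[
\bm F(r,\bm h,\bm u):=\nabla W(\bm u)+r^2\bm u-r\bm h,\qquad (r,\bm h,\bm u)\in(0,\infty)\times\bbR^3\times\bbR^3 .
\]
By the discussion following \eqref{ns:y(r,h)}, for each $(r,\bm h)$ the vector $\bm y(r,\bm h)$ is the unique zero of $\bm F(r,\bm h,\cdot)$. Uniqueness holds because $\bm u\mapsto W(\bm u)+\tfrac12\norm{\bm h-r\bm u}^2$ is $(\kappa+r^2)$-strongly convex by \eqref{cond:W-curvature}, so it has exactly one critical point.

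The standing assumption $W\in C^3$ (or $C^\infty$ for admissible potentials) gives $\bm F\in C^2$ jointly in all variables, since $r\mapsto r^2$ and $r\mapsto r\bm h$ are polynomial. Its partial Jacobian in $\bm u$ is
\[
D_{\bm u}\bm F(r,\bm h,\bm u)=\nabla^2W(\bm u)+r^2\bI_3\succeq(\kappa+r^2)\bI_3 ,
\]
which is invertible at every point. Fix $(r_0,\bm h_0)$ and set $\bm u_0=\bm y(r_0,\bm h_0)$. The implicit function theorem then gives a neighborhood $N$ of $(r_0,\bm h_0)$, a neighborhood $V$ of $\bm u_0$, and a $C^1$ (indeed $C^2$) map $\bm g:N\to V$ with $\bm F(r,\bm h,\bm g(r,\bm h))=\bm 0$ on $N$.

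The one point needing care is to identify the local solution $\bm g$ with the global minimizer $\bm y$ on $N$, since the implicit function theorem only gives uniqueness inside $V$. Global uniqueness resolves this: $\bm g(r,\bm h)$ is a critical point of a strictly convex function, hence it equals the unique minimizer $\bm y(r,\bm h)$. So $\bm y=\bm g$ on $N$, and $\bm y$ is $C^1$ near each point of $(0,\infty)\times\bbR^3$.

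As a by-product, differentiating the identity $\bm F(r,\bm h,\bm y(r,\bm h))=\bm 0$ yields the explicit derivatives
\[
D_{\bm h}\bm y=r\bigl[\nabla^2W(\bm y)+r^2\bI_3\bigr]^{-1},\qquad
\partial_r\bm y=\bigl[\nabla^2W(\bm y)+r^2\bI_3\bigr]^{-1}(\bm h-2r\bm y).
\]
The first of these is the formula used in \eqref{ns:eq:Dh-y} and in the proof of Proposition~\ref{ns:prop:zero-temperature-endpoint}. The argument also extends to $r=0$, but the statement requires only $r>0$.
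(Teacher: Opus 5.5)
Your proposal is correct and follows essentially the same route as the paper: the implicit function theorem applied to $\nabla W(\bm u)+r^2\bm u-r\bm h=\bm 0$, with invertibility from $\nabla^2W+r^2\bI_3\succeq(\kappa+r^2)\bI_3$, followed by the same formula for $D_{\bm h}\bm y$. The paper leaves implicit the identification of the local implicit-function branch with the global minimizer. Your strict-convexity argument for that step makes your write-up slightly more complete.
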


\noindent \textbf{Proof of Lemma \ref{ns:lem:y(r,h)-smooth}.}
For \(r>0\) and \( \bm{h} \in\bbR^3\), define

\[
\Phi_{r, \bm{h} }( \bm{u} )
:=
W( \bm{u} )+\frac12\norm{\bm{h} -r \bm{u}}^2 .
\]
By \textnormal{\eqref{cond:W-curvature}}, \(\Phi_{r, \bm{h} }\) is strongly convex and coercive. Consequently, it has a unique minimizer, which is precisely \( \bm{y} (r, \bm{h} )\). The first-order optimality condition is thus
\begin{equation}
\label{ns:eq:y-mode-equation}
\nabla W\lb  \bm{y} (r, \bm{h} ) \rb
+r^2 \bm{y} (r, \bm{h} )-r \bm{h} 
= \bm{0} .
\end{equation}
Consider the map
\[
 \bm{F} :\bbR^3\times(0,\infty)\times\bbR^3
\longrightarrow\bbR^3,
\qquad
 \bm{F} ( \bm{u} ,r, \bm{h} )
=
\nabla W( \bm{u} )+r^2 \bm{u} -r \bm{h} .
\]
Since \(W\in C^3(\bbR^3)\), the map \( \bm{F} \) is \(C^2\). A direct computation yields 
\[
D_{ \bm{u} } \bm{F} ( \bm{u} ,r, \bm{h} )
=
\nabla^2W( \bm{u} )+r^2\bm{I} _3.
\]
By \textnormal{\eqref{cond:W-curvature}}, 
$
D_{ \bm{u} } \bm{F} ( \bm{u} ,r, \bm{h} )
\succeq
(\kappa+r^2)\bm{I} _3,
$
and hence \(D_{ \bm{u} } \bm{F} ( \bm{u} ,r, \bm{h} )\) is invertible for every
\(( \bm{u} ,r, \bm{h} )\). In particular, it is invertible at
\( \bm{u} = \bm{y} (r, \bm{h} )\).
The implicit function theorem therefore implies that, in a neighborhood of every
\((r_0, \bm{h} _0)\in(0,\infty)\times\bbR^3\), the equation
\[
 \bm{F} ( \bm{u} ,r, \bm{h} )= \bm{0} 
\]
has a \(C^1\) solution \( \bm{u} =\widetilde{ \bm{y} }(r, \bm{h} )\). 

For later use, differentiating \eqref{ns:eq:y-mode-equation} with respect to
\( \bm{h} \) gives

\[
\left[
\nabla^2W\lb  \bm{y} (r, \bm{h} ) \rb+r^2\bm{I} _3
\right]D_{ \bm{h} } \bm{y} (r, \bm{h} )
=
r\bm{I} _3,
\]
and therefore
\begin{equation}
\label{ns:eq:Dh-y}
D_{ \bm{h} } \bm{y} (r, \bm{h} ) =
r\left[
\nabla^2W\lb  \bm{y} (r, \bm{h} ) \rb+r^2\bm{I} _3
\right]^{-1}.
\end{equation}
The proof is completed.
\(\hfill\square\)

\begin{lemma}[Posterior covariance estimates]
\label{ns:lem:sandwich}
For all $\beta \geq1$, $r>0$, and $ \bm{h} \in\bbR ^3$,
\begin{equation}
\label{ns:eq:BN-sandwich}
    \frac{r}{\Lambda+r^2}\bm{I} _3
    \preceq D_{ \bm{h} }  \bm{y} _\beta(r, \bm{h} )
      =\beta r \cdot \Cov_{\pi_{\beta ,r, \bm{h} }}( \bm{Y} )
    \preceq\frac{r}{\kappa+r^2}\bm{I} _3,
\end{equation}
and
\[\bbE _{\pi_{\beta ,r, \bm{h} }}\norm{\bm{Y} - \bm{y} (r, \bm{h} )}^2
\leq \frac{3}{\beta(\kappa+r^2)}.\]
\end{lemma}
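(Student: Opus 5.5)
The plan is to treat $\pi_{\beta,r,\bm h}$ as a strongly log-concave measure and read off both claims from its potential. Write
\[
V_{r,\bm h}(\bm y):=\beta\Big[W(\bm y)+\tfrac12\norm{\bm h-r\bm y}^2\Big],\qquad \nabla^2V_{r,\bm h}(\bm y)=\beta\big[\nabla^2W(\bm y)+r^2\bI_3\big].
\]
By \eqref{cond:W-curvature} this gives $\beta(\kappa+r^2)\bI_3\preceq\nabla^2V_{r,\bm h}\preceq\beta(\Lambda+r^2)\bI_3$ uniformly in $\bm y$. The measure therefore has Gaussian tails, so all moments are finite and differentiation under the integral sign is justified by dominated convergence.

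\textbf{First claim: the identity.} The only $\bm h$-dependent part of the exponent, once normalized, is $\beta r\,\bm h^\top\bm y$; the term $-\tfrac{\beta}{2}\norm{\bm h}^2$ cancels against the normalizing constant. So $\pi_{\beta,r,\bm h}$ is an exponential family in $\bm h$ with natural parameter $\beta r\bm h$. Differentiating $\bm y_\beta(r,\bm h)=\bbE[\bm Y]$ in $\bm h$ then gives
\[
D_{\bm h}\bm y_\beta(r,\bm h)=\beta r\,\Cov_{\pi_{\beta,r,\bm h}}(\bm Y).
\]

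\textbf{First claim: the two-sided bound.} This is the same Brascamp--Lieb / Cram\'er--Rao pair already used in Lemma~\ref{lem:Jacobian-bound-logconcave}. Brascamp--Lieb gives
\[
\Cov(\bm Y)\preceq\bbE\big[(\nabla^2V_{r,\bm h})^{-1}\big]\preceq\frac{1}{\beta(\kappa+r^2)}\bI_3 .
\]
The Cram\'er--Rao lower bound, with Fisher information $\bbE[\nabla^2V_{r,\bm h}]$, gives
\[
\Cov(\bm Y)\succeq\big(\bbE[\nabla^2V_{r,\bm h}]\big)^{-1}\succeq\frac{1}{\beta(\Lambda+r^2)}\bI_3 .
\]
Multiplying both by $\beta r$ yields \eqref{ns:eq:BN-sandwich}.

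\textbf{Second claim.} The point $\bm y(r,\bm h)$ from \eqref{ns:y(r,h)} is exactly the minimizer of $V_{r,\bm h}$, so $\nabla V_{r,\bm h}(\bm y(r,\bm h))=\bm 0$. Apply the Stein / integration-by-parts identity $\bbE[\langle\nabla V(\bm Y),\bm g(\bm Y)\rangle]=\bbE[\operatorname{div}\bm g(\bm Y)]$ with $\bm g(\bm u)=\bm u-\bm y(r,\bm h)$. The boundary terms vanish because of the Gaussian tails, and this gives
\[
\bbE\big\langle\nabla V(\bm Y)-\nabla V(\bm y(r,\bm h)),\,\bm Y-\bm y(r,\bm h)\big\rangle=3 .
\]
Strong monotonicity of $\nabla V$, with modulus $\beta(\kappa+r^2)$, bounds the left side below by $\beta(\kappa+r^2)\,\bbE\norm{\bm Y-\bm y(r,\bm h)}^2$. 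This is exactly the stated bound $3/(\beta(\kappa+r^2))$; the hypothesis $\beta\ge1$ is not actually needed.

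The only delicate point is the rigor of interchanging derivatives and integrals and discarding boundary terms. Both follow from the uniform quadratic growth $V_{r,\bm h}(\bm y)\ge V_{r,\bm h}(\bm y(r,\bm h))+\tfrac{\beta(\kappa+r^2)}2\norm{\bm y-\bm y(r,\bm h)}^2$, which is locally uniform in $\bm h$. I expect this to be routine.
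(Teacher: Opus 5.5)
Your proposal is correct and follows the paper's proof essentially step for step. The paper likewise gets $D_{\bm h}\bm y_\beta=\beta r\,\Cov(\bm Y)$ by direct differentiation of the posterior mean. It gets the sandwich from the Cram\'er--Rao and Brascamp--Lieb inequalities using $(\kappa+r^2)\bI_3\preceq\nabla^2W+r^2\bI_3\preceq(\Lambda+r^2)\bI_3$. It gets the second-moment bound from the integration-by-parts identity $3=\beta\,\bbE\langle\bm Y-\bm y(r,\bm h),\nabla\Phi(\bm Y)\rangle$, where $\Phi(\bm u)=W(\bm u)+\tfrac12\norm{\bm h-r\bm u}^2$, combined with strong convexity and $\nabla\Phi(\bm y(r,\bm h))=\bm 0$. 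Your remark that $\beta\geq1$ is never needed is also accurate; the paper's argument does not use it either.
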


\noindent \textbf{Proof of Lemma \ref{ns:lem:sandwich}.}
Fix $\beta\geq1$, $r>0$, and $ \bm{h} \in\bbR ^3$, and put
\[
    \Phi( \bm{u} )
    :=W( \bm{u} )+\frac12\norm{\bm{h} -r \bm{u}}^2,
    \qquad
     \bm{y} _*:= \bm{y} (r, \bm{h} ).
\]
Throughout the proof, expectations are taken with respect to
$ \bm{Y} \sim\pi_{\beta,r, \bm{h} }$.
By \textnormal{\eqref{cond:W-curvature}},
\begin{equation}
\label{ns:eq:posterior-Hessian-bounds}
    (\kappa+r^2)\bm{I} _3
    \preceq\nabla^2\Phi( \bm{u} )
    =\nabla^2W( \bm{u} )+r^2\bm{I} _3
    \preceq(\Lambda+r^2)\bm{I} _3.
\end{equation}
Recall that $ \bm{y} _\beta(r, \bm{h} )$ is the conditional mean of the posterior distribution $\pi_{\beta,r, \bm{h} }$. A direct differentiation yields
\[
    D_{ \bm{h} } \bm{y} _\beta(r, \bm{h} )
    =\beta r\cdot \Cov( \bm{Y} ).
\]
Moreover, the Cramér--Rao and Brascamp--Lieb inequalities give, respectively,
\[
    \frac1\beta
    \lb \bbE [\nabla^2\Phi( \bm{Y} )] \rb^{-1}
    \preceq \Cov( \bm{Y} )
    \preceq
    \frac1\beta
    \bbE \left[\lb \nabla^2\Phi( \bm{Y} ) \rb^{-1}\right].
\]
Using \eqref{ns:eq:posterior-Hessian-bounds}, we therefore obtain
\[
    \frac1{\beta(\Lambda+r^2)}\bm{I} _3
    \preceq \Cov( \bm{Y} )
    \preceq
    \frac1{\beta(\kappa+r^2)}\bm{I} _3.
\]
Multiplying by $\beta r$ and using
$D_{ \bm{h} } \bm{y} _\beta=\beta r\,\Cov( \bm{Y} )$
proves \eqref{ns:eq:BN-sandwich}.

We now prove the second moment bound. For each $i\in\{1,2,3\}$, integration by parts gives
\[
    0
    =
    \int_{\bbR ^3}
    \partial_{u_i}
    \left[
       (u_i-( \bm{y} _*)_i)e^{-\beta\Phi( \bm{u} )}
    \right]\dd \bm{u} .
\]
Dividing by the normalizing constant and summing over $i$ yields
\[
    3
    =
    \beta\cdot \bbE \Big[
       \langle \bm{Y} - \bm{y} _*,\nabla\Phi( \bm{Y} )\rangle
    \Big].
\]
On the other hand, strong convexity and
$\nabla\Phi( \bm{y} _*)= \bm{0} $ imply
\begin{align*}
    \langle \bm{u} - \bm{y} _*,\nabla\Phi( \bm{u} )\rangle
    &=
    \langle \bm{u} - \bm{y} _*,
            \nabla\Phi( \bm{u} )-\nabla\Phi( \bm{y} _*)\rangle
            \geq
    (\kappa+r^2)\norm{\bm{u} - \bm{y} _*}^2.
\end{align*}
Therefore,
\[
    \beta(\kappa+r^2)\cdot 
    \bbE \norm{\bm{Y} - \bm{y} _*}^2
    \leq3.
\]
The proof is completed. \hfill$\square$

\begin{lemma} \label{ns:lem:Piota-identity}
    Let $ \bm{F} : [0,1] \times \bbR ^d \to \bbR ^d$ be a $C^2$ function and write
    \[
     \bm{F} _s( \bm{x} ):=  \bm{F} (s, \bm{x} ), \qquad  \bm{q} _s( \bm{x} ):= \partial_s  \bm{F} _s( \bm{x} ).
    \]
    For all $\varphi \in C^2_c \lb \bbR ^d \rb$, 
    we have 
    \[
    \frac{d}{ds} \Big[ \varphi \lb  \bm{F} _s( \bm{x} ) \rb \cdot \det D  \bm{F} _s( \bm{x} )  \Big] = \operatorname{div}_{ \bm{x} } \Big[
    \varphi \lb  \bm{F} _s( \bm{x} ) \rb \cdot \operatorname{cof} \lb D  \bm{F} _s( \bm{x} ) \rb^\top \cdot  \bm{q} _s( \bm{x} )
    \Big]
    \]
    where $\operatorname{cof}(\bm{A} )$ denotes the cofactor matrix of a matrix $\bm{A} $.
\end{lemma}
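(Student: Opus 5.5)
We prove the identity first for fixed $\bm x$ by a direct computation, using the chain rule, Jacobi's formula for the derivative of a determinant, and Piola's identity. Write $\bm A_s:=D\bm F_s(\bm x)$ and $\operatorname{cof}(\bm A)^\top=\operatorname{adj}(\bm A)$. Since $\bm F$ is $C^2$, we may interchange $\partial_s$ and $D_{\bm x}$, so that $\partial_s\bm A_s=D\bm q_s$. The left-hand side then splits into two pieces:
\[
\frac{d}{ds}\Big[\varphi(\bm F_s)\det \bm A_s\Big]
=\nabla\varphi(\bm F_s)^\top \bm q_s\,\det\bm A_s+\varphi(\bm F_s)\operatorname{tr}\big(\operatorname{adj}(\bm A_s)\,D\bm q_s\big).
\]
The second piece comes from Jacobi's formula $\frac{d}{ds}\det\bm A_s=\operatorname{tr}(\operatorname{adj}(\bm A_s)\,\partial_s\bm A_s)$.

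Next we expand the right-hand side with the product rule:
\[
\operatorname{div}\big[\varphi(\bm F_s)\operatorname{adj}(\bm A_s)\bm q_s\big]
=\nabla_{\bm x}[\varphi(\bm F_s)]^\top\operatorname{adj}(\bm A_s)\bm q_s+\varphi(\bm F_s)\,\operatorname{div}\big[\operatorname{adj}(\bm A_s)\bm q_s\big].
\]
In the first term, $\nabla_{\bm x}[\varphi(\bm F_s)]=\bm A_s^\top\nabla\varphi(\bm F_s)$, and $\bm A_s\operatorname{adj}(\bm A_s)=\det(\bm A_s)\bI_d$. The first term therefore equals $\nabla\varphi(\bm F_s)^\top\bm q_s\det\bm A_s$, which matches the first piece on the left.

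For the second term, we write $\operatorname{div}[\operatorname{adj}(\bm A)\bm q]=\sum_{i,j}\partial_i(\operatorname{adj}(\bm A)_{ij}q_j)$. This equals $\sum_{i,j}\operatorname{adj}(\bm A)_{ij}\partial_iq_j+\sum_j q_j\sum_i\partial_i\operatorname{adj}(\bm A)_{ij}$. The first sum is exactly $\operatorname{tr}(\operatorname{adj}(\bm A)D\bm q)$. The second sum vanishes by \emph{Piola's identity}: the rows of the cofactor matrix of a gradient are divergence-free, i.e. $\sum_i\partial_i\operatorname{cof}(D\bm F)_{ji}=0$ for each $j$. This is the main step.

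To prove Piola's identity for $C^2$ maps $\bm F_s$, we write each cofactor entry as a $(d-1)\times(d-1)$ minor in the partial derivatives $\partial_k F_l$. Differentiating a minor produces a sum of terms, each containing one second derivative $\partial_i\partial_kF_l$. Summing over $i$ with alternating signs, the terms cancel in pairs because mixed second derivatives are symmetric. Equivalently, $\operatorname{cof}(D\bm F)_{j\cdot}$ corresponds to the form $dF_1\wedge\cdots\widehat{dF_j}\cdots\wedge dF_d$, which is closed since $d^2=0$. This index bookkeeping is the only delicate point.

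Combining the pieces gives the claimed identity pointwise. We need $C^2$ regularity so that $D\bm q_s$ exists and $\operatorname{adj}(\bm A_s)$ is $C^1$, while $\varphi\in C^2_c$ more than suffices. For the application in Lemma~\ref{ns:lem:degree-defect}, where $\bm q_t$ is only $C^1$, we would mollify $\bm R$ and pass to the limit in the integrated identity using local uniform convergence of first derivatives. The integrand is supported in a fixed compact set, by the properness bound used there, so this limit passage is routine.
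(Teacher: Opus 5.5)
Your proposal is correct and follows essentially the same route as the paper. On the left you use the chain rule plus Jacobi's formula; on the right you use the product rule together with $D\bm F_s\,\operatorname{cof}(D\bm F_s)^\top=\det(D\bm F_s)\,\bm I_d$ and Piola's identity, so that $\operatorname{div}\big(\operatorname{cof}(D\bm F_s)^\top\bm q_s\big)=\langle\operatorname{cof}(D\bm F_s),D\bm q_s\rangle$. The remaining differences are minor: you sketch a proof of Piola's identity, which the paper only cites, and you work directly at $C^2$ regularity, which makes the paper's smooth-then-mollify step unnecessary for the lemma as stated.
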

The identity above is  a variant of the well-known Piola identity (see \cite{kupferman2019piola} and the references therein).

\noindent \textbf{Proof of Lemma \ref{ns:lem:Piota-identity}.}
 We first show the identity for $F$'s that are smooth in both space and time. We compute
\begin{align*}
    \frac{d}{ds} \Big[ \varphi \lb  \bm{F} _s( \bm{x} ) \rb \cdot \det D  \bm{F} _s( \bm{x} )  \Big]
    =& \frac{d}{ds} \Big[  \varphi \lb  \bm{F} _s( \bm{x} ) \rb \Big] \cdot  \det D  \bm{F} _s( \bm{x} )
    +   \varphi \lb  \bm{F} _s( \bm{x} ) \rb \cdot \frac{d}{ds} \Big[   \det D  \bm{F} _s( \bm{x} )  \Big] \\
    =& \nabla \varphi \lb  \bm{F} _s( \bm{x} ) \rb^\top \cdot \partial_s  \bm{F} _s( \bm{x} ) \cdot \det D  \bm{F} _s( \bm{x} )
    +    \varphi \lb  \bm{F} _s( \bm{x} ) \rb \cdot \langle  \operatorname{cof} \lb D  \bm{F} _s( \bm{x} ) \rb, D  \bm{q} _s( \bm{x} ) \rangle
\end{align*}
where we have used the facts that
\[
\frac{d}{ds} \Big[  \varphi \lb  \bm{F} _s( \bm{x} ) \rb \Big] = \nabla \varphi \lb  \bm{F} _s( \bm{x} ) \rb^\top \cdot \partial_s  \bm{F} _s( \bm{x} )
\qquad \text{and} \qquad 
\frac{d}{ds} \Big[   \det D  \bm{F} _s( \bm{x} )  \Big] =  \langle  \operatorname{cof} \lb D  \bm{F} _s( \bm{x} ) \rb, D  \bm{q} _s( \bm{x} ) \rangle.
\]
To proceed, let $\bm{C} _s(\bm{x}):= \operatorname{cof} \lb D  \bm{F} _s( \bm{x} ) \rb$. We will simply write $\bm C_s$ in some calculations below. The Piola identity yields
\begin{align} \label{ns:eq:Piotta-identity}
\operatorname{Div}_{ \bm{x} } \lb  \bm{C} _s( \bm{x} ) \rb =  \bm{0} 
\end{align}
Here the operator $\operatorname{Div}(\cdot)$ acts on matrices and is defined as 
\[
\operatorname{Div}_{ \bm{x} }\bm{C} _s
:=
\begin{pmatrix}
\operatorname{div}_{ \bm{x} }\lb \text{first row of }\bm{C} _s \rb\\
\vdots\\
\operatorname{div}_{ \bm{x} }\lb \text{$d$-th row of }\bm{C} _s \rb
\end{pmatrix}
=
\begin{pmatrix}
\displaystyle\sum_{j=1}^d \partial_{x_j}(\bm{C} _s)_{1j}\\
\vdots\\
\displaystyle\sum_{j=1}^d \partial_{x_j}(\bm{C} _s)_{dj}
\end{pmatrix}
\in\bbR^d.
\]
Thus
\begin{align*}
    \operatorname{div}_{ \bm{x} } \lb \bm{C} _s^\top  \bm{q} _s \rb &= \sum_{j=1}^d \partial_j \left[ \sum_{i=1}^d \lb \bm{C} _s \rb_{ij} \, \lb  \bm{q} _s \rb_i \right] \\
    &= \sum_{1\leq i,j \leq d} \partial_{x_j} \lb \bm{C} _s \rb_{ij} \, \lb  \bm{q} _s \rb_i + \sum_{1\leq i,j \leq d} \lb \bm{C} _s \rb_{ij} \cdot \partial_{x_j} \lb  \bm{q} _s \rb_i  \\
    &= \underbrace{\sum_{i=1}^d \lb \operatorname{Div} \bm{C} _s \rb_i \cdot \lb  \bm{q} _s \rb_i}_{= 0 \, \text{by} \;\eqref{ns:eq:Piotta-identity} }
    + \langle \bm{C} _s, D \bm{q} _s \rangle_{\rm HS} = \langle \bm{C} _s, D \bm{q} _s \rangle_{\rm HS} .
\end{align*}
Therefore,
\[
    \frac{d}{ds} \Big[ \varphi \lb  \bm{F} _s( \bm{x} ) \rb \cdot \det D  \bm{F} _s( \bm{x} )  \Big] 
    = \nabla \varphi \lb  \bm{F} _s( \bm{x} ) \rb^\top \cdot  \bm{q} _s \cdot \det D  \bm{F} _s( \bm{x} )
    +     \varphi \lb  \bm{F} _s( \bm{x} ) \rb \cdot     \operatorname{div}_{ \bm{x} } \lb \bm{C} _s^\top  \bm{q} _s \rb
\]
On the other hand, we have 
\begin{align*}
 \operatorname{div}_{ \bm{x} } \Big[
     \varphi \lb  \bm{F} _s( \bm{x} ) \rb \cdot \operatorname{cof} \lb D  \bm{F} _s( \bm{x} ) \rb^\top \cdot  \bm{q} _s( \bm{x} ) \Big] 
    &=  \nabla_{ \bm{x} } \Big[ \varphi \lb  \bm{F} _s( \bm{x} ) \rb \Big]^\top \cdot \bm{C} _s^\top  \bm{q} _s 
    +      \varphi \lb  \bm{F} _s( \bm{x} ) \rb \cdot     \operatorname{div}_{ \bm{x} } \lb \bm{C} _s^\top  \bm{q} _s \rb \\
    &= \nabla \varphi \lb  \bm{F} _s \rb^\top \cdot D  \bm{F} _s \cdot \bm{C} _s^\top  \bm{q} _s +       \varphi \lb  \bm{F} _s( \bm{x} ) \rb \cdot     \operatorname{div}_{ \bm{x} } \lb \bm{C} _s^\top  \bm{q} _s \rb
\end{align*}
Moreover, using the cofactor identity
\[
\bm{A}  \cdot \operatorname{cof} \lb \bm{A}  \rb^\top = \det \lb \bm{A}  \rb \cdot \bm{I} _d,
\]
with $\bm{A}  \equiv D  \bm{F} _s$, we arrive at
\[
\nabla \varphi \lb  \bm{F} _s \rb^\top \cdot D  \bm{F} _s \cdot \bm{C} _s^\top  \bm{q} _s = \nabla \varphi \lb  \bm{F} _s \rb^\top \cdot  \bm{q} _s \cdot \det D  \bm{F} _s( \bm{x} ).
\]
Thus the conclusion follows when $F$ is smooth in both space and time.  

For a general $C^1$ map, take smooth approximations $\bm F^\varepsilon\to\bm F$ in $C^1_{\mathrm{loc}}$. This can be done by taking the convolution 
\[
\bm F^\ve = \bm F * \rho_\ve.
\]
where $\rho_\ve : \mb R^{d+1} \to \mb R$ is a mollifier indexed by $\ve > 0$, and the convolution above is understood in the coordinate-wise sense (applying the convolution to each coordinate of $\bm F$, including the time coordinate). The readers are referred to Section 8.2 of \cite{folland1999real} for a comprehensive, compact exposition of mollifiers and their smoothing properties. 

Both $\varphi(\bm F^\varepsilon) \times \det D_{\bm x}\bm F^\varepsilon$ and $\varphi(\bm F^\varepsilon) \times 
\operatorname{cof}(D_{\bm x}\bm F^\varepsilon)^\top \times 
\partial_s\bm F^\varepsilon$
converge locally uniformly to their corresponding limits as $\ve \to 0$.$\hfill$ $\square$

\begin{lemma} \label{ns:lem:spectrum-bound}
    Let $\bm{A} , \bm{B}  \in \Sp ^d_{++}$ and put 
    \[
    \bm{C} _t := t \bm{A}  + (1-t) \bm{B} , \qquad t\in[0,1].
    \]
    It holds that
    \[
    t \lambda_{\rm min} \lb \bm{A}  \rb + (1-t)\lambda_{\rm min}(\bm{B} ) \leq \lambda_{\rm min} \lb \bm{C} _t \rb
    \leq \lambda_{\rm max} \lb \bm{C} _t \rb \leq   t \lambda_{\rm max} \lb \bm{A}  \rb + (1-t)\lambda_{\rm max}(\bm{B} ).
    \]
\end{lemma}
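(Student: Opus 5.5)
The statement to prove is Lemma~\ref{ns:lem:spectrum-bound}: Weyl-type bounds on the extreme eigenvalues of a convex combination of two symmetric positive-definite matrices. The plan is to use the Rayleigh-quotient characterization of the extreme eigenvalues of a symmetric matrix,
\[
\lambda_{\min}(\bm M)=\min_{\norm{\bm u}_2=1}\bm u^\top\bm M\bm u,
\qquad
\lambda_{\max}(\bm M)=\max_{\norm{\bm u}_2=1}\bm u^\top\bm M\bm u .
\]
Positive definiteness is not needed; only symmetry is used, which ensures that $\bm C_t$ is symmetric and the characterization applies.

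The argument goes in three steps.

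\emph{Step 1.} Fix $t\in[0,1]$ and a unit vector $\bm u$. By linearity,
\[
\bm u^\top\bm C_t\bm u = t\,\bm u^\top\bm A\bm u+(1-t)\,\bm u^\top\bm B\bm u .
\]

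\emph{Step 2.} The weights $t$ and $1-t$ are nonnegative. So we may bound each quadratic form below by the corresponding $\lambda_{\min}$ and above by the corresponding $\lambda_{\max}$. This gives
\[
t\lambda_{\min}(\bm A)+(1-t)\lambda_{\min}(\bm B)\le \bm u^\top\bm C_t\bm u\le t\lambda_{\max}(\bm A)+(1-t)\lambda_{\max}(\bm B).
\]

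\emph{Step 3.} Take the minimum over unit vectors $\bm u$ in the left inequality; this yields the lower bound on $\lambda_{\min}(\bm C_t)$. Take the maximum over unit vectors $\bm u$ in the right inequality; this yields the upper bound on $\lambda_{\max}(\bm C_t)$. The middle inequality $\lambda_{\min}(\bm C_t)\le\lambda_{\max}(\bm C_t)$ holds trivially.

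There is no real obstacle here. The only point needing care is the sign of the weights: nonnegativity of $t$ and $1-t$ is exactly what allows bounding each term separately without reversing an inequality. Equivalently, one could phrase the whole argument in the Loewner order: from $\lambda_{\min}(\bm A)\bm I\preceq\bm A\preceq\lambda_{\max}(\bm A)\bm I$, the analogous sandwich for $\bm B$, and nonnegative scaling, the bounds on $\bm C_t$ follow at once.
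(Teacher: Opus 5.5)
Your proof is correct and is essentially the paper's argument: the paper simply cites convexity of $\lambda_{\max}$ and concavity of $\lambda_{\min}$ on symmetric matrices. Your Rayleigh-quotient argument, which writes each extreme eigenvalue as a pointwise maximum or minimum of linear functionals $\bm u^\top \bm M \bm u$, is exactly the standard proof of those two facts, written out directly.
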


\noindent \textbf{Proof of Lemma \ref{ns:lem:spectrum-bound}.}
The proof follows directly from the facts that the maximum eigenvalue is a convex functional on $\Sp ^d_{++}$, while the minimum eigenvalue is a concave functional on the same space. $\hfill$ $\square$



\begin{lemma} \label{ns:lem:inverse-lipschitz}
Let $\kappa>0$ and let $\bm{U} ,\bm{V} \in\bbR ^{n\times n}$
be symmetric matrices satisfying
\[
    \bm{U} ,\bm{V} \succeq\kappa\bm{I} _n.
\]
Then, in the operator norm,
\[
    \norm{\bm{U} ^{-1}-\bm{V} ^{-1}}
    \leq\kappa^{-2}\norm{\bm{U} -\bm{V}}.
\]
\end{lemma}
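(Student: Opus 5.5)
The plan is the resolvent identity followed by a single norm bound on each inverse. Since $\bm U\succeq\kappa\bm I_n$ and $\bm V\succeq\kappa\bm I_n$, both matrices are symmetric positive definite and hence invertible. The starting point is the algebraic identity
\[
\bm U^{-1}-\bm V^{-1}=\bm U^{-1}(\bm V-\bm U)\bm V^{-1},
\]
which follows by multiplying out the right-hand side: it equals $\bm U^{-1}\bm V\bm V^{-1}-\bm U^{-1}\bm U\bm V^{-1}=\bm U^{-1}-\bm V^{-1}$.

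Next we take operator norms and use submultiplicativity to obtain
\[
\norm{\bm U^{-1}-\bm V^{-1}}\le\norm{\bm U^{-1}}\,\norm{\bm U-\bm V}\,\norm{\bm V^{-1}}.
\]
It remains to bound the two inverses. For a symmetric positive definite matrix, the operator norm of the inverse equals the reciprocal of its smallest eigenvalue. The hypothesis $\bm U\succeq\kappa\bm I_n$ gives $\lambda_{\min}(\bm U)\ge\kappa$, so $\norm{\bm U^{-1}}=\lambda_{\min}(\bm U)^{-1}\le\kappa^{-1}$, and the same argument gives $\norm{\bm V^{-1}}\le\kappa^{-1}$. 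Substituting these two bounds yields $\norm{\bm U^{-1}-\bm V^{-1}}\le\kappa^{-2}\norm{\bm U-\bm V}$, which is the claim.

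There is no real obstacle in this argument. The only point that needs care is the eigenvalue bound on the inverse, and this is exactly where the symmetry assumption is used: for a symmetric matrix the operator norm equals the largest eigenvalue in absolute value, so $\norm{\bm U^{-1}}$ is determined by the spectrum of $\bm U$.
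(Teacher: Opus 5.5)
Your proof is correct and follows the same route as the paper: the identity $\bm U^{-1}-\bm V^{-1}=\bm U^{-1}(\bm V-\bm U)\bm V^{-1}$, submultiplicativity of the operator norm, and the bounds $\norm{\bm U^{-1}},\norm{\bm V^{-1}}\le\kappa^{-1}$ obtained from $\lambda_{\min}\ge\kappa$.
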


\noindent \textbf{Proof of Lemma \ref{ns:lem:inverse-lipschitz}.}
The lower eigenvalue bounds imply that both matrices are
invertible, with
$\norm{\bm{U} ^{-1}},\norm{\bm{V} ^{-1}}\leq\kappa^{-1}$.
Using
\[
    \bm{U} ^{-1}-\bm{V} ^{-1}
    =\bm{U} ^{-1}(\bm{V} -\bm{U} )\bm{V} ^{-1},
\]
we obtain
\[
    \norm{\bm{U} ^{-1}-\bm{V} ^{-1}}
    \leq
    \norm{\bm{U} ^{-1}}\cdot \norm{\bm{V} -\bm{U}}\cdot \norm{\bm{V} ^{-1}}
    \leq\kappa^{-2}\norm{\bm{U} -\bm{V}}.
\]
The proof is completed. $\hfill$ $\square$

\section{Technical results used in the proof of Theorem~\ref{ns:thm:gmm}}

\begin{lemma}\label{lem:maximum-interpoint}
Recall the mean vectors $\bm\mu_j^{\pm}$ in
\eqref{eq:theta-mu-p}. Then
\[
4R^2
\leq
\max_{\substack{
1\leq i,j\leq N\\
s,t\in\{+,-\}\\
(i,s)\neq(j,t)
}}
\|\bm\mu_i^s-\bm\mu_j^t\|_2^2
\leq
4R^2+\eta^2.
\]
\end{lemma}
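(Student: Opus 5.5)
The plan is to compute the squared distances between the means explicitly from \eqref{eq:theta-mu-p} and bound the two types of pairs separately. Write $\phi_j:=\pi\theta_j^2$. For any indices $i,j$ and signs $s,t\in\{+,-\}$,
\[
\|\bm\mu_i^s-\bm\mu_j^t\|_2^2
=\bigl\|sR\bm u_{\phi_i}-tR\bm u_{\phi_j}\bigr\|_2^2+\eta^2(\theta_i-\theta_j)^2 .
\]
Since $\bm u_\phi$ is a unit vector, the first term is $R^2\bigl(2-2st\cos(\phi_i-\phi_j)\bigr)$, so it never exceeds $4R^2$.

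For the lower bound, take an antipodal pair, that is $i=j$ and $s=+$, $t=-$, which is allowed because $(i,+)\neq(i,-)$. In that case the horizontal part equals $\|2R\bm u_{\phi_i}\|_2^2=4R^2$ and the vertical part vanishes. The maximum over all admissible pairs is therefore at least $4R^2$.

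For the upper bound, the horizontal part is at most $4R^2$ as noted. For the vertical part, every $\theta_j=(2j-1)/(2N)$ lies in $(0,1)$, so $|\theta_i-\theta_j|<1$ and $\eta^2(\theta_i-\theta_j)^2\le\eta^2$. Adding the two bounds gives at most $4R^2+\eta^2$ for every pair, and hence for the maximum.

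No step is a real obstacle. The only point needing care is that the lower bound uses the antipodal pair, whose vertical offset is exactly zero.
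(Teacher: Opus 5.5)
Your proposal is correct and follows essentially the same route as the paper's proof. Both use the antipodal pair $(\bm\mu_j^+,\bm\mu_j^-)$, at distance exactly $2R$, for the lower bound, and for the upper bound split each squared distance into a horizontal part at most $4R^2$ and a vertical part $\eta^2(\theta_i-\theta_j)^2\le\eta^2$.
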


\noindent \textbf{Proof of Lemma \ref{lem:maximum-interpoint}.}
First note that the two means in the same pair satisfy
\[
\|\bm\mu_j^+-\bm\mu_j^-\|_2
=
2R\|\bm u_{\pi\theta_j^2}\|_2
=
2R.
\]
Thus the maximum interpoint distance is at least $2R$. On the other hand, for arbitrary indices $i,j$ and signs $s,t$, we have 
\
\[
\left\|  \bm \mu_i^{s} - \bm \mu_j^t \right\|^2  = R^2 \cdot \left\| \mbox{sign}(s) \cdot \bm u_{\pi \theta_i^2} -  \mbox{sign}(t) \cdot \bm u_{\pi \theta_j^2} \right\|^2
+\eta^2 \lb \theta_i - \theta_j \rb^2 \leq 4R^2 + \eta^2. 
\]
The proof is completed. $\hfill$ $\square$

The next result is a perturbation result for matrix-valued ODEs. Although it is standard, its non-asymptotic nature is helpful for the proof of Theorem~\ref{ns:thm:gmm}.

\begin{lemma} \label{lem:perturbation-matrix-ode}
    Let $\bm C, \widetilde{\bm C}:[0,1] \to \mb R^{d\times d}$ be continuous matrix-valued mappings and suppose $\bm M, \widetilde{\bm M}$ solve the ODEs
    \[
    \frac{d}{dt} \bm M(t) =  \bm C(t) \cdot  \bm M(t), \qquad \frac{d}{dt} \widetilde{\bm M}(t)= \widetilde{\bm C}(t) \cdot \widetilde{\bm M}(t), \qquad \bm M(0)= \widetilde{\bm M}(0)= \bm I_d.
    \]
    Then, it holds that
    \[
    \sup_{t \in [0,1]}\left\| \bm M(t) - \widetilde{\bm M}(t)  \right\| \leq \exp \lb \int_0^1 \left\|  \bm C(t) \right\| \, dt \rb \cdot \la \exp \left[ \int_0^1 \left\| \widetilde{\bm C}(s) - \bm C(s) \right\| \, ds  \right]  -1 \ra .
    \]
\end{lemma}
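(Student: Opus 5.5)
The plan is a Gronwall argument on the difference of the two solutions. Write $\bm E(t):=\widetilde{\bm M}(t)-\bm M(t)$, so that $\bm E(0)=\bm 0$ and
\[
\frac{d}{dt}\bm E(t)=\widetilde{\bm C}(t)\bm E(t)+\bigl(\widetilde{\bm C}(t)-\bm C(t)\bigr)\bm M(t).
\]
The natural first attempt, bounding $\norm{\widetilde{\bm C}}\le\norm{\bm C}+\norm{\widetilde{\bm C}-\bm C}$ and applying Gronwall directly, yields an extra factor $\exp(\int\norm{\bm C})$. That factor is harmless for the application in Theorem~\ref{ns:thm:gmm}, but the sharper form in the statement comes out more cleanly from a different organization.

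\emph{Step 1 (a priori bound on $\bm M$).} Since $\norm{\bm M(t)}\le 1+\int_0^t\norm{\bm C(s)}\norm{\bm M(s)}\,ds$, Gronwall's inequality gives
\[
\norm{\bm M(t)}\le \exp\Bigl(\int_0^t\norm{\bm C}\Bigr)=:e^{c(t)}.
\]

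\emph{Step 2 (factorization).} Because $\bm M(t)$ is invertible, write $\widetilde{\bm M}(t)=\bm M(t)\bm N(t)$. Then $\bm N$ solves
\[
\bm N'=\bm M^{-1}(\widetilde{\bm C}-\bm C)\bm M\,\bm N,\qquad \bm N(0)=\bm I.
\]
This step does not work as it stands, because $\norm{\bm M^{-1}}$ is not controlled by Step 1.

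\emph{Step 2$'$ (series expansion, the route I would actually take).} Treat $\bm\Delta:=\widetilde{\bm C}-\bm C$ as a perturbation and expand $\widetilde{\bm M}$ as a Dyson/Duhamel series around $\bm M$. Let $\bm\Phi(t,s)$ denote the propagator of $\bm C$, so that $\bm M(t)=\bm\Phi(t,0)$ and $\norm{\bm\Phi(t,s)}\le e^{c(t)-c(s)}\le e^{c(1)}$. The Duhamel formula is
\[
\widetilde{\bm M}(t)=\bm M(t)+\int_0^t\bm\Phi(t,s)\bm\Delta(s)\widetilde{\bm M}(s)\,ds .
\]
Iterating it, the $k$-th term is an ordered integral over $0<s_1<\dots<s_k<t$ of the product
\[
\bm\Phi(t,s_k)\bm\Delta(s_k)\bm\Phi(s_k,s_{k-1})\cdots\bm\Delta(s_1)\bm\Phi(s_1,0).
\]
The propagator norms telescope: $e^{c(t)-c(s_k)}\cdots e^{c(s_1)-c(0)}=e^{c(t)}$. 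Hence the $k$-th term has norm at most
\[
e^{c(t)}\,\frac{1}{k!}\Bigl(\int_0^t\norm{\bm\Delta}\Bigr)^k .
\]
Summing over $k\ge1$ gives exactly
\[
\norm{\widetilde{\bm M}(t)-\bm M(t)}\le e^{c(1)}\Bigl(e^{\int_0^1\norm{\bm\Delta}}-1\Bigr),
\]
uniformly in $t\in[0,1]$.

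\emph{Convergence of the series.} This is the main technical point. The iterates converge absolutely by the bound just derived, and the remainder after $n$ iterations is
\[
\int_{0<s_1<\cdots<s_n<t}\bm\Phi(t,s_n)\bm\Delta(s_n)\cdots\bm\Phi(s_2,s_1)\bm\Delta(s_1)\widetilde{\bm M}(s_1)\,ds .
\]
It is bounded by $\sup_{[0,1]}\norm{\widetilde{\bm M}}\,e^{c(1)}(\int\norm{\bm\Delta})^n/n!\to0$, using that $\widetilde{\bm M}$ is continuous on the compact interval $[0,1]$. All ingredients are elementary (continuity of the coefficients gives existence of the propagators by standard linear ODE theory), so no serious obstacle is expected beyond carefully verifying this telescoping of propagator norms.
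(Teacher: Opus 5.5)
Your proof is correct, but it is organized differently from the paper's. The paper does no expansion. It first bounds $\norm{\widetilde{\bm M}(t)}\le e^{A(t)+D(t)}$, where $A(t)=\int_0^t\norm{\bm C}$ and $D(t)=\int_0^t\norm{\bm\Delta}$. It then writes the error equation with the \emph{unperturbed} generator and the \emph{perturbed} solution as source, $\bm E'=\bm C\bm E+\bm\Delta\widetilde{\bm M}$. It closes with one integrating-factor step, $\frac{d}{dt}\bigl[e^{-A}F\bigr]\le D'e^{D}$, applied to the majorant $F(t)=\int_0^t\bigl(\norm{\bm C}\norm{\bm E}+\norm{\bm\Delta}e^{A+D}\bigr)$, which gives $e^{A(t)}(e^{D(t)}-1)$ directly.

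Your Duhamel--Dyson expansion around the propagator of $\bm C$ reaches the same pointwise bound. It does so by telescoping propagator norms and using the simplex volume $D(t)^k/k!$. The Duhamel formula, the bound $\norm{\bm\Phi(t,s)}\le e^{c(t)-c(s)}$, and the remainder estimate are all sound. Your route gives an exact term-by-term representation of $\widetilde{\bm M}-\bm M$ (for example, the first-order term explicitly). The paper's route is shorter and needs neither propagators nor a convergence argument.

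One correction to your motivation: direct Gronwall does not lose a factor $e^{\int\norm{\bm C}}$ once the variation-of-constants form is used. Even with your decomposition $\bm E'=\widetilde{\bm C}\bm E+\bm\Delta\bm M$, write $\bm E(t)=\int_0^t\widetilde{\bm\Phi}(t,s)\bm\Delta(s)\bm M(s)\,ds$, where $\widetilde{\bm\Phi}$ is the propagator of $\widetilde{\bm C}$. Using $\norm{\widetilde{\bm\Phi}(t,s)}\le e^{\int_s^t\norm{\widetilde{\bm C}}}$ and $\norm{\bm M(s)}\le e^{A(s)}$ gives
\[
\norm{\bm E(t)}\le\int_0^t\norm{\bm\Delta(s)}\,e^{A(s)}\,e^{(A(t)-A(s))+(D(t)-D(s))}\,ds=e^{A(t)}\bigl(e^{D(t)}-1\bigr).
\]
The extra factor appears only with the crude form $\alpha(t)\exp\bigl(\int_0^t\norm{\widetilde{\bm C}}\bigr)$ of Gronwall. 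So the series expansion, and the abandoned factorization in Step 2, are unnecessary, though not wrong.
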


\noindent \textbf{Proof of Lemma \ref{lem:perturbation-matrix-ode}.}  Set
\[
\bm\Delta(t):=\widetilde{\bm C}(t)-\bm C(t),
\qquad
A(t):=\int_0^t\|\bm C(s)\|\,ds,
\qquad
D(t):=\int_0^t\|\bm\Delta(s)\|\,ds.
\]
By Gr\"onwall's inequality,
\[
\left\|\widetilde{\bm M}(t)\right\|
\le
\exp\left(\int_0^t\|\widetilde{\bm C}(s)\|\,ds\right)
\le e^{A(t)+D(t)}.
\]
Let $\bm E(t):=\widetilde{\bm M}(t)-\bm M(t)$.
Subtracting the two ODEs gives
\[
\bm E'(t)
=
\bm C(t)\bm E(t)+\bm\Delta(t)\widetilde{\bm M}(t),
\qquad
\bm E(0)=\bm0_{d \times d}.
\]
Integrating the above and taking norms on both sides, we obtain
\[
\|\bm E(t)\|
\le
\int_0^t
\left[
\|\bm C(s)\| \cdot \|\bm E(s)\|
+
\|\bm\Delta(s)\| \cdot e^{A(s)+D(s)}
\right]\,ds
=:F(t).
\]
Since $\|\bm E(t)\|\le F(t)$, the function $F$ satisfies
\[
F'(t)
\le A'(t)F(t)+D'(t)e^{A(t)+D(t)},
\qquad F(0)=0.
\]
Multiplying by the integrating factor $e^{-A(t)}$ yields
\[
\frac{d}{dt}\left[e^{-A(t)}F(t)\right]
\le D'(t)e^{D(t)}
=\frac{d}{dt}e^{D(t)}.
\]
Integrating from $0$ to $t$ therefore gives
\[
\left\|\bm E(t)\right\|
\le F(t)
\le e^{A(t)}\left(e^{D(t)}-1\right).
\]
Finally, $A$ and $D$ are nondecreasing, so
\[
\sup_{t\in[0,1]}
\left\|\widetilde{\bm M}(t)-\bm M(t)\right\|
\le
\exp\left(\int_0^1\|\bm C(s)\|\,ds\right) \cdot 
\left[
\exp\left(\int_0^1
\|\widetilde{\bm C}(s)-\bm C(s)\|\,ds\right)-1
\right].
\]
This completes the proof. \hfill $\square$

\begin{lemma} \label{lem:properness}
    Recall $\sfZ_1^{p_1} \lb \bm x \rb$ in \eqref{eq:sfZ_1}. It holds that
    \[
    \sup_{\bm x \in \mb R^3} \left\| \sfZ_1^{p_1} (\bm x) - \bm x \right\| < \infty. 
    \]
\end{lemma}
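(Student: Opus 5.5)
Since $\sfZ_1^{p_1}(\bm x)-\bm x=\int_0^1 \bm v_t^{p_1}(\sfZ_t^{p_1}(\bm x))\,dt$, it suffices to bound $\bm v_t^{p_1}$ uniformly in space along trajectories. That is not possible directly, because $\bm v_t^{p_1}$ contains an unbounded linear part. I will therefore compare with the Gaussian reference flow, as in Lemma~\ref{ns:lem:affine-tail}. With $\sigma=1$, the Gaussian-to-$\bcN(\bm 0,\bI_3)$ velocity field vanishes, since the independent interpolation $(1-t)\bm X_0+t\bm Y$ has conditional mean displacement $\bm 0$ by symmetry. I will use the explicit formula \eqref{eq: velocity equation} with $\sigma=1$:
\[
\bm v_t^{p_1}(\bm z)=\frac{2t-1}{\sigma_t^2}\bm z+\frac{1-t}{\sigma_t^2}\sum_i\pi_{i,t}(\bm z)\bm\mu_i,\qquad \sigma_t^2=(1-t)^2+t^2 .
\]
Here $\bm v^G_t(\bm z)=\frac{2t-1}{\sigma_t^2}\bm z$ is exactly the velocity of the pure-Gaussian target $\bcN(\bm 0,\bI_3)$, obtained by setting all means to zero. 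Its flow is $\sfZ_t^G(\bm x)=\sigma_t\bm x$, since $\frac{d}{dt}\sigma_t=\frac{2t-1}{\sigma_t}$. In particular, $\sfZ_1^G=\mathrm{Id}$.

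The key steps follow. First, the perturbation is uniformly bounded:
\[
\|\bm v_t^{p_1}(\bm z)-\bm v_t^G(\bm z)\|\le \frac{1-t}{\sigma_t^2}\max_i\|\bm\mu_i\|\le 2\max_i\|\bm\mu_i\|,
\]
because $\sigma_t^2\ge 1/2$ and the posterior weights sum to one. Second, rather than using a Gronwall argument with the Lipschitz constant of $\bm v^G$ (which is fine here since $|2t-1|/\sigma_t^2\le 2$), I will use a cleaner exact computation. Set $\bm e_t:=\sfZ_t^{p_1}(\bm x)-\sigma_t\bm x$. Then
\[
\dot{\bm e}_t=\frac{2t-1}{\sigma_t^2}\bm e_t+\bm b_t,\qquad \|\bm b_t\|\le 2\max_i\|\bm\mu_i\|,\qquad \bm e_0=\bm 0 .
\]
This linear scalar-coefficient ODE has solution $\bm e_1=\int_0^1 \frac{\sigma_1}{\sigma_s}\bm b_s\,ds$, and hence
\[
\|\bm e_1\|\le \sqrt2\cdot 2\max_i\|\bm\mu_i\|.
\]
Since $\sigma_1=1$, this gives $\sup_{\bm x}\|\sfZ_1^{p_1}(\bm x)-\bm x\|\le 2\sqrt2\max_{j,\pm}\|\bm\mu_j^\pm\|<\infty$.

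The only genuinely delicate point is justifying that the flow exists on all of $[0,1]$, including near $t=1$, and that the endpoint is the limit of $\sfZ_t^{p_1}$. Here the velocity is globally Lipschitz, uniformly in $t\in[0,1]$: by \eqref{eq: grad_velocity} and $\Dmax<\infty$, the Jacobian is bounded by a constant plus $\frac{t(1-t)}{2\sigma_t^4}\Dmax^2$. The velocity is also continuous in $t$ up to $t=1$. Global existence and uniqueness therefore follow from standard Picard–Lindelöf theory, and the above estimates hold for every $t\in[0,1]$ with no limiting argument needed.
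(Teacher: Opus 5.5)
Your argument is correct and is essentially the paper's proof written in the original time variable. Your $\bm e_t/\sigma_t$ is exactly $\bm q(a(t),\bm x)-\bm x$ for the paper's rescaled flow, and your Duhamel integral $\int_0^1\sigma_s^{-1}\bm b_s\,ds$ becomes the paper's $\int_0^1\bm m(a,\bm q(a,\bm x))\,da$ under $da=(1-s)\sigma_s^{-3}\,ds$. Integrating that weight exactly (its total mass is $a(1)-a(0)=1$) instead of bounding it pointwise would improve your constant from $2\sqrt2$ to $1$.

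One slip to fix: the $\bcN(\bm 0,\bI_3)\to\bcN(\bm 0,\bI_3)$ velocity field does not vanish by symmetry. Symmetry only makes it vanish at $t=1/2$; in general it equals $\frac{2t-1}{\sigma_t^2}\bm z$, which is the formula you then correctly use. What vanishes is only the net displacement $\sfZ_1^G-\mathrm{Id}$.
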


\noindent \textbf{Proof of Lemma \ref{lem:properness}.}   Recall the rescaled flow $\bm q \lb a(t),\bm x \rb$ in \eqref{eq:rescaled-q}. Integrating \eqref{eq:ODE-q-gmm} gives
\[
\sfZ_1^{p_1} \lb \bm x \rb - \bm x = \bm q(1,\bm x) - \bm x = \int_0^1 \bm m \lb a, \bm q(a, \bm x)  \rb \, da. 
\]
On the other hand, 
\begin{align*}
\left\|\bm m(a,\bm q)\right\|
&\leq \sum_{j=1}^N\sum_{s\in\{+,-\}}
w_j^s(a,\bm q)\left\|\bm\mu_j^s\right\|\\
&\leq \max_{\substack{1\leq j\leq N\\s\in\{+,-\}}}
\left\|\bm\mu_j^s\right\|\\
&= \max_{1\leq j\leq N}\sqrt{R^2+\eta^2\theta_j^2}\\
&\leq \sqrt{R^2+\eta^2}
= \sqrt{4\pi+\frac{3\ve}{8}}.
\end{align*}
Thus
\[
\sup_{\bm x \in \mb R^3} \left\| \sfZ_1^{p_1} (\bm x) - \bm x \right\|  \leq 4\pi + \ve.
\]
The proof is completed. $\hfill$ $\square$

\begin{lemma} \label{lem:block-convergence-gmm}
    With the notation as in the proof of Theorem~\ref{ns:thm:gmm}, we have 
    \[
    \sup_{a \in [0,1]} \left\| \sum_{j=1}^N \alpha_j(a) \cdot \bm P_{\pi \theta_j^2} - \bm P_{\pi a^2} \right\|
    \leq \min \la 1; \frac{\pi}{N} + \frac{\pi\sqrt{2}}{\sqrt{B}} + \frac{\pi\eta^2}{B} \ra.
    \]
\end{lemma}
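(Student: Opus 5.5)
The approach is to read $\alpha_j(a)$ as a discrete posterior over the grid $\{\theta_j\}$ and show that it concentrates near $\theta=a$. Then the Lipschitz dependence of $\bm P_\phi$ on $\phi$ converts this concentration into the operator-norm bound. The bound $1$ in the minimum is immediate, since both matrices are convex combinations of rank-one orthogonal projections and hence have spectra in $[0,1]$. Their difference therefore has operator norm at most $1$. For the other bound, I will use $\norm{\bm P_\phi-\bm P_\psi}\le|\phi-\psi|$, which follows because $\bm P_\phi$ has derivative of operator norm $1$ in $\phi$. This gives
\[
\Big\|\sum_j\alpha_j(a)\bm P_{\pi\theta_j^2}-\bm P_{\pi a^2}\Big\|\le \pi\sum_j\alpha_j(a)\,|\theta_j^2-a^2|\le 2\pi\sum_j\alpha_j(a)|\theta_j-a|,
\]
where the last step uses $\theta_j,a\in[0,1]$. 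It then suffices to bound the posterior mean absolute deviation $\sum_j\alpha_j(a)|\theta_j-a|$.

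The first step is to track $Q(a)$. The exponent in $\alpha_j$ is $-(B+\frac{a^2\eta^2}{2})\theta_j^2+a\eta Q(a)\theta_j$, which is a discretized Gaussian in $\theta$ with precision $2B+a^2\eta^2$ and center $c(a):=a\eta Q(a)/(2B+a^2\eta^2)$. At $a=0$ all weights are equal to $p_j^\pm$, and the starting value is $Q(0)=L=2B/\eta$. I expect the ODE $Q'(a)=\eta\,\bbE_{\alpha(a)}[\theta]$ to keep $c(a)$ close to $a$. The ansatz to check is $Q(a)\approx (2B+a^2\eta^2)/\eta$, i.e. $c(a)\equiv a$ exactly in the continuum limit: differentiating gives $Q'=2a\eta$, whereas the posterior mean is $\approx\eta a$. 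So the center is not exactly $a$, and I will instead compute $c(a)$ directly from the ODE. If the posterior mean is $\approx c(a)$, then $Q'=\eta c$, and I will solve for $c$ to leading order, treating $\eta^2/B$ as a small correction. The target is $|c(a)-a|\lesssim \eta^2/B$, which is the source of the $\pi\eta^2/B$ term; I may need to refine the ansatz so that it matches exactly. This is the step I expect to be the main obstacle: closing a self-consistent estimate for $Q$ (via Gronwall on $c(a)-a$) while also controlling the discretization and boundary truncation errors of the posterior mean on $[0,1]$.

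Given the center estimate, I will decompose $|\theta_j-a|\le|\theta_j-c(a)|+|c(a)-a|$. The first term has posterior expectation at most the square root of the discrete second moment about the center. For a log-concave discrete Gaussian on a grid of spacing $1/N$ and precision at least $2B$, I will bound this by $\sqrt{1/(2B)}$ plus a grid error of order $1/(2N)$, using comparison with the continuous Gaussian or simply the fact that the mass is nearest to the center. Combined with the factor $2\pi$, these give the terms $\pi/N$ and $\pi\sqrt2/\sqrt B$, and the center error gives $\pi\eta^2/B$. Finally, I will make sure that all constants survive near $a=0$, where $\alpha$ is nearly determined by the prior $e^{-B\theta^2}$ centered at $0=a$, and near $a=1$, where truncation of the grid at $\theta\le1$ only helps the deviation bound.
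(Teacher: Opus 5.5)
Your reduction is the same as the paper's. The bound $1$ is correct as you argue (the paper does not spell it out). The estimates $\|\bm P_\phi-\bm P_\psi\|=|\sin(\phi-\psi)|\le|\phi-\psi|$ and $|\theta_j^2-a^2|\le2|\theta_j-a|$ reduce everything to $2\pi\,\mathbb{E}|\Theta_a-a|$ for $\Theta_a\sim\alpha(a)$, and you split at the completed-square center $c=c(a)$. The step you flag as the main obstacle, however, is elementary and needs neither Gronwall nor self-consistency. Since $\theta_j\in[0,1]$ and $\alpha(a)$ is a probability vector, $0\le Q'(a)\le\eta$, so $L\le Q(a)\le L+a\eta$. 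With $\eta L=2B$,
\[
c(a)-a=\frac{a\eta\,(Q(a)-L)-a^3\eta^2}{2B+a^2\eta^2}\in\Big[-\frac{a^3\eta^2}{2B},\ \frac{a^2(1-a)\eta^2}{2B}\Big],
\]
hence $|c(a)-a|\le\eta^2/(2B)$, exactly the constant needed for the $\pi\eta^2/B$ term. The same bound on $Q$ gives $0\le c(a)\le a(2B+a\eta^2)/(2B+a^2\eta^2)\le1$. You use this fact implicitly later but never establish it.

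The genuine gap is the remaining estimate $\mathbb{E}|\Theta_a-c|\le\frac1{2N}+\frac1{\sqrt{2B}}$, which your proposal only gestures at. ``Comparison with the continuous Gaussian'' does not go through directly. The weights $\alpha_j\propto e^{-\beta(a)(\theta_j-c)^2}$, with $\beta(a)=B+a^2\eta^2/2$, are point evaluations rather than cell masses. The bound must also hold with these constants uniformly in the ratio $B/N^2$, from nearly continuous to concentrated on one or two grid points.

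The idea you are missing is this. With $y=\theta_j+u$, one has $\int_{I_j}e^{-\beta(a)(y-c)^2}dy=e^{-\beta(a)(\theta_j-c)^2}H(|\theta_j-c|)$, where $H(x)=\int_{-1/(2N)}^{1/(2N)}e^{-\beta(a)u^2}\cosh(2\beta(a)xu)\,du$ is nondecreasing in $x\ge0$. So passing from point weights to cell masses is a tilt that is monotone in $|\theta-c|$, and Chebyshev's association inequality gives
\[
\mathbb{E}|\Theta_a-c|\le\frac{\sum_j|\theta_j-c|\int_{I_j}e^{-\beta(a)(y-c)^2}\,dy}{\int_0^1e^{-\beta(a)(y-c)^2}\,dy}.
\]
Only now do $|\theta_j-c|\le|y-c|+\frac1{2N}$ on $I_j$, Cauchy--Schwarz, and one integration by parts yield $\frac1{2N}+\frac1{\sqrt{2\beta(a)}}$. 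The boundary term $-\frac{1}{2\beta(a)}\bigl[(1-c)e^{-\beta(a)(1-c)^2}+c\,e^{-\beta(a)c^2}\bigr]$ is nonpositive precisely because $c(a)\in[0,1]$. That is what your claim that truncation ``only helps'' requires; the claim fails for a center outside $[0,1]$. Your ordering, Cauchy--Schwarz on the discrete second moment first, can also be completed, since $(\theta-c)^2$ is equally monotone in $|\theta-c|$. It still needs the same tilt argument.
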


\noindent \textbf{Proof of Lemma \ref{lem:block-convergence-gmm}.} 
Let $\Theta_a$ be a random variable taking values in $\la \theta_1,\dots,\theta_N \ra$ with the law
\[
\mb P \lb \Theta_a = \theta_i \rb = \alpha_i(a) =  \frac{ \exp \la -\beta(a) \cdot \left[ \theta_i - c(a) \right]^2 \ra}
{\sum_{k=1}^N \exp \la -\beta(a) \cdot \left[ \theta_k - c(a) \right]^2 \ra}
\]
where
\[
\beta(a):= B +\frac{a^2\eta^2}{2}, \qquad c(a):= \frac{a\eta \cdot Q(a)}{2B + a^2 \eta^2}.
\]
Here $Q$ is the solution of the ODE in \eqref{eq:Q-ode-gmm}. Note that the representations of the $\alpha_i(a)$ follow directly from completing the squares in their original definitions; see the proof of Theorem~\ref{ns:thm:gmm}. 

We first collect some elementary properties. It is clear from definitions that for all $a \in [0,1]$,
\[
\beta(a) \geq B, \quad c(a) \in [0,1], \qquad \left| c(a)-a \right| \leq \frac{\eta^2}{2B}. 
\]
The last bound follows from the fact that $L \leq Q(a) \leq L+a\eta$, where we recall that $L=2B/\eta$. Next, observe that
\begin{align*}
     \sup_{a \in [0,1]} \left\| \sum_{j=1}^N \alpha_j(a) \cdot \bm P_{\pi \theta_j^2} - \bm P_{\pi a^2} \right\|
     = \sup_{a \in [0,1]} \left\| \mb E \left[ \bm P_{\pi \Theta_a^2} - \bm P_{\pi a^2} \right] \right\|
     &\leq \sup_{a \in [0,1]}  \mb E \left\| \bm P_{\pi \Theta_a^2} - \bm P_{\pi a^2}  \right\| 
\end{align*}
A simple calculation yields 
\[
 \left\| \bm P_{\pi \Theta_a^2} - \bm P_{\pi a^2}  \right\| = \left|\sin \lb \pi \Theta_a^2 - \pi a^2 \rb \right| \leq 2\pi \cdot |\Theta_a-a|
\]
where the last bound follows from the fact that $|\sin(x)| \leq |x|$. 

Thus
\begin{align*}
         \sup_{a \in [0,1]} \left\| \sum_{j=1}^N \alpha_j(a) \cdot \bm P_{\pi \theta_j^2} - \bm P_{\pi a^2} \right\|
         \leq 2\pi \cdot \sup_{a \in [0,1]} \mb E \left| \Theta_a -a \right|
         &\leq 2\pi \cdot \sup_{a \in [0,1]} \Big\{  \mb E \left| \Theta_a -c(a) \right| + |c(a)-a| \Big\} \\
         &\leq 2\pi \cdot \sup_{a \in [0,1]} \mb E  \left| \Theta_a -c(a) \right| + \frac{\pi \eta^2}{B}.
\end{align*}
To finish the proof, it suffices to bound the first term in the last display. We will show that
\begin{align} \label{eq:mean-Theta_a}
     \mb E  \left| \Theta_a -c(a) \right| \leq \frac{1}{2N} + \frac{1}{\sqrt{2 B}}.
\end{align}
To prove \eqref{eq:mean-Theta_a}, we partition $[0,1]$ into $N$ intervals $I_j:=[(j-1)/N,j/N]$. Note that the partition yields $\theta_j \in I_j$. Put 
\[
H(x,a):= \int_{-1/(2N)}^{1/(2N)} e^{-\beta(a) u^2} \cdot \cosh \lb 2\beta(a) \cdot xu  \rb \, du. 
\]
Write
\begin{align}
& \int_{I_j} \exp\la -\beta(a) \cdot \left[ y-c(a) \right]^2 \ra \, dy \nonumber \\
=& \int_{(j-1)/N}^{j/N} \exp \la -\beta(a) \cdot \lb y - \theta_j \rb^2 \ra \cdot \exp \la -2\beta(a) \cdot \lb y- \theta_j \rb \lb \theta_j-c(a) \rb \ra, dy \nonumber \\
\times& \exp \la -\beta(a) \cdot \left[ \theta_j-c(a) \right]^2 \ra \nonumber \\ 
=&  \exp \la -\beta(a) \cdot \left[ \theta_j-c(a) \right]^2 \ra \times 
\int_{-\frac{1}{2N}}^{\frac{1}{2N}} \exp \la -\beta(a) \cdot u^2 \ra \cdot 
\cosh \left[ 2\beta(a) \cdot u \lb \theta_j-c(a) \rb \right] \, du
\nonumber \\
=& \exp \la -\beta(a) \cdot \left[ \theta_j - c(a) \right]^2 \ra \cdot H \lb |\theta_j-c(a)|,a \rb. \label{eq:change-of-variable}
\end{align}
Note that the second equality follows from the facts that $\exp(-x)= \cosh(x)-\sinh(x)$, $\sinh(x)$ is an odd function, and a change of variable. 

Since $H(x,a)$ is non-decreasing on $[0,\infty)$ for each $a \in [0,1]$, Chebyshev's association inequality (Theorem 2.14 from \citep{boucheron2013concentration}) gives
\[
0 \leq \mbox{Cov} \lb \left| \Theta_a - c(a) \right|,\,H \lb |\Theta_a-c(a)|,a \rb  \rb, \qquad \forall a \in [0,1].
\]
Equivalently, for all $a \in [0,1]$,
\begin{align*}
    \mb E \lb |\Theta_a-c(a)| \rb &\leq \frac{\mb E \Big[ |\Theta_a-c(a)| \cdot H \lb |\Theta_a-c(a)|,a \rb \Big]}{\mb E H \lb |\Theta_a-c(a)|, a \rb} \\
    &\stackrel{\rm (i)}{=} \frac{\sum_{j=1}^N |\theta_j-c(a)| \cdot \intop_{I_j} \exp \la -\beta(a) \cdot \left[ y-c(a) \right]^2 \ra \, dy  }{\intop_0^1 \exp \la -\beta(a) \cdot \left[ y-c(a) \right]^2 \ra \, dy } \\
    & \stackrel{\rm (ii)}{\leq} \frac{1}{2N} + 
    \frac{\intop_0^1 |y-c(a)| \cdot \exp \la -\beta(a) \cdot \left[ y-c(a) \right]^2 \ra \, dy}{\intop_0^1 \exp \la -\beta(a) \cdot \left[ y-c(a) \right]^2 \ra \, dy}.
\end{align*}
Here (i) follows from \eqref{eq:change-of-variable} and (ii)  follows from the triangle inequality
\[
|\theta_j -c(a)| \leq |y-c(a)| + |\theta_j-y| \leq |y-c(a)| + \frac{1}{2N}
\]
for all $y \in I_j$. 

To finish the proof, we bound the second term in (ii). The Cauchy--Schwarz inequality yields
\[
\frac{\intop_0^1 |y-c(a)| \cdot \exp \la -\beta(a) \cdot \left[ y-c(a) \right]^2 \ra \, dy}{\intop_0^1 \exp \la -\beta(a) \cdot \left[ y-c(a) \right]^2 \ra \, dy}
\leq 
\sqrt{\frac{ \intop_0^1 \lb y-c(a) \rb^2 \cdot \exp \la -\beta(a) \cdot \left[ y-c(a) \right]^2 \ra \, dy}{\intop_0^1 \exp \la -\beta(a) \cdot \left[ y-c(a) \right]^2 \ra \, dy}.}
\]
To bound the last term, we do integration by parts to get
\begin{align*}
\intop_0^1 \lb y-c(a) \rb^2 \cdot \exp \la -\beta(a) \cdot \left[ y-c(a) \right]^2 \ra \, dy 
&= -\frac{1}{2\beta(a)} \la (y-c(a)) \cdot \exp \left[ -\beta(a) \cdot (y-c(a))^2 \right] \ra \Big|_{y=0}^{y=1} \\
&+ \frac{1}{2\beta(a)} \cdot \int_0^1 \exp \la -\beta(a) \cdot \left[ y-c(a) \right]^2 \ra \, dy \\
&\leq \frac{1}{2\beta(a)} \cdot \int_0^1 \exp \la -\beta(a) \cdot \left[ y-c(a) \right]^2 \ra \, dy.
\end{align*}
Consequently,
\[
\frac{\intop_0^1 |y-c(a)| \cdot \exp \la -\beta(a) \cdot \left[ y-c(a) \right]^2 \ra \, dy}{\intop_0^1 \exp \la -\beta(a) \cdot \left[ y-c(a) \right]^2 \ra \, dy}
\leq \frac{1}{\sqrt{2 \beta(a)}} \leq \frac{1}{\sqrt{2B}}.
\]
This yields \eqref{eq:mean-Theta_a} and in turn completes the proof. $\hfill$ $\square$

%% file: paper-ref-url.bib
@article{liu2022rectified,
  title={Rectified flow: A marginal preserving approach to optimal transport},
  author={Liu, Qiang},
  journal={arXiv preprint arXiv:2209.14577},
  year={2022},
  url={https://arxiv.org/abs/2209.14577}
}

@inproceedings{kingma2014auto,
  title     = {Auto-Encoding Variational {Bayes}},
  author    = {Kingma, Diederik P. and Welling, Max},
  booktitle = {International Conference on Learning Representations},
  year      = {2014},
  url       = {https://arxiv.org/abs/1312.6114}
}

@inproceedings{goodfellow2014generative,
  title     = {Generative Adversarial Nets},
  author    = {Goodfellow, Ian J. and Pouget-Abadie, Jean and
               Mirza, Mehdi and Xu, Bing and Warde-Farley, David and
               Ozair, Sherjil and Courville, Aaron and Bengio, Yoshua},
  booktitle = {Advances in Neural Information Processing Systems},
  volume    = {27},
  pages     = {2672--2680},
  year      = {2014},
  url       = {https://arxiv.org/abs/1406.2661}
}

@inproceedings{sohl2015deep,
  title={Deep unsupervised learning using nonequilibrium thermodynamics},
  author={Sohl-Dickstein, Jascha and Weiss, Eric and Maheswaranathan, Niru and Ganguli, Surya},
  booktitle={International conference on machine learning},
  pages={2256--2265},
  year={2015},
  organization={PMLR},
  url={https://arxiv.org/abs/1503.03585}
}

@article{ho2020denoising,
  title={Denoising diffusion probabilistic models},
  author={Ho, Jonathan and Jain, Ajay and Abbeel, Pieter},
  journal={Advances in neural information processing systems},
  volume={33},
  pages={6840--6851},
  year={2020},
  url={https://arxiv.org/abs/2006.11239}
}

@inproceedings{song2021denoising,
title={Denoising Diffusion Implicit Models},
author={Jiaming Song and Chenlin Meng and Stefano Ermon},
booktitle={International Conference on Learning Representations},
year={2021},
url={https://openreview.net/forum?id=St1giarCHLP}
}

@article{lipman2022flow,
  title={Flow matching for generative modeling},
  author={Lipman, Yaron and Chen, Ricky TQ and Ben-Hamu, Heli and Nickel, Maximilian and Le, Matt},
  journal={arXiv preprint arXiv:2210.02747},
  year={2022},
  url={https://arxiv.org/abs/2210.02747}
}

@inproceedings{liu2023instaflow,
  title={Instaflow: One step is enough for high-quality diffusion-based text-to-image generation},
  author={Liu, Xingchao and Zhang, Xiwen and Ma, Jianzhu and Peng, Jian and others},
  booktitle={The Twelfth International Conference on Learning Representations},
  year={2024},
  url={https://arxiv.org/abs/2309.06380}
}

@inproceedings{yan2024perflow,
  title={PeRFlow: Piecewise Rectified Flow as Universal Plug-and-Play Accelerator},
  author={Yan, Hanshu and Liu, Xingchao and Pan, Jiachun and Liew, Jun Hao and Liu, Qiang and Feng, Jiashi},
  booktitle={Advances in Neural Information Processing Systems},
  volume={37},
  year={2024},
  url={https://papers.nips.cc/paper_files/paper/2024/hash/8f9d1362a386e80a723e98451a8c7564-Abstract-Conference.html}
}

@article{audio_RF_wang2024frieren,
  title={Frieren: Efficient video-to-audio generation network with rectified flow matching},
  author={Wang, Yongqi and Guo, Wenxiang and Huang, Rongjie and Huang, Jiawei and Wang, Zehan and You, Fuming and Li, Ruiqi and Zhao, Zhou},
  journal={Advances in Neural Information Processing Systems},
  volume={37},
  pages={128118--128138},
  year={2024},
  url={https://papers.nips.cc/paper_files/paper/2024/hash/743db1fd01fe1c4795f4b1aa11fc9c2d-Abstract-Conference.html}
}

@inproceedings{audio_RF_liu2025flashaudio,
  title={FlashAudio: Rectified Flow for Fast and High-Fidelity Text-to-Audio Generation},
  author={Liu, Huadai and Wang, Jialei and Huang, Rongjie and Liu, Yang and Lu, Heng and Zhao, Zhou and Xue, Wei},
  booktitle={Proceedings of the 63rd Annual Meeting of the Association for Computational Linguistics (Volume 1: Long Papers)},
  pages={13694--13710},
  year={2025},
  url={https://aclanthology.org/2025.acl-long.666/}
}

@inproceedings{video_RF_chen2025goku,
  title={Goku: Flow based video generative foundation models},
  author={Chen, Shoufa and Ge, Chongjian and Zhang, Yuqi and Zhang, Yida and Zhu, Fengda and Yang, Hao and Hao, Hongxiang and Wu, Hui and Lai, Zhichao and Hu, Yifei and others},
  booktitle={Proceedings of the Computer Vision and Pattern Recognition Conference},
  pages={23516--23527},
  year={2025},
  url={https://openaccess.thecvf.com/content/CVPR2025/html/Chen_Goku_Flow_Based_Video_Generative_Foundation_Models_CVPR_2025_paper.html}
}

@article{video_RF_liu2025improving,
  title={Improving video generation with human feedback},
  author={Liu, Jie and Liu, Gongye and Liang, Jiajun and Yuan, Ziyang and Liu, Xiaokun and Zheng, Mingwu and Wu, Xiele and Wang, Qiulin and Xia, Menghan and Wang, Xintao and others},
  journal={arXiv preprint arXiv:2501.13918},
  year={2025},
  url={https://arxiv.org/abs/2501.13918}
}

@inproceedings{liu2023flow,
  title={Flow straight and fast: Learning to generate and transfer data with rectified flow},
  author={Liu, Xingchao and Gong, Chengyue and Liu, Qiang},
  booktitle={The Eleventh International Conference on Learning Representations},
  year={2023},
  url={https://arxiv.org/abs/2209.03003}
}

@InProceedings{bansal2025wasserstein,
  title = 	 { On the Convergence and Straightness of Rectified Flow },
  author = 	 {Bansal, Vansh and Roy, Saptarshi and Rinaldo, Alessandro and Sarkar, Purnamrita},
  booktitle = 	 {Proceedings of The 29th International Conference on Artificial Intelligence and Statistics},
  pages = 	 {4636--4644},
  year = 	 {2026},
  publisher = 	 {PMLR},
  url = {https://openreview.net/forum?id=qKKxzTucgy}
}

@article{mena2025statistical,
  author  = {Mena, Gonzalo and Kuchibhotla, Arun Kumar
             and Wasserman, Larry},
  title   = {Statistical Properties of Rectified Flow},
  journal = {arXiv preprint arXiv:2511.03193},
  year    = {2025},
  url     = {https://arxiv.org/abs/2511.03193}
}

@inproceedings{zhu2026modelcollapse,
  title     = {Analyzing and Mitigating Model Collapse in Reflow Methods},
  author    = {Zhu, Huminhao and Wang, Fangyikang and Ding, Tianyu
               and Qu, Qing and Zhu, Zhihui},
  booktitle = {Conference on Parsimony and Learning},
  series    = {Proceedings of Machine Learning Research},
  volume    = {328},
  pages     = {314--340},
  year      = {2026},
  publisher = {PMLR},
  url       = {https://proceedings.mlr.press/v328/zhu26a.html}
}

@inproceedings{kim2025balanced,
  title     = {Balanced Conic Rectified Flow},
  author    = {Kim, Shin Seong and Kwon, Mingi and Jeong, Jaeseok
               and Uh, Youngjung},
  booktitle = {Advances in Neural Information Processing Systems},
  volume    = {38},
  year      = {2025},
  url       = {https://openreview.net/forum?id=kqHzgTV9AU}
}

@inproceedings{roy2026optimal,
title={Optimal Self-Distillation for Rectified Flow via Linear Probing},
author={Saptarshi Roy and Debepsita Mukherjee and Pratik Patil},
booktitle={The First Conference on Statistics and Trustworthy AI for Cross (X)-Domain Acceleration},
year={2026},
url={https://arxiv.org/abs/2607.14947}
}

@article{plastock1974homeomorphisms,
  title={Homeomorphisms between Banach spaces},
  author={Plastock, Roy},
  journal={Transactions of the American mathematical society},
  volume={200},
  pages={169--183},
  year={1974},
  url={https://doi.org/10.2307/1996934}
}

@article{ruzhansky2015global,
  title={On global inversion of homogeneous maps},
  author={Ruzhansky, Michael and Sugimoto, Mitsuru},
  journal={Bulletin of Mathematical Sciences},
  volume={5},
  number={1},
  pages={13--18},
  year={2015},
  publisher={Springer},
  url={https://doi.org/10.1007/s13373-014-0059-1}
}

@article{hertrich2026relation,
  title={On the relation between rectified flows and optimal transport},
  author={Hertrich, Johannes and Chambolle, Antonin and Delon, Julie},
  journal={Advances in Neural Information Processing Systems},
  volume={38},
  pages={172578--172610},
  year={2026},
  url={https://papers.nips.cc/paper_files/paper/2026/hash/2f6c89b3d2f1f1696b5b2dd55f0cb0e4-Abstract-Conference.html}
}

@article{nonstraight_tsimpos2026one,
  title={One-Shot Generative Flows: Existence and Obstructions},
  author={Tsimpos, Panos and Sharp, Daniel and Marzouk, Youssef},
  journal={arXiv preprint arXiv:2604.15439},
  year={2026},
  url={https://arxiv.org/abs/2604.15439}
}

@article{nonstraight_pierret2026tessellations,
  title={Tessellations of Semi-Discrete Flow Matching},
  author={Pierret, Emile and Hertrich, Johannes and Hurault, Samuel and Delon, Julie},
  journal={arXiv preprint arXiv:2605.07513},
  year={2026},
  url={https://arxiv.org/abs/2605.07513}
}

@article{nonstraight_chambolle2026limit,
  title={Limit Points of Reflow with Minibatch Optimal Transport},
  author={Chambolle, Antonin and Hertrich, Johannes},
  journal={arXiv preprint arXiv:2608.07042},
  year={2026},
  url={https://arxiv.org/abs/2608.07042}
}

@book{ambrosio2005gradient,
  title={Gradient flows: in metric spaces and in the space of probability measures},
  author={Ambrosio, Luigi and Gigli, Nicola and Savar{\'e}, Giuseppe},
  year={2005},
  publisher={Springer},
  url={https://doi.org/10.1007/b137080}
}

@article{popoviciu1935equations,
  title={Sur les {\'e}quations alg{\'e}briques ayant toutes leurs racines r{\'e}elles},
  author={Popoviciu, Tiberiu},
  journal={Mathematica},
  volume={9},
  number={129-145},
  pages={20},
  year={1935},
  url={https://www.ictp.acad.ro/popoviciu/articles/sur_les_equations_algebriques_ayant_toutes_leurs_racines_reelles.pdf}
}

@article{bhatia2012some,
  title={Some inequalities for positive linear maps},
  author={Bhatia, Rajendra and Sharma, Rajesh},
  journal={Linear algebra and its applications},
  volume={436},
  number={6},
  pages={1562--1571},
  year={2012},
  publisher={Elsevier},
  url={https://doi.org/10.1016/j.laa.2011.12.017}
}

@article{magnus1954exponential,
  title={On the exponential solution of differential equations for a linear operator},
  author={Magnus, Wilhelm},
  journal={Communications on pure and applied mathematics},
  volume={7},
  number={4},
  pages={649--673},
  year={1954},
  publisher={Wiley Online Library},
  url={https://doi.org/10.1002/cpa.3160070404}
}

@article{efron2011tweedie,
  title={Tweedie’s formula and selection bias},
  author={Efron, Bradley},
  journal={Journal of the American Statistical Association},
  volume={106},
  number={496},
  pages={1602--1614},
  year={2011},
  publisher={Taylor \& Francis},
  url={https://doi.org/10.1080/01621459.2011.622401}
}

@article{meng2021estimating,
  title={Estimating high order gradients of the data distribution by denoising},
  author={Meng, Chenlin and Song, Yang and Li, Wenzhe and Ermon, Stefano},
  journal={Advances in Neural Information Processing Systems},
  volume={34},
  pages={25359--25369},
  year={2021},
  url={https://openreview.net/forum?id=YTkQQrqSyE1}
}

@book{teschl2012ordinary,
  title={Ordinary differential equations and dynamical systems},
  author={Teschl, Gerald},
  volume={140},
  year={2012},
  publisher={American Mathematical Soc.},
  url={https://bookstore.ams.org/gsm-140/}
}

@book{folland1999real,
  author    = {Folland, Gerald B.},
  title     = {Real Analysis: Modern Techniques and Their Applications},
  edition   = {2},
  publisher = {John Wiley \& Sons},
  year      = {1999},
  url       = {https://www.wiley.com/en-us/Real+Analysis%3A+Modern+Techniques+and+Their+Applications%2C+2nd+Edition-p-9780471317166}
}

@misc{simader2005homotopy,
  author = {Simader, Christian G.},
  title  = {A homotopy argument and its applications to the
            transformation rule for bi-{Lipschitz} mappings,
            the {Brouwer} fixed point theorem and the
            {Brouwer} degree},
  year   = {2005},
  note   = {preprint, University of Bayreuth},
  url    = {https://epub.uni-bayreuth.de/id/eprint/813/}
}

@inproceedings{pmlr-v48-liub16,
  title     = {A Kernelized {Stein} Discrepancy for Goodness-of-fit Tests},
  author    = {Liu, Qiang and Lee, Jason and Jordan, Michael},
  booktitle = {Proceedings of the 33rd International Conference on Machine Learning},
  volume    = {48},
  pages     = {276--284},
  year      = {2016},
  url       = {https://proceedings.mlr.press/v48/liub16.html}
}

@book{boucheron2013concentration,
  author    = {Boucheron, St{\'e}phane and Lugosi, G{\'a}bor
               and Massart, Pascal},
  title     = {Concentration Inequalities:
               A Nonasymptotic Theory of Independence},
  publisher = {Oxford University Press},
  year      = {2013},
  url       = {https://global.oup.com/academic/product/concentration-inequalities-9780199535255}
}

@book{CoddingtonLevinson1955,
  author    = {Coddington, Earl A. and Levinson, Norman},
  title     = {Theory of Ordinary Differential Equations},
  publisher = {McGraw-Hill},
  address   = {New York},
  year      = {1955},
  url       = {https://archive.org/details/theoryofordinaryd00codd}
}

@article{heiny2025maximum,
  title={Maximum interpoint distance of high-dimensional random vectors},
  author={Heiny, Johannes and Kleemann, Carolin},
  journal={Bernoulli},
  volume={31},
  number={1},
  pages={537--560},
  year={2025},
  publisher={Bernoulli Society for Mathematical Statistics and Probability},
  url={https://doi.org/10.3150/24-BEJ1738}
}

@book{hardy1952inequalities,
  title={Inequalities},
  author={Hardy, Godfrey Harold and Littlewood, John Edensor and P{\'o}lya, George},
  year={1952},
  publisher={Cambridge University Press},
  url={https://openlibrary.org/books/OL7715912M/Inequalities}
}

@article{kupferman2019piola,
  author  = {Kupferman, Raz and Shachar, Asaf},
  title   = {A geometric perspective on the {Piola} identity in {Riemannian} settings},
  journal = {Journal of Geometric Mechanics},
  volume  = {11},
  number  = {1},
  pages   = {59--76},
  year    = {2019},
  doi     = {10.3934/jgm.2019004}
}

@article{roy2026low,
  title={Low-Dimensional Adaptation of Rectified Flow: A Diffusion and Stochastic Localization Perspective},
  author={Roy, Saptarshi and Rinaldo, Alessandro and Sarkar, Purnamrita},
  journal={arXiv preprint arXiv:2601.15500},
  year={2026},
  url = {https://arxiv.org/abs/2601.15500}
}

@article{kumar2026conditional,
  title={Conditional Flow Matching for Transport Between Markov Processes},
  author={Kumar, Syamantak and Nagaraj, Dheeraj and Roy, Saptarshi and Sarkar, Purnamrita},
  journal={arXiv preprint arXiv:2610.07229},
  year={2026},
  url = {https://arxiv.org/abs/2610.07229}
}

@article{low_kumar2026flow,
  title={Flow matching is adaptive to manifold structures},
  author={Kumar, Shivam and Wang, Yixin and Lin, Lizhen},
  journal={arXiv preprint arXiv:2602.22486},
  year={2026},
  url = {https://arxiv.org/pdf/2602.22486}
}

@inproceedings{low_boffi2025shallow,
  title={Shallow diffusion networks provably learn hidden low-dimensional structure},
  author={Boffi, Nicholas and Jacot, Arthur and Tu, Stephen and Ziemann, Ingvar},
  booktitle={International Conference on Learning Representations},
  volume={2025},
  pages={52889--52923},
  year={2025},
  url = {https://openreview.net/forum?id=KlxK4ncqWZ}
}

@inproceedings{guan2026mirror,
  title={Mirror flow matching with heavy-tailed priors for generative modeling on convex domains},
  author={Guan, Yunrui and Balasubramanian, Krishna and Ma, Shiqian},
  booktitle={International Conference on Learning Representations},
  volume={2026},
  pages={130098--130124},
  year={2026},
  url = {https://openreview.net/forum?id=dZKl7uc0XQ}
}

@article{wang2026computational,
  title={Computational and Statistical Guarantees of the$\backslash$textit $\{$c$\}$-Rectified flow},
  author={Wang, Leda and Xu, Zhehao and Liu, Qiang and Zhou, Harrison H},
  journal={arXiv preprint arXiv:2608.02487},
  year={2026},
  url={https://arxiv.org/abs/2608.02487}
}

@InProceedings{alignment_potaptchik2026meta,
  title = 	 {Meta Flow Maps enable scalable reward alignment},
  author =       {Potaptchik, Peter and Saravanan, Adhi and Mammadov, Abbas and Prat, Alvaro and Albergo, Michael Samuel and Teh, Yee Whye},
  booktitle = 	 {Proceedings of the 43rd International Conference on Machine Learning},
  pages = 	 {99255--99295},
  year = 	 {2026},
  editor = 	 {Zhang, Tong and Dudik, Miroslav and Jaggi, Martin and Agarwal, Alekh and Li, Sharon and Schuurmans, Dale and Zhu, Jerry and Berkenkamp, Felix and Dong, Hanze and Bietti, Alberto},
  volume = 	 {306},
  series = 	 {Proceedings of Machine Learning Research},
  month = 	 {06--11 Jul},
  publisher =    {PMLR},
  url = 	 {https://proceedings.mlr.press/v306/potaptchik26b.html}
}

@inproceedings{alignment_fan2025online,
  title={Online reward-weighted fine-tuning of flow matching with wasserstein regularization},
  author={Fan, Jiajun and Shen, Shuaike and Cheng, Chaoran and Chen, Yuxin and Liang, Chumeng and Liu, Ge},
  booktitle={The Thirteenth International Conference on Learning Representations},
  year={2025},
  url = {https://openreview.net/forum?id=2IoFFexvuw}
}

@article{dalalyan2017theoretical,
  author  = {Dalalyan, Arnak S.},
  title   = {Theoretical Guarantees for Approximate Sampling
             from Smooth and Log-Concave Densities},
  journal = {Journal of the Royal Statistical Society:
             Series B (Statistical Methodology)},
  year    = {2017},
  volume  = {79},
  number  = {3},
  pages   = {651--676},
  url     = {https://doi.org/10.1111/rssb.12183}
}

@article{durmus2019analysis,
  author  = {Durmus, Alain and Majewski, Szymon and
             Miasojedow, B{\l}a{\.z}ej},
  title   = {Analysis of {Langevin Monte Carlo}
             via Convex Optimization},
  journal = {Journal of Machine Learning Research},
  year    = {2019},
  volume  = {20},
  number  = {73},
  pages   = {1--46},
  url     = {https://www.jmlr.org/papers/v20/18-173.html}
}

@book{mclachlan2000finite,
  author    = {McLachlan, Geoffrey and Peel, David},
  title     = {Finite Mixture Models},
  publisher = {Wiley},
  year      = {2000},
  url       = {https://doi.org/10.1002/0471721182}
}

@book{bishop2006pattern,
  author    = {Bishop, Christopher M.},
  title     = {Pattern Recognition and Machine Learning},
  publisher = {Springer},
  year      = {2006},
  url       = {https://link.springer.com/book/9780387310732}
}

@article{nguyen2020approximation,
  author  = {Nguyen, T. Tin and Nguyen, Hien D. and
             Chamroukhi, Faicel and McLachlan, Geoffrey J.},
  title   = {Approximation by Finite Mixtures of Continuous
             Density Functions That Vanish at Infinity},
  journal = {Cogent Mathematics \& Statistics},
  year    = {2020},
  volume  = {7},
  number  = {1},
  pages   = {1750861},
  url     = {https://doi.org/10.1080/25742558.2020.1750861}
}

@inproceedings{lee2018beyond,
  author    = {Lee, Holden and Risteski, Andrej and Ge, Rong},
  title     = {Beyond Log-concavity: Provable Guarantees for Sampling
               Multi-modal Distributions using Simulated Tempering
               {Langevin Monte Carlo}},
  booktitle = {Advances in Neural Information Processing Systems},
  volume    = {31},
  year      = {2018},
  url       = {https://proceedings.neurips.cc/paper/2018/hash/c6ede20e6f597abf4b3f6bb30cee16c7-Abstract.html}
}

@inproceedings{chen2025learning,
  author    = {Chen, Sitan and Kontonis, Vasilis and Shah, Kulin},
  title     = {Learning General {Gaussian} Mixtures
               with Efficient Score Matching},
  booktitle = {Proceedings of the Thirty-Eighth Conference
               on Learning Theory},
  series    = {Proceedings of Machine Learning Research},
  volume    = {291},
  pages     = {1029--1090},
  year      = {2025},
  publisher = {PMLR},
  url       = {https://proceedings.mlr.press/v291/chen25e.html}
}

@article{chewi2023entropic,
  author  = {Chewi, Sinho and Pooladian, Aram-Alexandre},
  title   = {An Entropic Generalization of {Caffarelli}'s
             Contraction Theorem via Covariance Inequalities},
  journal = {Comptes Rendus. Math{\'e}matique},
  year    = {2023},
  volume  = {361},
  pages   = {1471--1482},
  doi     = {10.5802/crmath.486},
  url     = {https://doi.org/10.5802/crmath.486}
}

@inproceedings{chen2026highaccuracy,
  author    = {Chen, Fan and Chewi, Sinho and
               Daskalakis, Constantinos and Rakhlin, Alexander},
  title     = {High-Accuracy Sampling for Diffusion Models
               and Log-Concave Distributions},
  booktitle = {Proceedings of the 43rd International Conference
               on Machine Learning},
  series    = {Proceedings of Machine Learning Research},
  volume    = {306},
  pages     = {13835--13869},
  year      = {2026},
  publisher = {PMLR},
  url       = {https://proceedings.mlr.press/v306/chen26p.html}
}

@inproceedings{gatmiry2026highaccuracy,
  author    = {Gatmiry, Khashayar and Chen, Sitan and Salim, Adil},
  title     = {High-Accuracy and Dimension-Free Sampling
               with Diffusions},
  booktitle = {Proceedings of the 43rd International Conference
               on Machine Learning},
  series    = {Proceedings of Machine Learning Research},
  volume    = {306},
  pages     = {34242--34274},
  year      = {2026},
  publisher = {PMLR},
  url       = {https://proceedings.mlr.press/v306/gatmiry26a.html}
}

@misc{chewi2026logconcave,
  author = {Chewi, Sinho},
  title  = {Log-Concave Sampling},
  year   = {2026},
  note   = {Book draft, version of August 20, 2026},
  url    = {https://chewisinho.github.io/main.pdf}
}

@inproceedings{wang2025rectified,
  title     = {Rectified Diffusion: Straightness Is Not
               Your Need in Rectified Flow},
  author    = {Wang, Fu-Yun and Yang, Ling and
               Huang, Zhaoyang and Wang, Mengdi and Li, Hongsheng},
  booktitle = {International Conference on Learning Representations},
  year      = {2025},
  url       = {https://proceedings.iclr.cc/paper_files/paper/2025/hash/4df9a5e6bad9e64ebcea453e031142bb-Abstract-Conference.html}
}

@inproceedings{lee2024improving,
  title     = {Improving the Training of Rectified Flows},
  author    = {Lee, Sangyun and Lin, Zinan and Fanti, Giulia},
  booktitle = {Advances in Neural Information Processing Systems},
  volume    = {37},
  year      = {2024},
  url       = {https://papers.neurips.cc/paper_files/paper/2024/hash/7343a5c976f8399880b695267f1f9e9f-Abstract-Conference.html}
}
